\theoremstyle{plain}
\newtheorem{thm}{Theorem}[section]
\newtheorem{cor}[thm]{Corollary}
\newtheorem{lem}[thm]{Lemma}
\newtheorem{prop}[thm]{Proposition}
\def\@rst #1 #2other{#1}
\newcommand\MR[1]{\relax\ifhmode\unskip\spacefactor3000 \space\fi
  \MRhref{\expandafter\@rst #1 other}{#1}}
\newcommand{\MRhref}[2]{\href{http://www.ams.org/mathscinet-getitem?mr=#1}{MR#2}}
\theoremstyle{definition}
\newtheorem{defn}[thm]{Definition}
\newtheorem{remark}[thm]{Remark}
\newtheorem{ques}[thm]{Question}
\numberwithin{equation}{section}
\newcommand{\dsb}{\begin{adjustwidth}{2.5em}{0pt}
\begin{footnotesize}}
\newcommand{\dse}{\end{footnotesize}
\end{adjustwidth}}
\newcommand{\ssb}{\begin{adjustwidth}{2.5em}{0pt}}
\newcommand{\sse}{\end{adjustwidth}}
\newcommand{\aryb}{\begin{eqnarray*}}
\newcommand{\arye}{\end{eqnarray*}}
\def\alb#1\ale{\begin{align*}#1\end{align*}}
\def\allb#1\alle{\begin{align}#1\end{align}}
\newcommand{\eqb}{\begin{equation}}
\newcommand{\eqe}{\end{equation}}
\newcommand{\eqbn}{\begin{equation*}}
\newcommand{\eqen}{\end{equation*}}
\newcommand{\BB}{\mathbbm}
\newcommand{\ol}{\overline}
\newcommand{\ul}{\underline}
\newcommand{\op}{\operatorname}
\newcommand{\bd}{\mathbf}
\newcommand{\frk}{\mathfrak}
\newcommand{\ep}{\varepsilon}
\newcommand{\rta}{\rightarrow}
\newcommand{\wt}{\widetilde}
\newcommand{\mcl}{\mathcal}
\newcommand{\bdy}{\partial}
\let\originalleft\left
\let\originalright\right
\renewcommand{\left}{\mathopen{}\mathclose\bgroup\originalleft}
\renewcommand{\right}{\aftergroup\egroup\originalright}
\title{Geodesic networks in Liouville quantum gravity surfaces}
 \date{ }
\author{Ewain Gwynne\footnote{\url{ewain@uchicago.edu}}  \\ {\it University of Chicago}}
\begin{document}

\maketitle

\begin{abstract}
Recent work has shown that for $\gamma \in (0,2)$, a Liouville quantum gravity (LQG) surface can be endowed with a canonical metric. We prove several results concerning geodesics for this metric. In particular, we completely classify the possible networks of geodesics from a typical point on the surface to an arbitrary point on the surface, as well as the types of networks of geodesics joining two points which occur for a dense set of pairs of points on the surface. This latter result is the $\gamma$-LQG analog of the classification of geodesic networks in the Brownian map due to Angel, Kolesnik, and Miermont (2017). We also show that there is a deterministic $m\in\mathbb N$ such that almost surely any two points are joined by at most $m$ distinct LQG geodesics. 
\end{abstract}

\tableofcontents

\section{Introduction}
\label{sec-intro}

\subsection{Overview}
\label{sec-overview}

Liouville quantum gravity (LQG) is a family of canonical models of random surfaces (two-dimensional Riemannian manifolds), indexed by a parameter $\gamma \in (0,2)$, which were first introduced in the physics literature by Polyakov~\cite{polyakov-qg1}. Such surfaces are conjectured to describe the scaling limits of ``discrete random surfaces", such as random planar maps. %See, e.g.,~\cite{shef-kpz,shef-zipper,dkrv-lqg-sphere,legall-sphere-survey} 
See~\cite{gwynne-ams-survey,bp-lqg-notes} for introductory expository articles on LQG. 

To define LQG, let $U\subset\BB C$ be an open domain, and let $h$ be a variant of the Gaussian free field (GFF) on $U$. 
Heuristically speaking, the \emph{$\gamma$-LQG surface} corresponding to $(U,h)$ is the random two-dimensional Riemannian manifold with Riemannian metric tensor $e^{\gamma h} \, (dx^2+dy^2)$, where $dx^2+dy^2$ is the Euclidean metric tensor. 
This definition does not make literal sense since $h$ is a random distribution (generalized function), so does not have well-defined pointwise values. However, it is possible to define LQG surfaces rigorously using various regularization procedures, as we now explain.

We start with a family of continuous functions which approximate $h$ in some sense. For concreteness, for $\ep > 0$ we define
\eqb \label{eqn-gff-convolve}
h_\ep^*(z) := (h*p_{\ep^2/2})(z) = \int_{U} h(w) p_{\ep^2/2} (z,w) \, d^2w ,\quad \forall z\in U  ,
\eqe
where $p_s(z,w) =   \frac{1}{2\pi s} \exp\left( - \frac{|z-w|^2}{2s} \right) $ is the heat kernel on $\BB C$ and the integral is interpreted in the sense of distributional pairing. One can then define the volume form associated with an LQG surface, a.k.a.\ the LQG area measure, as the a.s.\ weak limit
\eqb \label{eqn-lqg-measure}
\mu_h := \lim_{\ep\rta 0} \ep^{\gamma^2/2} e^{\gamma h_\ep^*(z)} \,d^2 z, 
\eqe
where $d^2 z$ denotes Lebesgue measure on $U$. This measure is a special case of Gaussian multiplicative chaos~\cite{kahane} and many of its basic properties are proven in~\cite{shef-kpz}.
 
Recently, it was shown in a series of papers~\cite{dddf-lfpp,local-metrics,lqg-metric-estimates,gm-confluence,gm-uniqueness,gm-coord-change} that one can define the Riemannian distance function, a.k.a.\ the LQG metric, via a similar regularization procedure. 
To describe this regularization procedure, we first let $d_\gamma > 2$ be the LQG dimension exponent from~\cite{dzz-heat-kernel,dg-lqg-dim}. 
This exponent describe distances in various approximations of LQG (such as random planar maps and regularized versions of the metric tensor). 
For example, for a certain class of infinite-volume random planar maps which are expected to have LQG as their scaling limit, the number of vertices in the graph-distance ball of radius $r$ centered at the root vertex grows like $r^{d_\gamma+o(1)}$ as $r\rta\infty$~\cite[Theorem 1.6]{dg-lqg-dim}.
Once the LQG metric $D_h$ has been defined, it can be shown that $d_\gamma$ is the Hausdorff dimension of the metric space $(U,D_h)$~\cite[Corollary 1.7]{gp-kpz}. 

To construct the LQG metric, we let $h_\ep^*$ be as in~\eqref{eqn-gff-convolve} and define a metric on $U$ by
\eqb \label{eqn-lfpp}
D_h^\ep(z,w) := \inf_{P : z\rta w} \int_0^1 e^{\frac{\gamma}{d_\gamma} h_\ep^*(P(t))} |P'(t)| \,dt ,\quad\forall z,w\in U
\eqe 
where the infimum is over all piecewise continuously differentiable paths from $z$ to $w$. 
It is shown in~\cite{dddf-lfpp} that there are deterministic positive scaling constants $\{\frk a_\ep\}_{\ep  >0}$ such that the laws of the random metrics $\frk a_\ep^{-1} D_h^\ep$ are tight w.r.t.\ the local uniform topology on $U\times U$, and moreover every possible subsequential limit of these laws is a metric which induces the Euclidean topology on $U$. It was subsequently shown in~\cite{gm-uniqueness}, building on~\cite{local-metrics,lqg-metric-estimates,gm-confluence}, that the subsequential limit is unique. In fact, the metrics $\frk a_\ep^{-1} D_h^\ep$ converge in probability to a limiting metric $D_h$ which is uniquely characterized by a list of axioms. We will review these axioms in Section~\ref{sec-lqg-metric} below.

In the special case when $\gamma=\sqrt{8/3}$, there is a completely different, earlier construction of the LQG metric due to Miller and Sheffield~\cite{lqg-tbm1,lqg-tbm2,lqg-tbm3} based on a process called \emph{quantum Loewner evolution}. It is shown in~\cite[Corollary 1.5]{gm-uniqueness} that the Miller-Sheffield $\sqrt{8/3}$-LQG metric is the same as the one obtained as the limit of~\eqref{eqn-lfpp} for $\gamma=\sqrt{8/3}$. Using their construction, Miller and Sheffield~\cite[Corollary 1.5]{lqg-tbm2} showed that certain special $\sqrt{8/3}$-LQG surfaces, viewed as metric spaces, are isometric to so-called \emph{Brownian surfaces}, such as the Brownian map. These Brownian surfaces are random metric spaces which describe the scaling limits of uniform random planar maps in the Gromov-Hausdorff topology, see, e.g.,~\cite{legall-uniqueness,miermont-brownian-map,bet-mier-disk,bmr-uihpq,gwynne-miller-uihpq}. 

Although the LQG metric induces the same topology as the Euclidean metric, many of its properties are very different from those of any smooth Riemannian distance function. 
For example, the LQG metric exhibits \emph{confluence of geodesics}~\cite{gm-confluence}. Roughly speaking, this means that two LQG geodesics with the same starting point and different target points typically coincide for a non-trivial interval of time; see Section~\ref{sec-confluence} for precise statements. 
As another example, the boundaries of LQG metric balls have Hausdorff dimension strictly larger than 1~\cite{gwynne-ball-bdy,lqg-zero-one} and have infinitely many connected components~\cite{lqg-zero-one}.

In this paper, we will further investigate the qualitative properties of LQG geodesics. In particular, we will show that a typical point is joined to any other point by at most three geodesics (Theorem~\ref{thm-zero-geo}); we will classify the possibly ``networks" of geodesics joining pairs of points which are dense in $\BB C\times \BB C$ (Theorem~\ref{thm-dense}); and we will prove that any two points in an LQG surface are joined by at most finitely many distinct geodesics (Theorem~\ref{thm-finite-geo}). 
Our classification of geodesic networks is the LQG analog of the classification of geodesic networks in the Brownian map from~\cite{akm-geodesics} (although the proof is quite different). 

A remarkable feature of the results in this paper is that several qualitative properties of LQG geodesics do not depend on $\gamma$. 
For example, the set of possible dense geodesic networks in Theorem~\ref{thm-dense} does not depend on $\gamma$. 
This is in contrast to \emph{quantitative} properties of LQG distances, such as the Hausdorff dimensions of various sets, which are expected to be $\gamma$-dependent. 
\medskip

\noindent\textbf{Acknowledgments.} We thank three anonymous referees for helpful comments on an earlier version of this article. We thank Jason Miller, Josh Pfeffer, Wei Qian, and Scott Sheffield for helpful discussions. The author was partially supported by a Clay research fellowship and a Trinity college, Cambridge junior research fellowship.

\subsection{Main results}
\label{sec-results}

For concreteness, throughout most of this paper we will restrict attention to the case when $U= \BB C$ and $h$ is the whole-plane Gaussian free field, normalized so that its average over the unit circle is zero (see~\cite[Section 2.2]{ig4} or~\cite[Section 5.4]{bp-lqg-notes} or~\cite[Section 3.2.2]{ghs-mating-survey} for background on the whole-plane GFF). 
Our results can be extended to variants of the GFF on other domains using local absolute continuity. 
We also fix $\gamma \in (0,2)$ and let $D_h$ be the $\gamma$-LQG metric associated with $h$, so that $D_h$ is a random metric on $\BB C$. 
In order for our main results to make sense, we need the following basic fact about the existence and uniqueness of LQG geodesics. 

\begin{lem} \label{lem-geo-basic}
Almost surely, for each distinct $z,w\in\BB C$ there is at least one $D_h$-geodesic from $z$ to $w$. 
For a fixed choice of $z$ and $w$, a.s.\ this $D_h$-geodesic is unique. 
\end{lem}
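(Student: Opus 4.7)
The plan is to establish existence via a Hopf-Rinow-type argument and uniqueness via an invariance-plus-Fubini reduction whose key input is the confluence of geodesics.

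\textbf{Existence.} I would first verify that $(\BB C, D_h)$ is a complete, locally compact length space. The axioms of $D_h$ (to be recalled in Section~\ref{sec-lqg-metric}) state that $D_h$ is a length metric inducing the Euclidean topology on $\BB C$, which already gives local compactness. To get compactness of closed balls, I would use that $D_h(0,z) \to \infty$ as $|z|\to\infty$, a standard consequence of the axioms together with the comparability between $D_h$-lengths and Euclidean lengths via $h$; this implies closed $D_h$-balls are Euclidean-bounded, hence Euclidean-compact, hence $D_h$-compact. The Hopf-Rinow theorem for complete locally compact length spaces then yields existence of a $D_h$-geodesic between any two points.

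\textbf{Uniqueness.} For fixed distinct $z,w\in\BB C$, set $p(z,w) := \BB P[\text{at least two distinct } D_h\text{-geodesics from } z \text{ to } w]$. Using (i) translation and rotation invariance of the law of the whole-plane GFF modulo additive constant, and (ii) scale invariance combined with the Weyl scaling $D_{h+c} = e^{(\gamma/d_\gamma)c}D_h$, which rescales the metric uniformly and therefore preserves the collection of geodesic paths, one checks that any two pairs of distinct points can be related by such a transformation, so $p(z,w)=p$ is a constant. Thus it suffices to prove $p=0$.

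I would do this via the following Fubini argument. Define the random set $N := \{w\in\BB C : \text{there exist two distinct } D_h\text{-geodesics from } 0 \text{ to } w\}$. Then $\BB E[\op{Leb}(N\cap B_1(0))] = \int_{B_1(0)} p\,dw = \pi p$, so it is enough to show $\op{Leb}(N) = 0$ almost surely. This should follow from the confluence-of-geodesics results of~\cite{gm-confluence}: for every $\ep>0$, all $D_h$-geodesics from $0$ to points outside $B_\ep^{D_h}(0)$ pass through one of almost surely finitely many confluence points on $\partial B_\ep^{D_h}(0)$. Letting $\ep\to 0$ endows the family of geodesics from $0$ with a tree-like structure, and any $w\in N$ must be a branching endpoint of this tree. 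The branching-endpoint set is contained in a countable union of individual geodesic paths, and so has zero planar Lebesgue measure.

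\textbf{Main obstacle.} The principal technical step is the rigorous extraction of the tree structure of geodesics emanating from a base point and the ensuing proof that $\op{Leb}(N)=0$ almost surely. The Hopf-Rinow existence argument and the symmetry reduction of uniqueness to a single global Fubini computation are both routine; the real substance sits in translating confluence into a quantitative dimension/measure bound on the cut locus, in analogy with what is known for the Brownian map.
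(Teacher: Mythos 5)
The existence half of your argument matches the paper's: both invoke that $(\BB C,D_h)$ is a boundedly compact length space and appeal to the Hopf--Rinow theorem; the paper cites \cite[Lemma 3.8]{lqg-metric-estimates} for bounded compactness, which is the content of your sketch.

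For uniqueness, the paper simply cites \cite[Theorem 1.2]{mq-geodesics} and does not attempt a proof, whereas you sketch one. Your symmetry-plus-Fubini reduction is sound: translation, rotation, and scale invariance of the whole-plane GFF modulo additive constant, combined with Weyl scaling by a constant (which rescales $D_h$ by a global factor and therefore does not change the set of geodesic paths) and the conformal coordinate change axiom, do imply that $p(z,w)$ is a constant $p$ over all pairs of distinct points, so $p=0$ is equivalent to $\op{Leb}(N)=0$ a.s., where $N$ is the cut locus from $0$. However, the step where you conclude $\op{Leb}(N)=0$ is wrong as stated: the claim that ``the branching-endpoint set is contained in a countable union of individual geodesic paths'' cannot be correct. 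A single $D_h$-geodesic has $D_h$-Hausdorff dimension $1$, so a countable union of them also has dimension $1$; but Theorem~\ref{thm-zero-geo} of this paper shows that the set of points joined to $0$ by exactly two geodesics has $D_h$-Hausdorff dimension \emph{at least} $1$ and contains nontrivial connected sets in every open region, and for $\gamma=\sqrt{8/3}$ the dimension is known to equal $2$. So $N$ is in general not contained in countably many geodesics, and confluence alone does not immediately give $\op{Leb}(N)=0$. The fact that the cut locus from a fixed point has Lebesgue measure zero (equivalently, uniqueness of the geodesic to a fixed target) is precisely the content of the Miller--Qian theorem cited, and it requires a genuinely nontrivial argument beyond the soft tree-structure reasoning you describe. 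Your ``main obstacle'' paragraph is honest about this being the crux, but the specific mechanism you propose for resolving it does not work.
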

\begin{proof}
Almost surely, the metric space $(\BB C , D_h)$ is a boundedly compact length space: i.e., closed bounded sets are compact and the $D_h$-distance between any two points is the infimum of the $D_h$-lengths of continuous paths between them. See Axiom~\ref{item-metric-length} below and~\cite[Lemma 3.8]{lqg-metric-estimates}. 
Therefore, the existence of $D_h$-geodesics follows from general metric space theory~\cite[Corollary 2.5.20]{bbi-metric-geometry}. 
The uniqueness of the $D_h$-geodesic between fixed points is established in~\cite[Theorem 1.2]{mq-geodesics}. 
\end{proof}

Throughout this paper, for a set $A\subset\BB C$ we write $\dim_{\mcl H}^\gamma A$ for the Hausdorff dimension of the metric space $(A,D_h|_A)$ and we refer to $\dim_{\mcl H}^\gamma A$ as the \emph{$D_h$-Hausdorff dimension} of $A$. 
 
Our first main result concerns the number of distinct geodesics from a fixed point to an arbitrary point. 
Due to the translation invariance of the law of $h$, modulo additive constant, we can assume without loss of generality that the fixed point is the origin. 

\begin{thm} \label{thm-zero-geo}
Almost surely, for each $z\in\BB C$ there are either 1, 2, or 3 $D_h$-geodesics from 0 to $z$. 
Furthermore, a.s.\ 
\begin{enumerate}
\item For Lebesgue-a.e.\ $z\in\BB C$, there is a unique $D_h$-geodesic from 0 to $z$.
\item The set of points $z\in\BB C$ for which there are exactly two distinct $D_h$-geodesics from 0 to $z$ is dense in $\BB C$ and the $D_h$-Hausdorff dimension of this set is in $[1,d_\gamma-1]$. 
\item The set of points $z\in\BB C$ for which there are exactly three distinct $D_h$-geodesics from 0 to $z$ is dense in $\BB C$ and is countably infinite.
\end{enumerate}
\end{thm}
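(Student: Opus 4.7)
My strategy combines three ingredients: the fixed-endpoint uniqueness of geodesics (Lemma~\ref{lem-geo-basic}), the confluence of geodesics from~\cite{gm-confluence} (reviewed in Section~\ref{sec-confluence}), and the planarity of $\BB C$.

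Item~(1), Lebesgue-a.e.\ uniqueness, is an immediate Fubini consequence of Lemma~\ref{lem-geo-basic}: for each fixed $z$ the geodesic from $0$ to $z$ is a.s.\ unique, so the expected Lebesgue measure of the exceptional set vanishes, and a.s.\ this set has measure zero.

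The core of the proof is the universal bound of at most three geodesics. I would argue by contradiction: if with positive probability some $z\in\BB C$ admitted four distinct geodesics $\eta_1,\dots,\eta_4$ from $0$, then confluence applied near $0$ and (using translation invariance of $h$ modulo additive constant) near $z$ forces each $\eta_i$ to pass through one of finitely many confluence points shortly after leaving $0$ and shortly before reaching $z$. After regrouping and truncating, this produces a ``middle portion'' of the $\eta_i$'s joining a common pair of endpoints along pairwise interior-disjoint arcs. Planarity gives these four arcs a cyclic order; any two non-adjacent arcs then bound a closed disk whose interior contains a third arc $\eta_j$, and a shortcut/bigon argument based on strict geodesy and the positivity of the LQG length of any detour forces $\eta_j$ to coincide with an enclosing arc, a contradiction. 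The principal obstacle is enforcing this reasoning simultaneously over all $z$; I would handle it via a covering argument based on a countable dense collection of targets together with upper semicontinuity of the geodesic count in the target.

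Items~(2) and~(3) follow from a local analysis of the cut locus $\Sigma := \{z \in \BB C : \#\{\text{geodesics from } 0 \text{ to } z\} \geq 2\}$. Three-geodesic points are trifurcation (branch) points of $\Sigma$; the tree structure imposed by confluence makes these topologically discrete, hence countable. Density in both items comes from perturbation: any $z_0\in\BB C$ is approached by nearby points in $\Sigma$ (item~2) or by nearby trifurcations (item~3), obtained by sliding the target across the cut locus. The dimension bounds in item~(2) exploit that, near a $2$-geodesic point $z$, $\Sigma$ is locally the zero set of $f(w):=D_h(p_1,w)-D_h(p_2,w)+c$ for appropriate confluence points $p_1,p_2$ near $0$. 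The lower bound $\dim_{\mcl H}^\gamma \Sigma \geq 1$ would follow by constructing an LQG-Lipschitz curve in $\Sigma$ via continuity of $f$. The upper bound $\dim_{\mcl H}^\gamma \Sigma \leq d_\gamma - 1$ is a codimension-one estimate for a level set of $f$, which I expect to be the subtlest part and to require a volume/entropy estimate for the LQG metric tailored to level sets of distance-function differences.
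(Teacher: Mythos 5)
Your outline for item (1) matches the paper, and your structural intuition about cut-locus trifurcations being discrete (item 3) is in the right spirit, though the paper makes it precise via a different counting argument (Lemma~\ref{lem-three-geo-pts}: each triple of confluence points on $\bdy\mcl B_t^{w,\bullet}$ can serve at most one trifurcation). The serious problem is your argument for the universal bound of at most three geodesics.

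Your ``shortcut/bigon argument'' does not work, and no deterministic topological or length-based argument can. After applying Lemma~\ref{lem-geo-distinct} you indeed get $n$ pairwise interior-disjoint arcs from $\bdy\mcl B_{r_*}^{z,\bullet}$ to $z$, and when $n\ge 4$ planarity does put one arc $\eta_j$ inside the region bounded by two non-adjacent arcs. But all of these arcs have \emph{exactly the same $D_h$-length} (they are all portions of geodesics from $0$ to $z$), so there is no detour to charge for, no bigon to collapse, and certainly no reason for $\eta_j$ to ``coincide with an enclosing arc.'' This configuration is perfectly consistent with geodesy --- it is not ruled out by any soft argument. The paper's bound is genuinely probabilistic: Lemma~\ref{lem-four-geo-pts} shows that the event that $z$ has four geodesics is \emph{unstable under perturbation of the field} --- one can add a bump function $a\phi$ supported near a point hit by $P^4$ but not $P^1,P^2,P^3$, and by Weyl scaling this preserves $P^1,P^2,P^3$ as geodesics while strictly lengthening $P^4$, so the event occurs for at most one value of $a$; absolute continuity of $h$ and $h+a\phi$ then gives probability zero. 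Nothing in your proposal captures this instability mechanism, and without it the claim cannot be proved. (The fact that Proposition~\ref{prop-three-geo-countable} must come first, so that one can \emph{uniquely} identify $z$ by the triple of confluence points it uses, is also essential to make the perturbation argument close.)

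Two further issues in items (2)--(3). For the lower bound $\dim_{\mcl H}^\gamma\geq 1$, you cannot ``construct an LQG-Lipschitz curve via continuity of $f$'': $D_h$ is nowhere differentiable and there is no implicit-function theorem available. The paper instead produces an uncountable \emph{connected} subset of the cut locus (Lemma~\ref{lem-three-geo-dense-pt}, Step 5, via the relative boundary $\wt\bdy H_x$) and invokes the purely metric fact that an uncountable connected metric space has Hausdorff dimension at least $1$. For the upper bound $\leq d_\gamma-1$, your codimension-one heuristic is correct, but the actual estimate again runs through the same bump-function perturbation used for the ``at most three'' bound (Section~\ref{sec-two-geo-dim} via Lemma~\ref{lem-geo-count-prob}), combined with a Minkowski-dimension covering of $\BB C$ by $\ep^{-d_\gamma+o(1)}$ LQG balls (Lemma~\ref{lem-ball-cover}). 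So the perturbation technique you omit is in fact the engine behind both of the quantitative parts of the theorem, not just the $\leq 3$ bound.
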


The analog of Theorem~\ref{thm-zero-geo} in the case of the Brownian map (equivalently, the case of a $\sqrt{8/3}$-LQG surface centered at a quantum typical point) follows from the fact that in the Brownian map, the dual of the geodesic tree is a continuum random tree. See~\cite{legall-geodesics} or the discussion at the beginning of~\cite[Section 1.3]{akm-geodesics}. In the case of $\gamma$-LQG for general $\gamma\in (0,2)$, however, we have no exact description of the laws of any functionals of the metric. Hence the proof of Theorem~\ref{thm-zero-geo} requires a non-trivial amount of work. In fact, the proof of Theorem~\ref{thm-zero-geo} will occupy most of Section~\ref{sec-geo-network}.  

We do not expect that either the lower bound or the upper bound for the Hausdorff dimension of points joined to zero by two distinct geodesics is optimal --- indeed, in the case of the Brownian map, equivalently $\sqrt{8/3}$-LQG, the dimension of the analogous set is 2 whereas $d_{\sqrt{8/3}} =4$. In our setting, the lower bound comes from the fact that the set of points joined to zero by multiple geodesics contains a non-trivial connected set (see Section~\ref{sec-three-geo-dense}) and the upper bound comes from a ``one-point estimate" argument (see Section~\ref{sec-multi-geo-dim}). 

\begin{remark} \label{remark-log-singularity}
Theorem~\ref{thm-zero-geo} is also true, with the same proof, if we replace $h$ by $h - \alpha \log |\cdot|$ for $\alpha \in (-\infty,Q)$. This is because the confluence of geodesics results for the LQG metric from~\cite{gm-confluence} and the uniqueness of LQG geodesics from~\cite[Theorem 1.2]{mq-geodesics} still hold at a point with an $\alpha$-log singularity, with the same proofs; see~\cite[Remark 1.5]{gm-confluence}. We require $\alpha < Q$ in order to ensure that the origin is at infinite $D_h$-distance from every other point, see~\cite[Theorem 1.11]{lqg-metric-estimates}.
In particular, by taking $\alpha=\gamma$ and using a standard property of the LQG area measure~\cite{kahane} (see also~\cite[Section 3.3]{shef-kpz}), we see that Theorem~\ref{thm-zero-geo} is true if we look at geodesics started from a typical point sampled from the LQG area measure $\mu_h$ instead of geodesics started from 0. 
\end{remark}

Using Theorem~\ref{thm-zero-geo} and the confluence of geodesics results for LQG surfaces from~\cite{gm-confluence,lqg-zero-one}, we can classify the possible topologies for networks of $D_h$-geodesics joining two points which are dense in $\BB C\times \BB C$. An identical classification in the case of the Brownian map is given in~\cite[Theorem 8]{akm-geodesics}.

\begin{defn} \label{def-normal-network}
For $(z,w) \in \BB C\times \BB C$ and $(n,m) \in \BB N$, we say that $(z,w)$ induces a \emph{normal $(n,m)$-geodesic network} if the following is true.
\begin{itemize}
\item There is a point $u \in \BB C$ such that every $D_h$-geodesic from $z$ to $w$ passes through $u$. 
\item There are exactly $n$ (resp.\ $m$) distinct $D_h$-geodesics from $u$ to $z$ (resp.\ $w$).
\item For any two $D_h$-geodesics $P,P'$ from $u$ to $z$, there exists $r\in (0,D_h(u,z))$ such that $P|_{[0,r]} = P'|_{[0,r]}$ and $P((r,D_h(u,z))) \cap P'((r,D_h(u,z))) = \emptyset$. Moreover, the same holds with $w$ in place of $z$. 
\end{itemize}
We write $N(n,m)$ for the set of pairs $(z,w) \in \BB C\times \BB C$ which induce a normal  $(n,m)$-geodesic network. 
\end{defn}

See Figure~\ref{fig-normal-network} for an illustration of Definition~\ref{def-normal-network}. We note that if $(z,w)$ induces a normal $(n,m)$-geodesic network, then there are exactly $nm$ distinct geodesics from $z$ to $w$.

\begin{figure}[t!]
 \begin{center}
\includegraphics[scale=.9]{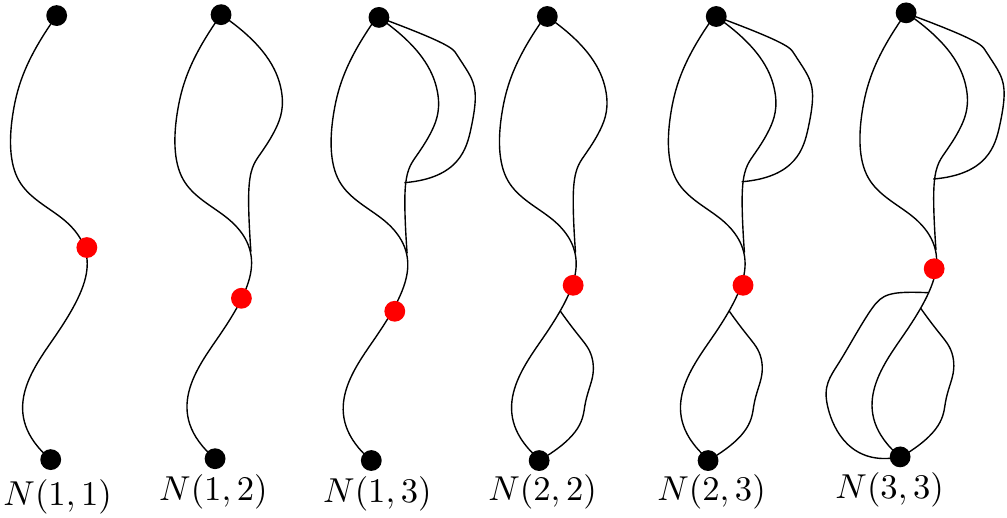}
\vspace{-0.01\textheight}
\caption{Illustration of normal $(n,m)$-geodesic networks for $n,m \in \{1,2,3\}$ (up to changing the order of the endpoints). In each network, a possible choice of the point $u$ of Definition~\ref{def-normal-network} is shown in red. By Theorem~\ref{thm-dense}, these are the only the types of geodesic networks whose pairs of endpoints are dense in $\BB C\times \BB C$. By Theorem~\ref{thm-zero-geo} and Lemma~\ref{lem-regular-geo'}, if the bottom endpoint of the geodesic is the origin then only the first three possibilities in the figure can arise.
}\label{fig-normal-network}
\end{center}
\vspace{-1em}
\end{figure}

\begin{thm} \label{thm-dense}
Almost surely, for any $n,m \in \{1,2,3\}$, $N(n,m)$ is dense in $\BB C \times \BB C$. Furthermore, a.s.\ $(\BB C\times \BB C) \setminus \bigcup_{n,m\in \{1,2,3\}} N(n,m)$ is nowhere dense in $\BB C\times \BB C$. 
\end{thm}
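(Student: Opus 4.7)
The plan is to combine Theorem~\ref{thm-zero-geo}, applied simultaneously at a countable dense set of base points, with the confluence of geodesics from \cite{gm-confluence}. The central point $u$ in the definition of $N(n,m)$ will be chosen from this countable set, while the endpoints $z,w$ will arise as small perturbations of a given pair $(z_0,w_0)$.

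First, I would fix a countable dense set $\{u_k\}_{k\in\BB N}\subset\BB C$. By translation invariance of the law of $h$ modulo additive constant, together with Remark~\ref{remark-log-singularity}, the conclusion of Theorem~\ref{thm-zero-geo} holds a.s. with the origin replaced by any individual fixed point, and a countable union argument gives an a.s.\ event on which it holds simultaneously for every $u_k$ in place of $0$. In particular, for each $k$ and each $n\in\{1,2,3\}$ the set $A_k^n$ of $z\in\BB C$ joined to $u_k$ by exactly $n$ $D_h$-geodesics is dense in $\BB C$, and the accompanying normal initial-coincidence structure referenced in Lemma~\ref{lem-regular-geo'} is in force at each $z\in A_k^n$.

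Given $(z_0,w_0)$ and $\delta>0$, I would produce $(z,w)\in N(n,m)$ within $\delta$ of $(z_0,w_0)$ as follows. The $D_h$-geodesic $P_0$ from $z_0$ to $w_0$ is a.s.\ unique (Lemma~\ref{lem-geo-basic}); pick $u_k$ very close (in both the Euclidean and $D_h$ senses) to a point $v$ in the interior of $P_0$, say near its midpoint. Using confluence from $u_k$, the $u_k$-to-$z$ geodesics for $z$ near $z_0$ and the $u_k$-to-$w$ geodesics for $w$ near $w_0$ form two ``fans'' that emanate from $u_k$ in opposite directions along $P_0$, and this rigidity forces $u_k$ to be a \emph{bottleneck}: one can locate $z'$ near $z_0$ and $w'$ near $w_0$ such that $u_k$ lies on every $D_h$-geodesic from $z'$ to $w'$. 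Density of $A_k^n$ and $A_k^m$ combined with the open nature of the bottleneck condition then lets one perturb within these dense sets to obtain $z\in A_k^n$ and $w\in A_k^m$ still within $\delta$ of $(z_0,w_0)$ and with $u_k$ on every $z$-to-$w$ geodesic. The $z$-to-$w$ geodesics are then exactly the $nm$ concatenations of $u_k$-to-$z$ geodesics with $u_k$-to-$w$ geodesics, and the normal branching condition of Definition~\ref{def-normal-network} transfers from the normal structure at $u_k$ given by Lemma~\ref{lem-regular-geo'}. The second assertion is then immediate: $\bigcup_{n,m\in\{1,2,3\}}N(n,m)$ is a finite union of dense sets, hence dense, so its complement is nowhere dense in $\BB C\times\BB C$.

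The main obstacle will be the bottleneck rigidity claim: verifying that $u_k$ lies on \emph{every} $z$-to-$w$ geodesic for $(z,w)$ varying in an open neighborhood of the perturbed pair, rather than merely on some such geodesic. This will require careful use of confluence in both directions at $u_k$ to control the set of initial directions of all geodesics out of a small ball around $u_k$, together with ruling out bypass geodesics that avoid $u_k$ altogether --- the latter achievable by placing $u_k$ sufficiently deep inside the interior of $P_0$ so that any bypass would be macroscopically longer than $D_h(z_0,w_0)$ and hence, once one invokes continuity of $D_h$, violate the triangle inequality used in the construction.
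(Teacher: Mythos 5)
Your overall strategy — locate a bottleneck point $u$ on the geodesic between $z_0$ and $w_0$, apply Theorem~\ref{thm-zero-geo} at $u$ in both directions, and concatenate — is indeed the same high-level plan as the paper. But your mechanism for establishing the bottleneck is not correct, and you have not filled the gap you yourself flag.

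The proposed argument that a bypass of $u_k$ "would be macroscopically longer than $D_h(z_0,w_0)$" cannot work as stated: any alleged bypass geodesic from $z'$ to $w'$ has $D_h$-length exactly $D_h(z',w')\approx D_h(z_0,w_0)$, so length considerations alone never rule it out. More fundamentally, the tools you invoke (confluence of geodesics \emph{emanating from} $u_k$, Theorem~\ref{thm-clsce}) control fans of geodesics started at $u_k$, not arbitrary $z'$-to-$w'$ geodesics that pass somewhere near $u_k$; nothing you cite forces such a geodesic to actually hit $u_k$. The paper uses an entirely different device here: Theorem~\ref{thm-general-confluence} (confluence \emph{near} a typical point), applied at the two \emph{endpoints} $0$ and $w$ rather than at a midpoint. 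This yields bottleneck points $Z_0\in B_\ep(0)\setminus B_{\ep'}(0)$ and $Z_w\in B_\ep(w)\setminus B_{\ep'}(w)$ such that every geodesic from $B_{\ep'}(0)$ to $B_{\ep'}(w)$ must hit $Z_0$ and $Z_w$; by uniqueness of the $0$-to-$w$ geodesic there is a unique geodesic $P_*$ from $Z_0$ to $Z_w$, every such geodesic traverses $P_*$, and the branching point $u$ is taken on $P_*$. Note that in the paper's proof $u$ is a random point depending on $h$; the countable dense set used in the proof of Theorem~\ref{thm-dense} is a set $\BB Q^2\times\BB Q^2$ of \emph{endpoint pairs} (via translation invariance of Proposition~\ref{prop-nine-geo-pts}), not a fixed countable set of candidate bottleneck points. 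Your setup forces $u_k$ to come from a deterministic countable set, which makes the bottleneck claim even harder since you cannot choose $u$ adapted to the confluence geometry. To repair your argument you would essentially have to import Theorem~\ref{thm-general-confluence} at the endpoints and thereby reproduce the paper's proof, so as written this is a genuine gap rather than a complete alternative route.
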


Theorem~\ref{thm-dense} does not give a complete classification of the possible geodesic networks for an LQG surface since there could be other configurations of geodesics besides $N(n,m)$ for $n,m\in\{1,2,3\}$ which occur for points $(z,w)$ in a nowhere dense subset of $\BB C\times \BB C$. In fact, such configurations are proven to exist in the case when $\gamma = \sqrt{8/3}$ in~\cite{mq-strong-confluence}. See Question~\ref{ques-network} and the discussion just after for more details.  

It is easy to see from the results of~\cite{gm-confluence,lqg-zero-one} that a.s.\ $(0,z) \in  N(1,m)$ for every $z\in\BB C$ such that there are $m$ distinct geodesics from 0 to $z$. 
For the sake of reference we record this fact in the following lemma.

\begin{lem} \label{lem-regular-geo'}
Almost surely, for each $z\in\BB C$ the following is true. 
\begin{enumerate}
\item For any $D_h$-geodesic $P$ from 0 to $z$ and any $t \in (0,D_h(0,z))$, $P|_{[0,t]}$ is the only $D_h$-geodesic from $0$ to $P(t)$. \label{item-regular-unique}
\item For any two distinct $D_h$-geodesics $P,P'$ from 0 to $z$, there exists $r = r(P,P') \in (0,D_h(0,z))$ such that $P|_{[0,r]} = P'|_{[0,r]}$ and $P((r,D_h(0,z))) \cap P'((r,D_h(0,z))) = \emptyset$. \label{item-regular-disjoint}
\item If there are exactly $m$ distinct geodesics from 0 to $z$ then $(0,z) \in N(1,m)$. \label{item-regular-normal}
\end{enumerate}  
\end{lem}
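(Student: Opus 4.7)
The plan is to derive all three parts from the strong confluence of geodesics at~$0$ established in~\cite{gm-confluence,lqg-zero-one}, with Part~(1) as the main input, Part~(2) following by a short maximality argument, and Part~(3) being a direct verification against Definition~\ref{def-normal-network} with $u = 0$.

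For Part~(1), the key fact is the tree structure of the geodesic network from~$0$: if two $D_h$-geodesics from~$0$ pass through a common point, they must coincide up to that point. This should follow from the strong confluence results of~\cite{gm-confluence}, which (combined with Lemma~\ref{lem-geo-basic}) imply that at each distance level $s > 0$ there are a.s.\ finitely many ``confluence points'' through which every geodesic from~$0$ to any point at $D_h$-distance $>s$ must pass, each joined to~$0$ by a unique geodesic. Concretely, if $Q$ were a geodesic from~$0$ to~$P(t)$ distinct from $P|_{[0,t]}$, the concatenation $Q \cdot P|_{[t, D_h(0,z)]}$ would yield a second geodesic from~$0$ to~$z$ also passing through~$P(t)$; the tree property then forces $Q = P|_{[0,t]}$.

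For Part~(2), given two distinct $D_h$-geodesics $P, P'$ from~$0$ to~$z$, set
\[
r := \sup\{s \in [0, D_h(0,z)] : P|_{[0,s]} = P'|_{[0,s]}\}.
\]
Confluence at~$0$ yields $r > 0$ and $P \ne P'$ yields $r < D_h(0,z)$. Suppose for contradiction that $P(t) = P'(t)$ for some $t \in (r, D_h(0,z))$; the possibility $P(t) = P'(t')$ with $t \ne t'$ is ruled out because both sub-geodesics are arc-length parameterized, so both sides give $D_h(0, P(t)) = t = t'$. Then $P|_{[0,t]}$ and $P'|_{[0,t]}$ are both length-$t$ geodesics from~$0$ to~$P(t)$, so Part~(1) forces $P|_{[0,t]} = P'|_{[0,t]}$, contradicting the maximality of~$r$.

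For Part~(3), I would take $u = 0$ in Definition~\ref{def-normal-network}: every geodesic from~$0$ to~$z$ passes through $0 = u$ trivially; the only geodesic from~$u$ to~$0$ is the trivial constant path, giving $n = 1$; there are exactly $m$ geodesics from~$u$ to~$z$ by hypothesis; and the coincide-then-disjoint condition holds vacuously for the single geodesic from~$u$ to~$0$ and by Part~(2) for the $m$ geodesics from~$u$ to~$z$. The principal subtlety is the extraction of Part~(1) in the precise form above from the cited results: if only the local form of confluence (``geodesics from~$0$ to nearby points coincide initially'') is available, one must additionally invoke the multi-component boundary of LQG metric balls from~\cite{lqg-zero-one} to preclude splitting-and-rejoining, since otherwise Parts~(1) and~(2) are logically interdependent.
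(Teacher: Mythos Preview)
Your Parts~(1) and~(2) match the paper exactly: Part~(1) is simply quoted as a re-statement of \cite[Lemma~3.9]{lqg-zero-one} (so your attempted sketch, which is somewhat circular as written since it assumes a ``tree property'' to deduce one, is not needed); Part~(2) is argued just as you do, defining $r$ as the supremum of the agreement interval, using confluence \cite[Theorem~1.3]{gm-confluence} to get $r>0$, and invoking Part~(1) to rule out any intersection after time~$r$.

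The one genuine difference is in Part~(3). You take $u=0$, whereas the paper takes $u=P^1(s)$ for some $s\in\bigl(0,\min_{i\neq j}r(P^i,P^j)\bigr)$, i.e., a point strictly inside the common initial segment of all $m$ geodesics. Your choice forces you to read the constant path as ``the unique geodesic from $0$ to $0$'' and to treat the coincide-then-disjoint condition on the $u$-to-$0$ side as vacuous because the interval $(0,D_h(u,0))=(0,0)$ is empty. This arguably satisfies the literal wording of Definition~\ref{def-normal-network}, but it is a degenerate reading: throughout the paper geodesics are implicitly between distinct points (see Lemma~\ref{lem-geo-basic}), and the later use of $N(1,m)$ (e.g., Proposition~\ref{prop-nine-geo-pts}) takes $u$ to be a genuine interior point. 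The paper's choice costs nothing extra---Part~(2) already supplies the common initial segment---and avoids this ambiguity, so you should adopt it.
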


We note that once Theorem~\ref{thm-zero-geo} is established, Lemma~\ref{lem-regular-geo'} shows that  a.s.\ $(0,z) \in \bigcup_{m=1}^3 N(1,m)$ for every $z\in\BB C$. 

\begin{proof}[Proof of Lemma~\ref{lem-regular-geo'}]
Assertion~\ref{item-regular-unique} is just a re-statement of~\cite[Lemma 3.9]{lqg-zero-one}. 

To prove assertion~\ref{item-regular-disjoint}, let $P$ and $P'$ be distinct geodesics from 0 to $z$ and define 
\eqb
r = r(P,P') := \inf\left\{ t \in [0,D_h(0,z)]  : P(t) \not= P'(t) \right\} .
\eqe
Then $r < D_h(0,z)$ since $P$ and $P'$ are distinct. 
Furthermore, a.s.\ $r > 0$ for any two distinct geodesics started from 0 by confluence of geodesics~\cite[Theorem 1.3]{gm-confluence}. 
By definition, $P|_{[0,r]} = P'|_{[0,r]}$. 
We must show that $P((r,D_h(0,z))) \cap P'((r,D_h(0,z))) = \emptyset$.  
Indeed, if $u \in P((r,D_h(z,w))) \cap P'((r,D_h(z,w))) $ then $u = P(t) = P'(t)$ for $t = D_h(z,u) \in (r, D_h(z,w))$. 
Since $P|_{[0,t]} \not= P'|_{[0,t]}$ (by the definition of $r$), this implies that there are two distinct geodesics from 0 to $P(t)$, namely $P|_{[0,t]}$ and $P'|_{[0,t]}$. 
This cannot happen by condition~\ref{item-regular-unique}. 
Therefore, condition~\ref{item-regular-disjoint} holds. 

It remains to prove assertion~\ref{item-regular-normal}. 
To this end, let $P^1,\dots,P^m$ be the geodesics from 0 to $z$.
For distinct $i,j \in [1,m]_{\BB Z}$, let $r_{i,j}$ satisfy the condition of assertion~\ref{item-regular-disjoint} with $P = P^i$ and $P' = P^j$. 
Let $\ul r := \inf_{i,j} r_{i,j}$ and let $u = P^1(s)$ for some $s \in (0, \ul r)$. 
By inspection, the pair $(0,z)$ satisfies the conditions in Definition~\ref{def-normal-network} of $N(1,m)$ with this choice of $u$. 
\end{proof}

Our last main result gives a deterministic, finite upper bound for the maximal number of $D_h$-geodesics joining \emph{any} two points in $\BB C$.

\begin{thm} \label{thm-finite-geo}
There is a finite, deterministic number $m = m(\gamma )\in \BB N$ such that a.s.\ any two points in $\BB C$ are joined by at most $m$ distinct $D_h$-geodesics.
\end{thm}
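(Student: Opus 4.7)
The plan is to combine Theorem~\ref{thm-zero-geo} (upgraded to a countable dense set of ``nice'' starting points via translation invariance) with confluence of geodesics from \cite{gm-confluence}, and then argue by contradiction using a compactness/limit analysis to obtain the deterministic bound.

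First, I would leverage Theorem~\ref{thm-zero-geo} together with Remark~\ref{remark-log-singularity} and the translation invariance of the whole-plane GFF modulo additive constant to produce an a.s.\ event $E$ on which, for every $q \in \BB Q^2$, there are at most three $D_h$-geodesics from $q$ to any point of $\BB C$. Intersecting the countable collection of a.s.\ events indexed by $\BB Q^2$ is harmless. Since a $D_h$-geodesic from $z$ to $w$ is also one from $w$ to $z$, on $E$ we equally have: for every rational target $q$ and every source $z$, at most three $D_h$-geodesics from $z$ to $q$.

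Second, I would argue by contradiction: suppose on $E$ there are points $z,w\in\BB C$ joined by $N$ distinct $D_h$-geodesics $P_1,\dots,P_N$ with $N$ arbitrarily large. Using confluence of geodesics towards a fixed target (as in \cite{gm-confluence}) applied at rational points $q_n \to w$, I would argue that for $q_n$ sufficiently close to $w$, geodesics from $z$ to $q_n$ are in correspondence with the ``terminal bundles'' of the $P_i$ near $w$, modulo small perturbations. The bound of three geodesics from any source to a rational target would then force the terminal branching multiplicity of the $P_i$ at $w$ to be at most three. A symmetric argument near $z$ (using rationals $q_n' \to z$ and the bound of three from rational sources) would give initial branching multiplicity at most three. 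Combining the tree structure provided by Lemma~\ref{lem-regular-geo'}-type arguments for the geodesic bundles at $z$ and at $w$, and enumerating the intermediate merging structure along the lines of Definition~\ref{def-normal-network}, I expect to obtain a deterministic combinatorial bound on $N$; the most optimistic value would be $m=9$ (matching $\max_{n,m\in\{1,2,3\}} nm$ from Theorem~\ref{thm-dense}), though the actual proof may only yield some larger $m=m(\gamma)$.

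Since the candidate bound $m$ is produced from the union of countably many a.s.\ events plus a purely combinatorial step, it is deterministic. To handle the full plane, I would then union over a countable family of compact sets covering $\BB C$; translation invariance and the covariance of the LQG metric ensure that the same deterministic $m$ works globally.

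The main obstacle is the second step: both endpoints of an exceptional pair $(z,w)$ are random and selected based on $h$, so one cannot apply confluence results directly at $z$ or $w$. The delicate part is making precise the ``approximate correspondence'' between $D_h$-geodesics from $z$ to $w$ and geodesic bundles from $z$ to a rational $q$ close to $w$ — even when $q \to w$, the geodesic from $z$ to $q$ need not be a simple extension of a geodesic from $z$ to $w$, since $w$ need not lie on it. Overcoming this requires combining confluence at the rational target $q$ with a limiting argument (as $q\to w$) and upper semicontinuity-type properties of the geodesic network, together with the uniqueness statement of \cite[Theorem~1.2]{mq-geodesics} to rule out pathological accumulation.
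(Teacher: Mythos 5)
Your proposal and the paper's proof diverge fundamentally, and there is a genuine gap in your approach that cannot be patched without introducing a completely new ingredient.

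Your plan relies exclusively on ``soft'' confluence and topological arguments: bounding the branching at $z$ and at $w$ via geodesics to nearby rational points, and then combinatorially bounding the intermediate merging structure. The central problem is that confluence-type arguments alone constrain the \emph{local} structure of the geodesic network --- e.g., that any two geodesics from $z$ to $w$ make at most two excursions away from each other, or that each geodesic has a point hit by at most three others in the collection (both of which the paper does prove, in Lemmas~\ref{lem-two-excursion} and~\ref{lem-three-hit-pt}, using exactly the rational-point reasoning you sketch via Lemma~\ref{lem-dichotomy}) --- but they do not bound the \emph{total count}. A collection of $N$ geodesics all from $z$ to $w$ in which each has a nearly-private point and each pair makes at most two excursions is not ruled out by these constraints for any particular $N$. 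Your step ``enumerating the intermediate merging structure ... I expect to obtain a deterministic combinatorial bound on $N$'' is precisely where the argument breaks, because the intermediate branching is not controlled by any rational-point argument: there is no typical endpoint to anchor to in the middle of the network.

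What the paper actually does, and what your outline lacks entirely, is a \emph{quantitative} input: Proposition~\ref{prop-multi-geo-dim}, which shows that the $D_h$-Hausdorff dimension of the set $\mathcal{S}_n$ of pairs joined by $n+1$ geodesics with a non-overlapping property is at most $2d_\gamma - n$, and hence that $\mathcal{S}_n = \emptyset$ for $n > \lfloor 2d_\gamma\rfloor$. This is proved by a one-point estimate (Lemma~\ref{lem-geo-count-prob}) obtained from a perturbation argument --- adding independent random multiples of bump functions supported in the $V_i$ and observing that the event $E^\ep(z,w)$ is destroyed if any $x_i$ changes by more than order $\ep$ --- combined with an explicit Radon--Nikodym computation and a covering argument over $\ep^{-d_\gamma + o(1)}$ LQG balls (Lemmas~\ref{lem-random-pt}, \ref{lem-subset-cover}, \ref{lem-ball-cover}). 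This is the step that produces the $\gamma$-dependence of $m$, and your outline has no analogue of it; tellingly, your projected bound of $m = 9$ would be $\gamma$-independent, and you provide no mechanism for a $\gamma$-dependent bound to arise. Finally, even granting the dimension estimate, there is a nontrivial combinatorial reduction (via the independent-set lower bound of Lemma~\ref{lem-independent-set}) needed to pass from the ``at most three hits'' topological constraint to a subcollection satisfying the $\mathcal{S}_n$ non-overlapping condition; this step is also absent from your sketch. The obstacle you flagged --- that confluence can't be applied directly at random endpoints --- is real, and the paper circumvents it through Lemma~\ref{lem-dichotomy}; but the deeper obstacle, the lack of any quantitative mechanism to bound $N$, is the one that makes your approach unworkable as stated.
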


By Theorem~\ref{thm-dense}, a.s.\ there exist points in $\BB C$ joined by 9 distinct geodesics, so $m\geq 9$. 
It is shown in~\cite[Theorem 1.6]{mq-strong-confluence} that for $\gamma=\sqrt{8/3}$, Theorem~\ref{thm-finite-geo} holds with $m=9$. We expect that the same is true for general $\gamma \in (0,2)$. 
However, the proof of~\cite[Theorem 1.6]{mq-strong-confluence} requires an \emph{a priori} finite upper bound on the number of geodesics joining any two points, see~\cite[Section 4.1]{mq-strong-confluence}. 
We expect that the same situation is true for $\gamma$-LQG for general $\gamma \in (0,2)$, i.e., Theorem~\ref{thm-finite-geo} is likely to be a necessary input in the proof that the maximal number of geodesics joining any two points is 9.  

Our proof of Theorem~\ref{thm-finite-geo} shows that $m(\gamma) $ is at most the largest integer for which 
\eqb \label{eqn-geo-count-bound}
\left\lfloor \frac12 \left( \left( \frac{17}{3} m  + 1 \right) - \sqrt{\left( \frac{17}{3} m  + 1 \right)^2 - 4m^2} \right) \right\rfloor  
\leq \lfloor 2d_\gamma+1 \rfloor  .
\eqe
For example, for $\gamma$ close to 0 we have $\lfloor 2d_\gamma+1 \rfloor = 5$ and so $m\leq 23$. 
Due to the bounds in~\cite{dg-lqg-dim}, for all $\gamma \in (0,2)$ we have $\lfloor 2d_\gamma+1 \rfloor \leq 10$ and so $m\leq 50$. 
The strange looking quantity on the left side of~\eqref{eqn-geo-count-bound} comes from a lower bound for the size of a so-called \emph{independent set} in a general graph from~\cite{hs-independence-number}, see Lemma~\ref{lem-independent-set}. It is possible that our bound for $m$ could be improved by soft arguments. However, we do not try to optimize it since, as discussed above, we expect that in actuality $m=9$, and we do not believe that this value of $m$ cannot be obtained by purely soft arguments.

\begin{comment}
(*Kettani*)
f[n_] := Ceiling[((4 n + n + 2) - ((4 n + n + 2)^2 - 16 n^2)^(1/2))/8]
Table[{f[n], n}, {n, 1, 100}]

(*Harant*)
g[n_] := Ceiling[((4 n + n + 1) - ((4 n + n + 1)^2 - 4 n^2)^(1/2))/2]
Table[{g[n], n}, {n, 1, 100}]

(* actual *)

h[n_]:=  Ceiling[(((17/3) n + 1) - (((17/3) n + 1)^2 - 4 n^2)^(1/2))/2]
Table[{h[n], n}, {n, 1, 100}]

\end{comment}

\begin{remark}
The parameter $\gamma/d_\gamma$ in~\eqref{eqn-lfpp} lies in $(0,2/d_2)$ where $d_2 = \lim_{\gamma \rta 2^-} d_\gamma$. The estimates for $d_\gamma$ from~\cite{dg-lqg-dim,gp-lfpp-bounds} (see~\cite[Theorem 2.3]{gp-lfpp-bounds}) show that $ 0.4135 \leq 2/d_2 \leq 0.4189$.
The recent paper~\cite{dg-supercritical-lfpp} showed that if we define LFPP as in~\eqref{eqn-lfpp} with $\gamma/d_\gamma$ replaced by a parameter $\xi > 2/d_2$, then the LFPP metrics, re-scaled appropriately, admit subsequential limiting w.r.t.\ a certain topology. These subsequential limits are metrics on $\BB C$ which do not induce the Euclidean topology: rather, there is an uncountable dense set of points which lie at infinite distance from every other point. 
These metrics are expected to be related to LQG with ``matter central charge" in $(1,25)$ (for $\gamma \in (0,2)$, the matter central charge is $25-6(2/\gamma+\gamma/2)^2 \in (-\infty,1)$). 

It would be of interest to determine to what extent the results of this paper remain true in the case when $\xi  >2/d_2$. 
The confluence of geodesics results from~\cite{gm-confluence} can be extended to the case when $\xi > 2/d_2$; see~\cite{dg-confluence}. However, the arguments of the present paper do not apply in this setting since we use that the $\gamma$-LQG metric for $\gamma \in (0,2)$ induces the Euclidean topology (used frequently throughout our proofs) and the fact that each compact subset of $\BB C$ has finite upper Minkowski dimension w.r.t.\ the $\gamma$-LQG metric (used in the proof of Theorem~\ref{thm-finite-geo}); see Lemma~\ref{lem-ball-cover}.  
\end{remark}

\subsection{Open problems}
\label{sec-open-problems}

Recall that Theorem~\ref{thm-dense} only classifies geodesic networks which occur for a dense set of pairs of points in $\BB C\times\BB C$.

\begin{ques} \label{ques-network}
Give a complete classification of the possible topological configurations of geodesics joining pairs of points in $\BB C$, including configurations which occur for a nowhere dense set of points in $\BB C\times\BB C$.  
\end{ques}

Miller and Qian~\cite{mq-strong-confluence} give a nearly complete answer to Question~\ref{ques-network} in the $\gamma = \sqrt{8/3}$ (Brownian surface) case. 
Indeed,~\cite[Theorem 1.5]{mq-strong-confluence} gives an explicit finite list of topological configurations which can arise for the set of geodesics joining two arbitrary points in the Brownian map, which includes the normal $(n,m)$-networks for $n,m\in\{1,2,3\}$ as well as configurations which are not normal $(n,m)$-networks for any $n,m \in\BB N$. The theorem also gives upper bounds for the Hausdorff dimensions of the set of pairs of points $(z,w)$ in the Brownian map for which each of these configurations arise. 
Moreover,~\cite[Theorem 1.6]{mq-strong-confluence} shows that there exist pairs of points in the Brownian map joined by exactly $k$ distinct geodesics if and only if $k\in \{1,\dots,9\}$ and gives the Hausdorff dimension of the set of such pairs in terms of $k$. The lower bound for the dimension of the set of pairs of point joined by $k$ geodesics for $k\in \{1,2,3,4,6,9\}$ was already established in~\cite{akm-geodesics}. Note that for $k\in \{5,7,8\}$, the pairs of points joined by exactly $k$ geodesics do not belong to $N(n,m)$ for any $n,m$. 
We find it likely that all of the configurations from~\cite[Theorem 1.5]{mq-strong-confluence} actually arise in the Brownian map, but Miller and Qian do not prove this in all cases, so they do not quite give a complete answer to Question~\ref{ques-network} for $\gamma=\sqrt{8/3}$. 

In light of the similarity between Theorem~\ref{thm-dense} and~\cite[Theorem 8]{akm-geodesics}, a natural guess is that the results of~\cite{mq-strong-confluence} remain true (with the same list of possible geodesic networks) for general values of $\gamma \in (0,2)$. However,~\cite{mq-strong-confluence} relies on exact independence properties for the Brownian map which are not expected to be true for general $\gamma \in (0,2)$, so novel ideas would be needed to extend their results.

\begin{ques} \label{ques-dim}
For each possible configuration of geodesics in Question~\ref{ques-network}, compute the Hausdorff dimension of the set of pairs of points $(z,w) \in \BB C\times \BB C$ joined by this type of configuration, w.r.t.\ the Euclidean and $\gamma$-LQG metrics.
\end{ques}

As discussed above,~\cite{mq-strong-confluence} gives a nearly complete answer to Question~\ref{ques-dim} in the case of LQG dimensions for $\gamma = \sqrt{8/3}$. In the general case, we do not even have a conjecture for most of the dimensions involved. 

It appears that the biggest obstacle to resolving Question~\ref{ques-dim} is to compute the Euclidean and $\gamma$-LQG dimensions of the set of points joined to 0 by exactly 2 distinct geodesics. Indeed, if we denote these dimensions by $\Delta_0$ and $\Delta_\gamma$, respectively, then we expect that 
\allb \label{eqn-dim-conj-0}
&\dim^0 N(1,1) = 4, \quad \dim^0 N(1,2) = 2 + \Delta_0, \quad \dim^0 N(1,3) = 2 ,\notag\\ 
&\dim^0 N(2,2) = 2\Delta_0, \quad \dim^0 N(2,3) = \Delta_0 , \quad \dim^0 N(3,3) = 0
\alle
and
\allb \label{eqn-dim-conj-gamma}
&\dim^\gamma N(1,1) = 2d_\gamma , \quad \dim^\gamma N(1,2) = d_\gamma + \Delta_\gamma , \quad \dim^\gamma N(1,3) = d_\gamma , \notag\\
&\dim^\gamma N(2,2) = 2\Delta_\gamma, \quad \dim^\gamma N(2,3) = \Delta_\gamma , \quad \dim^\gamma N(3,3) = 0 ,
\alle
where $\dim^0$ and $\dim^\gamma$ denote the Euclidean and $\gamma$-LQG Hausdorff dimensions, respectively. We also expect that it is not too hard to derive predictions for the Hausdorff dimension of the pairs of points joined by the other geodesic networks appearing in~\cite[Theorem 1.5]{mq-strong-confluence} in terms of $d_\gamma,\Delta_0,\Delta_\gamma$. 

Note that~\eqref{eqn-dim-conj-gamma} is consistent with the results of~\cite{akm-geodesics,mq-strong-confluence} since (as explained in~\cite{akm-geodesics}) we have $\Delta_{\sqrt{8/3}} = 2$ for $\gamma = \sqrt{8/3}$. 
We do not have a conjecture for the value of $\Delta_0$ for any value of $\gamma$ or for $\Delta_\gamma$ for $\gamma\not=\sqrt{8/3}$.

\section{Preliminaries}
\label{sec-prelim}

In this section, we fix some more or less standard notation (Section~\ref{sec-basic}), then review some known properties of the LQG metric: the axiomatic characterization (Section~\ref{sec-lqg-metric}), the definition and properties of filled metric balls (Section~\ref{sec-filled}), and the confluence of geodesics property (Section~\ref{sec-confluence}). 

\subsection{Basic notation}
\label{sec-basic}

\noindent
We write $\BB N = \{1,2,3,\dots\}$ and $\BB N_0 = \BB N \cup \{0\}$. 
For $a < b$, we define the discrete interval $[a,b]_{\BB Z}:= [a,b]\cap\BB Z$. 
\medskip
 
\noindent
If $f  :(0,\infty) \rta \BB R$ and $g : (0,\infty) \rta (0,\infty)$, we say that $f(\ep) = O_\ep(g(\ep))$ (resp.\ $f(\ep) = o_\ep(g(\ep))$) as $\ep\rta 0$ if $f(\ep)/g(\ep)$ remains bounded (resp.\ tends to zero) as $\ep\rta 0$. 
We similarly define $O(\cdot)$ and $o(\cdot)$ errors as a parameter goes to infinity.  
We often specify requirements on the dependencies on rates of convergence in $O(\cdot)$ and $o(\cdot)$ errors in the statements of lemmas/propositions/theorems, in which case we implicitly require that errors, implicit constants, etc., in the proof satisfy the same dependencies. 
\medskip

\noindent
For $z\in\BB C$ and $r>0$, we write $B_r(z)$ for the Euclidean ball of radius $r$ centered at $z$.  
\medskip 

\noindent
For a metric space $(X,D)$, $A\subset X$, and $r>0$, we write $\mcl B_r(A;D)$ for the open ball consisting of the points $x\in X$ with $D (x,A) < r$.  
If $A = \{y\}$ is a singleton, we write $\mcl B_r(\{y\};D) = \mcl B_r(y;D)$.

\subsection{Axiomatic characterization of the LQG metric}
\label{sec-lqg-metric}

In this section we review the axiomatic characterization of the LQG metric which was proven in~\cite{gm-uniqueness}. 
We will not need the uniqueness part of this characterization theorem for our proofs, but we will frequently use the fact that the LQG metric satisfies the axioms in the characterization theorem (which was checked in~\cite{lqg-metric-estimates,gm-uniqueness}). 
To state the axioms we need some preliminary definitions. 

\begin{defn} \label{def-metric-stuff}
Let $(X,D)$ be a metric space.
\begin{itemize}
\item
For a curve $P : [a,b] \rta X$, the \emph{$D$-length} of $P$ is defined by 
\eqbn
\op{len}\left( P ; D  \right) := \sup_{T} \sum_{i=1}^{\# T} D(P(t_i) , P(t_{i-1})) 
\eqen
where the supremum is over all partitions $T : a= t_0 < \dots < t_{\# T} = b$ of $[a,b]$. Note that the $D$-length of a curve may be infinite.
\item
We say that $(X,D)$ is a \emph{length space} if for each $x,y\in X$ and each $\ep > 0$, there exists a curve of $D$-length at most $D(x,y) + \ep$ from $x$ to $y$. 
\item
For $Y\subset X$, the \emph{internal metric of $D$ on $Y$} is defined by
\eqb \label{eqn-internal-def}
D(x,y ; Y)  := \inf_{P \subset Y} \op{len}\left(P ; D \right) ,\quad \forall x,y\in Y 
\eqe 
where the infimum is over all paths $P$ in $Y$ from $x$ to $y$. 
Note that $D(\cdot,\cdot ; Y)$ is a metric on $Y$, except that it is allowed to take infinite values.  
\item
If $X$ is an open subset of $\BB C$, we say that $D$ is  a \emph{continuous metric} if it induces the Euclidean topology on $X$. 
We equip the set of continuous metrics on $X$ with the local uniform topology on $X\times X$ and the associated Borel $\sigma$-algebra.
\end{itemize}
\end{defn}

We now give the axiomatic definition of the LQG metric. Since we will only be working with the whole-plane GFF, we only give the definition in the whole-plane case. 
See~\cite{gm-coord-change} for the axioms in the case of a general open domain $U\subset\BB C$. The definition involves the parameters 
\eqb \label{eqn-xi-Q}
\xi := \frac{\gamma}{d_\gamma} \quad \text{and} \quad Q := \frac{2}{\gamma}  + \frac{\gamma}{2} ,
\eqe
which appear in the transformation rules for the metric under adding a continuous function and applying a conformal map, respectively. Recall from the discussion just before~\eqref{eqn-lfpp} that $d_\gamma$ is the Hausdorff dimension of the LQG metric. 

\begin{defn}[The LQG metric]
\label{def-lqg-metric}
Let $\mcl D'(\BB C)$ be the space of distributions (generalized functions) on $\BB C$, equipped with the usual weak topology.   
A \emph{$\gamma$-LQG metric} is a measurable function $h\mapsto D_h$ from $\mcl D'(\BB C)$ to the space of continuous metrics on $\BB C$ with the following properties. 
Let $h$ be a \emph{whole-plane GFF plus a continuous function}, i.e., $h$ is a random distribution on $\BB C$ which can be coupled with a random continuous function $f$ in such a way that $h-f$ has the law of the whole-plane GFF.  Then the associated metric $D_h$ satisfies the following axioms.
\begin{enumerate}[I.]
\item \textbf{Length space.} Almost surely, $(\BB C,D_h)$ is a length space, i.e., the $D_h$-distance between any two points of $\BB C$ is the infimum of the $D_h$-lengths of $D_h$-continuous paths (equivalently, Euclidean continuous paths) between the two points. \label{item-metric-length}
\item \textbf{Locality.} Let $V \subset \BB C$ be a deterministic open set. 
The $D_h$-internal metric $D_h(\cdot,\cdot ; V)$ is a.s.\ equal to $D_{h|_V}$, so in particular it is a.s.\ determined by $h|_V$.  \label{item-metric-local}
\item \textbf{Weyl scaling.} Let $\xi = \gamma/d_\gamma$ be as in~\eqref{eqn-xi-Q}. For a continuous function $f : \BB C \rta \BB R$, define
\eqb \label{eqn-metric-f}
(e^{\xi f} \cdot D_h) (z,w) := \inf_{P : z\rta w} \int_0^{\op{len}(P ; D_h)} e^{\xi f(P(t))} \,dt , \quad \forall z,w\in \BB C,
\eqe 
where the infimum is over all continuous paths from $z$ to $w$ parametrized by $D_h$-length.
Then a.s.\ $ e^{\xi f} \cdot D_h = D_{h+f}$ for every continuous function $f: \BB C \rta \BB R$. \label{item-metric-f}
\item \textbf{Conformal coordinate change.} Let $a \in \BB C\setminus \{0\}$ and $b\in\BB C$. 
Then, with $Q = 2/\gamma+\gamma/2$ as in~\eqref{eqn-xi-Q}, a.s.\ \label{item-metric-coord}
\eqb \label{eqn-metric-coord}
 D_h \left( a z + b ,a w + b\right) = D_{h(a\cdot+b) + Q\log |a|}\left( z , w \right)  ,\quad  \forall z,w \in \BB C .
\eqe    
\end{enumerate}
\end{defn}

It is shown in~\cite{gm-uniqueness}, building on~\cite{dddf-lfpp,local-metrics,lqg-metric-estimates,gm-confluence} that the limit of the metrics $D_h^\ep$ of~\eqref{eqn-lfpp} satisfies the axioms of Definition~\ref{def-lqg-metric}. 
Furthermore, the metric satisfying these axioms are unique in the following sense.
If $D$ and $\wt D$ are two such metrics, then there is a deterministic constant $C>0$ such that whenever $h$ is a whole-plane GFF plus a continuous function, a.s.\ $\wt D_h = C D_h$.

\subsection{Filled metric balls}
\label{sec-filled}
 
For $z, w\in\BB C$ and $s \in [0,D_h(w,z))$, we define the \emph{filled metric ball centered at $w$ and targeted at $z$}, denoted $\mcl B^{z,\bullet}_s(w;D_h)$, to be the union of the closed metric ball $\ol{\mcl B_s(w;D_h)}$ and the set of points which this closed metric ball disconnects from $z$. 
For $s \geq D_h(0,z)$ we set $\mcl B^{z,\bullet}_s(w;D_h) := \BB C$. 
Note that $\mcl B^{z,\bullet}(w;D_h)$ is unbounded if $\ol{\mcl B_s(w;D_h)}$ disconnects $z$ from $\infty$.
For $s> 0$, we similarly define the \emph{filled metric ball centered at $w$ and targeted at $\infty$} to be the set $\mcl B^\bullet_s(w;D_h) = \mcl B^{\infty,\bullet}_s(w;D_h)$ 
which is the union of $\ol{\mcl B_s(w;D_h)}$ and the set of points which it disconnects from $\infty$. 
We will most often work with filled metric balls centered at zero, so to lighten notation we abbreviate
\eqb
\mcl B_s^{z,\bullet} := \mcl B_s^{z,\bullet}(0;D_h) \quad \text{and} \quad \mcl B_s^\bullet := \mcl B_s^\bullet(0;D_h) .
\eqe

The following lemma is proven in~\cite[Section 2.1]{lqg-zero-one} (see also~\cite[Proposition 2.1]{tbm-characterization}).

\begin{lem}  \label{lem-metric-curve}
Almost surely, for each $z,w\in \BB C$ and each $s \in (0,D_h(w,z))$, the set $\bdy\mcl B_s^{z,\bullet}(w;D_h)$ is a Jordan curve.
\end{lem}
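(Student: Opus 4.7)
The plan is to apply the standard plane-topology criterion: a compact connected $K \subset \widehat{\BB C}$ whose complement is connected has $\bdy K$ equal to a Jordan curve provided $K$ is locally connected and its complement contains no point reached by two distinct prime ends (equivalently, the Riemann map $\BB D \to \widehat{\BB C}\setminus K$ extends to a homeomorphism $\overline{\BB D} \to \overline{\widehat{\BB C}\setminus K}$, by Carath\'eodory's extension theorem). Set $K := \mcl B_s^{z,\bullet}(w;D_h)$; I will verify these hypotheses for $K$.

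First I would dispatch the easy topological prerequisites. Because $D_h$ induces the Euclidean topology and $(\BB C,D_h)$ is a length space by Axiom~\ref{item-metric-length}, the closed ball $\overline{\mcl B_s(w;D_h)}$ is Euclidean-compact and path connected (any two of its points are joined by a $D_h$-geodesic, which is Euclidean-continuous). The filled ball $K$ is obtained by unioning in the Euclidean-bounded connected components of $\BB C\setminus \overline{\mcl B_s(w;D_h)}$ that fail to contain $z$; this preserves compactness and connectedness, and by construction makes $\widehat{\BB C}\setminus K$ a single connected open set (the component containing $z$, with $\infty$ adjoined if $K$ is bounded).

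The main obstacle is local connectedness of $K$ at each $u \in \bdy K$. My plan is to use confluence of geodesics from Section~\ref{sec-confluence}. Every $u \in \bdy K$ satisfies $D_h(w,u)=s$ and lies at the end of a $D_h$-geodesic $\eta_u$ from $w$. Given $\epsilon > 0$, confluence implies that there is $s_0 < s$ close to $s$ such that all $D_h$-geodesics from $w$ to points of $\bdy K$ lying in a sufficiently small Euclidean neighborhood of $u$ agree with $\eta_u$ on $[0,s_0]$, and their terminal portions on $[s_0,s]$ lie in $B_\epsilon(u)$ by uniform Euclidean-continuity of $D_h$-geodesic endpoints as a function of geodesic length. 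The union of these terminal portions together with their endpoints is a connected subset of $K$ containing $u$ and all nearby boundary points, contained in $B_\epsilon(u)$, which gives the required local connectedness. (A subtle point: $\bdy K$ need not itself be locally connected for the criterion, but local connectedness of $K$ does suffice for Carath\'eodory extension.)

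Finally, once $K$ is a locally connected full continuum, Carath\'eodory provides a continuous extension of the Riemann map to $\bdy \BB D$, so $\bdy K$ is the continuous image of $\bdy \BB D$; to conclude that it is a Jordan curve I would show the extension is injective, which amounts to showing $K$ has no cut points on $\bdy K$. A cut point $u$ would split $K \setminus\{u\}$ into disjoint relatively open pieces, forcing $\overline{\mcl B_s(w;D_h)}\setminus\{u\}$ to disconnect as well; but the closed metric ball in a length space is connected through $w$, and confluence yields a $D_h$-geodesic from $w$ avoiding any single prescribed point of $\bdy K$, giving a contradiction. The heart of the argument is the local connectedness step, which is where the LQG-specific confluence input is indispensable; the remainder is essentially classical planar topology.
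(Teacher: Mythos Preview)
There is a genuine circularity problem. The confluence results you invoke from Section~\ref{sec-confluence} logically \emph{depend} on Lemma~\ref{lem-metric-curve}, not the other way around. Look at the statement of Proposition~\ref{prop-conf-finite}: it speaks of ``arcs $I_x$'' of $\bdy\mcl B_s^{w,\bullet}$ and of a ``cyclic ordering'' on these arcs, which presupposes that the boundary is already known to be a curve. The same is true in the source paper~\cite{gm-confluence}: the curve property of the filled-ball boundary is established first (as a preliminary lemma) and is then used to set up the framework in which leftmost geodesics and confluence points make sense. The paper here simply imports Lemma~\ref{lem-metric-curve} from~\cite[Section~2.1]{lqg-zero-one}, i.e.\ from the preliminaries of that paper, precisely because it is a prerequisite for everything in Section~\ref{sec-confluence}. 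So your argument, as written, is circular.

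Even setting circularity aside, there are two further gaps. First, confluence (Theorem~\ref{thm-clsce} and its relatives) is an almost-sure statement for a \emph{fixed} centre point; the lemma asks for the conclusion simultaneously for every $w\in\BB C$. You cannot pass from a fixed $w$ to all $w$ by a density argument, since the Jordan-curve property is not obviously stable under limits. Second, the specific claim you extract from confluence---that geodesics from $w$ to boundary points near $u$ agree with $\eta_u$ on $[0,s_0]$ for $s_0$ \emph{close to} $s$---is stronger than anything the confluence results actually provide; those results only give agreement up to some earlier time $t<s$, with no control on how close $t$ is to $s$. Your cut-point step has the same defect: confluence does not hand you a geodesic from $w$ avoiding a prescribed boundary point.

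A smaller point: $K$ need not be compact. As noted just above Lemma~\ref{lem-metric-curve}, the filled ball $\mcl B_s^{z,\bullet}(w;D_h)$ is unbounded whenever $\ol{\mcl B_s(w;D_h)}$ separates $z$ from $\infty$, so your opening paragraph needs to work in $\widehat{\BB C}$ from the start.

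The actual proof in~\cite[Section~2.1]{lqg-zero-one} (see also~\cite[Proposition~2.1]{tbm-characterization}) is more elementary: it uses only that $(\BB C,D_h)$ is a geodesic space inducing the Euclidean topology, and proceeds by a direct planar-topology argument that does not touch confluence at all. That is what you should aim for.
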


Lemma~\ref{lem-metric-curve} together with Carath\'eodory's theorem~\cite[Theorem 2.6]{pom-book} implies that if $\BB C\setminus \mcl B_s^{z,\bullet}(w;D_h)$ is bounded, then every conformal map $\BB D \rta \BB C\setminus \mcl B_s^{z,\bullet}(w;D_h)$ extends to a homeomorphism $\ol{\BB D} \rta \ol{\BB C\setminus \mcl B_s^{z,\bullet}(w;D_h)}$. If $\BB C\setminus \mcl B_s^{z,\bullet}(w;D_h)$ is unbounded, we get a similar result if we replace $\BB C\setminus \mcl B_s^{z,\bullet}(w;D_h)$ by $(\BB C\setminus \mcl B_s^{z,\bullet}(w;D_h)) \cup \{\infty\}$, viewed as a subset of the Riemann sphere.

\subsection{Confluence of geodesics}
\label{sec-confluence}

Here we review some results on confluence of geodesics in LQG surfaces which were proven in~\cite{gm-confluence,lqg-zero-one}. 
Roughly speaking, these results say that (unlike geodesics in a smooth Riemannian manifold) LQG geodesics tend to ``merge into one another" and stay together for non-trivial intervals of time. 
These confluence results will be the key tool in the proofs of our main theorems. 
The simplest version of confluence is the following result, which is~\cite[Theorem 1.2]{gm-confluence}.

\begin{thm}[Confluence at a typical point] \label{thm-clsce}
Fix $z\in\BB C$. Almost surely, for each radius $s > 0$ there exists a radius $t \in (0,s)$ such that any two $D_h$-geodesics from $z$ to points outside of $\mcl B_s(z;D_h)$ coincide on the time interval $[0,t]$.
\end{thm}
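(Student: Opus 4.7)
By the translation invariance of the law of the whole-plane GFF modulo additive constant and Axiom~\ref{item-metric-f} (Weyl scaling), it suffices to treat $z = 0$. Fix $s > 0$ and, for each $r \in (0,s)$, let $\Gamma_r \subset \bdy \mcl B_r^\bullet$ denote the set of first-exit points $P(r)$ where $P$ ranges over all $D_h$-geodesics from $0$ to points in $\BB C \setminus \mcl B_s(0;D_h)$. (Each such $P$ meets $\bdy \mcl B_r^\bullet$ for the first time exactly at time $r$.) The theorem is equivalent to asserting that $\Gamma_r$ is a singleton for all sufficiently small $r$, a.s. First I would record two structural properties: (a) $\Gamma_r$ is closed, by Arzel\`a--Ascoli applied to the uniformly $1$-Lipschitz family of unit-speed geodesics, combined with the boundedly compact length-space structure guaranteed by Axiom~\ref{item-metric-length}; and (b) $\Gamma_r$ has a natural monotonicity, since any geodesic counted at radius $r'$ restricts at time $r < r'$ to a geodesic counted at radius $r$, so $\Gamma_r$ equals the image of $\Gamma_{r'}$ under the ``flow down a geodesic to time $r$'' map.

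Next I would set
\eqbn
\rho(s) := \sup\bigl\{\, t \in [0,s] : \Gamma_t \text{ is a singleton} \,\bigr\}
\eqen
and aim to show $\rho(s) > 0$ almost surely. The strategy is a two-step argument: establish $\mathbb{P}[\rho(s) > 0] > 0$ for every $s$, then bootstrap this to probability one by scale invariance. For the bootstrap, use the conformal coordinate change (Axiom~\ref{item-metric-coord}) with $a = \ep$, together with the fact that $h(\ep \cdot) - h_\ep(0)$ agrees in law with $h$ modulo additive constant (up to a $Q\log\ep$ shift that Weyl scaling absorbs): the event ``$\rho(s') > 0$ for some $s' \leq s$'' is invariant under this rescaling around $0$, so by a Blumenthal-type $0$--$1$ law using the locality axiom (Axiom~\ref{item-metric-local}) applied to arbitrarily small Euclidean balls around $0$, its probability is in $\{0,1\}$. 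It therefore suffices to verify positivity at a single scale.

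To produce positive probability of confluence at some scale, I would exploit the unique-geodesic result of Lemma~\ref{lem-geo-basic} together with the Jordan curve structure of $\bdy \mcl B_r^\bullet$ from Lemma~\ref{lem-metric-curve}. Sample an independent point $w$ from the restriction of $\mu_h$ to an annulus around $0$, so that a.s.\ there is a unique $D_h$-geodesic $P^w$ from $0$ to $w$. The idea is that with positive probability, for some $r$ the point $P^w(r)$ is a ``pinch point'' of $\bdy \mcl B_r^\bullet$ in the sense that removing it disconnects the complement of $\mcl B_r^\bullet$ into two components each of which has positive $\mu_h$-mass, and every geodesic from $0$ to a point outside $\mcl B_s$ is forced to traverse this pinch point. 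Such a bottleneck event can be arranged positively by conditioning on suitable macroscopic behavior of $h$ in a small annulus. On that event $\Gamma_r \subset \{P^w(r)\}$, giving $\rho(s) \geq r > 0$.

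The main obstacle will be constructing the bottleneck event rigorously: unlike in the Brownian map, we have no combinatorial description of the geodesic tree, so the pinch-point must be extracted directly from the axioms. One concrete realization is to show that, on a positive-probability event, $\bdy\mcl B_r^\bullet$ contains a point $x$ such that the interior metric from $0$ to any $y \in \BB C \setminus \mcl B_r^\bullet$ restricted to paths avoiding a small Euclidean neighborhood of $x$ strictly exceeds $D_h(0,y)$; this can be engineered by making $h$ very negative on a thin ``funnel'' near $x$ and comparatively large elsewhere, via absolute-continuity arguments for the GFF combined with Weyl scaling. Once this is done, the scale-invariant $0$--$1$ law upgrades positive probability to full probability of $\rho(s) > 0$ for all rational $s$ simultaneously, and then monotonicity in $s$ extends the statement to all $s > 0$.
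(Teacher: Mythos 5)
This statement is not proved in the paper at all; it is imported verbatim as \cite[Theorem 1.2]{gm-confluence}, so there is no in-paper argument to compare against. Your proposal should therefore be judged as a from-scratch attempt at a theorem whose known proof occupies a substantial part of a separate paper, and there are two serious gaps in it.

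The first and most serious gap is the ``Blumenthal-type $0$--$1$ law.'' The event $\{\rho(s) > 0\}$, and even the event $\bigcap_{n}\{\rho(1/n)>0\}$, is not measurable with respect to the germ $\sigma$-algebra $\bigcap_{\ep>0}\sigma(h|_{B_\ep(0)})$. The reason is that a $D_h$-geodesic from $0$ to a point outside $\mcl B_s(0;D_h)$ is a global object: which unit-speed path from $0$ is length-minimizing depends on $h$ everywhere along candidate paths, not only on $h$ near $0$, so the \emph{set} of geodesics emanating from $0$ (and hence whether they confluence at time $t$) is not determined by $h$ in any fixed Euclidean neighborhood of the origin. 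A locality argument via Axiom~\ref{item-metric-local} only gives you that the internal metric of $\mcl B_s^\bullet$ is determined by $h|_{\mcl B_s^\bullet}$, but $\mcl B_s^\bullet$ is itself random and not contained in small Euclidean balls, and the geodesic segments are not internal geodesics of $\mcl B_s^\bullet$. Your scaling reduction also does not produce the needed invariance: under $z\mapsto \ep z$, the coordinate-change axiom turns $D_h$-geodesics into $D_{h(\ep\cdot)+Q\log\ep}$-geodesics, and the latter field is a whole-plane GFF only after a further recentering; the radius $s$ changes in a random way (it scales by a multiplicative factor involving $e^{\xi h_\ep(0)}$, not a deterministic power of $\ep$), so ``$\rho(s')>0$ for some $s'\leq s$'' is not a fixed point of the rescaling map. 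In short, there is no $0$--$1$ law here without substantially more work, and the actual argument in~\cite{gm-confluence} does not use one: it is an iterative multi-scale coalescence argument, showing by a quantitative one-scale estimate plus the domain Markov property of the GFF that the number of distinct points of $\bdy\mcl B_r^\bullet$ hit by geodesics decreases as $r\downarrow 0$ and eventually reaches one.

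The second gap is the bottleneck construction. Even granting an absolute-continuity deformation of $h$, making $h$ very negative on a thin annular funnel does not by itself force $\bdy\mcl B_r^\bullet$ to have a pinch point, nor does it force every geodesic from $0$ to pass through a prescribed point $x$. Filled LQG metric balls have fractal boundary of dimension strictly larger than $1$ and are not amenable to the kind of direct geometric surgery you are envisioning; a geodesic can snake around a region where $h$ is large without passing through a pre-selected bottleneck, because length is an integral over the whole path and there may be many ``cheap'' corridors. Making this rigorous is exactly the hard quantitative content of the one-scale estimate in~\cite{gm-confluence}, and the sketch as written gives no control that would rule out competing corridors. So both halves of the proposed two-step strategy need to be replaced by genuinely new arguments, and the known proof does not follow your outline.
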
 

Theorem~\ref{thm-clsce} gives an a.s.\ confluence property for geodesics started from a \emph{fixed} point of $\BB C$, but it does not tell us anything about geodesics with arbitrary starting and target points. The following improvement on Theorem~\ref{thm-clsce} was proven in~\cite[Theorem 1.2]{lqg-zero-one}. Roughly speaking, it says that the result of Theorem~\ref{thm-clsce} extends to geodesics whose starting points are ``nearby" a typical point.

\begin{thm}[Confluence near a typical point] \label{thm-general-confluence}
Fix $z\in\BB C$. Almost surely, for each  neighborhood $U$ of $z$ there is a neighborhood $U'\subset U$ of $z$ and a point $Z \in U\setminus U'$ such that every $D_h$-geodesic from a point in $U'$ to a point in $\BB C\setminus U$ passes through $Z$.
\end{thm}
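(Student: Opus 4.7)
The plan is to use confluence at a typical point (Theorem~\ref{thm-clsce}) to produce the candidate point $Z$, and then extend the conclusion to nearby starting points via a compactness argument combined with the Jordan-curve structure of filled metric ball boundaries. First, since $D_h$ induces the Euclidean topology, I would pick a $D_h$-radius $R > 0$ small enough that $\mcl B_R(z;D_h) \subset U$. Applying Theorem~\ref{thm-clsce} at $z$ with this radius gives $t \in (0,R)$ such that all $D_h$-geodesics from $z$ to points in $\BB C \setminus \mcl B_R(z;D_h)$ coincide on $[0,t]$; letting $P$ be any such geodesic, set $Z := P(t)$, so $D_h(z,Z) = t$. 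Take $U' := \mcl B_{t/2}(z;D_h)$: this is a Euclidean-open neighborhood of $z$ contained in $U$, and satisfies $Z \in U \setminus U'$.

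The main claim to prove is that every $D_h$-geodesic $\gamma$ from $w \in U'$ to $y \in \BB C \setminus U$ passes through $Z$. I would argue by contradiction: suppose there are sequences $w_n \in U'$ with $w_n \to z$ (Euclidean), $y_n \in \BB C \setminus U$, and $D_h$-geodesics $\gamma_n$ from $w_n$ to $y_n$ whose images avoid $Z$. Since $D_h$-geodesics are $1$-Lipschitz, the truncated paths $\gamma_n|_{[0,\sigma_n]}$, where $\sigma_n$ is the first exit time of $\gamma_n$ from $\mcl B_R(z;D_h)$, form an equicontinuous family of paths of length $\sigma_n \to R$; Arzel\`a--Ascoli then yields a subsequential uniform limit $\gamma_\infty$, which is a $D_h$-geodesic from $z$ to some $z^\ast \in \partial \mcl B_R(z;D_h)$. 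Theorem~\ref{thm-clsce} forces $\gamma_\infty(t) = Z$.

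The crux is converting this limit into a contradiction with $Z \notin \gamma_n$. By Lemma~\ref{lem-metric-curve}, $\partial \mcl B_t^\bullet(z;D_h)$ is a Jordan curve. For $n$ large, $w_n$ lies in its interior (since $w_n \in U' \subset \mcl B_t(z;D_h)$), while the endpoint of $\gamma_n|_{[0,\sigma_n]}$ lies outside $\mcl B_t^\bullet(z;D_h)$ (once $R$ is chosen appropriately, so that pockets of $\mcl B_t^\bullet(z;D_h)$ do not reach $\partial \mcl B_R(z;D_h)$). Thus $\gamma_n$ has a first exit point $\xi_n \in \partial \mcl B_t^\bullet(z;D_h)$, and uniform convergence gives $\xi_n \to Z$. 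To upgrade $\xi_n \to Z$ to $\xi_n = Z$ for all large $n$, one compares $\gamma_n$ with the concatenation of a $D_h$-geodesic from $z$ to $\xi_n$ (of length $t$, since $\xi_n \in \partial \mcl B_t^\bullet(z;D_h)$) followed by the tail of $\gamma_n$ after $\xi_n$. Using the identity $D_h(z,y_n) = t + D_h(Z,y_n)$ supplied by Theorem~\ref{thm-clsce} together with triangle-inequality bookkeeping, this concatenation is seen to be a near-geodesic from $z$ to $y_n$ with excess of order $D_h(z,w_n) \to 0$. A quantitative refinement of Theorem~\ref{thm-clsce}, obtained by iterating the confluence statement on a sequence of shrinking radii around $z$ and exploiting the trunk structure of the geodesic tree from $z$, should then imply that any sufficiently near-geodesic from $z$ to a point outside $\mcl B_R(z;D_h)$ must coincide with the true geodesic on an initial segment of length $t$, forcing $\xi_n = Z$ for $n$ large and yielding the contradiction.

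I expect the main obstacle to be this last step: extracting a quantitative version of confluence strong enough to force the exact equality $\xi_n = Z$ rather than merely $\xi_n \to Z$. Without such robustness, compactness alone only shows that ``avoided'' exit points accumulate at $Z$, which is not in itself contradictory. The most natural route is to exploit the tree structure of geodesics from $z$ --- by Theorem~\ref{thm-clsce} this tree has a unique trunk up to time $t$ ending at the branching point $Z$ --- and to argue that this trunk is stable under perturbation of the root from $z$ to nearby $w_n$, ultimately forcing the ``root segments'' of the $w_n$-geodesics to overlap with the $z$-trunk through $Z$.
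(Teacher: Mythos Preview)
The paper does not give its own proof of this statement: it is quoted verbatim from \cite[Theorem~1.2]{lqg-zero-one} as a known input, alongside the finer ``confluence across a filled metric annulus'' machinery (Propositions~\ref{prop-conf-finite} and~\ref{prop-conf-endpt-finite}).

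Your proposal has a genuine gap, and you have correctly located it. The compactness step yields only $\xi_n \to Z$, and the ``quantitative refinement of Theorem~\ref{thm-clsce}'' you invoke to force $\xi_n = Z$ is not a consequence of anything you have available. There is no reason a \emph{near}-geodesic from $z$ (a path whose length exceeds $D_h(z,y_n)$ by $o(1)$) should coincide with the true geodesic on any fixed initial segment, or pass through $Z$ exactly: one can perfectly well have paths of length $D_h(z,y_n)+o(1)$ crossing $\partial \mcl B_t^\bullet(z;D_h)$ at points arbitrarily close to but distinct from $Z$. So the contradiction never closes, and Arzel\`a--Ascoli is simply too coarse to detect exact passage through a single point.

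More structurally, what Theorem~\ref{thm-general-confluence} asserts is precisely the stability of the geodesic trunk under perturbation of its \emph{root}, and this does not follow from the mere existence of the trunk (Theorem~\ref{thm-clsce}) plus soft limiting arguments. The proof in \cite{lqg-zero-one} proceeds instead through the discrete structure of confluence points on filled metric ball boundaries (the same circle of ideas underlying Propositions~\ref{prop-conf-finite} and~\ref{prop-conf-endpt-finite}), and requires a hands-on analysis of how geodesics from varying basepoints interact with that finite set of points; this cannot be shortcut by compactness alone.
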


Recall that for $z\in\BB C\setminus \{0\}$ and $s\in [0,D_h(0,z)]$, $\mcl B_s^{z,\bullet} = \mcl B_s^{z,\bullet}(0;D_h)$ is the filled metric ball centered at 0 and targeted at $z$.  
Each point $x\in \bdy \mcl B_s^{z,\bullet}  $ lies at $D_h$-distance exactly $s$ from $0$, so every $D_h$-geodesic from $0$ to $x$ stays in $\mcl B_s^{z,\bullet} $. For some points $x$ there might be many such $D_h$-geodesics. But, it is shown in~\cite[Lemma 2.4]{gm-confluence} that there is always a distinguished $D_h$-geodesic from 0 to $x$, called the \emph{leftmost geodesic}, which lies (weakly) to the left of every other $D_h$-geodesic from 0 to $x$ if we stand at $x$ and look outward from $\mcl B_s^\bullet$. Strictly speaking,~\cite[Lemma 2.4]{gm-confluence} only treats the case of filled metric balls targeted at $\infty$, but the same proof works for filled metric balls with different target points. 
The following proposition is~\cite[Proposition 3.6]{lqg-zero-one}, and is a generalization of~\cite[Theorem 1.3]{gm-confluence} (which is the version for filled metric balls target at $\infty$). 

\begin{prop}[Confluence across a filled metric annulus] \label{prop-conf-finite} 
Almost surely, for each $w\in\BB C\setminus \{0\}$ and each $0 < t < s < D_h(0,w)$, the following is true.
\begin{enumerate}
\item There is a finite set of points $\mcl X = \mcl X_{t,s}^w \subset \bdy\mcl B_t^{w,\bullet}$ such that every leftmost $D_h$-geodesic from 0 to a point of $\bdy \mcl B_s^{w,\bullet}$ passes through some $x\in\mcl X$. \label{item-conf-finite}
\item There is a unique $D_h$-geodesic from 0 to $x$ for each $x\in \mcl X$. \label{item-conf-finite-unique}
\item For $x\in \mcl X$, let $I_x$ be the set of $y\in\bdy\mcl B_s^{w,\bullet}$ such that that the leftmost $D_h$-geodesic from 0 to $y$ passes through $x$.
Each $I_x $ for $x\in\mcl X $ is a connected arc of $\bdy\mcl B_s^{w,\bullet} $ (possibly a singleton) and $\bdy\mcl B_s^{w,\bullet}$ is the disjoint union of the arcs $I_x $ for $x\in \mcl X $. \label{item-conf-finite-arcs}
\item The counterclockwise cyclic ordering of the arcs $I_x $ is the same as the counterclockwise cyclic ordering of the corresponding points $x\in \mcl X \subset\bdy \mcl B_t^{w,\bullet}$. \label{item-conf-finite-order} 
\end{enumerate}
\end{prop}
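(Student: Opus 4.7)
The plan is to follow the strategy used in the proof of \cite[Theorem 1.3]{gm-confluence}, which establishes the analogous statement for filled metric balls targeted at $\infty$, and to adapt it to the target-$w$ setting. The key ingredients are: (i) existence and basic properties of leftmost $D_h$-geodesics, proven in \cite[Lemma 2.4]{gm-confluence} for the $\infty$-target case and adaptable verbatim to the $w$-target case; (ii) the confluence-at-a-typical-point statement Theorem~\ref{thm-clsce}; and (iii) planarity combined with the Jordan curve property of $\bdy \mcl B_s^{w,\bullet}$ from Lemma~\ref{lem-metric-curve}.

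First, I would tautologically define $\mcl X$ to be the image of the map $y \mapsto P_y(t) \in \bdy \mcl B_t^{w,\bullet}$ where $y$ ranges over $\bdy \mcl B_s^{w,\bullet}$ and $P_y$ denotes the leftmost $D_h$-geodesic from $0$ to $y$ (parametrized by arclength; since $y$ lies at $D_h$-distance exactly $s$ from $0$ and every $D_h$-geodesic from $0$ to $y$ remains in $\mcl B_s^{w,\bullet}$, we indeed have $P_y(t) \in \bdy \mcl B_t^{w,\bullet}$). Setting $I_x := \{y : P_y(t) = x\}$ for $x \in \mcl X$ gives the partition claimed in the proposition. Parts~\ref{item-conf-finite-arcs} and~\ref{item-conf-finite-order} then follow from a planar non-crossing argument: two leftmost geodesics $P_{y_1}, P_{y_2}$ with distinct endpoints cannot cross transversally, since at such a crossing one could swap the tails after the crossing to produce a new geodesic lying strictly to the left of one of $P_{y_1}, P_{y_2}$, contradicting its leftmost property. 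Hence $y \mapsto P_y(t)$ is monotone with respect to the cyclic orders on the two Jordan curves $\bdy \mcl B_s^{w,\bullet}$ and $\bdy \mcl B_t^{w,\bullet}$, which gives part~\ref{item-conf-finite-order} and forces each $I_x$ to be a connected arc (part~\ref{item-conf-finite-arcs}).

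For part~\ref{item-conf-finite-unique}, uniqueness of the $D_h$-geodesic from $0$ to each $x \in \mcl X$ follows from the leftmost property: if $Q$ were a second $D_h$-geodesic from $0$ to $x$, distinct from $P_y|_{[0,t]}$ for some $y \in I_x$, then the concatenation $Q \cdot P_y|_{[t,s]}$ would be a $D_h$-geodesic from $0$ to $y$ distinct from $P_y$ and lying weakly to the left of $P_y$, contradicting the definition of $P_y$ as leftmost.

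The main obstacle is the finiteness of $\mcl X$ in part~\ref{item-conf-finite}. I would combine Theorem~\ref{thm-clsce} applied at the fixed starting point $0$ with an isolation argument. Suppose $\mcl X$ had an accumulation point $x_\infty \in \bdy \mcl B_t^{w,\bullet}$; by connectedness of the $I_x$'s and order preservation, the arcs $I_x$ for $x \in \mcl X$ near $x_\infty$ shrink to a point $y_\infty \in \bdy \mcl B_s^{w,\bullet}$, so infinitely many leftmost geodesics $P_{y_n}$ with $y_n \to y_\infty$ pass through distinct points $x_n \to x_\infty$ on $\bdy \mcl B_t^{w,\bullet}$. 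The geodesics $P_{y_n}$ therefore coincide up to some common prefix, branch near time $t$, and reach boundary points arbitrarily close together; using the locality and Weyl-scaling axioms (Axioms~\ref{item-metric-local}--\ref{item-metric-f}) together with the conditional law of $h$ inside and outside a small Euclidean neighborhood of the common prefix, one can upgrade Theorem~\ref{thm-clsce} from its fixed-radius form to the random radius $t$ and derive a contradiction. Compactness of $\bdy \mcl B_t^{w,\bullet}$ then promotes this discreteness into finiteness. This last step is where the technical heart of the argument in \cite{gm-confluence,lqg-zero-one} lies; the sketch above is faithful to that strategy while glossing over the measurability bookkeeping.
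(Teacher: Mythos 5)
The paper does not prove this proposition; it is quoted verbatim from \cite[Proposition 3.6]{lqg-zero-one}, which in turn generalizes \cite[Theorem 1.3]{gm-confluence} from filled balls targeted at $\infty$ to arbitrary targets $w$. So there is no ``paper's own proof'' to compare against; the relevant comparison is with the argument in those references, which your sketch attempts to reconstruct at a high level. Your definition of $\mcl X$ as the image of $y \mapsto P_y(t)$ and the non-crossing/planarity argument for parts (3) and (4) are in the spirit of the cited proof.

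However, your argument for part (2) has a genuine gap. You claim that if $Q$ were a second geodesic from $0$ to $x = P_y(t)$, then the concatenation $Q \cdot P_y|_{[t,s]}$ would lie weakly to the left of $P_y$, contradicting leftmostness. This does not follow: $Q$ could equally well lie weakly to the \emph{right} of $P_y|_{[0,t]}$, in which case the concatenation lies to the right of $P_y$ and there is no contradiction. All the leftmost property gives you is that $P_y|_{[0,t]}$ is weakly to the left of any geodesic from $0$ to $x$; it does not preclude other geodesics to $x$ from existing. The uniqueness asserted in part (2) is in fact a consequence of the much stronger statement, re-stated here as Lemma~\ref{lem-regular-geo'}(\ref{item-regular-unique}) (which is \cite[Lemma 3.9]{lqg-zero-one}), that a $D_h$-geodesic from $0$ is the \emph{unique} geodesic from $0$ to each of its interior points. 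That fact requires the full confluence machinery and cannot be obtained from the leftmost property by a one-line swap argument. You also correctly flag that the finiteness of $\mcl X$ in part (1) is the technical crux; it relies on the iterated quantitative estimates of \cite{gm-confluence} and is not a soft ``upgrade'' of Theorem~\ref{thm-clsce} to a random radius, so that part of the sketch remains essentially a black box.
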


Proposition~\ref{prop-conf-finite} concerns only \emph{leftmost} $D_h$-geodesics. We will also sometimes need to work with geodesics which are not necessarily leftmost. The following proposition, which is a re-statement of~\cite[Proposition 3.7]{lqg-zero-one}, will allow us to do so. 

\begin{prop} \label{prop-conf-endpt-finite}
Fix $0<t < s$ and let $\mcl X = \mcl X_{t,s}^w$ be the set of confluence points as in Proposition~\ref{prop-conf-finite}. 
Almost surely, on the event $\{s < D_h(0,w)\}$, the following is true. For every $D_h$-geodesic $P$ from 0 to a point of $\BB C\setminus \mcl B_s^{w,\bullet}$ there is an $x\in\mcl X$ such that $P(t)  = x$ and $P(s) $ is a point of the arc $I_x$ which is not one of the endpoints of $I_x$. 
\end{prop}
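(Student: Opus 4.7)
The plan is to work on the a.s.\ event from Proposition~\ref{prop-conf-finite} and Lemma~\ref{lem-metric-curve} (applied at all scales $(t,s')$ for rational $s' > s$), fix a $D_h$-geodesic $P$ from $0$ to some $z \in \BB C \setminus \mcl B_s^{w,\bullet}$, and prove the statement in three steps.

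First I would show that $P(r) \in \bdy \mcl B_r^{w,\bullet}$ for $r \in \{t,s\}$. Indeed, the geodesic property gives $D_h(0,P(r)) = r$, so $P(r) \in \ol{\mcl B_r(0;D_h)} \subset \mcl B_r^{w,\bullet}$. If $P(r)$ were in the interior of $\mcl B_r^{w,\bullet}$, it would lie in a connected component of $\mcl B_r^{w,\bullet} \setminus \ol{\mcl B_r(0;D_h)}$ disconnected from $w$; then $P$ could not subsequently reach $z$ without re-crossing $\ol{\mcl B_r(0;D_h)}$ at some $r' > r$, contradicting $D_h(0,P(r')) = r'$.

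Next I would prove $P(t) \in \mcl X$ and $P(s) \in I_{P(t)}$. Let $P_L$ be the leftmost $D_h$-geodesic from $0$ to $P(s)$ and put $x_0 := P_L(t) \in \mcl X$, so that $P(s) \in I_{x_0}$ by the definition of $I_{x_0}$. By Proposition~\ref{prop-conf-finite}(\ref{item-conf-finite-unique}), the $D_h$-geodesic from $0$ to $x_0$ is unique, so it suffices to show $P(t) = x_0$. The planar input is that, by Proposition~\ref{prop-conf-finite}(\ref{item-conf-finite-arcs},\ref{item-conf-finite-order}) together with Lemma~\ref{lem-metric-curve}, the images $P_L^y([t,s])$ for $y \in I_x$, $x \in \mcl X$, tile the closure of the topological annulus $\mcl B_s^{w,\bullet} \setminus \mcl B_t^{w,\bullet}$ into closed ``sectors'' $S_x$ with $S_x \cap \bdy \mcl B_t^{w,\bullet} = \{x\}$ and $S_x \cap \bdy \mcl B_s^{w,\bullet} = I_x$. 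The geodesic $P$ lies weakly to the right of $P_L$ and weakly to the left of the rightmost geodesic $P_R$ from $0$ to $P(s)$; since $P_R$ is a subsequential limit of leftmost geodesics from $0$ to points of $I_{x_0}$ approaching $P(s)$ from the side opposite $P_L$'s approximations, we also have $P_R(t) = x_0$. Hence $P|_{[t,s]} \subset \ol{S_{x_0}}$, which forces $P(t) = x_0$.

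Finally, to rule out $P(s)$ being an endpoint of $I_{x_0}$, I would use that $P$ strictly extends past $\bdy \mcl B_s^{w,\bullet}$. Applying the previous step at scale $(t,s')$ for each rational $s' \in (s, D_h(0,z))$ gives $P(t) \in \mcl X_{t,s'}^w$ and $P(s') \in I_{P(t)}'$ (the scale-$s'$ arc). The locality of ``leftmost'' yields $\mcl X_{t,s'}^w \subseteq \mcl X$ and that the time-$s$ restriction of any leftmost geodesic from $0$ to a point of $I_{P(t)}'$ lies in $I_{P(t)}$; taking $s' \downarrow s$ shows that $P(s)$ lies in the closure of $I_{P(t)}$, and a two-sided approximation (via perturbing the target to nearby points $z' \in \BB C \setminus \mcl B_s^{w,\bullet}$ on both sides of $z$) rules out $P(s)$ being an endpoint. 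The main obstacle will be Step 2, specifically verifying that the rightmost geodesic from $0$ to an interior point of $I_x$ passes through $x$: this hinges on the Jordan-curve structure from Lemma~\ref{lem-metric-curve} together with uniqueness of geodesics at confluence points.
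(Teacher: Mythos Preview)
The paper does not prove this proposition at all: it simply records it as a restatement of \cite[Proposition~3.7]{lqg-zero-one}. So there is no argument here to compare against, and your sketch is attempting something the paper deliberately imports.

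That said, as a standalone argument your sketch has real gaps. Step~1 is essentially fine. In Step~2 the ``sector'' picture is heuristic: the leftmost geodesics $P_L^y|_{[t,s]}$ for $y\in I_x$ need not tile the annulus, and the claim that the rightmost geodesic to $P(s)$ also hits $x_0$ (because it is a subsequential limit of leftmost geodesics from the other side) requires a genuine compactness-plus-continuity argument that you have not supplied. More seriously, Step~3 does not work as written. Passing to scales $(t,s')$ with $s'\downarrow s$ only recovers $P(s)\in \overline{I_{P(t)}}$, which you already know; and ``perturbing the target to nearby points $z'$ on both sides of $z$'' does not produce geodesics whose time-$s$ points approximate $P(s)$ from both sides on $\partial\mathcal B_s^{w,\bullet}$, since there is no continuity of $P\mapsto P(s)$ in the target. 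So the endpoint case is not actually ruled out.

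A cleaner route, available in this paper's logical framework, is to observe that endpoints of the arcs $I_x$ are exactly the points of $\partial\mathcal B_s^{w,\bullet}$ admitting two distinct (leftmost) geodesics to $0$. If $P$ is a geodesic from $0$ to $z\in\mathbb C\setminus\mathcal B_s^{w,\bullet}$, then $s<D_h(0,z)$ and Lemma~\ref{lem-regular-geo'}\eqref{item-regular-unique} (i.e.\ \cite[Lemma~3.9]{lqg-zero-one}) forces $P|_{[0,s]}$ to be the \emph{unique} geodesic from $0$ to $P(s)$, so $P(s)$ cannot be an endpoint; the same uniqueness also gives $P(t)=x_0$ directly, bypassing the sector argument. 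Whether this is circular depends on the internal dependencies in \cite{lqg-zero-one}, which you should check before relying on it.
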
 

 Note that, unlike Proposition~\ref{prop-conf-finite}, Proposition~\ref{prop-conf-endpt-finite} holds only for a \emph{fixed} choice of $t$ and $s$, not for all choices of $t$ and $s$ simultaneously.

\section{Classification of geodesic networks}
\label{sec-geo-network}

The purpose of this section is to prove the parts of our main theorems which do not require quantitative estimates. In particular, we will prove all of the assertions of Theorem~\ref{thm-zero-geo} except for the upper bound for the Hausdorff dimension of the set of points joined to zero by two distinct geodesics; and we will prove Theorem~\ref{thm-dense}. 
We start in Section~\ref{sec-three-geo-countable} by showing that there are at most countably many points $z\in\BB C$ for which there are at least three distinct $D_h$-geodesics from 0 to $z$. This is done using confluence of geodesics together with a purely topological argument. 
In Section~\ref{sec-four-geo}, we show that there cannot be any points in $\BB C$ which are joined to zero by four distinct geodesics by combining the countability result of Section~\ref{sec-three-geo-countable} with a perturbation argument based on the Weyl scaling property of the LQG metric and the absolute continuity properties of the GFF. 

In Section~\ref{sec-three-geo-dense}, we prove that there are \emph{at least} countably many points joined to zero by three distinct $D_h$-geodesics and \emph{at least} a Hausdorff-dimension 1 set of points joined to zero by two distinct $D_h$-geodesics. The proof is again based on confluence together with `soft" topological arguments.
In Section~\ref{sec-zero-geo} we collect the results of the preceding subsections to prove most of Theorem~\ref{thm-zero-geo}. 
In Section~\ref{sec-normal-dense} we deduce Theorem~\ref{thm-dense} from confluence, Theorem~\ref{thm-zero-geo}, and a short argument similar to ones in~\cite{akm-geodesics} (this is the only part of the paper which is close to the proofs in~\cite{akm-geodesics}).

\subsection{At most countably many points joined to the origin by at least three distinct geodesics}
\label{sec-three-geo-countable}

\begin{prop} \label{prop-three-geo-countable}
Almost surely, there are at most countably many points $z\in\BB C$ for which there are three or more distinct $D_h$-geodesics from 0 to $z$.
\end{prop}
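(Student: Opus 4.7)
For $z \in S$ (points joined to $0$ by at least three distinct $D_h$-geodesics), Lemma~\ref{lem-regular-geo'} gives that any three such geodesics share a common initial segment up to a split point $u(z) := P_i(r(z))$ and then form three pairwise disjoint simple arcs $\alpha_1, \alpha_2, \alpha_3$ from $u(z)$ to $z$; their union $N(z)$ is topologically a ``theta on a stick''. Since $D_h$ induces the Euclidean topology, $\BB C \setminus N(z)$ has exactly three non-empty open components $W_1^z, W_2^z, W_3^z$. I would pick $q_i(z) \in W_i^z \cap \BB Q^2$ by a canonical rule, obtaining a map $z \mapsto \{q_1(z), q_2(z), q_3(z)\}$ from $S$ into the countable set of unordered rational triples. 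The proof then reduces to showing this map is a.s.\ injective.

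Suppose $z \neq z'$ both lie in $S$ and give the same triple, with (after relabeling) $q_i(z) = q_i(z') =: q_i$. By Lemma~\ref{lem-regular-geo'}(1), every point of $N(z) \setminus \{0\}$ is joined to $0$ by a unique $D_h$-geodesic; in particular no $D_h$-geodesic from $0$ to $z'$ can pass through $z$ (otherwise $z$ would have a unique geodesic from $0$, contradicting $z \in S$), and $z' \notin N(z)$, so $z' \in W_j^z$ for some $j$. The key claim is that $N(z') \subset [0, u(z)] \cup \ol{W_j^z}$: for any $D_h$-geodesic $P$ from $0$ to $z'$, setting $t_* := \sup\{t \in [0, D_h(0, z')] : P(t) \in N(z)\}$, Lemma~\ref{lem-regular-geo'}(1) forces $P|_{[0, t_*]}$ to equal the unique $N(z)$-geodesic from $0$ to $P(t_*) \neq z$, which is the stick $[0, u(z)]$ possibly extended by a sub-arc of some $\alpha_l$; after $t_*$, $P$ stays in a single component of $\BB C \setminus N(z)$, which must be $W_j^z$ since $P$ ends at $z' \in W_j^z$, forcing $\alpha_l \subset \bdy W_j^z$ whenever the $\alpha_l$-piece appears. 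Then the complement $\BB C \setminus ([0, u(z)] \cup \ol{W_j^z}) = W_k^z \cup W_l^z \cup (\alpha_c \setminus \{u(z), z\})$ (with $\{j,k,l\} = \{1,2,3\}$ and $\alpha_c$ the unique arc not bounding $W_j^z$) is connected, since $\alpha_c \subset \bdy W_k^z \cap \bdy W_l^z$ joins $W_k^z$ and $W_l^z$; and it contains both $q_k$ and $q_l$. Hence $q_k$ and $q_l$ lie in the same component of $\BB C \setminus N(z')$, contradicting the fact that they were chosen in distinct components of $\BB C \setminus N(z')$.

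\textbf{Main obstacle.} The delicate step is the inclusion $N(z') \subset [0, u(z)] \cup \ol{W_j^z}$. Lemma~\ref{lem-regular-geo'}(1) must be used both to convert any intersection of a geodesic from $0$ to $z'$ with $N(z)$ into a forced initial coincidence and to rule out any later re-crossing of $N(z)$ after a deviation, which together prevent such a geodesic from entering a ``wrong'' region $W_k^z$ with $k \neq j$. The additional observation that no $z'$-geodesic passes through $z$ itself rules out the one potential degenerate case, in which such a geodesic would otherwise traverse the arc $\alpha_c$ opposite to $W_j^z$ via the point $z$. The remaining topological identification of the complement as a connected set is routine.
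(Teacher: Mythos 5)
Your approach is genuinely different from the paper's. The paper bounds the number of three-geodesic points in each region $\BB C\setminus\mcl B_s^{w,\bullet}$ by a power of the cardinality of the finite confluence-point set $\mcl X_{t,s}^w$ from Proposition~\ref{prop-conf-finite}; you instead assign to each $z\in S$ an unordered triple of rationals, one in each complementary face of $N(z)$, and prove injectivity by a separation argument. This is a clean idea and closer in spirit to Brownian-map arguments. However, the topological setup contains a gap, because Lemma~\ref{lem-regular-geo'} only yields \emph{pairwise} confluence: the three geodesics $P^1,P^2,P^3$ from $0$ to $z$ need not all separate at a single point. It can happen that $P^2$ splits off at a point $v_1$ while $P^1,P^3$ continue together to a later branch point $v_2$ before separating. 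In that case $\alpha_1$ and $\alpha_3$ share the segment $[v_1,v_2]$, so they are not pairwise disjoint simple arcs, $N(z)$ is not a ``theta on a stick,'' and each $\alpha_l$ (being a union of several edges of the graph $N(z)$) need not lie on the boundary of a single face.

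This is exactly what breaks the step you flagged as the main obstacle. If $z'$ lies in the face $W_j^z$ enclosed by the tails of $P^1$ and $P^3$ beyond $v_2$, then every $D_h$-geodesic from $0$ to $z'$ must traverse $[v_1,v_2]$ before exiting into $W_j^z$, yet $[v_1,v_2]$ is not on $\bdy W_j^z$ and so not in $[0,u(z)]\cup\ol{W_j^z}$: the inclusion $N(z')\subset[0,u(z)]\cup\ol{W_j^z}$ fails, and ``$\alpha_l\subset\bdy W_j^z$ whenever the $\alpha_l$-piece appears'' is false. Correspondingly, the identification $\BB C\setminus([0,u(z)]\cup\ol{W_j^z})=W_k^z\cup W_l^z\cup(\alpha_c\setminus\{u(z),z\})$ is incorrect in this configuration (it misses the interior of the shared segment). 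The separation conclusion you want --- that $q_k$ and $q_l$ lie in the same component of $\BB C\setminus N(z')$ --- still appears to be true, but proving it requires working directly with the tree $T:=N(z')\cap N(z)$ and exhibiting an edge of the planar graph $N(z)$ lying on $\bdy W_k^z\cap\bdy W_l^z$ and disjoint from $T$; this needs a case analysis over the combinatorial types of $N(z)$ (one or two branch points) and the choice of face $W_j^z$, which the proposal does not carry out.
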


The proof of Proposition~\ref{prop-three-geo-countable} proceeds by way of a simple topological argument which is illustrated in Figure~\ref{fig-three-geo}.  
The following lemma is the key input in the proof. 

\begin{figure}[t!]
 \begin{center}
\includegraphics[scale=.9]{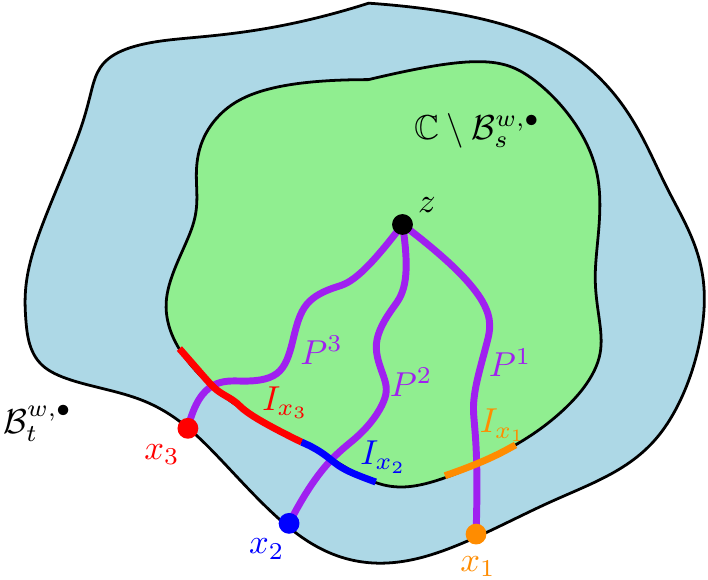}
\vspace{-0.01\textheight}
\caption{Illustration of the proof of Lemma~\ref{lem-three-geo-pts}. The three $D_h$-geodesics $P^1,P^2,P^3$ from 0 to $z$ divide $\BB C\setminus \mcl B_s^{w,\bullet}$ into three connected components. For each of these connected components, one of the arcs $I_{x_1} , I_{x_2}$, or $I_{x_3}$ does not intersect the boundary of the component. Consequently, if there is another point $\wt z \not=z$ and three $D_h$-geodesics $\wt P^1,\wt P^2,\wt P^3$ from 0 to $\wt z$ which pass through $x_1,x_2,x_3$, respectively, then at least one of the $\wt P^j$'s must cross one of the $P^i$'s. This is impossible since two $D_h$-geodesics started from 0 cannot cross without merging into each other. 
}\label{fig-three-geo}
\end{center}
\vspace{-1em}
\end{figure}

\begin{lem} \label{lem-three-geo-pts}
For each fixed $w\in\BB C$ and $0 < t < s$, the following is true a.s.\ on the event $\{s < D_h(0,w) \}$. 
Let $\mcl X = \mcl X_{t,s}^w \subset\bdy\mcl B_t^{w,\bullet}$ be the set of confluence points as in Proposition~\ref{prop-conf-finite} and let $x_1,x_2,x_3 \in \mcl X $ be distinct. 
There is at most one point $z \in \BB C\setminus \mcl B_s^{w,\bullet}$ with the following property: there are three $D_h$-geodesics $P^1,P^2,P^3$ from 0 to $z$ which pass through $x_1,x_2,x_3$, respectively.   
\end{lem}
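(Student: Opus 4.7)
The plan is to argue by contradiction, combining the tripod structure of the three geodesics $P^1,P^2,P^3$ with the uniqueness of geodesics from $0$ to intermediate points (Lemma~\ref{lem-regular-geo'}\ref{item-regular-unique}). Suppose toward contradiction that two distinct points $z, \wt z \in \BB C\setminus \mcl B_s^{w,\bullet}$ both satisfy the property, yielding geodesics $P^1,P^2,P^3$ from $0$ to $z$ and $\wt P^1,\wt P^2,\wt P^3$ from $0$ to $\wt z$ with $P^a(t) = \wt P^a(t) = x_a$ for $a = 1,2,3$. Proposition~\ref{prop-conf-endpt-finite} places $P^a(s), \wt P^a(s) \in \op{int}(I_{x_a})$.

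Next I would set up the topological picture. Let $D := (\BB C\cup\{\infty\})\setminus \mcl B_s^{w,\bullet}$, which by Lemma~\ref{lem-metric-curve} is a Jordan domain of the Riemann sphere with boundary $\bdy D = \bdy \mcl B_s^{w,\bullet}$. Every point of $\bdy \mcl B_s^{w,\bullet}$ has $D_h$-distance exactly $s$ from $0$, while along any geodesic $D_h(0,P^a(u)) = u > s$ for $u > s$, which forces $P^a((s,D_h(0,z)]) \subset D$. Combining this with the branching times $r_{ab} < t < s$ furnished by Lemma~\ref{lem-regular-geo'}\ref{item-regular-disjoint}, the three segments $P^a|_{[s,D_h(0,z)]}$ are pairwise disjoint except at $z$ and form a tripod in $\ol D$ meeting $\bdy D$ only at the three distinct points $P^1(s),P^2(s),P^3(s)$. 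Relabel so that these points appear in this counterclockwise order on $\bdy D$, let $A_k \subset \bdy D$ be the open arc between $P^i(s)$ and $P^j(s)$ not containing $P^k(s)$ (with $\{i,j,k\}=\{1,2,3\}$), and let $R_k \subset D$ be the open region bounded by $A_k$ together with $P^i|_{[s,D_h(0,z)]}$ and $P^j|_{[s,D_h(0,z)]}$. The complement of the tripod in $D$ decomposes as $R_1 \sqcup R_2 \sqcup R_3$.

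The key topological observation is that $I_{x_k} \cap A_k = \emptyset$. Indeed, Proposition~\ref{prop-conf-finite}\ref{item-conf-finite-order} says that $I_{x_1}, I_{x_2}, I_{x_3}$ appear on $\bdy D$ in the same counterclockwise order as $P^1(s), P^2(s), P^3(s)$; since $I_{x_k}$ contains $P^k(s)$ in its interior and contains neither $P^i(s)$ nor $P^j(s)$, it is contained in $A_i \cup \{P^k(s)\} \cup A_j$. Next I would locate $\wt z$: it cannot lie on $\bdy D \subset \mcl B_s^{w,\bullet}$, and if $\wt z = P^a(u)$ for some $u \in (s, D_h(0,z))$ then Lemma~\ref{lem-regular-geo'}\ref{item-regular-unique} would make $P^a|_{[0,u]}$ the unique geodesic from $0$ to $\wt z$, contradicting the existence of the three distinct geodesics $\wt P^1, \wt P^2, \wt P^3$. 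Hence $\wt z$ lies in one of the three regions; relabeling, I may assume $\wt z \in R_k$.

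The finish is to show that the geodesic $\wt P^k$ cannot reach $\wt z$. Its starting point $\wt P^k(s) \in \op{int}(I_{x_k}) \subset A_i \cup \{P^k(s)\} \cup A_j$ does not lie on $A_k$, so the path $\wt P^k$ does not enter $R_k$ directly from $\bdy D$. Following $\wt P^k$ through $D$, its trace must reach $\bdy R_k$ at some first time $u^* > s$, either at an interior point of $P^i$ or $P^j$ or at the triple point $z$. A crossing $\wt P^k(u^*) = P^j(v^*)$ with $v^* < D_h(0,z)$ and $u^* < D_h(0,\wt z)$ forces $u^*=v^* = D_h(0,\wt P^k(u^*))$, whence Lemma~\ref{lem-regular-geo'}\ref{item-regular-unique} gives $\wt P^k|_{[0,u^*]} = P^j|_{[0,u^*]}$ and hence $x_k = \wt P^k(t) = P^j(t) = x_j$, a contradiction (and analogously for $P^i$). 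Passing through $z$ gives $\wt P^k(D_h(0,z)) = z$ with $D_h(0,z) < D_h(0,\wt z)$, so Lemma~\ref{lem-regular-geo'}\ref{item-regular-unique} again makes $\wt P^k|_{[0,D_h(0,z)]}$ the unique geodesic from $0$ to $z$, contradicting the existence of $P^1, P^2, P^3$. The degenerate subcase $\wt P^k(s) = P^k(s)$ --- in which $\wt P^k$ initially shadows $P^k$ before peeling off into $R_i$ or $R_j$ --- reduces to the same analysis once the uniqueness lemma is applied at the first time of divergence. I expect the main obstacle to be the topological bookkeeping: correctly identifying the regions $R_k$ and arcs $A_k$, verifying the cyclic-order match that yields $I_{x_k}\cap A_k = \emptyset$, and cleanly dispatching the degenerate subcase.
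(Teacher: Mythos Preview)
Your proof is correct and follows essentially the same approach as the paper's: both set up the tripod formed by $P^1,P^2,P^3$ in the complement of $\mcl B_s^{w,\bullet}$, identify the three complementary regions, observe that one of the three arcs $I_{x_k}$ misses the boundary arc of the region containing $\wt z$, and then derive a contradiction from Lemma~\ref{lem-regular-geo'} when the offending geodesic $\wt P^k$ is forced to cross the tripod. The only notable cosmetic difference is that the paper establishes the key fact ``each region's boundary meets exactly two of the $I_{x_i}$'' directly from Proposition~\ref{prop-conf-endpt-finite} (the $P^i(s)$ avoid the arc endpoints), rather than via the cyclic ordering statement of Proposition~\ref{prop-conf-finite}\ref{item-conf-finite-order} --- indeed your claim $I_{x_k}\cap A_k=\emptyset$ already follows from connectedness of $I_{x_k}$ together with $P^k(s)\in\op{int}(I_{x_k})$ and $P^i(s),P^j(s)\notin I_{x_k}$, so the cyclic ordering is not strictly needed.
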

\begin{proof}
See Figure~\ref{fig-three-geo} for an illustration. Basically, the idea of the proof is that if there were two points $z,\wt z$ with the property in the lemma statement, then topological considerations would force a geodesic from 0 to $z$ and a geodesic from 0 to $\wt z$ to cross. This cannot happen due to Lemma~\ref{lem-regular-geo'}. 

Throughout the proof we assume that $s < D_h(0,w)$ and there is a point $z\in\BB C\setminus  \mcl B_s^{w,\bullet}$ and $D_h$-geodesics $P^1,P^2,P^3$ from 0 to $z$ which pass through $x_1,x_2,x_3$, respectively (if such a point does not exist the lemma statement is vacuous). 
Define the arcs $I_{x_i} \subset \bdy\mcl B_s^{w,\bullet}$ for $i\in \{1,2,3\}$ as in Proposition~\ref{prop-conf-finite}.
Note that these arcs are disjoint. 
On the full-probability event of Proposition~\ref{prop-conf-endpt-finite}, for each $i\in \{1,2,3\}$, we have $P^i(s) \in I_{x_i}$ and $P^i(s)$ is not one of the endpoints of $I_{x_i}$.  

By Lemma~\ref{lem-regular-geo'}, a.s.\ $ P^i([s,t]) \cap P^j([s,t]) = \{z\}$ for each distinct $i,j\in \{1,2,3\}$: indeed, each point in this intersection is joined to zero by at least two distinct geodesics. 
Since $\BB C\setminus \mcl B_s^{w,\bullet}$ is conformally equivalent to either the disk or the plane minus a disk, we infer that the set 
\eqb \label{eqn-three-geo-divide}
\BB C\setminus \left( \mcl B_s^{w,\bullet} \cup \bigcup_{i=1}^3 P^i([s,D_h(0,z)])  \right)
\eqe
has exactly three connected components.  
Since none of $P^1,P^2,P^3$ hits an endpoint of one of the arcs $I_{x_i}$ for $i\in \{1,2,3\}$, the boundary of each of the above three connected components intersects exactly two of the arcs $I_{x_i} $ for $i\in \{1,2,3\}$. 

Now suppose $\wt z \in \BB C\setminus \mcl B_s^{w,\bullet}$, $\wt z \not=z$. 
We will show that there cannot be three $D_h$-geodesics from 0 to $\wt z$ which pass through each of $x_1$, $x_2$, and $x_3$.
If $\wt z \in \bigcup_{i=1}^3 P^i([s,D_h(0,z)]) $, then by Lemma~\ref{lem-regular-geo'}, there is a unique $D_h$-geodesic from 0 to $\wt z$. 
Hence we can assume that $\wt z$ lies in one of the connected components of the set~\eqref{eqn-three-geo-divide}.
Call this connected component $U$.
By the preceding paragraph, $\bdy U$ intersects only two of the arcs $I_{x_i}(s)$ for $i\in \{1,2,3\}$; assume without loss of generality that $\bdy U$ intersects $I_{x_1} $ and $I_{x_2} $. 

No $D_h$-geodesic started from 0 can hit $\bdy\mcl B_s^{w,\bullet}$ more than once.
So, in order for a $D_h$-geodesic $\wt P$ from $0$ to $\wt z$ to hit $I_{x_3} $ it must cross $P^i([s,t])$ for some $i\in \{1,2,3\}$.
In other words, there must be a time $u \in [s,t]$, a time $v \in [0,D_h(0,\wt z)]$, and an $i\in \{1,2,3\}$ such that $P^i(u) = \wt P(v)$ and $P^i|_{[0,u]} \not= \wt P|_{[0,v]}$. 
We will now argue that this cannot be the case.
There are two distinct $D_h$-geodesics from 0 to $P^i(u)$, namely $P^i|_{[0,u]}$ and $\wt P|_{[0,v]}$. 
By Lemma~\ref{lem-regular-geo'}, this means that $P^i(u) = z$ (so in particular $u = v = D_h(0,z)$). 
But then $z$ is hit by the $D_h$-geodesic $\wt P$ started from 0 at a time which is not the terminal time of $\wt P$.
By Lemma~\ref{lem-regular-geo'}, this implies that there is a unique $D_h$-geodesic from 0 to $z$, which is contrary to our choice of $z$. 
Therefore, no $D_h$-geodesic from 0 to $\wt z$ can hit $I_{x_3} $. By Proposition~\ref{prop-conf-endpt-finite}, this means that no $D_h$-geodesic from 0 to $\wt z$ can hit $x_3$. 
\end{proof}

We will also need the following trivial consequence of Lemma~\ref{lem-regular-geo'}.

\begin{lem} \label{lem-geo-distinct}
Almost surely, the following is true.
Let $z\in\BB C$, let $n\in\BB N$, and let $P^1,\dots,P^n$ be distinct $D_h$-geodesics from 0 to $z$. 
There exists $r_* \in (0,D_h(0,z))$ such that the geodesic segments $P^i([r_*,D_h(0,z))) $ for $i\in \{1,\dots,n\}$ are pairwise disjoint.
\end{lem}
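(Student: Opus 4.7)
The plan is to reduce this directly to Lemma~\ref{lem-regular-geo'}\ref{item-regular-disjoint}, which already gives the desired pairwise disjointness one pair of geodesics at a time; the only remaining task is to upgrade this to simultaneous disjointness for a finite collection by taking a maximum over pairs.

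Concretely, I would work on the full-probability event on which the conclusion of Lemma~\ref{lem-regular-geo'} holds for every $z\in\BB C$. Fix $z\in\BB C$, $n\in\BB N$, and distinct $D_h$-geodesics $P^1,\dots,P^n$ from $0$ to $z$. For each pair of distinct indices $i,j\in\{1,\dots,n\}$, apply Lemma~\ref{lem-regular-geo'}\ref{item-regular-disjoint} to $P^i$ and $P^j$ to obtain some $r_{i,j} = r(P^i,P^j) \in (0,D_h(0,z))$ with
\eqbn
P^i\bigl((r_{i,j}, D_h(0,z))\bigr) \cap P^j\bigl((r_{i,j}, D_h(0,z))\bigr) = \emptyset .
\eqen
Since the collection $\{r_{i,j} : i\neq j\}$ is finite, we may set $\ol r := \max_{i\neq j} r_{i,j}$ and then choose any $r_* \in (\ol r, D_h(0,z))$, which is possible because $\ol r < D_h(0,z)$.

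For this $r_*$, and any distinct $i,j$, the interval $[r_*,D_h(0,z))$ is contained in $(r_{i,j}, D_h(0,z))$, so
\eqbn
P^i\bigl([r_*, D_h(0,z))\bigr) \cap P^j\bigl([r_*, D_h(0,z))\bigr) \subset P^i\bigl((r_{i,j}, D_h(0,z))\bigr) \cap P^j\bigl((r_{i,j}, D_h(0,z))\bigr) = \emptyset ,
\eqen
which gives the desired pairwise disjointness. There is essentially no obstacle here, since the topological/geometric content has already been absorbed into Lemma~\ref{lem-regular-geo'}; the one thing to be a little careful about is choosing $r_*$ strictly larger than $\ol r$ rather than equal to it, since the pair achieving the maximum could still have $P^i(\ol r)=P^j(\ol r)$ at the common endpoint.
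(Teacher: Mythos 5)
Your proof is correct and takes essentially the same approach as the paper: both reduce to Lemma~\ref{lem-regular-geo'}\ref{item-regular-disjoint} pair by pair and then take a maximum over the finitely many pairs. Your extra care about the open-versus-closed-interval issue (choosing $r_*$ strictly larger than $\ol r$) is a genuine refinement of what the paper writes, but the paper's argument can be read as implicitly absorbing this into the choice of each $r_{i,j}$.
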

\begin{proof}
Throughout the proof, all statements are required to hold a.s.\ for every possible choice of $z$ and $P^1,\dots,P^n$. 
By assertion~\ref{item-regular-disjoint} of Lemma~\ref{lem-regular-geo'}, for each distinct $i,j \in \{1,\dots,n\}$ there exists $r_{i,j} \in (0,D_h(0,z))$ such that $P^i([r_{i,j} , D_h(0,z)) \cap P^j([r_{i,j} , D_h(0,z))) =\emptyset$.  
Then the lemma statement holds with
\eqbn
r_* := \max\left\{ r_{i,j} : i,j \in \{1,\dots,n\}, i\not=j\right\} .
\eqen
\end{proof}

\begin{proof}[Proof of Proposition~\ref{prop-three-geo-countable}]
For $w\in\BB Q^2$ and $t,s\in\BB Q \cap (0,\infty)$ with $t < s$, let $ \mcl Z_{t,s}^w$ be the set of $z\in\BB C \setminus \mcl B_s^{w,\bullet}$ such that there are three $D_h$-geodesics $P^1,P^2,P^3$ from 0 to $z$ such that $P^1(t), P^2(t)$, and $P^3(t)$ are distinct. 

We claim that a.s.\ $\mcl Z_{t,s}^w$ is a finite set. 
To see this, define $\mcl X = \mcl X_{t,s}^w$ as in Proposition~\ref{prop-conf-finite}.
By Proposition~\ref{prop-conf-endpt-finite}, a.s.\ each $D_h$-geodesic from 0 to a point of $\BB C\setminus \mcl B_s^{w,\bullet}$ passes through some $x\in\mcl X$, necessarily at time $t$ since $\mcl X\subset \bdy \mcl B_t^{w,\bullet}$. 
By Lemma~\ref{lem-three-geo-pts}, for each distinct $x_1,x_2,x_3 \in \mcl X$, a.s.\ there is at most one $z\in \mcl Z_{t,s}^w  $ whose corresponding $D_h$-geodesics $P^1,P^2,P^3$ pass through $x_1,x_2,x_3$, respectively (necessarily at time $t$). Since the points $P^1(t) ,P^2(t),P^3(t)$ are assumed to be distinct, it follows that
\eqb
\#  \mcl Z_{t,s}^w    \leq (\#\mcl X)^3 < \infty. 
\eqe 

To conclude the proof, we will now argue that a.s.\ every  $z\in \BB C$ for which there are three or more distinct $D_h$-geodesics from 0 to $z$ belongs to $\mcl Z_{t,s}^w$ for some $w\in\BB Q^2$ and some $t,s\in\BB Q \cap (0,\infty)$ with $t < s < D_h(0,w)$.
To see this, let $P^1,P^2,P^3$ be three distinct $D_h$-geodesics from 0 to $z$. 
By Lemma~\ref{lem-geo-distinct}, there is an $r_* \in (0,D_h(0,z))$ such that $P^1(r) , P^2(r), P^3(r)$ are distinct for each $r\in [r_* ,D_h(0,z))$. 
We choose $t,s \in \BB Q\cap (0,\infty)$ with $r_* < t < s < D_h(0,z)$ and $w \in \BB C\setminus \mcl B_s$ such that $z \in \BB C\setminus \mcl B_s^{w,\bullet}$. 
Then $P^1(t) , P^2(t) , P^3(t)$ are distinct and so $z\in \mcl Z_{t,s}^w$, as required.  
\end{proof}

\subsection{Each point is joined to the origin by at most three geodesics} 
\label{sec-four-geo}

\begin{prop} \label{prop-four-geo}
Almost surely, for each $z\in\BB C$ there are at most three distinct $D_h$-geodesics from 0 to $z$.
\end{prop}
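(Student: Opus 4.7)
I would prove this by contradiction, combining Proposition~\ref{prop-three-geo-countable} with the Weyl scaling axiom (Axiom~\ref{item-metric-f}) and the absolute continuity of the whole-plane GFF under Cameron-Martin shifts. The goal is to show that if a $4$-geodesic point exists with positive probability, then uncountably many $3$-geodesic points exist with positive probability, contradicting Proposition~\ref{prop-three-geo-countable}.

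Suppose for contradiction that $\BB P[\exists z \in \BB C : z$ has $\geq 4$ distinct $D_h$-geodesics from $0] > 0$. First, I would \emph{localize}: using Lemma~\ref{lem-geo-distinct} to quantify the eventual mutual separation of the geodesics, together with countable-union arguments over rational parameters, I reduce to a positive-probability event $E_0$ on which there exist a point $z$ with four distinct $D_h$-geodesics $P^1, P^2, P^3, P^4$ from $0$ to $z$, a deterministic Euclidean ball $B$ which meets $P^1$ in its interior but is disjoint from $P^2 \cup P^3 \cup P^4$, and rational times $0 < t < s < D_h(0,z)$ such that $B$ is disjoint from the filled metric ball $\mcl B_s^{z,\bullet}$. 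This places $B$ on $P^1$ between the confluence point $x_1 \in \bdy \mcl B_t^{z,\bullet}$ and $z$.

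Next, fix a deterministic smooth nonnegative bump function $f:\BB C \to [0,\infty)$ supported in $B$, strictly positive on an open subset of $B$ that (on $E_0$) meets $P^1$. By the Weyl scaling axiom, for each $\alpha > 0$ the metric $D_{h+\alpha f}$ satisfies $D_{h+\alpha f} \geq D_h$ pointwise, with equality of length along any path disjoint from $\op{supp}(f)$. On $E_0$, the paths $P^2, P^3, P^4$ avoid $B$, so their $D_{h+\alpha f}$-lengths equal $D_h(0,z)$, while $P^1$ strictly lengthens. Hence $D_{h+\alpha f}(0,z) = D_h(0,z)$ and $P^2, P^3, P^4$ remain distinct $D_{h+\alpha f}$-geodesics from $0$ to $z$, while $P^1$ is no longer a $D_{h+\alpha f}$-geodesic. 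Moreover, since $B \cap \mcl B_s^{z,\bullet} = \emptyset$, the leftmost $D_h$-geodesics from $0$ to points of $\bdy \mcl B_s^{z,\bullet}$ are unaffected by the perturbation, so the confluence points $x_1, \ldots, x_4 \in \mcl X_{t,s}^z$ of Proposition~\ref{prop-conf-finite} agree for $D_{h+\alpha f}$ and $D_h$. The $D_{h+\alpha f}$-geodesic continuing outward from $x_1$ must avoid $B$ (or pay a strict length penalty), and hence terminates at a point $z_\alpha \neq z$ (since $P^1$ is no longer a $D_{h+\alpha f}$-geodesic to $z$). An argument analogous to Lemma~\ref{lem-three-geo-pts} applied in $D_{h+\alpha f}$ then shows that $z_\alpha$ has three distinct $D_{h+\alpha f}$-geodesics from $0$ (routed through three of $x_1,\ldots,x_4$).

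As $\alpha$ varies over $(0,\infty)$, the point $z_\alpha$ varies continuously on $E_0$ and sweeps out an uncountable set. By the Cameron-Martin theorem for the whole-plane GFF, for each $\alpha>0$ the law of $h+\alpha f$ is mutually absolutely continuous with that of $h$ (after absorbing the unit-circle-average normalization into $f$). Transferring the uncountable family $\{z_\alpha\}$ back via this absolute continuity and a Fubini-type disintegration in $\alpha$ produces uncountably many three-geodesic points for a metric in the Cameron-Martin equivalence class of $D_h$, on a positive-probability event. This contradicts Proposition~\ref{prop-three-geo-countable} applied to that metric, which completes the proof.

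The main obstacle will be making rigorous the step showing that the new points $z_\alpha$ are genuinely \emph{three}-geodesic points — rather than two-geodesic points, which could happen if the perturbation causes two of the $P^i$'s to merge in $D_{h+\alpha f}$ — and that the continuous one-parameter family $\{z_\alpha\}$ transfers through the Cameron-Martin change of measure to produce uncountably many three-geodesic points within a single metric in the equivalence class of $D_h$. This requires careful quantitative analysis of the confluence structure under perturbation (using Propositions~\ref{prop-conf-finite} and~\ref{prop-conf-endpt-finite}), together with a Fubini-type measurable-selection argument.
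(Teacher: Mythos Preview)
Your perturbation idea is the same one the paper uses, but the way you try to close the argument has a genuine gap. The family $\{z_\alpha\}_{\alpha>0}$ consists of points that are three-geodesic points for \emph{different} metrics $D_{h+\alpha f}$, one for each $\alpha$. Proposition~\ref{prop-three-geo-countable} only bounds the number of three-geodesic points for a \emph{single} metric. Absolute continuity of the law of $h+\alpha f$ with respect to that of $h$ lets you transfer a probability-zero or probability-one statement from one field to another, but it cannot pool the countably many three-geodesic points of the various $D_{h+\alpha f}$ into uncountably many three-geodesic points of a single $D_h$. No Fubini or measurable-selection trick fixes this: for each fixed $\alpha$ the countable set of three-geodesic points of $D_{h+\alpha f}$ is allowed to move with $\alpha$, and nothing forces the assignment $\alpha\mapsto z_\alpha$ to be injective or even to leave the original (countable) set of three-geodesic points of $D_h$. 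Your own last paragraph correctly flags this as the main obstacle; it is not a technicality but the heart of the matter.

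The paper avoids this entirely by turning the perturbation into a direct instability argument. First, it perturbs near the \emph{extra} geodesic $P^4$ rather than near one of $P^1,P^2,P^3$. This matters: the three geodesics $P^1,P^2,P^3$ passing through distinct confluence points $x_1,x_2,x_3$ pin down $z$ uniquely by Lemma~\ref{lem-three-geo-pts}, and that uniqueness survives the perturbation because $P^1,P^2,P^3$ are untouched. Second, instead of trying to manufacture new three-geodesic points, the paper simply observes that if the four-geodesic event $\wt E_a$ holds for $h+a\phi$, then for any $b>a$ the path $P^4$ is strictly longer while $P^1,P^2,P^3$ still witness the same $z$; hence $\wt E_b$ fails. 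So $\wt E_a$ can hold for at most one value of $a\in[0,1]$, whence $\BB P[\wt E_A]=0$ for $A$ uniform and independent of $h$, and absolute continuity gives $\BB P[\wt E]=0$. This is the clean way to exploit the Cameron--Martin shift: use it once, at a random $\alpha$, rather than trying to collect information across all $\alpha$ simultaneously.
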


In light of Proposition~\ref{prop-three-geo-countable}, it is not hard to believe that Proposition~\ref{prop-four-geo} is true.
Intuitively, a ``generic" point which is joined to 0 by at least three geodesics should in fact be joined to zero by \emph{exactly} three geodesics. 
Since there are only countably many points joined to zero by at least three geodesics, we expect that all of them should be in some sense generic. 
The proof of Proposition~\ref{prop-four-geo} proceeds by making this intuition precise. The main step is the following lemma.

\begin{figure}[t!]
 \begin{center}
\includegraphics[scale=.9]{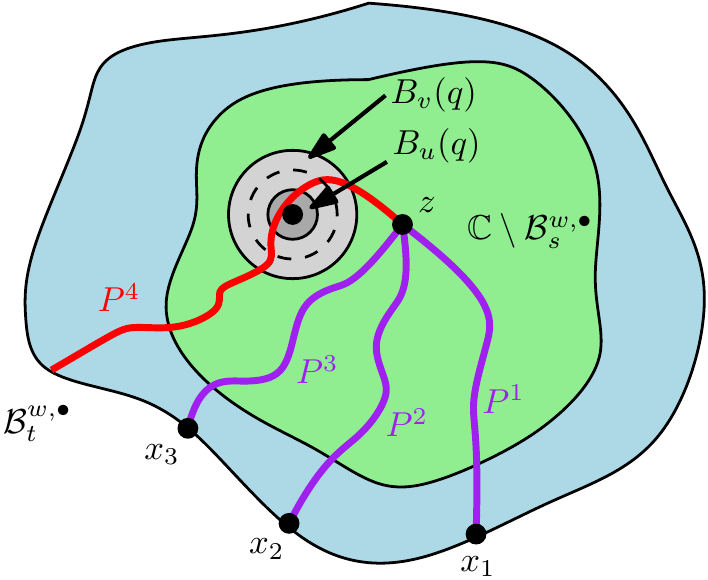}
\vspace{-0.01\textheight}
\caption{Illustration of the proof of Lemma~\ref{lem-four-geo-pts}. On $\wt E$, there are four $D_h$-geodesics $P^1,P^2,P^3,P^4$ from 0 to $z$ such that $P^4$ enters $B_u(q)$ but $P^1,P^2,P^3$ are disjoint from $B_v(q)$. If we add to $h$ a positive multiple of the smooth bump function $\phi$, which is supported on $B_v(q)$ and identically equal to 1 in a neighborhood of $B_u(q)$, then by Weyl scaling $P^1,P^2,P^3$ remain geodesics but $P^4$ does not. This means that $\wt E$ no longer occurs. Since adding a multiple of $\phi$ affects the law of $h$ in an absolutely continuous way, this shows that $\BB P[\wt E ] = 0$. 
}\label{fig-four-geo}
\end{center}
\vspace{-1em}
\end{figure}

\begin{lem}  \label{lem-four-geo-pts}
For each fixed $w\in\BB C$ and $0 < t < s$, the following is true a.s.\ on the event $\{s < D_h(0,w) \}$. 
Let $\mcl X = \mcl X_{t,s}^w \subset\bdy\mcl B_t^{w,\bullet}$ be the set of confluence points as in Proposition~\ref{prop-conf-finite} and let $x_1,x_2,x_3 \in \mcl X $ be distinct. 
There are no points $z \in \BB C\setminus \mcl B_s^{w,\bullet}$ with the following property: there are four distinct $D_h$-geodesics $P^1,P^2,P^3 , P^4$ from 0 to $z$ such that $P^1,P^2,P^3$ pass through $x_1,x_2,x_3$, respectively. 
\end{lem}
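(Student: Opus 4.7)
The plan is to combine a discretization argument with a Cameron-Martin perturbation of $h$ by a smooth bump function, exploiting the Weyl scaling Axiom~\ref{item-metric-f} of the LQG metric. By Proposition~\ref{prop-conf-endpt-finite}, every $D_h$-geodesic from $0$ to a point outside $\mcl B_s^{w,\bullet}$ passes through $\mcl X$ at time $t$, so it suffices to show: almost surely, there is no $z \in \BB C \setminus \mcl B_s^{w,\bullet}$ joined to $0$ by four distinct $D_h$-geodesics with three of them passing through three distinct points of $\mcl X$. Suppose for contradiction that this event has positive probability.

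By Lemma~\ref{lem-geo-distinct}, on the bad event the four geodesics are pairwise disjoint on a terminal time interval, so some rational Euclidean ball around a point of $P^4$ is disjoint from $P^1 \cup P^2 \cup P^3$. Passing to rational/integer parameters, it suffices to show that for each choice of rationals $q \in \BB Q^2$, $0 < u_1 < u_2 < v$, a rational compact $K \subset \BB C \setminus \ol{B_v(q)}$ containing $0$, and $M, N \in \BB N$, the following sub-event $\wt E = \wt E_{q, u_1, u_2, v, K, M, N}$ has probability zero: there exist $z \in K$ and four distinct $D_h$-geodesics $P^1, \dots, P^4$ from $0$ to $z$ with $P^4 \cap B_{u_1}(q) \neq \emptyset$ and $P^j \cap \ol{B_v(q)} = \emptyset$ for $j = 1, 2, 3$, satisfying the deterministic bounds $M_h := \sup_{z' \in K} D_h(0, z'; \BB C \setminus \ol{B_v(q)}) \leq M$ and $\delta_h := D_h(\bdy B_{u_1}(q), \bdy B_{u_2}(q); B_{u_2}(q) \setminus \ol{B_{u_1}(q)}) \geq 1/N$. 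Since $M_h < \infty$ and $\delta_h > 0$ almost surely, the countable family of such sub-events covers the bad event.

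Fix parameters with $\BB P[\wt E] > 0$, and take a smooth $\phi : \BB C \to [0, 1]$ with $\op{supp} \phi \subset B_v(q)$ and $\phi \equiv 1$ on $B_{u_2}(q)$; put $\eta := h + c\phi$ for $c > 0$. By Weyl scaling, $D_\eta = e^{\xi c \phi} \cdot D_h$, so $M_\eta = M_h$ (since $\phi \equiv 0$ off $B_v(q)$) and $\delta_\eta = e^{\xi c} \delta_h$ (since $\phi \equiv 1$ on the annulus). The pathwise picture is that on $\wt E$ the original $P^1, P^2, P^3$ remain $D_\eta$-geodesics of length $D_h(0, z)$ from $0$ to $z$, while $P^4$ acquires an extra factor $e^{\xi c}$ along its annulus crossing and so is no longer a $D_\eta$-geodesic; more generally, any candidate $D_\eta$-geodesic $\wt P^4$ from $0$ to a point $\wt z \in K$ entering $B_{u_1}(q)$ has $D_\eta$-length at least $\delta_\eta = e^{\xi c}\delta_h$, yet admits a detour through $\BB C\setminus\ol{B_v(q)}$ of length at most $M_h$, forcing $e^{\xi c}\delta_h \leq M_h$. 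Since $\phi$ is smooth and compactly supported, it lies in the Cameron-Martin space of the whole-plane GFF, so the laws of $h$ and $h + c\phi$ are mutually absolutely continuous; combining this absolute continuity with the Weyl scaling estimate should force $\BB P[\wt E] = 0$, and a countable union over the discretization parameters then gives the lemma.

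\textbf{Main obstacle.} The delicate step is converting the pathwise destruction statement (after adding $c\phi$, the specific configuration $\wt E$ no longer holds for $h + c\phi$ with the \emph{same} $P^4$) into a null-set conclusion via Cameron-Martin. This is not automatic, because after the perturbation there could a priori be a \emph{different} quadruple of $D_\eta$-geodesics realizing $\wt E$. Making the argument watertight likely requires quantitative inputs: a tail estimate for the annular distance $\delta_h$, a quantitative Cameron-Martin lower bound of the form $\mu_c(\wt E)\geq \mu_0(\wt E)^2 e^{-c^2\|\phi\|_\nabla^2}$, and the freedom to choose $\phi$ with arbitrarily small Dirichlet energy (e.g.\ by taking $v \gg u_2$), so that the geometric upper bound on $\mu_c(\wt E)$ coming from $e^{\xi c}\delta_h \leq M_h$ dominates the Cameron-Martin lower bound and forces $\mu_0(\wt E) = 0$.
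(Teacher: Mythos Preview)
Your overall strategy --- discretize, then perturb $h$ by a smooth bump $\phi$ supported near a point on $P^4$ and away from $P^1,P^2,P^3$ --- is exactly the paper's approach. You also correctly locate the one genuine difficulty in your ``Main obstacle'' paragraph: after adding $c\phi$, the \emph{specific} $P^4$ is killed, but nothing prevents a \emph{different} point $\wt z$ and a \emph{different} fourth geodesic from realizing the event for $h+c\phi$. Your proposed quantitative fix (tail bounds on $\delta_h$, Cameron--Martin lower bounds, shrinking $\|\phi\|_\nabla$) does not work as written: once you phrase $\wt E$ as an event for the field (with the truncations $M_\eta\le M$, $\delta_\eta\ge 1/N$), the inequality you extract for $\eta=h+c\phi$ is just $2/N\le M$, which carries no information about $c$. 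So the contradiction you are hoping for never materializes.

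The paper closes this gap with a single clean idea you did not use: the \emph{uniqueness} of $z$ coming from Lemma~\ref{lem-three-geo-pts}. One fixes a $\sigma(\mcl B_s^{w,\bullet},h|_{\mcl B_s^{w,\bullet}})$-measurable choice of $x_1,x_2,x_3\in\mcl X$ and builds uniqueness of $z$ into the event. Then for $0\le a<b$: since $P_a^1,P_a^2,P_a^3$ avoid $\op{supp}\phi$, they remain $D_{h+b\phi}$-geodesics, so $z_a$ still satisfies the three-geodesic property for $h+b\phi$; by uniqueness, any candidate $z_b$ must equal $z_a$. Moreover $D_{h+b\phi}(0,z_a)=D_{h+a\phi}(0,z_a)$ and $D_{h+b\phi}\ge D_{h+a\phi}$, so every $D_{h+b\phi}$-geodesic from $0$ to $z_a$ is already a $D_{h+a\phi}$-geodesic; but any such geodesic entering $B_u(q)$ has strictly larger $D_{h+b\phi}$-length than $D_{h+b\phi}(0,z_a)$. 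Hence $\wt E_a\Rightarrow\neg\wt E_b$ for \emph{every} $b>a$. One then samples $A$ uniformly on $[0,1]$ independent of $h$: at most one value of $a$ can make $\wt E_a$ occur, so $\BB P[\wt E_A\mid h]=0$ a.s., whence $\BB P[\wt E_A]=0$, and mutual absolute continuity of the laws of $h$ and $h+A\phi$ gives $\BB P[\wt E]=0$. No quantitative Cameron--Martin estimate is needed; the uniqueness from Lemma~\ref{lem-three-geo-pts} is precisely what pins down the target point across all perturbations and eliminates your ``different $\wt z$'' problem.
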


The idea of the proof of Lemma~\ref{lem-four-geo-pts} is that the event that there are four distinct $D_h$-geodesics $P^1,P^2,P^3,P^4$ from 0 to $z$ is unstable under small perturbations of $h$, in the following sense. 
If such geodesics exist with positive probability, then we can choose a smooth bump function $\phi : \BB C\rta [0,1]$ such that with positive probability,  $P^4$ intersects the support of $\phi$ but $P^1,P^2,P^3$ do not. On the event that this is the case, if we add any positive multiple of $\phi$ to $h$, then by Weyl scaling (Axiom~\ref{item-metric-f}) we will increase the length of $P^4$ while leaving the lengths of $P^1,P^2,P^3$ fixed. Hence $P^4$ will no longer be a geodesic. 
Since the laws of $h$ and $h+a\phi$ are mutually absolutely continuous for any $a \in \BB R$, this instability property is enough to conclude the lemma statement. 
See Figure~\ref{fig-four-geo} for an illustration.

\begin{proof}[Proof of Lemma~\ref{lem-four-geo-pts}]
The set of confluence points $\mcl X$ is $\sigma (\mcl B_s^{w,\bullet} , h|_{\mcl B_s^{w,\bullet}})$-measurable, so we can choose $x_1,x_2,x_3\in \mcl X$ in a $\sigma (\mcl B_s^{w,\bullet} , h|_{\mcl B_s^{w,\bullet}})$-measurable manner. 
Henceforth fix such a choice of $x_1,x_2,x_3$. 

Let $E$ be the event that the following is true.
\begin{enumerate}
\item There is a unique point $z \in \BB C\setminus \mcl B_s^{w,\bullet}$ with the following property: there are three distinct $D_h$-geodesics $P^1,P^2,P^3  $ from 0 to $z$ such that $P^1,P^2,P^3$ pass through $x_1,x_2,x_3$, respectively.  \label{item-four-geo-three}
\item For this point $z$, there is a $D_h$-geodesic $P^4$ from 0 to $z$ which is distinct from each of $P^1,P^2,P^3$. \label{item-four-geo-extra}
\end{enumerate}
In other words, $E$ is the event of the lemma statement except that we impose a uniqueness condition on $z$ in condition~\ref{item-four-geo-three}. 
By Lemma~\ref{lem-three-geo-pts}, a.s.\ there is at most one point satisfying the property of condition~\ref{item-four-geo-three}. 
Therefore, to prove the lemma statement we only need to show that $\BB P[E] = 0$. 
\medskip

\noindent\textit{Step 1: reducing to an event with extra conditions.}
We first reduce to proving that a secondary event, which we call $\wt E$, has probability zero. 
Consider a point $q \in \BB Q^2  $ and rational radii $u,v \in \BB Q$ with $0 < u < v$. 
We let $\wt E = \wt E(q,u,v)$ be the event that $E$ occurs and the following extra conditions are satisfied. 
\begin{enumerate} \setcounter{enumi}{2}
\item Each of the $D_h$-geodesics $P^1,P^2,P^3$ is disjoint from $B_v(q)$. 
\item There is a $D_h$-geodesic $P^4$ as in condition~\ref{item-four-geo-extra} in the definition of $E$ which enters $B_u(q)$. 
\item $B_v(q) \subset \BB C\setminus \mcl B_s^{w,\bullet}$.
\end{enumerate} 

We claim that if $E$ occurs, then a.s.\ $\wt E$ occurs for some choice of $q,u,v$ as above. 
To see this, assume that $E$ occurs.
By Lemma~\ref{lem-geo-distinct} a.s.\ there exists $r_* \in (0,D_h(0,z))$ such that the segments $P^i|_{[r_* , D_h(0,z))}$ for $i\in \{1,2,3,4\}$ are disjoint.
Therefore, if $r \in ( s \vee r_* , D_h(0,z))$, then the point $P^4(r) \in \BB C\setminus \mcl B_s^{w,\bullet}$ lies at positive distance from $P^i([r_*,D_h(0,z)])$ for each $i\in \{1,2,3 \}$. 
The point $P^4(r)$ also lies at positive distance from $\bdy\mcl B_s^{w,\bullet}$ and from $P^i([0,r_*])$ for each $i \in \{1,2,3,4\}$ since $\bdy\mcl B_s^{w,\bullet} , P^i([0,r_*]) \subset \mcl B_{r_*}(0;D_h)$ but $D_h(0,P^4(r)) = r > r_*$. 
Consequently, we can choose $q,u,v,$ as above such that $P^4(r) \in B_u(q)$, $B_v(q)$ is disjoint from $P^1,P^2,P^3$, and $B_v(q) \subset \BB C\setminus \mcl B_s^{w,\bullet}$. 
Hence $\wt E(q,u,v)$ occurs.  
\medskip

\noindent\textit{Step 2: reducing to a comparison between different fields.}
Since there are only countably many possible choices of the triple $(q,u,v)$, it suffices to fix one such triple and show that for this choice, $\BB P[\wt E ] = 0$. 
To show this, we will, roughly speaking, show that if $\wt E$ occurs and we add any positive multiple of a suitably chosen smooth bump function to $h$, then $\wt E$ no longer occurs. Since adding a smooth bump function affects the law of $h$ in an absolutely continuous way, this will tell us that $\BB P[\wt E ] = 0$. 

Let $u' := (u+v)/2$ and let $\phi : \BB C\rta [0,1]$ be a smooth bump function which is identically equal to 1 on $B_{u'}(q)$ and which vanishes outside of $B_v(q)$. 
By Weyl scaling (Axiom~\ref{item-metric-f}), on the event $\{ B_q(v) \subset \BB C\setminus \mcl B_s^{w,\bullet}\}$, a.s.\ adding a multiple of $\phi$ to $h$ does not affect $\mcl B_s^{w,\bullet}$ or $h|_{\mcl B_s^{w,\bullet}}$. In particular, on this event adding a multiple of $\phi$ to $h$ does not affect the values of $x_1,x_2,x_3$. 
Furthermore, for any $a \geq 0$ the laws of $h$ and $h+a\phi$ are mutually absolutely continuous.\footnote{By~\cite[Proposition 2.9]{ig4}, the laws of $h$ and $h+\phi$ are mutually absolutely continuous modulo additive constant. It is easy to see from the discussion above that we can arrange so that $B_v(q)$ is disjoint from $\bdy\BB D$. Then $h$ and $h+a\phi$ each have mean zero over $\bdy\BB D$, so their laws are mutually absolutely continuous not just modulo additive constant.}
Therefore, the following definition makes sense. 
For $a\geq 0$, let $\wt E_a$ be the event that $\wt E$ occurs with $h$ replaced by the field $h+a \phi$. 

We claim that, a.s., 
\eqb \label{eqn-four-geo-claim}
\text{If $0 \leq a < b$ and $\wt E_a$ occurs, then $\wt E_b$ does not occur.}
\eqe
The claim~\eqref{eqn-four-geo-claim} implies that a.s.\ there is only one value of $a \geq 0$ for which $\wt E_a$ occurs.
Consequently, if we sample a random variable $A$ uniformly at random from $[0,1]$, independently from $h$, then $\BB P[\wt E_A] = 0$. 
Since the laws of $h$ and $h+A\phi$ are mutually absolutely continuous, this implies that $\BB P[\wt E] =0$.
\medskip

\noindent\textit{Step 3: comparing $h+a\phi$ and $h+b\phi$.}
It remains only to prove the claim~\eqref{eqn-four-geo-claim}. In what follows, all a.s.\ statements are required to hold for \emph{all} values of $a$ and $b$. 
Assume that $0\leq a < b$ and $\wt E_a$ occurs. 
Let $z_a$ and $P_a^i$ for $i\in \{1,2,3 \}$ be as in the definition of $\wt E_a$ (i.e., the definition of $\wt E$ but with $h+a\phi$ in place of $h$). 

By Weyl scaling (Axiom~\ref{item-metric-f}) a.s.\ $D_{h+b\phi} \geq D_{h+a\phi}$. 
Since $P_a^1,P_a^2,$ and $P_a^3$ are disjoint from the support of $\phi$, the $D_{h+b\phi}$-lengths of each of these three paths are the same as their $D_{h+a\phi}$-lengths.
Therefore, each of $P_a^1,P_a^2$, and $P_a^3$ is also a $D_{h+b\phi}$-geodesic. 
In particular, $D_{h+b\phi}(0,z_a) = D_{h+a\phi}(0,z_a)$ and the point $z_a$ satisfies the property of condition~\ref{item-four-geo-three} in the definition of $E$ with $h+b\phi$ in place of $h$. 

Due to the uniqueness part of condition~\ref{item-four-geo-three}, in order to show that $\wt E_b$ does not occur it remains to show that there is no $D_{h+b\phi}$-geodesic from 0 to $z_a$ which enters $B_u(q)$. Since $D_{h+b\phi}(0,z_a) = D_{h+a\phi}(0,z_a)$, any $D_{h+b\phi}$-geodesic from 0 to $z_a$ has $D_{h+b\phi}$-length equal to $D_{h+a\phi}(0,z)$. From this and the fact that $D_{h+b\phi} \geq D_{h+a\phi}$, we infer that a $D_{h+b\phi}$-geodesic from 0 to $z_a$ is also a $D_{h+a\phi}$-geodesic. 

Let us therefore consider a $D_{h+a\phi}$-geodesic $P_a^4$ from 0 to $z_a$ which enters $B_u(q)$ and show that $P_a^4$ cannot be a $D_{h+b\phi}$-geodesic.
Write $\op{len}_{h+a\phi}$ and $\op{len}_{h+b\phi}$ for $D_{h+a\phi}$ and $D_{h+b\phi}$ lengths, respectively. 
Since $\phi$ is identically equal to 1 on $B_{u'}(q)$ and is non-negative, by Weyl scaling (Axiom~\ref{item-metric-f}), a.s.,
\alb
\op{len}_{h+b\phi}(P_a^4)
&\geq \op{len}_{h+b\phi}(P_a^4\cap B_{u'}(q)) + \op{len}_{h+b\phi}(P_a^4\setminus B_{u'}(q)) \notag\\
&\geq e^{\xi (b-a)} \op{len}_{h+a\phi}(P_a^4\cap B_{u'}(q)) + \op{len}_{h+a\phi}(P_a^4\setminus B_{u'}(q)) .
\ale
Since $P_a^4$ intersects $B_u(q)$, it must spend a positive amount of time in $B_{u'}(q)$ so by the previous display $\op{len}_{h+b\phi}(P_a^4)  > \op{len}_{h+a\phi}(P_a^4) = D_{h+a\phi}(0,z_a) = D_{h+b\phi}(0,z_a)$. 
Therefore $P_a^4$ is not a $D_{h+b\phi}$-geodesic and so $E_b$ does not occur.
\end{proof}

\begin{proof}[Proof of Proposition~\ref{prop-four-geo}]
By Lemma~\ref{lem-four-geo-pts} and a union bound over countably many possibilities for $w,t,s,$ a.s.\ for each $w\in \BB Q^2$ and each $t,s \in \BB Q\cap (0,\infty)$ such that $0 < t < s < D_h(0,w)$ the following is true.  
Let $\mcl X = \mcl X_{t,s}^w \subset\bdy\mcl B_t^{w,\bullet}$ be the set of confluence points as in Proposition~\ref{prop-conf-finite} and let $x_1,x_2,x_3 \in \mcl X $ be distinct. 
There are no points $z \in \BB C\setminus \mcl B_s^{w,\bullet}$ with the following property: there are four distinct $D_h$-geodesics $P^1,P^2,P^3 , P^4$ from 0 to $z$ such that $P^1,P^2,P^3$ pass through $x_1,x_2,x_3$, respectively (equivalently, $P^i(t) = x_i$ for each $i=1,2,3$). 

By Proposition~\ref{prop-conf-endpt-finite}, a.s.\ every $D_h$-geodesic from 0 to a point of $\BB C\setminus \mcl B_s^{w,\bullet}$ passes through some point of $\mcl X$, so the preceding paragraph implies that a.s.\ there are no points $z \in \BB C\setminus \mcl B_s^{w,\bullet}$ with the following property: there are four distinct $D_h$-geodesics $P^1,P^2,P^3 , P^4$ from 0 to $z$ such that $P^1(t) , P^2(t),P^3(t),P^4(t)$ are distinct. 

Now consider a point $z\in \BB C$ and four distinct $D_h$-geodesics $P^1,P^2,P^3,P^4$ from 0 to $z$. 
By Lemma~\ref{lem-geo-distinct}, on the event that such a point $z$ exists there is an $r_* \in (0,D_h(0,z))$ such that $P^1(r) , P^2(r), P^3(r) , P^4(r)$ are distinct for each $r\in [r_* ,D_h(0,z))$. 
Choose $t,s \in \BB Q\cap (0,\infty)$ with $r_* < t < s < D_h(0,z)$ and $w \in (\BB C\setminus \mcl B_s(0;D_h))\cap\BB Q^2$ such that $z \in \BB C\setminus \mcl B_s^{w,\bullet}$. 
Then $P^1(t) , P^2(t) , P^3(t) , P^4(t)$ are distinct, so from the preceding paragraph we conclude that the probability that such a point $z$ exists is zero. 
\end{proof}

\subsection{Existence of points joined to the origin by two or three geodesics}
\label{sec-three-geo-dense}

\begin{prop} \label{prop-three-geo-dense}
Let $\mcl R_{\geq 2}$ (resp.\ $\mcl R_{\geq 3}$) be the set of $z \in \BB C$ such that there are at least two (resp.\ three) distinct $D_h$-geodesics from 0 to $z$.  
Then every open subset of $\BB C$ contains at least one point of $\mcl R_{\geq 3}$ and an uncountable connected subset of $\mcl R_{\geq 2}$. 
\end{prop}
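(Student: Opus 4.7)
The approach relies on the ``death of a confluence point'' phenomenon in Proposition~\ref{prop-conf-finite}. For fixed $w$ and $t<s<D_h(0,w)$, the finite set $\mcl X_{t,s}^w\subset\bdy\mcl B_t^{w,\bullet}$ partitions $\bdy\mcl B_s^{w,\bullet}$ into arcs $I_x$, with $|\mcl X_{t,s}^w|$ non-increasing in $s$. At each critical time $s^*$ where $|\mcl X_{t,s}^w|$ drops by one, an arc $I_x$ must collapse to a single point $y^*\in\bdy\mcl B_{s^*}^{w,\bullet}$. I would argue that $y^*\in\mcl R_{\geq 3}$: as $s\nearrow s^*$, interior points of $I_x$ converge to $y^*$ and their leftmost geodesics (all passing through $x$) yield, by an Arzel\`a--Ascoli compactness argument, a leftmost geodesic from $0$ to $y^*$ through $x$. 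Simultaneously, $y^*$ is the shared endpoint of the two neighboring arcs $I_{x'},I_{x''}$, giving leftmost geodesics through $x'$ and $x''$ as well. These three geodesics are distinct because they pass through distinct points $x,x',x''\in\bdy\mcl B_t^{w,\bullet}$. Away from death times, any endpoint $y$ shared by two adjacent arcs $I_x,I_{x'}$ already admits leftmost geodesics through both $x$ and $x'$, hence $y\in\mcl R_{\geq 2}$.

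For an open $U\subset\BB C$, I would pick $w\in U$ and a small rational $t>0$. Since the unique geodesic from $0$ to $w$ (Lemma~\ref{lem-geo-basic}) passes through a single point of $\bdy\mcl B_t^{w,\bullet}$, we have $|\mcl X_{t,s}^w|\to 1$ as $s\nearrow D_h(0,w)$. For $t$ small enough, $|\mcl X_{t,s}^w|$ exceeds $1$ for $s$ in some subinterval, so at least one death event occurs; the \emph{last} such death event takes place at some $s^*$ close to $D_h(0,w)$. Continuity of filled metric ball boundaries, together with the fact that the unbounded component of $\BB C\setminus \mcl B_s^{w,\bullet}$ containing $w$ shrinks to $\{w\}$ as $s\nearrow D_h(0,w)$, shows that the corresponding death point $y^*\in\bdy\mcl B_{s^*}^{w,\bullet}$ lies near $w$, hence in $U$ provided $t$ is chosen appropriately.

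For the uncountable connected subset of $\mcl R_{\geq 2}$ in $U$, the plan is to trace arc endpoints as $s$ varies. With the same setup, consider $s\mapsto y^-(s)$, the endpoint of the shrinking arc $I_x$ shared with a neighboring arc, for $s\in(s^*-\delta,s^*)$. Each $y^-(s)\in\mcl R_{\geq 2}$ by the observation above; the map is continuous with $y^-(s)\to y^*$ as $s\to s^*$, so for $\delta$ small its image lies in $U$. This image is connected as the image of a connected set under a continuous map, and uncountable because $D_h(0,y^-(s))=s$ forces $s\mapsto y^-(s)$ to be injective. Combined with the $\mcl R_{\geq 3}$ point $y^*\in U$, this gives the full statement.

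\emph{Main obstacle.} The crux of the argument is the rigorous justification of the ``death point'' analysis: (i) that arc endpoints vary continuously as functions of $s$, and (ii) that both endpoints of a shrinking arc $I_x$ converge to a common limit point $y^*$ as $s\nearrow s^*$. Both should follow from Hausdorff-continuity of $\bdy\mcl B_s^{w,\bullet}$ in $s$, the locality axiom, and the uniqueness of leftmost geodesics from Proposition~\ref{prop-conf-finite}(\ref{item-conf-finite-unique}); however, carrying them out rigorously requires care. A secondary difficulty is controlling the geometry of $\bdy\mcl B_s^{w,\bullet}$ near $w$ so that the death point and the nearby arc-endpoint curve lie inside $U$; this relies on the continuity of the LQG metric and the fact that filled metric balls targeted at $w$ pinch down to $w$ in the limit $s\nearrow D_h(0,w)$.
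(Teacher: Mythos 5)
Your ``death of confluence points'' argument is a genuinely different route from the paper, which works at a \emph{fixed} $s$: it introduces the cells $H_x=\{z:\text{some }D_h\text{-geodesic from }0\text{ to }z\text{ passes through }x\}$, shows they cover $\ol{\BB C\setminus\mcl B_s^{w,\bullet}}$ and meet only along their boundaries, and then applies a Mayer--Vietoris argument (Lemma~\ref{lem-connected-bdy}) to prove each relative boundary $\wt\bdy H_x$ is connected and uncountable --- from which both the triple intersections (for $\mcl R_{\geq 3}$) and the uncountable connected subset of $\mcl R_{\geq 2}$ drop out without ever varying $s$. Your plan could in principle work, but beyond the obstacle you acknowledge there are two concrete gaps.

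First, the localization fails as written. You claim the \emph{last} death event occurs at $s^*$ close to $D_h(0,w)$ and hence that $y^*$ is close to $w\in U$, but nothing forces this: confluence is strong, and $\#\mcl X_{t,s}^w$ could drop to $1$ at some $s_0$ bounded well away from $D_h(0,w)$ and stay there, in which case the last death point is nowhere near $w$. There is also a secondary problem with the last death: if it is the drop from $2$ to $1$, the dying arc $I_x$ has the same neighbor at both endpoints ($x'=x''$), so you only produce two geodesics, not three. The paper sidesteps both issues by localizing near a prescribed boundary point $z\in\bdy\mcl B_t^{w,\bullet}$ rather than near the target $w$, exploiting the fact (eq.~\eqref{eqn-infty-conf-pts}) that $\#(\mcl X_{t,s}^w\cap\mcl B_\ep(z;D_h))\to\infty$ as $s\to t^+$; this guarantees both $\#\mcl X\geq 3$ and that the relevant phenomena occur in a thin annulus near $z$. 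To rescue your plan you would need to replace ``last death, near $w$'' with ``a death near a prescribed $z\in\bdy\mcl B_t^{w,\bullet}$ at $s$ close to $t$,'' at which point your argument begins to converge to the paper's. Second, as you note, the claims that arc endpoints vary continuously in $s$ and that a dying arc collapses to a single point (rather than, say, two adjacent arcs dying simultaneously or an endpoint jumping) are nontrivial and are not supplied by any result in the paper; the fixed-$s$ cell argument was designed precisely to avoid having to prove such regularity in $s$.
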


The main difficulty in the proof of Proposition~\ref{prop-three-geo-dense} is showing that $\mcl R_{\geq 3}\not=\emptyset$ and $\mcl R_{\geq 2}$ contains a connected set. This is accomplished via the following lemma.

\begin{figure}[t!]
 \begin{center}
\includegraphics[scale=.9]{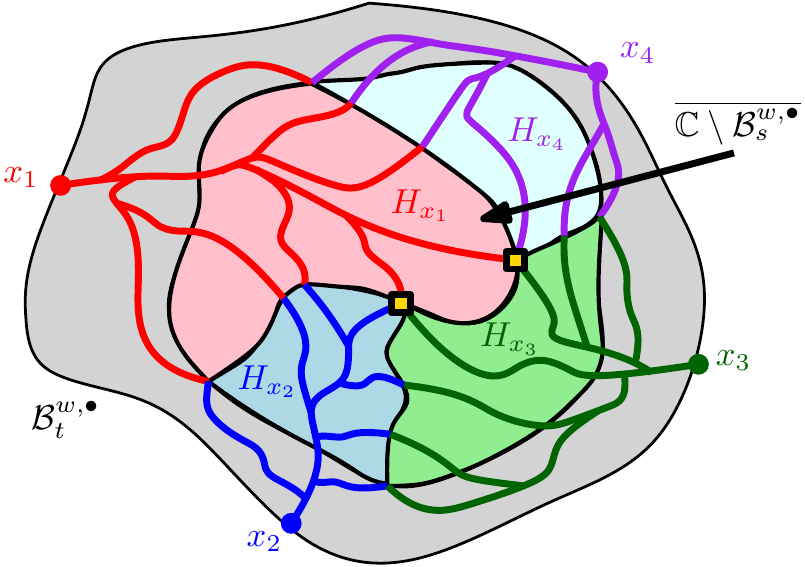}
\vspace{-0.01\textheight}
\caption{Illustration of the proof of Lemma~\ref{lem-three-geo-dense-pt}. Here, $\#\mcl X = 4$. We have shown each of the sets $H_x \in \mcl X$ in a different color and also shown several representative geodesics going from $x$ to points in $H_x$. Each point in $H_x \cap \bigcap_{y\not= x} H_y$ is joined to zero by at least two geodesics. Each point of $H_{x_1}\cap H_{x_2} \cap H_{x_3}$ for distinct $x_1,x_2,x_3 \in \mcl X$ is joined to zero by at least three geodesics. In the figure there are two points of this latter type, which are indicated by gold squares. 
}\label{fig-three-geo-dense}
\end{center}
\vspace{-1em}
\end{figure}

\begin{lem} \label{lem-three-geo-dense-pt}
Let $w\in\BB C\setminus \{0\}$ and $0 < t < s < \infty$. 
Define the set of confluence points $\mcl X = \mcl X_{t,s}^w \subset\bdy\mcl B_t^{w,\bullet}$ as in Proposition~\ref{prop-conf-finite}.
If $s < D_h(0,w)$, then a.s.\ for each $x\in\mcl X$ the following is true.
\begin{enumerate}
\item If $\#\mcl X \geq 3$, then there is a point $z\in \ol{\BB C\setminus \mcl B_s^{w,\bullet}}$ for which there are at least three distinct $D_h$-geodesics from 0 to $z$, one of which passes through $x$. \label{item-three-dense}
\item Suppose there is a point of $\BB C\setminus \mcl B_s^{w,\bullet}$ which is joined to zero by a geodesic which passes through $x$ and a point in $\BB C\setminus \mcl B_s^{w,\bullet}$ which is joined to zero by a geodesic which does not pass through $x$. Then there is an uncountable connected set $A\subset \ol{\BB C\setminus \mcl B_s^{w,\bullet}}$ such that for each $z\in A$, there are at least two distinct $D_h$-geodesics from 0 to $z$, one of which passes through $x$. \label{item-two-dense}
\end{enumerate}
\end{lem}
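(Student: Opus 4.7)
The plan is to study the closed ``territories''
\[
H_x := \overline{\{\, z \in \mathbb{C} \setminus \mcl B_s^{w,\bullet} : \text{some } D_h\text{-geodesic from } 0 \text{ to } z \text{ passes through } x \,\}}, \qquad x \in \mcl X,
\]
viewed as closed subsets of the closed topological disk $\overline{\mathbb{C} \setminus \mcl B_s^{w,\bullet}} \cup \{\infty\}$ obtained via Lemma~\ref{lem-metric-curve} together with one-point compactification. My first step would be to record the structural properties of the $H_x$: an Arzel\`a--Ascoli argument for $D_h$-geodesics in the proper length space $(\mathbb{C}, D_h)$, using that $x$ itself is preserved along limits, shows every $z \in H_x$ actually admits some $D_h$-geodesic from $0$ passing through $x$; Proposition~\ref{prop-conf-endpt-finite} gives $\bigcup_{x \in \mcl X} H_x = \overline{\mathbb{C} \setminus \mcl B_s^{w,\bullet}}$; each $H_x$ is connected since it contains the closed arc $\overline{I_x} \subset \partial \mcl B_s^{w,\bullet}$ (as a limit of exterior points whose geodesics pass through $x$) and every $z \in H_x$ is joined to $\overline{I_x}$ by a geodesic segment inside $H_x$; and by assertion~\ref{item-regular-disjoint} of Lemma~\ref{lem-regular-geo'}, any $z \in H_{x_1} \cap \cdots \cap H_{x_k}$ for distinct $x_1, \ldots, x_k \in \mcl X$ is joined to $0$ by at least $k$ distinct $D_h$-geodesics.

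For part~(2), the hypothesis yields a point $z_0 \in H_x$ and a point $z_1 \in K := \bigcup_{y \ne x} H_y$. Using assertion~\ref{item-regular-unique} of Lemma~\ref{lem-regular-geo'}, any non-endpoint of the corresponding geodesic segments has a unique $D_h$-geodesic from $0$. Via Arzel\`a--Ascoli and the finiteness of $\mcl X$, the leftmost-confluence-point map $\Phi$ (sending $z$ to the element of $\mcl X$ through which its leftmost geodesic from $0$ passes) is then continuous at unique-geodesic points, so both $H_x$ and $K$ contain relatively open subsets of the closed disk. In particular the relative topological boundary $F$ of $H_x$ in the closed disk is nonempty, contained in $H_x \cap K$, and separates the open interiors of $H_x$ and $K$. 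A standard planar topology result (Kuratowski's boundary-of-separator theorem, related to Janiszewski's theorem) then implies that $F$ contains a connected component which itself separates these two open sets; any separating continuum in a disk is automatically uncountable, giving the desired set $A$ of points joined to $0$ by at least two distinct $D_h$-geodesics with one through $x$.

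For part~(1), assume $\#\mcl X \geq 3$ and invoke part~(2) (whose hypothesis holds automatically) to obtain a connected uncountable $A \subset H_x \cap \bigcup_{y \ne x} H_y$. Consider the finite closed cover $\{A \cap H_y\}_{y \in \mcl X \setminus \{x\}}$ of the connected set $A$. If two or more members are nonempty, connectedness forces at least two of them to meet, yielding $z \in A \cap H_x \cap H_{y_1} \cap H_{y_2}$ for distinct $y_1, y_2 \ne x$, hence three distinct $D_h$-geodesics from $0$ to $z$, one through $x$, as required. Otherwise $A \subset H_x \cap H_y$ for a single $y \in \mcl X \setminus \{x\}$, and one must work harder: since $\#\mcl X \geq 3$ there is a cyclic neighbor $y'$ of $x$ on $\partial \mcl B_t^{w,\bullet}$ with $y' \ne y$, and the shared endpoint of $\overline{I_x}$ and $\overline{I_{y'}}$ already lies in $H_x \cap H_{y'}$. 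A second application of part~(2) centered at $y'$, combined with the cyclic ordering of $\mcl X$ from Proposition~\ref{prop-conf-finite}~\ref{item-conf-finite-order} and a Knaster--Kuratowski--Mazurkiewicz-type planar argument applied to the cover $\{H_z\}_{z \in \mcl X}$ of the disk, should then force a separating continuum emanating from $y'$ to cross $A$ in a point of $H_x \cap H_y \cap H_{y'}$. I expect this final topological step to be the main technical obstacle.
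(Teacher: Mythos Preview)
Your setup via the territories $H_x$ is exactly the paper's approach, and your treatment of part~(2) is close to correct (the paper gets there slightly differently, but your separating-continuum argument works). The genuine problem is in part~(1).

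You only extract a \emph{connected component} $A$ of the relative boundary of $H_x$, and then you cannot rule out $A \subset H_x \cap H_y$ for a single $y$. Your proposed fix --- a second application of part~(2) at a cyclic neighbor $y'$ plus a ``KKM-type planar argument'' --- is not a proof: there is no reason the separating continuum produced for $y'$ should intersect your original $A$, and the KKM lemma concerns covers of a simplex by sets indexed by vertices, which has no evident relation to this configuration. You correctly identify this as the main obstacle, but you have not overcome it.

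The missing idea is that the \emph{entire} relative boundary $\wt\bdy H_x$ (in $\ol{\BB C \setminus \mcl B_s^{w,\bullet}}$) is connected, not just some component. To get this, the paper first shows that the complement $\ol{\BB C \setminus \mcl B_s^{w,\bullet}} \setminus H_x$ is path connected: any $z$ there has a geodesic from $0$ avoiding $H_x$ entirely (else concatenation would produce a geodesic to $z$ through $x$), and that geodesic lands in the connected arc $\bdy \mcl B_s^{w,\bullet} \setminus \ol{I_x}$. With both $H_x$ and its complement connected inside a space with trivial $H_1$ (the closed disk, after compactifying at $\infty$ if needed), a short Mayer--Vietoris argument forces $\wt\bdy H_x$ itself to be connected. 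Once you have that, part~(1) is immediate: $\wt\bdy H_x = \bigcup_{y \ne x} (H_x \cap H_y)$ is a connected finite union of closed sets, and at least two members are nonempty (the shared endpoints of $\ol{I_x}$ with its two cyclic neighbors lie in distinct $H_x \cap H_y$'s), so two of them must meet. This also streamlines part~(2): $\wt\bdy H_x$ is connected and not a point, hence uncountable, with no need for a separator theorem.

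A smaller issue: you assert that the hypothesis of part~(2) ``holds automatically'' once $\#\mcl X \geq 3$, but this requires points in the \emph{open} set $\BB C \setminus \mcl B_s^{w,\bullet}$ whose geodesics do and do not pass through $x$, which is not immediate from the definition of $\mcl X$ (the paper verifies it separately in the application, using an auxiliary radius). Since the corrected argument for part~(1) does not route through part~(2), this point becomes moot.
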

\begin{proof}%[Proof of Lemma~\ref{lem-three-geo-dense-pt}]
For $x \in \mcl X$, let $H_x$ be the set of points $z\in  \ol{\BB C\setminus \mcl B_s^{w,\bullet}}$ such that there is a $D_h$-geodesic from 0 to $z$ which passes through $x$.
Equivalently, 
\eqb \label{eqn-geo-cell-def}
H_x = \left\{z \in \ol{\BB C\setminus \mcl B_s^{w,\bullet}} : D_h(z,x) = D_h(z,\bdy\mcl B_s^{w,\bullet}) \right\} .
\eqe
To prove assertion~\ref{item-three-dense}, we will show that there must be distinct $y,y' \in \mcl X \setminus \{x\}$ such that $H_x\cap H_{y} \cap H_{y'} \not=\emptyset$. 
To prove assertion~\ref{item-two-dense}, we will show that $H_x \cap \bigcup_{y\in \mcl X\setminus \{x\}} H_y$ is uncountable and connected.
The proofs of both of these assertions are via deterministic topological arguments. 
See Figure~\ref{fig-three-geo-dense} for an illustration. 
\medskip

\noindent\textit{Step 1: the $H_x$'s cover $\ol{\BB C\setminus \mcl B_s^\bullet}$ and intersect only along their boundaries.}
By Proposition~\ref{prop-conf-endpt-finite}, a.s.\ every point of $\mcl B_s^{w,\bullet}$ belongs to $H_x$ for some $x\in\mcl X$. Furthermore, every point of $I_x$ except possibly the endpoints of $I_x$ belongs to $H_x$. 
By~\eqref{eqn-geo-cell-def}, each $H_x$ is closed so in fact $H_x$ contains $\ol I_x$. Consequently, 
\eqb \label{eqn-geo-cell-union}
 \ol{\BB C\setminus \mcl B_s^{w,\bullet}} = \bigcup_{x\in\mcl X} H_x .
\eqe

If $z\in H_x \cap H_y$ for $x \not=y$, then there are at least two distinct $D_h$-geodesics from 0 to $z$. By the uniqueness of $D_h$-geodesics to rational points, a.s.\ $H_x\cap H_y$ has empty interior for any $x\not=y$, so the sets $H_x$ intersect only along their boundaries. 
\medskip

\noindent\textit{Step 2: $H_x$ and its complement are path connected.}
For each $z\in H_x$, there is a $D_h$-geodesic from 0 to $z$ which passes through $x$. By Proposition~\ref{prop-conf-endpt-finite} this $D_h$-geodesic hits $\ol I_x$. 
Since $\ol I_x$ is a curve (see Lemma~\ref{lem-metric-curve}) it follows that each $H_x$ is connected.

If $z \in \ol{\BB C\setminus \mcl B_s^{w,\bullet}} \setminus H_x$, then there is no geodesic from 0 to $z$ which passes through $x$. Consequently, if $P$ is a geodesic from 0 to $z$, then $P$ cannot enter $H_x$ (otherwise, there would be a geodesic from 0 to some point of $P$ which passes through $x$, and hence also a geodesic from 0 to $z$ which passes through $x$).
In particular, $P$ must pass through $ \bdy\mcl B_s^{w,\bullet} \setminus \ol I_x$. The set $ \bdy\mcl B_s^{w,\bullet} \setminus \ol I_x$ is path connected (it is a union of arcs which intersect at their endpoints) and we have just shown that every point in $\ol{\BB C\setminus \mcl B_s^{w,\bullet}} \setminus H_x$ can be joined to this set by a path in $\ol{\BB C\setminus \mcl B_s^{w,\bullet}} \setminus H_x$.
Therefore, $\ol{\BB C\setminus \mcl B_s^{w,\bullet}} \setminus H_x$ is path connected.
\medskip

\noindent\textit{Step 3: the relative boundary of $H_x$ is connected.}
For $x\in\mcl X$, let $\wt\bdy H_x$ be the boundary of $H_x$ viewed as a subset of $\ol{\BB C\setminus \mcl B_s^{w,\bullet}}$, i.e.,
\eqbn
\wt\bdy H_x = \bdy H_x \cap \bdy (\ol{\BB C\setminus \mcl B_s^{w,\bullet}} \setminus H_x ) .
\eqen
We claim that for each $x\in\mcl X$, the set $\wt\bdy H_x$ is connected.
 
If $\BB C \setminus \mcl B_s^{w,\bullet}$ is bounded, then by Lemma~\ref{lem-metric-curve} and the discussion just after, the set $\ol{\BB C \setminus \mcl B_s^{w,\bullet}}$ is homeomorphic to a closed Euclidean disk, so its first homology group is trivial.
By Lemma~\ref{lem-connected-bdy} (applied with $K = H_x$) and the result of Step 2, we get that $\wt\bdy H_x$ is connected.
 
We now argue that the same is true if $\BB C\setminus \mcl B_s^{w,\bullet}$ is unbounded. Indeed, Lemma~\ref{lem-metric-curve} shows that $W := \ol{\BB C\setminus \mcl B_s^{w,\bullet}} \cup \{\infty\}$, viewed as a topological space with the one-point compactification topology, is homeomorphic to a closed Euclidean disk. Furthermore, by~\cite[Lemma 4.3]{gm-confluence} a.s.\ only one of the sets $H_x$ for $x\in\mcl X$ is unbounded. In particular, for each $x\in\mcl X$, $\wt\bdy H_x$ is bounded and coincides with the boundary of $H_x$ viewed as a subset of $W$. Therefore, we can apply Lemma~\ref{lem-connected-bdy} as above to get that $\wt\bdy H_x$ is connected.
\medskip

\noindent\textit{Step 4: proof of assertion~\ref{item-three-dense}.}
By~\eqref{eqn-geo-cell-union}, for each $x\in\mcl X$ the set $\wt\bdy H_x$ is the union of the closed sets $H_x \cap H_y$ for $y\in\mcl X\setminus \{x\}$. 
Since $\#\mcl X\geq 3$, at least two of the sets $H_x\cap H_y$ for $y\in\mcl X\setminus \{x\}$ must be non-empty, namely the ones for which $I_y$ and $I_x$ share an endpoint.
The union of all of the sets $H_x\cap H_y$ for $y\in \mcl X\setminus \{x\}$ is equal to $\wt\bdy H_x$, which by Step 3 is connected.
Since the sets $H_x\cap H_y$ are closed, these sets cannot all be disjoint.
Hence there must be distinct $y_1,y_2 \in \mcl X\setminus \{x\}$ for which $H_x\cap H_{y_1} \cap H_{y_2} \not=\emptyset$.
If $z\in H_x\cap H_{y_1}\cap H_{y_2}$, then there are $D_h$-geodesics from 0 to $z$ which pass through each of $x,y_1,y_2$. 
This gives assertion~\ref{item-three-dense}. 
\medskip

\noindent\textit{Step 5: proof of assertion~\ref{item-two-dense}.}
Assume the hypothesis of assertion~\ref{item-two-dense}, i.e., $H_x \cap (\BB C\setminus \mcl B_s^{w,\bullet}) \not=\emptyset$ and $H_y \cap (\BB C\setminus \mcl B_s^{w,\bullet}) \not=\emptyset$ for some $y\not= x$. If $P$ is a $D_h$-geodesic from 0 to a point $u \in H_x\cap  (\BB C\setminus \mcl B_s^{w,\bullet})$ which passes through $x$, then by Lemma~\ref{lem-regular-geo'} $P|_{[0,t]}$ is the unique $D_h$-geodesic from 0 to $P(t)$ for each $t\in (0,D_h(0,u))$. In particular, due to the last paragraph in Step 1, we must have $P(t) \notin \wt\bdy H_x$ for each $t\in (0,D_h(0,u))$. 
Hence $H_x$ has non-empty interior. Similarly, $H_y$ and hence also $\ol{\BB C\setminus \mcl B_s^{w,\bullet}} \setminus H_x$ has non-empty interior. 

As explained in Step 3, the set $\ol{\BB C\setminus \mcl B_s^{w,\bullet}}$ has the topology of either a closed disk or a closed disk with a point removed.
In particular, removing a single point from $\ol{\BB C\setminus \mcl B_s^{w,\bullet}}$ does not disconnect it. 
From the preceding paragraph, we see that $\ol{\BB C\setminus \mcl B_s^{w,\bullet}}\setminus \wt\bdy H_x$ is not connected, hence $\wt\bdy H_x$ is not a single point. 
Since $\wt\bdy H_x$ is connected (Step 3) it must be uncountable. 

Since each point of $\wt\bdy H_x$ is joined to zero by at least two distinct $D_h$-geodesics, at least one of which passes through $x$ (Step 1), we obtain assertion~\ref{item-two-dense} with $A = \wt \bdy H_x$. 
\end{proof}

We now prove an analog of Proposition~\ref{prop-three-geo-dense} restricted to points of $\bdy\mcl B_t^{w,\bullet}$. 

\begin{lem} \label{lem-three-geo-dense-bdy}
Let $w\in\BB C\setminus \{0\}$ and $t > 0$.
Almost surely on the event $\{t <D_h(0,w)\}$, each neighborhood of each point of $\bdy\mcl B_t^{w,\bullet}$ contains at least one point of $\mcl R_{\geq 3}$ and an uncountable connected subset of $\mcl R_{\geq 2}$. 
\end{lem}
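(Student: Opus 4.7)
The strategy is to apply Lemma~\ref{lem-three-geo-dense-pt} with $s > t$ chosen extremely close to $t$, so that the three-geodesic point and uncountable connected two-geodesic set it produces can be forced into any prescribed neighborhood of $x_0 \in \bdy\mcl B_t^{w,\bullet}$. Fix such an $x_0$ and an open neighborhood $U$ of $x_0$.

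The first step is to establish that the confluence points are dense on $\bdy\mcl B_t^{w,\bullet}$, in the sense that $\bigcup_{s \in (t, D_h(0,w))} \mcl X_{t,s}^w$ is a dense subset of $\bdy\mcl B_t^{w,\bullet}$. I would prove this by fixing a sequence $p_n \in \BB C \setminus \mcl B_t^{w,\bullet}$ converging to $x_0$ and considering the leftmost $D_h$-geodesics from $0$ to $p_n$. By Proposition~\ref{prop-conf-endpt-finite} these geodesics pass through confluence points $x_n \in \mcl X_{t, s_n}^w$ for $s_n := D_h(0, p_n) \searrow t$, and by Arzel\`a--Ascoli the geodesics converge subsequentially to a $D_h$-geodesic from $0$ to $x_0$, which forces $x_n \to x_0$. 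Combining this density with the fact that $\# \mcl X_{t,s}^w$ grows without bound as $s \searrow t$, I can pick $s > t$ close to $t$ together with three cyclically consecutive confluence points $x^*, y_1, y_2 \in \mcl X_{t,s}^w$ all contained in $U \cap \bdy\mcl B_t^{w,\bullet}$.

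Applying Lemma~\ref{lem-three-geo-dense-pt} with $x = x^*$ then produces (via assertion~1) a point $z \in H_{x^*} \cap H_{y_1} \cap H_{y_2}$ joined to $0$ by three distinct $D_h$-geodesics passing through $x^*, y_1, y_2$, and (via assertion~2, whose hypothesis holds as long as the arcs $I_{x^*}, I_{y_1}$ are non-degenerate) an uncountable connected set $A \subset \wt\bdy H_{x^*}$ of points in $\mcl R_{\geq 2}$. The remaining task, and the main obstacle, is to verify that $z$ and $A$ lie in $U$. Since $z \in H_{x^*}$ satisfies $D_h(z, x^*) = D_h(0,z) - t$ and $x^* \in U$, this reduces to showing that $D_h(0, z) - t$ --- and more generally the $D_h$-diameters of $H_{x^*}, H_{y_1}, H_{y_2}$ --- can be made arbitrarily small by shrinking $s - t$ and clustering $x^*, y_1, y_2$ around $x_0$.

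I would overcome this obstacle via compactness. Along a sequence $s_n \searrow t$ with cyclically consecutive confluence triples $(x_n^*, y_{1,n}, y_{2,n}) \to (x_0, x_0, x_0)$, the corresponding three-geodesic points $z_n$, which are unique by Lemma~\ref{lem-three-geo-pts}, admit a subsequential limit $z^\infty$. Using Arzel\`a--Ascoli on the three associated geodesics from $0$ to $z_n$ yields three $D_h$-geodesics from $0$ to $z^\infty$, all passing through $x_0$ at time $t$. The hard part is ruling out $z^\infty \neq x_0$: here I would argue that the three limiting geodesics must in fact be distinct (using the cyclic ordering on $\bdy\mcl B_t^{w,\bullet}$ dictated by Proposition~\ref{prop-conf-finite}\ref{item-conf-finite-order}) and then combine with Proposition~\ref{prop-four-geo} to derive a contradiction, for instance by producing a fourth distinct geodesic from $0$ to $z^\infty$ via a separate application of Lemma~\ref{lem-three-geo-dense-pt} to a strictly smaller annular pair. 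The same compactness reasoning, applied to the connected sets $\wt\bdy H_{x_n^*}$ and using the uniqueness of geodesics to interior points of each $H_x$ (assertion~\ref{item-regular-unique} of Lemma~\ref{lem-regular-geo'}), gives the statement about $A \subset U$.
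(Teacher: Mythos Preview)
Your overall strategy --- apply Lemma~\ref{lem-three-geo-dense-pt} for $s$ close to $t$ with a confluence point $x$ near $x_0$, then argue that the resulting three-geodesic point $z$ lies in $U$ --- is the same as the paper's. The gap is in the second step: you have no mechanism to bound the location of $z$.

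Lemma~\ref{lem-three-geo-dense-pt} only says $z \in \ol{\BB C\setminus \mcl B_s^{w,\bullet}}$, which is unbounded. So your sequence $z_n$ need not admit any subsequential limit, and the compactness argument never gets started. Even granting boundedness, the rest of the argument is problematic: when $x_n^*, y_{1,n}, y_{2,n}$ all collapse to $x_0$, there is no reason the three limiting geodesics remain distinct (the cyclic ordering from Proposition~\ref{prop-conf-finite}\ref{item-conf-finite-order} gives no information once the points coincide), and the proposal to manufacture a fourth geodesic to contradict Proposition~\ref{prop-four-geo} is not substantiated. Also note that Lemma~\ref{lem-three-geo-dense-pt} does not guarantee that the three geodesics through $z$ pass through your \emph{chosen} cyclically adjacent $y_1,y_2$; its proof only produces \emph{some} $y_1,y_2 \in \mcl X\setminus\{x\}$.

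The paper's proof replaces your compactness argument with a one-line observation that directly controls $z$. One chooses two scales $t < s_1 < s_2$ close to $t$ with $\#\mcl X_{s_1}\cap \mcl B_{\ep/2}(x_0;D_h) > \#\mcl X_{s_2}\cap \mcl B_{\ep/2}(x_0;D_h) \geq 3$, and takes $x \in (\mcl X_{s_1}\setminus \mcl X_{s_2}) \cap \mcl B_{\ep/2}(x_0;D_h)$. Then one applies Lemma~\ref{lem-three-geo-dense-pt} at scale $s_0 \in (t,s_1)$. The point is that since $x \notin \mcl X_{s_2}$, \emph{no} $D_h$-geodesic from $0$ to a point outside $\mcl B_{s_2}^{w,\bullet}$ passes through $x$; hence the three-geodesic point $z'$ (one of whose geodesics does pass through $x$) must lie in $\ol{\mcl B_{s_2}^{w,\bullet}}$. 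This gives $D_h(0,z') \leq s_2 < t + \ep/2$, so $D_h(x,z') = D_h(0,z') - t < \ep/2$ and $z' \in \mcl B_\ep(x_0;D_h)$. The same reasoning bounds the connected set $A$. This idea --- choosing the confluence point $x$ to die between scales $s_1$ and $s_2$ --- is exactly what is missing from your argument.
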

\begin{proof}
For $s > t$, let $\mcl X_s = \mcl X_{t,s}^w $ be as in Proposition~\ref{prop-conf-finite}. 
Fix $z \in \bdy\mcl B_t^{w,\bullet}$ and $\ep > 0$. We will show that a.s.\ $ \mcl R_{\geq 3} \cap \mcl B_\ep(z;D_h) \not=\emptyset$ and that $\mcl R_{\geq 2} \cap \mcl B_\ep(z;D_h ) $ contains an uncountable connected set. See Figure~\ref{fig-three-geo-dense-bdy} for an illustration of the proof.

\begin{figure}[t!]
 \begin{center}
\includegraphics[scale=.9]{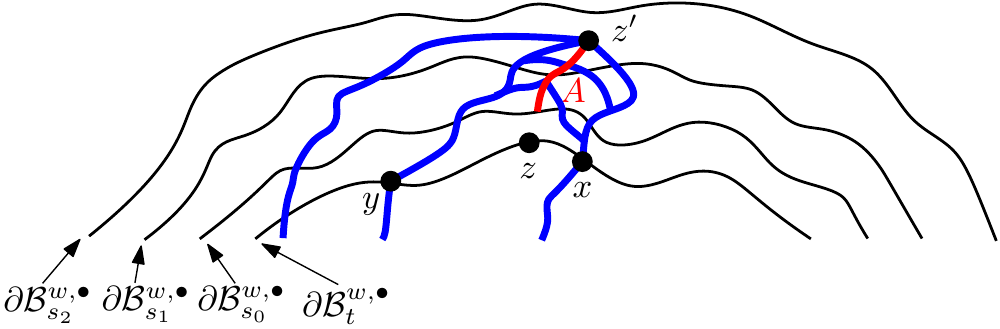}
\vspace{-0.01\textheight}
\caption{Illustration of the proof of Lemma~\ref{lem-three-geo-dense-bdy} (we just show a neighborhood of $z$). The point $x$ is chosen so that $x\in \mcl X_{t,s_1}^w \cap \mcl B_{\ep/2}(z;D_h)$. The point $z' \in \mcl B_{s_2}^{w,\bullet} \setminus \mcl B_{s_0}^{w,\bullet}$ is joined to 0 by three distinct geodesics, one of which passes through $x$. Each point in the connected set $A$ is joined to zero by two distinct $D_h$-geodesics, one of which passes through $x$. The point $z'$ and the set $A$ exist due to Lemma~\ref{lem-three-geo-dense-pt}. We show that $z'$ and $A$ are close to $z$ using that $x$ is close to $z$ and each point of $\mcl B_{s_2}^{w,\bullet}$ is close to $\mcl B_t^{w,\bullet}$. 
}\label{fig-three-geo-dense-bdy}
\end{center}
\vspace{-1em}
\end{figure}

We first argue that a.s.\
\eqb \label{eqn-infty-conf-pts}
\lim_{s \rta t^+} \#(\mcl X_s \cap \mcl B_\ep(z;D_h)) = \infty .
\eqe
Indeed, if $s>t$ and $u \in \bdy\mcl B_s^{w,\bullet}$, then every $D_h$-geodesic from 0 to $u$ must hit $\bdy\mcl B_t^{w,\bullet} \cap \ol{\mcl B_{s-t}(u;D_h)}$.
Hence any two $D_h$-geodesics to points of $\bdy\mcl B_s^{w,\bullet}$ which lie at $D_h$-distance greater than $2(s-t)$ from each other must hit different points of $\bdy\mcl B_t^{w,\bullet}$. 
Since the number of $D_h$-balls of radius $2(s-t)$ needed to cover $\bdy\mcl B_t^{w,\bullet} \cap \mcl B_\ep(z;D_h)$ tends to $\infty$ as $s\rta t^+$, we obtain~\eqref{eqn-infty-conf-pts}. 

Since $D_h$ induces the Euclidean topology and $\mcl B_t^{w,\bullet} = \bigcap_{s > t} \mcl B_s^{w,\bullet}$, we also have
\eqb \label{eqn-three-geo-diam}
\lim_{s\rta t^+} \max_{z' \in\mcl B_s^{w,\bullet}} D_h(z' ,\mcl B_t^{w,\bullet}) = 0 .
\eqe
 
By~\eqref{eqn-infty-conf-pts} and~\eqref{eqn-three-geo-diam}, there exists $s_1 , s_2  \in (t ,  t+\ep/2) \cap \BB Q$ with $s_1  <s_2$ such that 
\eqb \label{eqn-three-geo-times}
 \# [\mcl X_{s_1} \cap \mcl B_{\ep/2}(z;D_h) ] >   \#[\mcl X_{s_2 } \cap \mcl B_{\ep/2}(z;D_h)]  \geq 3 \quad \text{and} \quad
  \max_{z'\in\mcl B_{s_2}^{w,\bullet}} D_h(z',\mcl B_t^{w,\bullet}) \leq \ep/2 .
\eqe
Since $\mcl X_{s_1} \subset \mcl X_{s_2}$, the first condition in~\eqref{eqn-three-geo-times} implies that there is a point $x \in \mcl X_{s_1} \cap \mcl B_{\ep/2}(z;D_h)$ which is not in $\mcl X_{s_2}$. 

Let $s_0 \in (t,s_1) \cap \BB Q$. 
We now want to apply Lemma~\ref{lem-three-geo-dense-pt} with $s = s_0$ and the above choice of $x$. 
We have $x\in \mcl X_{s_0}$ and $\#\mcl X_{s_0} \geq \#\mcl X_{s_2} \geq 3$. 
Since $x\in\mcl X_{s_1}$ there must be a point in $\bdy\mcl B_{s_1}^{w,\bullet} \subset \BB C\setminus \mcl B_{s_0}^{w,\bullet}$ which is joined to zero by a $D_h$-geodesic which passes through $x$.
Since $\#\mcl X_{s_1} \geq 3$, there is some point $y\in \mcl X_{s_1}\setminus \{x\}$. As in the case of $x$, there is a point in $  \BB C\setminus \mcl B_{s_0}^{w,\bullet}$ which is joined to zero by a $D_h$-geodesic which passes through $y$.
Thus, the hypotheses of both assertions of Lemma~\ref{lem-three-geo-dense-pt} are satisfied, so Lemma~\ref{lem-three-geo-dense-pt} implies that a.s.\ the following is true. 
\begin{enumerate}
\item There is a point $z' \in \ol{\BB C\setminus \mcl B_{s_0}^{w,\bullet}}$ and at least three distinct $D_h$-geodesics from 0 to $z' $, one of which passes through $x$. 
\item There is an uncountable connected set $A\subset \ol{\BB C\setminus \mcl B_{s_0}^{w,\bullet}}$ such that for each $z''\in A$, there are at least two distinct $D_h$-geodesics from 0 to $z''$, one of which passes through $x$. 
\end{enumerate}

Since $x\notin \mcl X_{s_2}$, if $z'$ is as above then $z'  \in \ol{\mcl B_{s_2}^{w,\bullet} \setminus \mcl B_{s_0}^{w,\bullet}}$. 
By the second condition in~\eqref{eqn-three-geo-times}, $D_h(z', \bdy\mcl B_t^{w,\bullet}) \leq \ep/2$. 
Therefore, if $P$ is a $D_h$-geodesic from 0 to $z'$ which passes through $x$, then the $D_h$-diameter of $P([t,D_h(0,z')]$ is at most $\ep/2$. 
Consequently, $D_h(x,z') \leq \ep/2$ so since $x\in\mcl B_{\ep/2}(z;D_h)$, we have $z' \in \mcl B_\ep(z;D_h)$. 
Since $z' \in \mcl R_{\geq 3}$ by definition, we have $\mcl R_{\geq 3} \cap \mcl B_\ep(z;D_h) \not=\emptyset$. Similarly, we obtain that each of the points $z'' \in A$ above is contained in $\mcl B_\ep(z;D_h)$, so $ \mcl R_{\geq 2} \cap \mcl B_\ep(z;D_h) $ contains the uncountable connected set $A$. 
\end{proof}

\begin{proof}[Proof of Proposition~\ref{prop-three-geo-dense}]
The follows from  Lemma~\ref{lem-three-geo-dense-bdy} since the union of the sets $\bdy\mcl B_t^{w,\bullet}$ for $w\in\BB Q^2$ and $t\in \BB Q \cap (0,\infty)$ is a dense subset of $\BB C$.  
\end{proof}

\subsection{Proof of Theorem~\ref{thm-zero-geo}} 
\label{sec-zero-geo}

We prove all of the assertions of Theorem~\ref{thm-zero-geo} except that the Hausdorff dimension of the set of points joined to zero by two distinct geodesics is at most $d_\gamma-1$. This last assertion is Proposition~\ref{prop-two-geo-dim} below.

By Proposition~\ref{prop-four-geo}, a.s.\ for each $z\in\BB C$ there are either 1, 2, or 3 $D_h$-geodesics from 0 to $z$.
For each fixed $z\in\BB C$, a.s.\ there is a unique geodesic from 0 to $z$~\cite[Theorem 1.2]{mq-geodesics}. Hence the set of $z\in\BB C$ for which this is the case a.s.\ has full Lebesgue measure.

By Propositions~\ref{prop-three-geo-dense} and~\ref{prop-three-geo-countable}, a.s.\ the set $\mcl R_{\geq 3}$ of points $z\in\BB C$ for which there are at least (equivalently, exactly) three distinct geodesics from 0 to $z$ is dense and countable. 

By Proposition~\ref{prop-three-geo-dense}, if we let $\mcl R_{\geq 2}$ be the set of points $z\in \BB C$ such that there are at least two geodesics from 0 to $z$, then a.s.\ for any open set $U\subset \BB C$ the set $\mcl R_{\geq 2} \cap U$ contains an uncountable connected set. An uncountable connected metric space has Hausdorff dimension at least 1, so $\dim_{\mcl H}^\gamma (\mcl R_{\geq 2} \cap U) \geq 1$. 
Since $\mcl R_{\geq 3}$ is countable, it follows that the set $\mcl R_{\geq 2} \setminus \mcl R_{\geq 3}$ of points joined to zero by exactly two distinct geodesics is dense and has Hausdorff dimension at least 1. \qed

\subsection{Normal networks} 
\label{sec-normal-dense}

In this section we will prove Theorem~\ref{thm-dense}. The theorem is an easy consequence of Lemma~\ref{lem-regular-geo'}, Theorem~\ref{thm-zero-geo}, and the confluence of geodesics whose starting and ending points are close to ``typical" points (Theorem~\ref{thm-general-confluence}). 
The main input in the proof is the following proposition.

\begin{figure}[t!]
 \begin{center}
\includegraphics[scale=.9]{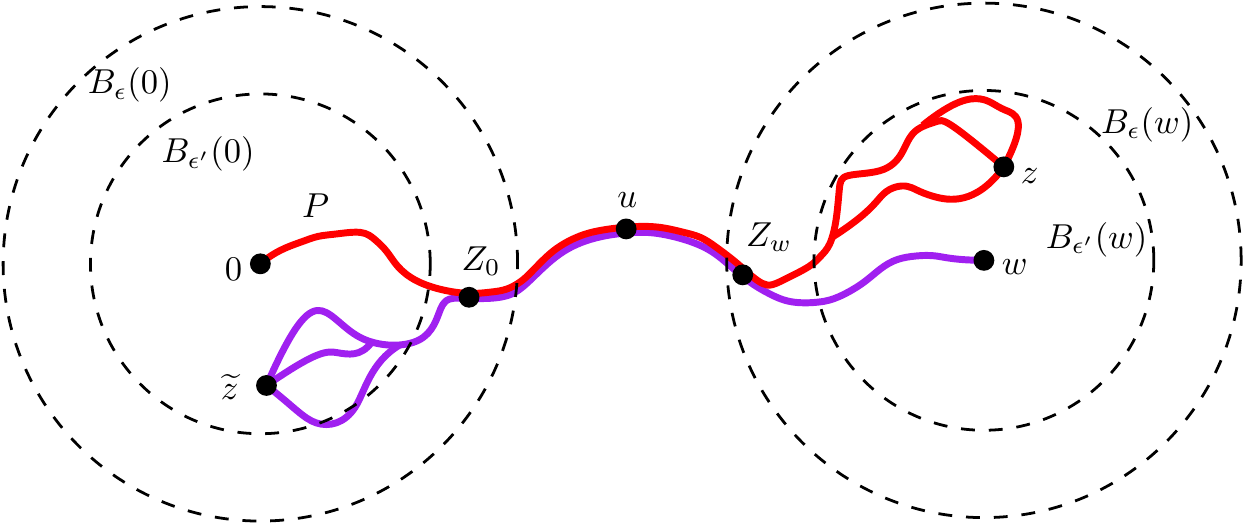}
\vspace{-0.01\textheight}
\caption{Illustration of the proof of the existence part of Proposition~\ref{prop-nine-geo-pts} in the case when $n=m=3$ (the proof that  $B_{\ep'}(0) \times B_{\ep'}(w) \subset \bigcup_{n,m \in \{1,2,3\}} N(n,m)$ uses the same setup). We apply Lemma~\ref{lem-regular-geo'} and Theorem~\ref{thm-zero-geo}, once for geodesics started from 0 and once for geodesics started $w$, to produce points $z \in B_{\ep'}(w)$ and $\wt z\in B_{\ep'}(0)$ such that $(0,z) \in N(1,m)$ and $(\wt z , w) \in N(n,1)$. Using confluence of geodesics, we can arrange that all of the geodesics from $0 $ to $z$ and all of the geodesics from $\wt z$ to $w$ coincide along the segment between the points $Z_0$ and $Z_w$. This implies that if $P$ is a geodesic from $0$ to $z$ and $P'$ is a geodesic from $\wt z$ to $w$, then the concatenation of the segment of $P'$ before it hits $u$ with the segment of $P$ after it hits $u$ is a $D_h$-geodesic from $\wt z$ to $z$. Consequently, $(\wt z ,z) \in N(n,m)$. 
}\label{fig-nine-geo}
\end{center}
\vspace{-1em}
\end{figure}

\begin{prop} \label{prop-nine-geo-pts}
Fix $w\in\BB C \setminus \{0\}$. Almost surely, for each $\ep > 0$ there exists $\ep' \in (0,\ep)$ such that $B_{\ep'}(0) \times B_{\ep'}(w) \subset \bigcup_{n,m \in \{1,2,3\}} N(n,m)$ and $(B_{\ep'}(0) \times B_{\ep'}(w) ) \cap N(n,m) \not=\emptyset$ for each $n,m \in \{1,2,3\}$. 
\end{prop}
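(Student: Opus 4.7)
The plan is to apply Theorem~\ref{thm-general-confluence} at both $0$ and $w$ to locate confluence points $Z_0, Z_w$ through which every $D_h$-geodesic between a point in a small neighborhood of $0$ and a point in a small neighborhood of $w$ must pass. Combining this with Lemma~\ref{lem-regular-geo'} shows that the segment between $Z_0$ and $Z_w$ is a single, uniquely determined geodesic $\alpha$, so every such cross-basin geodesic factors as an initial segment to $Z_0$, the unique middle $\alpha$, and a final segment from $Z_w$. Counting endpoint choices via Theorem~\ref{thm-zero-geo} and identifying the normal network structure at a midpoint of $\alpha$ will then complete the proof.

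Concretely, after shrinking $\ep$ I may assume $|w| > \ep$. I apply Theorem~\ref{thm-general-confluence} at $0$ with $U_0 \subset B_{\ep/4}(0)$ to obtain $U'_0 \subset U_0$ and a confluence point $Z_0 \in U_0 \setminus U'_0$, and the same theorem at $w$ (valid after translation by invariance of the whole-plane GFF modulo additive constant) to obtain $U'_w \subset U_w \subset B_{\ep/4}(w)$ and $Z_w$, with $U_0 \cap U_w = \emptyset$; then I choose $\ep' > 0$ so that $B_{\ep'}(0) \subset U'_0$ and $B_{\ep'}(w) \subset U'_w$. For any $\wt z \in B_{\ep'}(0)$ and $z \in B_{\ep'}(w)$, every $D_h$-geodesic from $\wt z$ to $z$ passes through $Z_0$ (confluence at $0$) and, by time reversal, through $Z_w$ (confluence at $w$), in that order. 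Specializing to the endpoint $0$ in place of $\wt z$, assertion~\ref{item-regular-unique} of Lemma~\ref{lem-regular-geo'} implies that the segment of any $0$-to-$z$ geodesic from $0$ to $Z_w$ is the unique $D_h$-geodesic from $0$ to $Z_w$; a splicing argument (any competitor geodesic from $Z_0$ to $Z_w$ could be concatenated with the unique geodesic from $0$ to $Z_0$ to violate this uniqueness) shows that its sub-segment $\alpha$ from $Z_0$ to $Z_w$ is the unique $D_h$-geodesic from $Z_0$ to $Z_w$. Symmetrically, the unique geodesic from $w$ to $Z_0$ exists, and concatenation with it puts $D_h$-geodesics from $\wt z$ to $Z_0$ in bijection with $D_h$-geodesics from $w$ to $\wt z$; analogously, $D_h$-geodesics from $Z_w$ to $z$ are in bijection with those from $0$ to $z$. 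Hence the total number of $D_h$-geodesics from $\wt z$ to $z$ equals $n \cdot m$, where $n$ and $m$ are respectively the number of $D_h$-geodesics from $w$ to $\wt z$ and from $0$ to $z$. By Theorem~\ref{thm-zero-geo} at $0$ and at $w$, both $n, m \in \{1,2,3\}$, and the density assertions of that theorem allow me to realize each prescribed pair $(n,m) \in \{1,2,3\}^2$ by choosing $z \in B_{\ep'}(w)$ and $\wt z \in B_{\ep'}(0)$ appropriately.

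Finally, to check $(\wt z, z) \in N(n,m)$, I set $u$ to be the midpoint of $\alpha$: every $D_h$-geodesic from $\wt z$ to $z$ passes through $u$, and the factorization yields exactly $n$ distinct geodesics from $u$ to $\wt z$ and $m$ from $u$ to $z$. The agreement-then-disjointness condition in Definition~\ref{def-normal-network} transfers from assertion~\ref{item-regular-disjoint} of Lemma~\ref{lem-regular-geo'} applied at $w$: two distinct $D_h$-geodesics from $w$ to $\wt z$ coincide on an initial segment containing the unique $w$-to-$Z_0$ piece and are pairwise disjoint afterward, and truncating before $Z_0$ and prepending the $u$-to-$Z_0$ sub-segment of $\alpha$ gives the required structure for geodesics from $u$ to $\wt z$; the condition for $z$ is analogous. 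The main obstacle is the uniqueness of $\alpha$ and the transfer of Theorem~\ref{thm-zero-geo} and Lemma~\ref{lem-regular-geo'} from $0$ to the fixed point $w$; these rest on assertion~\ref{item-regular-unique} of Lemma~\ref{lem-regular-geo'} and on the translation invariance of the whole-plane GFF modulo additive constant.
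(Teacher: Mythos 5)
Your proof is correct and takes essentially the same route as the paper. Both arguments apply Theorem~\ref{thm-general-confluence} at $0$ and at $w$ to produce confluence points $Z_0, Z_w$ joined by a unique middle geodesic, take an interior point $u$ of that geodesic, and then classify geodesics from $\wt z$ to $z$ by decomposing them through $u$, with Theorem~\ref{thm-zero-geo} and Lemma~\ref{lem-regular-geo'} (transported to base point $w$ by translation invariance) supplying the counting, density, and agreement-then-disjointness structure required by Definition~\ref{def-normal-network}.
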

\begin{proof}
The proof is similar to that of~\cite[Theorems 8 and 9]{akm-geodesics}. See Figure~\ref{fig-nine-geo} for an illustration. 
We can assume without loss of generality that $\ep \in (0,|w|/3)$, so $B_\ep(z) \cap B_\ep(w) \not=\emptyset$. 
By Theorem~\ref{thm-general-confluence}, a.s.\ there exists $\ep' \in (0,\ep)$ and $Z_0 \in B_\ep(0) \setminus B_{\ep'}(w)$ such that each $D_h$-geodesic from a point of $B_{\ep'}(0)$ to a point of $\BB C\setminus B_{\ep }(0)$ passes through $Z_0$. 
Symmetrically, by possibly shrinking $\ep'$ we can arrange that there is also a point $Z_w\in B_\ep(w) \setminus B_{\ep'}(w)$ such that each $D_h$-geodesic from a point of $B_{\ep'}(w)$ to a point of $\BB C\setminus B_{\ep }(w)$ passes through $Z_w$.  

Every $D_h$-geodesic from a point of $B_{\ep'}(0)$ to a point of $B_{\ep'}(w)$ hits $Z_0$, then $Z_w$. Since there is a.s.\ a unique $D_h$-geodesic from 0 to $w$, it follows that a.s.\ there is a unique $D_h$-geodesic $P_*$ from $Z_0$ to $Z_w$, and every $D_h$-geodesic from a point of $B_{\ep'}(0)$ to a point of $B_{\ep'}(w)$ must traverse $P_*$. 
Let $u$ be a point of $P_*$ which is not one of its endpoints. 
\medskip

\noindent\textit{Step 1: proof that $B_{\ep'}(0) \times B_{\ep'}(w) \subset \bigcup_{n,m \in \{1,2,3\}} N(n,m)$.}
By Lemma~\ref{lem-regular-geo'} and Proposition~\ref{prop-four-geo}, a.s.\ $(0,z) \in \bigcup_{m=1}^3 N(1,m)$ for each $z\in B_{\ep'}(w)$. 
Furthermore, from the preceding paragraph we know that any $D_h$-geodesic from 0 to $z$ traces $P_*$ for a positive interval of times after hitting $u$. 
If $z \in B_{\ep'}(w)$, then each geodesic $P$ from $u$ to $z$ gives rise to a distinct geodesic from 0 to $z$ by concatenating a geodesic from 0 to $u$ with the segment of $P$ after it hits $u$. This concatenation is a geodesic since every geodesic from 0 to $z$ hits $u$.  
It follows that $(u , z) \in \bigcup_{m=1}^3 N(1,m)$ for each $z \in B_{\ep'}(w)$.  
Similarly, $( \wt z , u) \in \bigcup_{n=1}^3 N(n,1)$ for each $\wt z \in B_{\ep'}(0)$.   
 
If $\wt z \in B_{\ep'}(0)$ and $z \in B_{\ep'}(w)$, then every $D_h$-geodesic from $\wt z$ to $z$ is the concatenation of a $D_h$-geodesic from $\wt z$ to $u$ and a $D_h$-geodesic from $u$ to $\wt z$. By the preceding paragraph, we get that $B_{\ep'}(0) \times B_{\ep'}(w)\subset \bigcup_{n,m \in \{1,2,3\}} N(n,m)$. 
\medskip

\noindent\textit{Step 2: proof that $(B_{\ep'}(0) \times B_{\ep'}(w) ) \cap N(n,m) \not=\emptyset$ for each $n,m \in \{1,2,3\}$.}
By Lemma~\ref{lem-regular-geo'} and Theorem~\ref{thm-zero-geo}, a.s.\ there exists $z\in B_{\ep'}(w)$ such that $(0, z) \in N(1,m)$. 
Each $D_h$-geodesic from 0 to $z$ hits $u$ then subsequently traces $P_*$ for a non-trivial interval of time. 
From this, we infer that $(u,z) \in N(1,m)$. 
Similarly, a.s.\ there exists $\wt z \in B_{\ep'}(0)$ such that $(\wt z , u) \in N(1,n)$. 
Since every $D_h$-geodesic from $\wt z$ to $z$ passes through $u$, we get that the set of $D_h$-geodesics from $\wt z$ to $z$ is precisely the set of paths obtained as the concatenation of a $D_h$-geodesic from $\wt z$ to $u$ and a $D_h$-geodesic from $u$ to $z$. 
Therefore, $(\wt z , z) \in N(n,m)$.  
\end{proof}

\begin{proof}[Proof of Theorem~\ref{thm-dense}]
Due to the translation invariance of the law of $h$, modulo additive constant, Proposition~\ref{prop-nine-geo-pts} implies that a.s.\ for each $(z,w) \in\BB Q^2 \times \BB Q^2$ and each $\ep > 0$, there exists $\ep' \in (0,\ep)$ such that $B_{\ep'}(z) \times B_{\ep'}(w) \subset \bigcup_{n,m \in \{1,2,3\}} N(n,m)$ and $(B_{\ep'}(z) \times B_{\ep'}(w) ) \cap N(n,m) \not=\emptyset$ for each $n,m \in \{1,2,3\}$. 
From this, the theorem statement is immediate. 
\end{proof}

\section{Finitely many geodesics between any two points}
\label{sec-multi-geo-dim}

The goal of this section is to prove Theorem~\ref{thm-finite-geo} (which gives an upper bound for the maximal number of distinct $D_h$-geodesics joining any two points in $\BB C$) and the remaining assertion of Theorem~\ref{thm-zero-geo}, namely the upper bound for the Hausdorff dimension of the set of points joined to zero by at least two distinct geodesics. 

The main input needed for this purpose is a bound for the size of the set of pairs of points in $\BB C$ which can be joined by a network of geodesics with a certain non-overlapping property, which we now define. 
For $n\in\BB N$, let $\mcl S_n$ be the set of pairs of points $(z,w) \in \BB C \times \BB C$ with the following property. 
There exist $n+1$ distinct geodesics $P^0,\dots,P^n$ from $z$ to $w$ such that the following is true. 
For each $i \in [1,n]_{\BB Z}$, there is a point $u_i \in \BB C$ such that $u_i $ is hit by $P^i$ and $u_i$ is not hit by $P^j$, $\forall j \in [0,n]_{\BB Z} \setminus \{i\}$.
Note that the case $i=0$ is somewhat special here: there is no point $u_0$, i.e., it is possible that every point on $P^0$ is hit by $P^j$ for some $j\not=0$. 
See Figure~\ref{fig-geo-overlap} for an illustration of the definition of $\mcl S_n$.

\begin{figure}[t!]
 \begin{center}
\includegraphics[scale=.9]{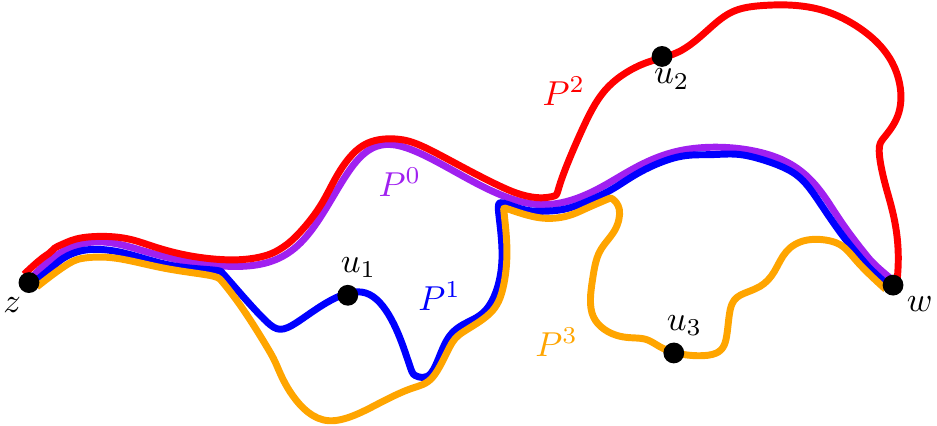}
\vspace{-0.01\textheight}
\caption{A pair of points $(z,w) \in \mcl S_3$ along with a possible choice of the corresponding geodesics $P^0,P^1,P^2,P^3$ and points $u_1,u_2,u_3$. 
The figure illustrates several important points about the definition on $\mcl S_n$. 
First, there are nine distinct $D_h$-geodesics from $z$ to $w$ (which can be obtained by concatenating different segments of the four colored geodesics).
However, $(z,w)\notin \mcl S_n$ for any $n\geq 4$ since we can only find four geodesics satisfying the non-overlapping condition in the definition of $\mcl S_n$. 
Second, the purple geodesic is contained in the union of the red and blue geodesics, so the purple geodesic must be $P^0$ (we can re-label the other three geodesics arbitrarily, however).  
}\label{fig-geo-overlap}
\end{center}
\vspace{-1em}
\end{figure}

\begin{prop} \label{prop-multi-geo-dim}
If $n\leq \lfloor 2d_\gamma  \rfloor$, then a.s.\ $\dim_{\mcl H}^\gamma \mcl S_n \leq 2d_\gamma - n$. 
If $n > \lfloor 2d_\gamma  \rfloor$, then a.s.\ $\mcl S_n = \emptyset$. 
\end{prop}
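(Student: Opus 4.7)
The strategy is a standard first-moment argument combining a covering of compact sets by $D_h$-balls with a quantitative one-point estimate. The key estimate I will seek to establish is the following: for any fixed $z_0, w_0 \in \BB C$ and any $\ep > 0$, there exists $C > 0$ such that for every $r \in (0,1)$,
\begin{equation*}
\BB P\!\left[ \mcl S_n \cap \left( \mcl B_r(z_0 ; D_h) \times \mcl B_r(w_0 ; D_h) \right) \neq \emptyset \right] \leq C r^{n - \ep}.
\end{equation*}
The $r^n$ scaling is the heuristic reason the threshold $\lfloor 2d_\gamma \rfloor$ appears: $n$ factors of $r$ must beat the $r^{-2d_\gamma}$ growth of the number of product cover balls.

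To set up this one-point estimate I would apply Theorem~\ref{thm-general-confluence} at each of $z_0$ and $w_0$, which provides (for $r$ smaller than some random positive scale) deterministic confluence points $Z_0$ and $Z_w$ through which every $D_h$-geodesic joining a point of $\mcl B_r(z_0;D_h)$ to a point of $\mcl B_r(w_0;D_h)$ must pass. The condition $(z,w) \in \mcl S_n$ then translates into the existence of $n+1$ pairwise non-coincident geodesic arcs between $Z_0$ and $Z_w$, of which $n$ each carry a private point $u_i$ not lying on any other arc. I would peel the excursions off one at a time using the confluence-across-annuli structure of Proposition~\ref{prop-conf-finite} applied at a nested sequence of well-separated scales. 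Each excursion at a prescribed scale $r_i$ forces a coincidence of $D_h$-distances of the form $D_h(z, u_i) + D_h(u_i, w) = D_h(z, w)$ together with the separation condition that $u_i$ avoids every other $P^j$; by standard anti-concentration/regularity estimates for the LQG metric this should cost a factor $O(r_i^{1-o(1)})$, and the scale separation should make the resulting $n$ events nearly independent, so that their product is $O(r^{n-\ep})$.

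Given the one-point estimate, the dimension bound follows directly. For compact $K, K' \subset \BB C$, Lemma~\ref{lem-ball-cover} yields a cover of each by $D_h$-balls of radius $r$ of cardinality at most $r^{-d_\gamma - o(1)}$. For $s > 2d_\gamma - n$, choose $\ep \in (0, s - 2d_\gamma + n)$ and sum over pairs of covering balls:
\begin{equation*}
\BB E\!\left[ \sum_{B\times B'} (2r)^{s}\, \BB{1}_{\{(B\times B')\cap \mcl S_n\neq \emptyset \}} \right] \leq r^{-2d_\gamma - o(1)} \cdot r^{s} \cdot C r^{n - \ep} = O\!\left( r^{s - 2 d_\gamma + n - \ep - o(1)} \right),
\end{equation*}
which tends to $0$ as $r \rta 0$. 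Applying Fatou along $r = 2^{-k}$ shows that the $s$-Hausdorff measure of $\mcl S_n \cap (K\times K')$ in the product $D_h$-metric is a.s.\ zero, hence $\dim_{\mcl H}^\gamma (\mcl S_n \cap (K\times K')) \leq s$. Taking $s\searrow 2d_\gamma - n$ rationally and exhausting $\BB C\times\BB C$ by a countable family of compact products yields the first assertion.

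For the emptiness statement, when $n > \lfloor 2d_\gamma \rfloor$ we have $n \geq \lfloor 2d_\gamma \rfloor + 1 > 2d_\gamma$, so the expected number of covering product balls meeting $\mcl S_n \cap (K\times K')$ is at most $r^{n - 2d_\gamma - o(1)}$, which is summable along $r = 2^{-k}$. Borel--Cantelli gives that a.s.\ for all sufficiently large $k$ no such product ball meets $\mcl S_n \cap (K\times K')$, so $\mcl S_n \cap (K\times K') = \emptyset$; a countable exhaustion finishes. The main obstacle is the one-point estimate: decoupling the $n$ excursions so that they contribute $n$ genuinely independent factors of $r$, rather than only a single coincidence among many. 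I expect this to require an induction on $n$ in which Proposition~\ref{prop-conf-finite}, together with the leftmost-geodesic labelling used in its proof, organizes the excursions into a hierarchical tree on which the multi-scale decoupling can be carried out.
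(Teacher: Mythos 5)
Your overall architecture — a one-point estimate of order $\ep^{n-o(1)}$ combined with a covering of compact sets by $\ep^{-d_\gamma - o(1)}$ LQG balls, then $s$-Hausdorff measure estimation, then a countable exhaustion — matches the paper's, and the covering step and the emptiness conclusion when $n>\lfloor 2d_\gamma\rfloor$ are essentially identical to what the paper does via Lemmas~\ref{lem-ball-cover} and~\ref{lem-subset-cover}. So the skeleton is right.

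The gap is precisely the one you flag yourself: the one-point estimate is the mathematical content of the proposition, and your proposed mechanism for it does not work as stated. You want to peel off the $n$ excursions at well-separated scales and argue that each coincidence $D_h(z,u_i)+D_h(u_i,w)=D_h(z,w)$ costs a factor $O(r_i^{1-o(1)})$, with scale separation giving near-independence. But the $n$ constraints all involve the same random metric $D_h$, the same endpoints, and the same total distance $D_h(z,w)$; there is no a priori source of conditional independence, and ``anti-concentration'' of this exact-coincidence type is not a standard LQG input. Moreover, the constraints could conspire (e.g.\ the perturbation of the field near one $u_i$ could shift $D_h(z,w)$ itself in a way that keeps several coincidences satisfied simultaneously), so the heuristic $r^n$ does not follow from stacking $n$ events. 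The paper sidesteps independence entirely. It first covers $\mcl S_n$ by countably many sets $\wt{\mcl S}_n(U_1,\dots,U_n,V_1,\dots,V_n)$ with \emph{deterministic} pairwise-disjoint open sets: $u_i\in U_i\subset\ol U_i\subset V_i$, $P^i$ forced to avoid $\ol V_j$ for $j\neq i$, and the reference geodesic $P^0$ forced to avoid all $\ol V_j$. Then the one-point bound is a perturbation/Weyl-scaling argument: one adds $\sum_{i=1}^n X_i\phi_i$ to $h$ with $\phi_i$ a fixed bump on $V_i$ and $X_i$ i.i.d.\ Uniform$[0,1]$; a deterministic elementary argument (Lemma~\ref{lem-geo-count-perturb}) shows that if the event occurs for parameter $\bd x$ it fails for any $\bd y$ at $L^\infty$-distance $\gtrsim\ep$, because perturbing inside $V_i$ changes the length of $P^i$ relative to the perturbation-invariant reference $P^0$. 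This bounds the Lebesgue measure of admissible $\bd x$ by $O(\ep^n)$, and an $L^p$ Radon--Nikodym estimate for adding a bounded smooth function to the GFF converts this into $\BB P[E^\ep\cap G(C)]\le\ep^{n+o(1)}$. The reference geodesic $P^0$ — the one with no private point in the definition of $\mcl S_n$ — is the linchpin of this argument, and it plays no role in your sketch; its absence is exactly what would let the $n$ constraints conspire. To complete your proof you would need to find and prove the analog of Lemma~\ref{lem-geo-count-perturb}, or else find a genuinely different route to the one-point estimate; as written the plan does not have one.
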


We note that $(z,w) \in \mcl S_1$ if and only if there are at least two distinct $D_h$-geodesics from $z$ to $w$. Therefore, Proposition~\ref{prop-multi-geo-dim} immediately implies the following corollary. 
 
\begin{cor} \label{cor-two-geo-dim}
Almost surely, the set of pairs of points $(z,w) \in \BB C \times \BB C$ such that $z$ and $w$ can be joined by at least 2 distinct $D_h$-geodesics has $\gamma$-LQG dimension at most $2d_\gamma-1$. 
\end{cor}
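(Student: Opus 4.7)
The plan is to derive Corollary~\ref{cor-two-geo-dim} as an immediate specialization of Proposition~\ref{prop-multi-geo-dim} to the case $n=1$. The only substantive thing to verify is the equivalence flagged in the paragraph just above the corollary: namely that $(z,w)\in \mcl S_1$ if and only if there are at least two distinct $D_h$-geodesics from $z$ to $w$. Once this is in hand, applying the proposition is essentially tautological.

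First I would dispatch the equivalence. The forward direction is literally the definition of $\mcl S_1$: witnessing $(z,w)\in \mcl S_1$ requires producing two distinct geodesics $P^0,P^1$. For the reverse, suppose $P,P'$ are two distinct $D_h$-geodesics from $z$ to $w$. Both are parametrized by $D_h$-arc length on $[0, D_h(z,w)]$ and are injective (a distance-realizing path in a length space is injective, since each of its points lies at a unique distance from the starting endpoint). If $P$ and $P'$ had the same image, then matching each point in the common image to its unique $D_h$-distance from $z$ would force $P = P'$, contradicting distinctness. Hence after possibly swapping labels there is a point $u_1$ on $P'$ that does not lie on $P$; taking $P^0 := P$ and $P^1 := P'$ with this witness $u_1$ places $(z,w)$ in $\mcl S_1$.

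Next I would confirm we sit in the dimension-bound regime of Proposition~\ref{prop-multi-geo-dim} rather than the emptiness regime. Since $d_\gamma > 2$ for every $\gamma \in (0,2)$, we have $\lfloor 2d_\gamma \rfloor \geq 4 > 1$, so $n=1 \leq \lfloor 2d_\gamma \rfloor$, and Proposition~\ref{prop-multi-geo-dim} gives $\dim_{\mcl H}^\gamma \mcl S_1 \leq 2d_\gamma - 1$ almost surely. Combined with the equivalence above, this is exactly the assertion of the corollary.

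Because Proposition~\ref{prop-multi-geo-dim} is used as a black box, there is no real technical obstacle in the present argument; all of the quantitative content — the one-point estimate and the Minkowski-dimension covering — is absorbed into the proposition. The only minor point worth flagging in the write-up is the small injectivity argument above, which ensures that two distinct geodesics automatically supply the required ``non-overlap'' witness needed in the definition of $\mcl S_n$ at $n=1$.
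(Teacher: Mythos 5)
Your proposal is correct and follows the paper's own route: the paper simply notes that $(z,w)\in\mcl S_1$ if and only if $z$ and $w$ are joined by at least two distinct $D_h$-geodesics, and then applies Proposition~\ref{prop-multi-geo-dim} with $n=1$ (using $d_\gamma>2$ to land in the dimension-bound regime). Your added injectivity argument for the reverse implication is a legitimate fleshing-out of the step the paper leaves implicit.
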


We do not expect that the upper bound in Corollary~\ref{cor-two-geo-dim} is optimal. 
For $n\geq 3$, there can be points $(z,w) \in \BB C\times \BB C$ such that $z$ and $w$ can be joined by at least $n$ distinct $D_h$-geodesics but $(z,w) \notin \mcl S_n$; see Figure~\ref{fig-geo-overlap} for an example.

Most of this section is devoted to the proof of Proposition~\ref{prop-multi-geo-dim}. 
In Section~\ref{sec-geo-finite}, we will explain how to use Proposition~\ref{prop-multi-geo-dim} to prove Theorem~\ref{thm-finite-geo} by reducing a given collection of geodesics with the same endpoints to a collection which satisfies the conditions in the definition of $\mcl S_n$. 
In Section~\ref{sec-two-geo-dim}, we will explain how to adapt the arguments used in the proof of Proposition~\ref{prop-multi-geo-dim} to get the Hausdorff dimension upper bound of Theorem~\ref{thm-zero-geo}.

We now give an overview of the proof of Proposition~\ref{prop-multi-geo-dim}. 
In Section~\ref{sec-reduction}, we fix a collection of open sets $U_1,V_1,\dots,U_n , V_n$ such that $\ol U_i \subset V_i$ and $V_i\cap V_j =\emptyset$ for each $i,j=1,\dots,n$. 
We then explain why it suffices to prove an analog of Proposition~\ref{prop-multi-geo-dim} with $\mcl S_n$ replaced by the set $\wt{\mcl S}_n = \wt{\mcl S}_n( U_1,\dots,U_n,V_1,\dots,V_n )$, which is defined in the same manner as $\mcl S_n$ except that the point $u^i \in P^i$ is required to belong to $U_i$ for each $i=1,\dots,n$ and the geodesics $P^j$ for $j\not=i$ are required to be disjoint from $V_i$. The reason why we can reduce to upper-bounding the dimension of $\wt{\mcl S}_n$ is that we can cover $\mcl S_n$ by countably many sets of the form $\wt{\mcl S}_n$ for varying choices of $U_1,V_1,\dots,U_n,V_n$. 

In Section~\ref{sec-geo-count-prob}, we will prove the ``one-point estimate" needed for the proof of the above analog of Proposition~\ref{prop-multi-geo-dim}. To this end, we will define for each $z,w\in \BB C$ and each $\ep > 0$ an event $E^\ep(z,w)$ which is, roughly speaking, the event that $\wt{\mcl S}_n$ intersects $\mcl B_\ep(z;D_h)\times \mcl B_\ep(w;D_h)$ (plus some regularity conditions). We then prove in Lemma~\ref{lem-geo-count-prob} that $\BB P[E^\ep(z,w) \cap G(C)] \leq \ep^{n  +o_n(1)}$, where $G(C)$ is a global regularity event depending on a parameter $C$. 
The proof of this estimate is based on the following perturbation argument, which is similar in flavor to the argument used in Section~\ref{sec-four-geo}, but more quantitative. For $i=1,\dots,n$, let $\phi_i : \BB C \rta [0,1]$ be a smooth bump function supported on $V_i$ and let $X_1,\dots,X_n$ be i.i.d.\ random variables with the uniform distribution on $[0,1]$. 
If $E^\ep(z,w)$ occurs, then with high probability adding the function $\sum_{i=1}^n X_i \phi_i$ to $h$ distorts distances enough that one of the $P^i$'s is no longer a geodesic. This makes it so that $E^\ep(z,w)$ does not occur with $h + \sum_{i=1}^n X_i \phi_i$ in place of $h$. We will then conclude our estimate using the fact that the laws of $h + \sum_{i=1}^n X_i \phi_i$ and $h$ are mutually absolutely continuous, with an explicit Radon-Nikodym derivative. 

In Section~\ref{sec-subset-dim}, we will deduce Proposition~\ref{prop-multi-geo-dim} from the above one-point estimate by covering a given bounded open subset of $\BB C$ by $\ep^{-d_\gamma + o_\ep(1)}$ LQG balls of radius $\ep$ and applying the one-point estimate to the center points of the balls (actually, we need a variant of the one-point estimate where the points $z$ and $w$ can be sampled from the $\gamma$-LQG measure, see Lemma~\ref{lem-random-pt}). The existence of the needed covering comes from Lemma~\ref{lem-ball-cover}, which in turn is a consequence of results from~\cite{afs-metric-ball}.

\subsection{Reducing to a bound for a fixed collection of sets}
\label{sec-reduction}

To prove Proposition~\ref{prop-multi-geo-dim}, we begin with some reductions. 
Suppose $(z,w) \in \mcl S_n$ and let $P^0,\dots,P^n$ and $u_1,\dots,u_n$ be geodesics and points, respectively, as in the definition of $\mcl S_n$. 
Since the trace of each $P^i$ is a closed set, we can find open sets $U_1,\dots,U_n , V_1,\dots,V_n \subset \BB C$, each of which is a finite union of Euclidean balls with rational centers and radii, such that
\eqb \label{eqn-multi-geo-set}
 \ol U_i\subset V_i   \quad \text{and} \quad \ol V_i\cap \ol V_j =\emptyset , \quad\forall i ,j \in [1,n]_{\BB Z} ,\quad i\not=j   ,
\eqe
and the following is true. For each $i \in [1,n]_{\BB Z}$, we have $u_i \in U_i$ (which implies that $P^i\cap U_i \not=\emptyset$) and for each $j\in [0,n]_{\BB Z}\setminus \{i\}$ we have $P^j \cap \ol V_i = \emptyset$. 
Moreover, by possibly replacing each $u_i$ with a nearby point on $P^i$ and subsequently shrinking each of the $U_i$'s and $V_i$'s, we can arrange that
\eqb \label{eqn-multi-geo-bdy}
V_i \cap \bdy\BB D = \emptyset, \quad \forall i\in [1,n]_{\BB Z} .
\eqe
We note that an LQG geodesic cannot trace an arc of $\bdy \BB D$; see, e.g.,~\cite[Proposition 4.1]{lqg-metric-estimates}. 
The condition~\eqref{eqn-multi-geo-bdy} is important for our purposes since $h$ is normalized so that its average over $\bdy\BB D$ is zero. 

Since we required that $U_1,\dots,U_n,V_1,\dots,V_n$ are finite unions of Euclidean balls with rational centers and radii, there are only countably many possible choices of $U_1,\dots,U_n,V_1,\dots,V_n$. By the countable stability of Hausdorff dimension, it therefore suffices to fix bounded open sets $U_1,\dots,U_n,V_1,\dots,V_n$ such that~\eqref{eqn-multi-geo-set} and~\eqref{eqn-multi-geo-bdy} hold; then establish an upper bound for the Hausdorff dimension of the set of pairs $(z,w) \in \BB C\times \BB C$ for which the above conditions on geodesics hold with the given choice of $U_1,\dots,U_n,V_1,\dots,V_n$. 

Let us now make this more precise. 
Let $\wt{\mcl S}_n = \wt{\mcl S}_n( U_1,\dots,U_n,V_1,\dots,V_n )$ be the set of pairs $(z,w) \in \BB C\times \BB C$ for which the following is true. 
There are $n+1$ distinct $D_h$-geodesics $P^0,\dots,P^n$ from $z$ to $w$ such that
\eqb \label{eqn-subset-def}
P^i \cap U_i \not=\emptyset \quad \text{and} \quad  P^i \cap \bigcup_{j \in [1,n]_{\BB Z} \setminus \{i\} } \ol V_j = \emptyset ,\quad \forall i \in [1,n]_{\BB Z} , \quad \text{and} \quad P^0 \cap \bigcup_{j \in [1,n]_{\BB Z}   } \ol V_j = \emptyset .
\eqe  
By the above discussion, to prove Proposition~\ref{prop-multi-geo-dim}, it suffices to prove the following.

\begin{prop} \label{prop-subset-dim}
Fix any collection of bounded open sets $U_1,\dots,U_n,V_1,\dots,V_n \subset\BB C$ satisfying~\eqref{eqn-multi-geo-set} and~\eqref{eqn-multi-geo-bdy}.  
If $n\leq \lfloor 2d_\gamma   \rfloor$, then a.s.\ $\dim_{\mcl H}^\gamma \wt{\mcl S}_n \leq 2d_\gamma - n$. 
If $n > \lfloor 2d_\gamma  \rfloor$, then a.s.\ $\wt{\mcl S}_n = \emptyset$. 
\end{prop}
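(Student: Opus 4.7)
The plan is to follow the outline laid out in this section: prove a one-point estimate of the form $\BB P[E^\ep(z,w)\cap G(C)] \leq \ep^{n + o_\ep(1)}$, where $E^\ep(z,w)$ is (roughly) the event that $\wt{\mcl S}_n$ meets $\mcl B_\ep(z;D_h)\times\mcl B_\ep(w;D_h)$ and $G(C)$ is a regularity event controlling Euclidean--$D_h$ comparisons; then combine this with a covering of a bounded $K \subset \BB C$ by small $D_h$-balls to deduce the dimension bound.

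For the one-point estimate, use~\eqref{eqn-multi-geo-set} to choose smooth bump functions $\phi_i : \BB C \rta [0,1]$ with $\phi_i \equiv 1$ on $\ol U_i$ and $\op{supp}\phi_i \subset V_i$; by~\eqref{eqn-multi-geo-bdy} we may further arrange each $\phi_i$ to vanish in a neighborhood of $\bdy \BB D$ so that the Cameron--Martin shift $h \mapsto h + \sum a_j \phi_j$ preserves the circle-average normalization. Suppose $E^\ep(z,w) \cap G(C)$ occurs with witnesses $(z',w'), P^0,\dots,P^n$, and for $a\in [0,1]^n$ set $h^a := h + \sum_j a_j \phi_j$. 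By Weyl scaling (Axiom~\ref{item-metric-f}), the $D_{h^a}$-length of $P^0$ equals $L_0 := D_h(z',w')$ because $P^0$ is disjoint from every $\ol V_j$, whereas for $i\in[1,n]_{\BB Z}$
\eqbn
\op{len}(P^i;D_{h^a}) - L_0 \;=\; \int_{P^i\cap V_i}\bigl(e^{\xi a_i \phi_i(P^i(s))}-1\bigr)\,ds \;\geq\; c\,(e^{\xi a_i}-1),
\eqen
where $c>0$ is a uniform lower bound for $\op{len}(P^i \cap U_i;D_h)$ furnished by $G(C)$. For $E^\ep(z,w)$ to persist under $h^a$, one must produce a new configuration of $n+1$ non-overlapping geodesics between points in the $D_{h^a}$-balls of radius $\ep$ around $z,w$, and since the new distance at $(z',w')$ cannot drop (any path through some $U_i$ has length strictly greater than $L_0$, and paths avoiding all $U_i$ have length $\geq L_0$), such new witnesses can exist only when $a_i = O(\ep)$ for every $i$. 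Hence on $G(C)$ the Lebesgue measure of $\{a \in [0,1]^n : E^\ep(z,w) \text{ holds for } h^a\}$ is $O(\ep^n)$. Combining with Cameron--Martin absolute continuity, whose Radon--Nikodym derivative between the laws of $h$ and $h^a$ has $L^p$ norms bounded uniformly in $a \in [0,1]^n$ for every $p$, yields $\BB P[E^\ep(z,w)\cap G(C)] \leq \ep^{n+o_\ep(1)}$.

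For the covering step, fix a bounded $K \subset \BB C$; by Lemma~\ref{lem-ball-cover}, on an event contained in $G(C)$ one can cover $K$ by at most $N_\ep \leq \ep^{-d_\gamma + o_\ep(1)}$ closed $D_h$-balls of radius $\ep$ centered at points $z_1,\dots,z_{N_\ep}$. Every $(z',w') \in \wt{\mcl S}_n \cap (K\times K)$ lies in some product $\mcl B_\ep(z_i;D_h) \times \mcl B_\ep(z_j;D_h)$, so the random-point variant of the one-point estimate (Lemma~\ref{lem-random-pt}), together with a union bound, shows that the expected number of pairs $(i,j)$ for which $E^\ep(z_i,z_j) \cap G(C)$ occurs is at most $N_\ep^2 \cdot \ep^{n+o_\ep(1)} = \ep^{n-2d_\gamma + o_\ep(1)}$. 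If $n > \lfloor 2d_\gamma\rfloor$, the exponent is strictly positive, and Borel--Cantelli along $\ep_k = 2^{-k}$ together with a union bound over an exhaustion of $\BB C$ and $C\rta\infty$ forces $\wt{\mcl S}_n = \emptyset$ a.s. If $n \leq \lfloor 2d_\gamma\rfloor$, then for any $s > 2d_\gamma - n$ the expected $s$-dimensional $D_h$-Hausdorff content of $\wt{\mcl S}_n \cap (K\times K)$ is dominated by $(2\ep)^s \cdot \ep^{n-2d_\gamma+o_\ep(1)} \rta 0$, yielding $\dim_{\mcl H}^\gamma \wt{\mcl S}_n \leq 2d_\gamma - n$.

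The main obstacle is the quantitative lower bound $\op{len}(P^i \cap U_i; D_h) \geq c > 0$, which must hold uniformly over every possible witness of $E^\ep(z,w)\cap G(C)$. The separation $\ol U_i \subset V_i$ forces any continuous curve entering $U_i$ to have non-trivial Euclidean diameter, and $G(C)$ can be calibrated using H\"older continuity of the identity map between $D_h$ and the Euclidean metric to translate this into a $D_h$-length lower bound. A related subtlety is the transfer from the fixed-point one-point estimate to the random-point variant (Lemma~\ref{lem-random-pt}) used in the covering step, which is what allows the centers $z_i$ to be $h$-measurable without losing the $\ep^n$-factor.
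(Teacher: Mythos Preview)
Your approach is the paper's approach, and the structure (bump-function perturbation, Cameron--Martin plus H\"older, covering by $D_h$-balls with $\mu_h$-sampled centers, random-point transfer) matches. But the perturbation step has a real gap. You assume $E^\ep(z,w)\cap G(C)$ occurs for the base field $h=h^0$, argue that $E^\ep$ can persist under $h^a$ only when every $a_i=O(\ep)$, and then assert that ``on $G(C)$ the Lebesgue measure of $\{a:E^\ep(z,w)\text{ holds for }h^a\}$ is $O(\ep^n)$.'' Your argument only yields this bound on the event $E^\ep_0\cap G(C)$, not on $G(C)$. To pass via absolute continuity to $\BB P[E^\ep(z,w)\cap G(C)]\leq\ep^{n+o_\ep(1)}$ you need $\BB P[E^\ep_{\bd X}\cap G_{\bd X}(C)\mid h]=O(\ep^n)$ almost surely, which requires the volume bound to hold whether or not $E^\ep_0$ occurs. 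The paper fixes this by proving a two-sided comparison (its Lemma~\ref{lem-geo-count-perturb}): whenever $E^\ep_{\bd x}\cap G(C)$ and $E^\ep_{\bd y}$ both occur, $\max_i|x_i-y_i|\leq C'\ep$. This gives the set $\{a:E^\ep_a\}$ $L^\infty$-diameter $O(\ep)$ and hence volume $O(\ep^n)$ unconditionally. The two-sided argument needs both cases: if $y_i>x_i+C'\ep$, any $D_{h_{\bd y}}$-path entering $U_i$ is too long (this is essentially your direction, with $\bd x$ rather than $0$ as base); if $y_i<x_i-C'\ep$, the old witness $P^i_{\bd x}$ becomes strictly shorter than any path avoiding all $V_j$, so the geodesic $P^0_{\bd y}$ cannot exist. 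Your one-sided argument from the fixed base $a=0$ does not give this.

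A second, smaller point: your displayed inequality is for the specific witness $P^i$, but the conclusion (``any path through some $U_i$ has length strictly greater than $L_0$'') must hold for an arbitrary path entering $U_i$, since the new witness for $h^a$ need not be $P^i$. For this you need $\phi_i\equiv 1$ on an intermediate open set $U_i'$ with $\ol U_i\subset U_i'\subset\ol{U_i'}\subset V_i$, and take $G(C):=\{D_h(\bdy U_i,\bdy U_i')\geq 1/C\ \forall i\}$; then any path entering $U_i$ with endpoints outside $V_i$ must cross $U_i'\setminus U_i$ and so spends $D_h$-length at least $1/C$ in $\{\phi_i=1\}$. Your H\"older calibration of $G(C)$ works once this intermediate set is in place, but as written (with $\phi_i\equiv 1$ only on $\ol U_i$) a path could merely graze $\bdy U_i$ and pick up arbitrarily little length in $\{\phi_i=1\}$.
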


The rest of this section is devoted to the proof of Proposition~\ref{prop-subset-dim}.

\subsection{One-point estimate}
\label{sec-geo-count-prob}
  
For $\ep > 0$ and $z,w\in \BB C$, let $  E^\ep(z,w)$ be the event that the following is true.
\begin{enumerate}
\item The sets $\mcl B_\ep(z; D_h) $, $\mcl B_\ep(w ; D_h)$, and $ \bigcup_{i=1}^n \ol V_i$ are disjoint.   
\item There are $D_h$-geodesics $P^0,\dots,P^n$ such that each $P^i$ goes from a point of $\mcl B_\ep(z ; D_h)$ to a point of $\mcl B_\ep(w ; D_h)$, 
\eqb \label{eqn-subset-def'}
P^i \cap U_i \not=\emptyset  \quad \text{and} \quad P^i \cap \bigcup_{j \in [1,n]_{\BB Z}\setminus \{i\} } \ol V_j = \emptyset ,\quad \forall i \in [1,n]_{\BB Z} ,
 \quad \text{and} \quad P^0 \cap \bigcup_{j \in [1,n]_{\BB Z}   } \ol V_j = \emptyset .
\eqe 
\end{enumerate}
See Figure~\ref{fig-geo-count} for an illustration.

\begin{figure}[t!]
 \begin{center}
\includegraphics[scale=.9]{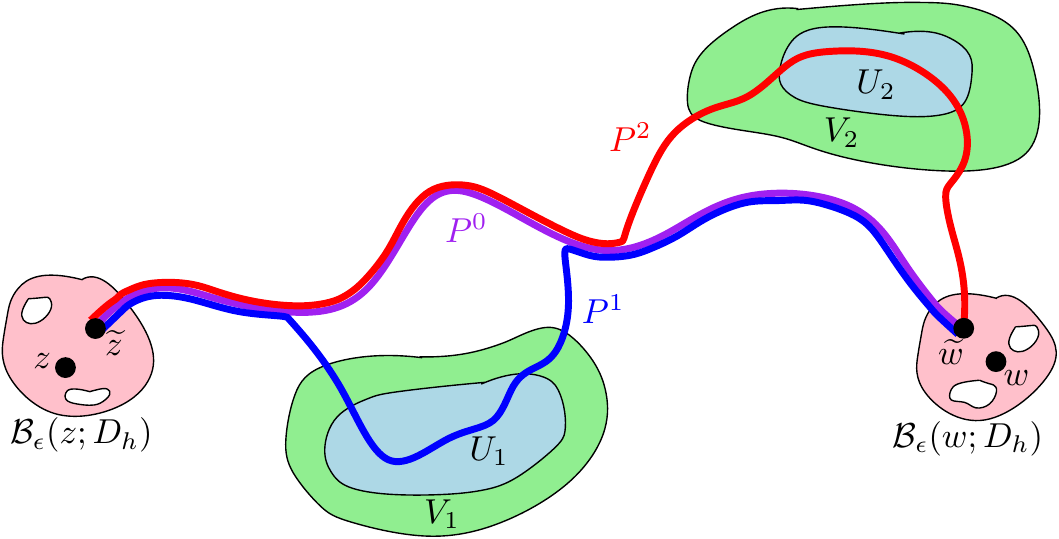}
\vspace{-0.01\textheight}
\caption{The figure shows a pair of points $(\wt z , \wt w) \in \wt{\mcl S}_2$ and the three corresponding $D_h$-geodesics $P^0, P^1,P^2$ from $\wt z$ to $\wt w$, as well as a pair of points $(z,w) $ for which $E^\ep(z,w)$ occurs.  
We emphasize that in the definition of $E^\ep(z,w)$ we do \emph{not} require that the geodesics go from $z$ to $w$, only that they each go from a point of $\mcl B_\ep(z;D_h)$ to a point of $\mcl B_\ep(w;D_h)$. 
In the figure, the starting and ending points of all three geodesics are the same, but this is not required.
}\label{fig-geo-count}
\end{center}
\vspace{-1em}
\end{figure}

The main step in the proof of Proposition~\ref{prop-subset-dim} is an upper bound for $\BB P[E^\ep(z,w)]$. 
The idea of the proof of this upper bound is, at a very rough level, similar to the proof of Proposition~\ref{prop-four-geo}. We will show that if $E^\ep(z,w)$ occurs and for each $i\in [1,n]_{\BB Z}$, we add to $h$ a smooth bump function which is supported in $V_i$ and takes values in $[0,A\ep]$ for a constant $A>0$, then $E^\ep(z,w)$ no longer occurs. 
Since adding these smooth bump functions affects the law of $h$ in an absolutely continuous way, this leads to an upper bound for $\BB P[E^\ep(z,w)]$. 

To make the above heuristic precise, we first define the particular bump functions we will consider. 
For $i\in [1,n]_{\BB Z}$, let $U_i'$ be an intermediate open set with $\ol U_i \subset U_i' $ and $\ol U_i' \subset V_i$. 
Let $\phi_i$ be a smooth bump function which is identically equal to 1 on $U_i'$ and is identically equal to 0 outside of $V_i$. 
For a vector $\bd x = (x_1,\dots,x_n ) \in [0,1]^n$, let 
\eqb
h_{\bd x} := h + \sum_{i=1}^n x_i \phi_i .
\eqe 
Also define $E_{\bd x}^\ep(z,w)$ as above but with $h_{\bd x}$ in place of $h$. 

We will then deduce Proposition~\ref{prop-subset-dim} from the following two facts.
\begin{enumerate}[(i)]
\item Suppose $\bd x , \bd y \in [0,1]^n$ and the $L^\infty$-distance between $\bd x$ and $\bd y$ is at least a constant times $\ep$.
If $E_{\bd x}^\ep(z,w)$ occurs, then $E_{\bd y}^\ep(z,w)$ does not occur (Lemma~\ref{lem-geo-count-perturb}). \label{item-geo-outline-perturb}
\item By basic estimates for the GFF, if $\bd X$ is sampled from Lebesgue measure on $[0,1]^n$ independently from $h$ then the laws of $h$ and $h_{\bd X}$ are mutually absolutely continuous. Moreover, the Radon-Nikodym derivative has finite moments of all positive and negative orders.  \label{item-geo-outline-rn}
\end{enumerate}
Roughly speaking, the reason why fact~\eqref{item-geo-outline-perturb} is true is that changing the value of $\bd x$ by at least a constant times $\ep$ will make it so that at least one of the geodesics $P^0,\dots,P^n$ in the definition of $E^\ep(z,w)$ is no longer length-minimizing. For this purpose, it is crucial that the geodesic $P^0$ is disjoint from the supports of all of the bump functions which we are adding to $h$, so that changing the value of $\bd x$ does not change the length of $P^0$. If changing $\bd x$ could change the lengths of all $n$ of the geodesics, then it could be that the changes to $\bd x$ conspire in such a way as to change the length of each of the geodesics by the same amount. 

Fact~\eqref{item-geo-outline-perturb} implies that the Lebesgue measure of the set of $\bd x\in [0,1]^n$ for which $E_{\bd x}^\ep(z,w)$ occurs is at most a constant times $\ep^n$. Equivalently, if $\bd X$ is as in fact~\eqref{item-geo-outline-rn}, then $\BB P[E_{\bd X}^\ep(z,w)] = O_\ep(\ep^n)$ (see Lemma~\ref{lem-geo-count-unif}). 
Combining this with the absolute continuity statement from fact~\eqref{item-geo-outline-rn} leads to the needed upper bound for $\BB P[E^\ep(z,w)]$. 

In order to ensure that changing the value of $\bd x$ has a large enough effect on the $D_{h_{\bd x}}$-lengths of paths, throughout most of the proof we will need to truncate on the global regularity event  
\eqb \label{eqn-geo-count-reg}
  G(C) := \left\{ D_h(\bdy U_i , \bdy U_i') \geq 1/C ,\: \forall i \in [1,n]_{\BB Z} \right\}  ,\quad \forall  C >1 .
\eqe 
We also define $G_{\bd x}(C)$ in the same manner as in~\eqref{eqn-geo-count-reg} but with $h_{\bd x}$ in place of $h$.
With this event in hand, we can now state the main estimate which goes into the proof of Proposition~\ref{prop-subset-dim}.

\begin{lem} \label{lem-geo-count-prob} 
For each $C>1$, each $z,w\in \BB C$, and each $\ep > 0$, 
\eqb \label{eqn-geo-count-prob}
\BB P\left[ E^\ep(z,w) \cap G(C) \right] \leq \ep^{n + o_\ep(1)}
\eqe
where here the rate of the $o_\ep(1)$ does not depend on $z$ or $w$. 
\end{lem}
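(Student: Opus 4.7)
I would follow the strategy indicated just after~\eqref{eqn-geo-count-reg}: a deterministic perturbation lemma to bound the Lebesgue measure of good parameters $\bd x$, then a Cameron--Martin transfer to convert this into the probability bound~\eqref{eqn-geo-count-prob}.

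The core step is the perturbation lemma (Lemma~\ref{lem-geo-count-perturb}): on $G(C)$, if $\|\bd x-\bd y\|_\infty \geq A\ep$ for a constant $A = A(C,\xi)$, then $E_{\bd x}^\ep(z,w)$ and $E_{\bd y}^\ep(z,w)$ cannot both hold. By symmetry I may assume $y_i - x_i \geq A\ep$ for some $i\in[1,n]_{\BB Z}$. Let $P^0,\dots,P^n$ and $Q^0,\dots,Q^n$ be the geodesics witnessing the two events. The path $P^0$ avoids every $\ol V_j$, so by Weyl scaling (Axiom~\ref{item-metric-f}) its $h_{\bd x}$- and $h_{\bd y}$-lengths agree; call the common value $L^0$. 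The geodesic $Q^i$ passes through $U_i \subset U_i'$ and exits $V_i \supset U_i'$, so it crosses $U_i' \setminus U_i$ at least twice; on $G(C)$ (which implies the analogous event for $h_{\bd x}$, since $\phi_j \geq 0$ makes distances only larger), the $h_{\bd x}$-length of $Q^i$ inside $U_i'$ is at least $2 D_{h_{\bd x}}(\bdy U_i, \bdy U_i') \geq 2/C$. Because $\phi_i \equiv 1$ on $U_i'$ and $\phi_j \equiv 0$ along $Q^i$ for $j\neq i$, Weyl scaling (with $Q^i$ parametrized by $h_{\bd x}$-length) yields
\eqb \label{eqn-plan-weyl}
\op{len}_{h_{\bd y}}(Q^i) \geq \op{len}_{h_{\bd x}}(Q^i) + \frac{2\xi(y_i-x_i)}{C}.
\eqe
To extract the contradiction I note that $\mcl B_\ep(z;D_{h_{\bd x}})$ and $\mcl B_\ep(w;D_{h_{\bd x}})$ are disjoint from $\bigcup_j \ol V_j$ by the definition of $E_{\bd x}^\ep$, so any $h_{\bd x}$-geodesic between two points of such a ball stays in the ball and thus has the same $h_{\bd x}$- and $h_{\bd y}$-length. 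Combined with $D_{h_{\bd x}}\leq D_{h_{\bd y}}$ and $\mcl B_\ep(\cdot;D_{h_{\bd y}}) \subset \mcl B_\ep(\cdot;D_{h_{\bd x}})$, several triangle inequalities applied to the endpoints of $P^0$ and $Q^i$ (all within $\ep$ of $z$ or $w$ in either metric) give $\op{len}_{h_{\bd y}}(Q^i) \leq L^0 + 4\ep$ and $\op{len}_{h_{\bd x}}(Q^i)\geq L^0 - 4\ep$. Plugging into~\eqref{eqn-plan-weyl} forces $2\xi(y_i-x_i)/C \leq 8\ep$, contradicting $y_i - x_i \geq A\ep$ once $A > 4C/\xi$.

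The perturbation lemma implies that, on $G(C)$, the set $S(h) := \{\bd x\in [0,1]^n : E_{\bd x}^\ep(z,w) \text{ occurs}\}$ has $L^\infty$-diameter at most $A\ep$, hence Lebesgue measure at most $(A\ep)^n$. Sampling $\bd X \sim \mathrm{Unif}([0,1]^n)$ independently of $h$ and applying Fubini,
\eqbn
\BB P\!\left[E_{\bd X}^\ep(z,w) \cap G(C)\right] = \BB E\!\left[|S(h)|\,\mathbf{1}_{G(C)}\right] \leq (A\ep)^n.
\eqen
Because the $\phi_i$ are smooth, compactly supported, and, by~\eqref{eqn-multi-geo-bdy}, disjoint from $\bdy\BB D$ (so $h_{\bd X}$ and $h$ still have mean zero on $\bdy\BB D$), the Cameron--Martin theorem gives that $h$ and $h_{\bd X}$ are mutually absolutely continuous with Radon--Nikodym derivative $M$ in every $L^q(\BB P)$. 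H\"older's inequality with conjugate exponents $q,q'$ yields
\eqbn
\BB P[E^\ep(z,w) \cap G(C)] \leq \BB E[M^{-q'}]^{1/q'}\cdot \BB P[E_{\bd X}^\ep(z,w) \cap G(C)]^{1/q} \leq C_q\,\ep^{n/q},
\eqen
and sending $q\downarrow 1$ gives~\eqref{eqn-geo-count-prob}.

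The main obstacle is the perturbation step: establishing~\eqref{eqn-plan-weyl} with a constant independent of the unknown geodesics $P^j,Q^j$, and then arranging the triangle-inequality bookkeeping so that the $\Theta(\ep)$ length gain from $\phi_i$ dominates the $O(\ep)$ corrections coming from the fact that the geodesics in $E^\ep$ connect $\ep$-balls rather than fixed endpoints. It is crucial here that $P^0$ is entirely disjoint from the supports of \emph{all} of the $\phi_i$'s, so that perturbing in any coordinate direction strictly affects the $P^i$ lengths but not $P^0$'s. The final absolute-continuity step is the standard Cameron--Martin shift for the whole-plane GFF normalized on $\bdy\BB D$, whose Radon--Nikodym derivative is log-normal and therefore has moments of all orders.
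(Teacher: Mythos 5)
Your proposal follows the same overall strategy as the paper: a deterministic perturbation lemma bounding the Lebesgue measure of parameters $\bd x$ for which $E^\ep_{\bd x}(z,w)$ occurs, followed by a Cameron--Martin transfer with H\"older. Your perturbation argument is in fact a clean, symmetric reformulation of the paper's (Lemma~\ref{lem-geo-count-perturb}): rather than splitting into cases according to the sign of $x_i-y_i$, you compare the $P^0$ witness of one event against the $P^i$ witness of the other, which works because only one sign of $y_i - x_i$ can make $Q^i$ strictly lengthen. The key mechanisms are the same: $P^0$ is disjoint from all $\ol V_j$ so its length is unaffected, $Q^i$ crosses $\ol{U_i'\setminus U_i}$ on which $\phi_i\equiv 1$ and the other $\phi_j$'s vanish, and $G(C)$ forces a macroscopic amount of arc-length inside $U_i'$. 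The triangle-inequality bookkeeping and the conclusion that $\BB P[E^\ep_{\bd X}(z,w)\cap G(C)]\leq (A\ep)^n$ are correct.

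There is one small bookkeeping point you elide, which the paper handles explicitly in Lemma~\ref{lem-geo-count-unif}. After the change of measure, the event $E^\ep(z,w)\cap G(C)$ for the field $h$ pulls back to $E^\ep_{\bd X}(z,w)\cap G_{\bd X}(C)$ (not $G(C)$) for the field $h_{\bd X}$, since $G(C)$ is itself a functional of the field. You wrote $\BB P[E_{\bd X}^\ep(z,w)\cap G(C)]^{1/q}$ in the H\"older step, but what actually appears is $\BB P[E_{\bd X}^\ep(z,w)\cap G_{\bd X}(C)]^{1/q}$, and the inclusion $G(C)\subset G_{\bd X}(C)$ goes the \emph{wrong} way to deduce the latter from your Lebesgue bound. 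The fix is to use the reverse inclusion $G_{\bd X}(C)\subset G(e^\xi C)$ (from Weyl scaling and $\sum_i X_i\phi_i\leq 1$), and run your Lebesgue-measure argument with $e^\xi C$ in place of $C$; since $C$ only enters through constants, this only affects the $O_\ep(1)$ factor and hence the $o_\ep(1)$ in the exponent. With that patch, the proof is complete and matches the paper's.
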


The following elementary lemma is the reason for the first condition in the definition of $E^\ep(z,w)$. 
For the statement, we recall the number $\xi = \gamma/d_\gamma$ from~\eqref{eqn-xi-Q}. 

\begin{lem} \label{lem-geo-count-disjoint}
Suppose that for some $\bd x\in [0,1]^n$, the sets $\mcl B_\ep(z ; D_{h_{\bd x}}) $, $\mcl B_\ep(\BB w ; D_{h_{\bd x}})$, and $ \bigcup_{i=1}^n \ol V_i$ are disjoint. 
Then $\mcl B_\ep(z ; D_{h_{\bd y}}) = \mcl B_\ep(z ; D_{h_{\bd x}})$ and $\mcl B_\ep(w ; D_{h_{\bd y}})= \mcl B_\ep(z ; D_{h_{\bd x}})$ for every $\bd y \in [0,1]^n$. 
\end{lem}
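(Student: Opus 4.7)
The plan is to prove both set equalities by transferring information between the metrics $D_{h_{\bd x}}$ and $D_{h_{\bd y}}$ using the locality axiom, with the length-space property controlling the paths involved. Let $W := \BB C \setminus \bigcup_{i=1}^n \ol V_i$, which is open. Since each $\phi_i$ is supported in $V_i$, both $h_{\bd x}$ and $h_{\bd y}$ agree with $h$ on $W$, so by Axiom~\ref{item-metric-local} the internal metrics coincide a.s.: $D_{h_{\bd x}}(\cdot, \cdot; W) = D_{h_{\bd y}}(\cdot, \cdot; W)$. A consequence is that for any continuous path $P$ whose image is contained in $W$, $\op{len}(P; D_{h_{\bd x}}) = \op{len}(P; D_{h_{\bd y}})$. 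The hypothesis gives $\mcl B_\ep(z; D_{h_{\bd x}}) \subset W$, and symmetrically for $w$; it suffices to prove $\mcl B_\ep(z; D_{h_{\bd x}}) = \mcl B_\ep(z; D_{h_{\bd y}})$ for a fixed but arbitrary $\bd y$, since the argument for $w$ is identical.

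For the forward inclusion, take $a$ with $D_{h_{\bd x}}(z, a) < \ep$. By Axiom~\ref{item-metric-length}, for any sufficiently small $\delta > 0$ there is a continuous path $P$ from $z$ to $a$ with $\op{len}(P; D_{h_{\bd x}}) \leq D_{h_{\bd x}}(z, a) + \delta < \ep$. Every prefix satisfies $D_{h_{\bd x}}(z, P(t)) \leq \op{len}(P|_{[0, t]}; D_{h_{\bd x}}) < \ep$, so $P$ lies in $\mcl B_\ep(z; D_{h_{\bd x}}) \subset W$. By the previous paragraph, $\op{len}(P; D_{h_{\bd y}}) = \op{len}(P; D_{h_{\bd x}}) < \ep$, so $D_{h_{\bd y}}(z, a) < \ep$.

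For the reverse inclusion, take $a$ with $D_{h_{\bd y}}(z, a) < \ep$ and a continuous path $P : [0, 1] \rta \BB C$ from $z$ to $a$ with $L := \op{len}(P; D_{h_{\bd y}}) < \ep$. The set $T := P^{-1}(\mcl B_\ep(z; D_{h_{\bd x}}))$ is open in $[0, 1]$ and contains $0$. If $T = [0, 1]$, then $a = P(1) \in \mcl B_\ep(z; D_{h_{\bd x}})$ and we are done. Otherwise, set $t^* := \inf ([0, 1] \setminus T) \in (0, 1]$, so that $P(t^*) \notin \mcl B_\ep(z; D_{h_{\bd x}})$. For every $t < t^*$, $P|_{[0, t]}$ lies in $\mcl B_\ep(z; D_{h_{\bd x}}) \subset W$, so by the first paragraph $D_{h_{\bd x}}(z, P(t)) \leq \op{len}(P|_{[0, t]}; D_{h_{\bd x}}) = \op{len}(P|_{[0, t]}; D_{h_{\bd y}}) \leq L$. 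Passing to the limit $t \rta t^{*-}$ using continuity of $D_{h_{\bd x}}(z, \cdot)$ and of $P$ gives $D_{h_{\bd x}}(z, P(t^*)) \leq L < \ep$, contradicting $P(t^*) \notin \mcl B_\ep(z; D_{h_{\bd x}})$.

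The only mildly delicate point is this last step: a $D_{h_{\bd y}}$-near-geodesic from $z$ to $a$ could a priori take a shortcut through the support of some $\phi_i$, but the crucial feature is the \emph{uniform} bound by $L$, which forces the first exit point from the $D_{h_{\bd x}}$-ball back into the open ball. I anticipate no other obstacles, and the analogous equality for $w$ is immediate from the same argument.
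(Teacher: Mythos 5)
Your proof is correct and the overall strategy (show that paths staying in $W := \BB C\setminus\bigcup_i \ol V_i$ have the same length under $D_{h_{\bd x}}$ and $D_{h_{\bd y}}$, then conclude the balls coincide by tracking paths until they first leave the ball) is the same as the paper's, which simply calls the second step ``immediate.'' The one genuine point of difference is the axiom you invoke for the length-invariance step: the paper derives it from Weyl scaling (Axiom~\ref{item-metric-f}), observing that since $f = \sum_i (y_i - x_i)\phi_i$ vanishes identically on $W$, the conformal factor $e^{\xi f}$ is $1$ along any path avoiding $\bigcup_i \ol V_i$. You instead derive it from Locality (Axiom~\ref{item-metric-local}), noting $h_{\bd x}|_W = h_{\bd y}|_W$ so the internal metrics on $W$ agree, and then using the fact that for a path $P$ contained in an open set $W$ one has $\op{len}(P; D) = \op{len}(P; D(\cdot,\cdot;W))$. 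That last fact is standard but deserves a one-line justification (e.g., by comparing partitions against internal distances of subpaths), since you rely on it. Both routes are sound; the Weyl-scaling route is slightly more direct for this purpose, and it has the minor advantage that the axiom is stated to hold a.s.\ simultaneously for all continuous $f$, so the full-measure event over which the lemma holds for all $\bd x, \bd y$ is built in, whereas the locality axiom is stated for a fixed field, leaving a (harmless, but worth noting) simultaneity gap if one wants to be scrupulous. Your topological argument for the reverse inclusion via the first exit time $t^*$ is correct and carefully done.
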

\begin{proof}
Since each $\phi_i$ is supported on $V_i$, Weyl scaling (Axiom~\ref{item-metric-f}) implies that adding a linear combination of the $\phi_i$'s to $h_{\bd x}$ does not change the $D_{h_{\bd x}}$-length of any path which does not enter $ \bigcup_{i=1}^n \ol V_i$. From this observation, the lemma statement is immediate. 
\end{proof}

Our next lemma roughly corresponds to fact~\eqref{item-geo-outline-perturb} in the above outline.

\begin{lem} \label{lem-geo-count-perturb}
Fix $C>1$ and $z,w\in\BB C$. 
Let $\ep \in (0,1/(100 C) )$ and let $\bd x , \bd y \in  [0,1]^{n}$ such that $\max_{i\in [1,n]_{\BB Z}}|x_i - y_i| > 8 e \xi^{-1} C  \ep$. If $E^\ep_{\bd x}(z,w) \cap G(C)$ occurs, then $E^\ep_{\bd y}(z,w)$ does not occur.
\end{lem}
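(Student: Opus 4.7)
The plan is to argue by contradiction. Assume both $E^\ep_{\bd x}(z,w) \cap G(C)$ and $E^\ep_{\bd y}(z,w)$ occur and pick $i^* \in [1,n]_{\BB Z}$ with $|\delta_{i^*}| > 8e\xi^{-1}C\ep$, writing $\delta_i := y_i - x_i$. Let $\{P^i_{\bd x}\}_{i=0}^n$ and $\{P^i_{\bd y}\}_{i=0}^n$ be the witnessing families with lengths $L_i := \op{len}(P^i_{\bd x}; D_{h_{\bd x}})$, $L'_i := \op{len}(P^i_{\bd y}; D_{h_{\bd y}})$ and endpoints $(a^i,b^i)$, $(\tilde a^i,\tilde b^i)$. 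By Lemma~\ref{lem-geo-count-disjoint}, $Z := \mcl B_\ep(z; D_{h_{\bd x}}) = \mcl B_\ep(z; D_{h_{\bd y}})$ and analogously $W$, so all the endpoints sit within mutual distance $2\ep$ in both metrics.

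Since the ``reference'' geodesics $P^0_{\bd x}$ and $P^0_{\bd y}$ avoid \emph{every} $\ol V_j$, Weyl scaling (Axiom~\ref{item-metric-f}) makes their $D_{h_{\bd x}}$- and $D_{h_{\bd y}}$-lengths coincide. Triangle inequalities over $Z, W$ then yield $|L_i - L_0|, |L'_i - L'_0|, |L_0 - L'_0| \leq 4\ep$ and the lower bound $D_{h_{\bd x}}(a, b) \geq L_0 - 4\ep$ for any $a \in Z$, $b \in W$ (and similarly for $\bd y$). So all distances of interest are pinned to a window of width $O(\ep)$.

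The core step is a Weyl-scaling comparison applied to $P^{i^*}_{\bd y}$ (if $\delta_{i^*} > 0$) or to $P^{i^*}_{\bd x}$ (if $\delta_{i^*} < 0$); either geodesic is disjoint from $V_j$ for $j \neq i^*$, so only $\phi_{i^*}$ contributes to the Weyl-scaling integral. The portion of the chosen geodesic on which $\phi_{i^*} = 1$ has (native) length at least $D_h(\bdy U_{i^*}, \bdy U_{i^*}') \geq 1/C$, because its endpoints lie in $Z\cup W$ (hence outside $\ol V_{i^*} \supset \ol U_{i^*}'$), the path must reach $U_{i^*}$, and $D_{h_{\bd x}}, D_{h_{\bd y}} \geq D_h$ since each $\phi_i \geq 0$. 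Taking $\delta_{i^*} > 0$ for concreteness, Weyl scaling gives
\eqbn
\op{len}(P^{i^*}_{\bd y}; D_{h_{\bd x}}) \leq L'_{i^*} - \tfrac{1}{C}\bigl(1 - e^{-\xi \delta_{i^*}}\bigr),
\eqen
while the left side is $\geq L_0 - 4\ep$ and $L'_{i^*} \leq L_0 + 4\ep$, yielding $1 - e^{-\xi \delta_{i^*}} \leq 8 C \ep$; the case $\delta_{i^*} < 0$ is symmetric and gives the slightly weaker bound $1 - e^{-\xi|\delta_{i^*}|} \leq 12 C \ep$.

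For $\ep < 1/(100C)$ we have $12 C \ep < 1/2$, so
\eqbn
\xi |\delta_{i^*}| \leq -\log(1 - 12C\ep) \leq \frac{12C\ep}{1 - 12C\ep} < 14 C \ep ,
\eqen
i.e.\ $|\delta_{i^*}| < 14 C \ep / \xi$. Since $14 < 8e$, this contradicts the hypothesis $|\delta_{i^*}| > 8e \xi^{-1} C \ep$. The main obstacle is entirely bookkeeping: the factor $e$ in $8e$ is precisely the slack needed to absorb the additive $O(\ep)$ losses from the triangle inequalities when they are pushed through the exponential Weyl-scaling comparison.
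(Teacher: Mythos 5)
Your proof is correct and takes essentially the same approach as the paper: the same ingredients (Weyl scaling through the annulus $U'_{i^*}\setminus U_{i^*}$, the $1/C$ lower bound from $G(C)$, the ``pure'' geodesic $P^0$ that sees no $\phi_j$, and triangle inequalities over the $\ep$-balls), just packaged as a contradiction that yields $|\delta_{i^*}| \lesssim C\ep/\xi$ directly, rather than the paper's more constructive exhibition of a strictly shorter competitor path. One tiny remark: the two sign cases are actually symmetric under $\bd x \leftrightarrow \bd y$ (in the $\delta_{i^*}<0$ case one can use $P^0_{\bd y}$ as the reference in place of $P^0_{\bd x}$ and obtain the same $8C\ep$ bound); your $12C\ep$ is weaker but harmless, since $14 < 8e$ still closes the argument.
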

\begin{proof}
Suppose that $E^\ep_{\bd x}(z,w) \cap G (C)$ occurs. The proof is elementary and completely deterministic. 
Let $P^0_{\bd x}, \dots, P^n_{\bd x}$ be the $D_{h_{\bd x}}$-geodesics as in the definition of $E_{\bd x}^\ep(z,w)$. 
Let $i\in [1,n]_{\BB Z}$ be chosen so that $|x_i - y_i| > 8 e \xi^{-1} C  \ep$.
We need to distinguish two cases depending on the sign of $x_i - y_i$. 
If $x_i - y_i > 8 e \xi^{-1} C \ep$, we will show that the geodesic $P_{\bd y}^0$ in the definition of $E^\ep_{\bd y}(z,w)$ cannot exist (since there will be a shorter path with the same endpoints). If $x_i - y_i < - 8 e \xi^{-1} C \ep$, we will similarly show that the geodesic $P_{\bd y}^i$ cannot exist.
\medskip 

\noindent\textit{Case 1: $x_ i -y_i > 8 e \xi^{-1} C \ep$.}
Let $z^i \in  \mcl B_\ep(z ; D_h)$ and $w^i \in \mcl B_\ep(w ; D_h)$ be the endpoints of $P_{\BB x}^i$.  
Since the $D_{h_{\bd x}}$-geodesic $P_{\bd x}^i$ enters $U_i$ and $z^i,w^i\notin U_i$, there are times $ 0 < \tau < \sigma < D_{h_{\bd x}}(z^i,w^i)$ such that $P_{\bd x}^i([\tau,\sigma]) \subset \ol{U_i' \setminus U_i}$.
By the definition~\eqref{eqn-geo-count-reg} of $G (C)$ and since $\phi_i \geq 0$ for each $i\in [1,n]_{\BB Z}$ (which implies $D_{h_{\bd x}} \geq D_h$), 
\eqb \label{eqn-geo-count-length}
\tau - \sigma 
= \op{len}\left( P_{\bd x}^i|_{[\tau,\sigma]} ; D_{h_{\bd x}} \right)  
\geq \op{len}\left( P_{\bd x}^i|_{[\tau,\sigma]} ; D_h \right)  
\geq 1/C .
\eqe

By the definition of $E_{\bd x}^\ep(z,w)$, the geodesic $P_{\bd x}^i$ is disjoint from $V_j$ for $j\not= i$. 
Since $\phi_j$ for $j\not=i$ is supported on $V_j$, it follows from Weyl scaling (Axiom~\ref{item-metric-f}) that the $D_{h_{\bd y}}$-length of $P_{\bd x}^i$ is the same as the LQG length of $P_{\bd x}^i$ w.r.t.\ the field $h + y_i \phi_i = h_{\bd x} - (x_i - y_i) \phi_i$. 

Since $\phi_i \geq 0$, $x_i - y_i > 0$, and $\phi_i$ is identically equal to 1 on $U_i'$, Weyl scaling gives
\allb \label{eqn-geo-count-weyl}
&\op{len}\left( P_{\bd x}^i|_{[\tau,\sigma]} ; D_{h_{\bd y}} \right) = e^{-\xi(x_i-y_i)} (\sigma-\tau) 
\quad \text{and} \notag \\
&\qquad\op{len}\left( P_{\bd x}^i|_{[0,\tau] \cup [\sigma , D_{h_{\bd x}}(z^i,w^i)]} ; D_{h_{\bd y}} \right) \leq   D_{h_{\bd x}}(z^i,w^i) - (\sigma-\tau)    .
\alle
Using~\eqref{eqn-geo-count-weyl}, followed by~\eqref{eqn-geo-count-length} and the fact that $x_i - y_i >8 e \xi^{-1} C  \ep$, we therefore have
\allb \label{eqn-geo-count-neg}
\op{len}\left( P_{\bd x}^i ; D_{h_{\bd y}} \right) 
&\leq  D_{h_{\bd x}}(z^i,w^i) - (1 - e^{-\xi(x_i - y_i)}) (\sigma-\tau)  \notag\\
&<  D_{h_{\bd x}}(z^i,w^i) - (1 - e^{-8 C  e  \ep}) / C
\leq D_{h_{\bd x}}(z^i,w^i) - 8 \ep .
\alle
Note that in the last inequality, we used that $1-e^{-s} \geq s/e$ for $s\in [0,1]$. 
 
Now consider a path $\wt P$ from a point $\wt z \in \mcl B_\ep(z ; D_{h_{\bd y}})$ to a point $\wt w \in \mcl B_\ep(w ; D_{h_{\bd y}})$ which does not enter $\bigcup_{i=1}^{n} \ol V_i$.
We will use~\eqref{eqn-geo-count-neg} to construct another path from $\wt z$ to $\wt w$ which is strictly $D_{h_{\bd y}}$-shorter than $\wt P$, which will show that $\wt P$ cannot be a $D_{h_{\bd y}}$-geodesic. This will then imply that $E_{\bd y}(z,w)$ cannot occur (since the geodesic $P_{\bd y}^0$ in the definition of  $E_{\bd y}^\ep(z,w)$ cannot exist). 

Since $E^\ep_{\bd x}(z,w)$ occurs, Lemma~\ref{lem-geo-count-disjoint} implies that $\mcl B_\ep(z ;D_{h_{\bd y}}) =  \mcl B_\ep(z ;D_{h_{\bd x}})  $ and $\mcl B_\ep(w;D_{h_{\bd y}})  =\mcl B_\ep(w;D_{h_{\bd x}})$. In particular, each of these balls is disjoint from $\bigcup_{i=1}^n \ol V_i$. 
We can find a path $\wt P_{z^i}$ from $\wt z$ to $z^i$ which is contained in $\mcl B_\ep(z;D_{h_{\bd y}})$ and a path $\wt P_{w^i}$ from $w^i$ to $\wt w$ which is contained in $\mcl B_\ep(w;D_{h_{\bd y}})$ such that each of $\wt P_{z^i}$ and $\wt P_{w^i}$ have $D_{h_{\bd y}}$-length at most $2\ep$.
 
The concatenation $\wt P'$ of $\wt P_{z^i}$, $P_{\bd x}^i$, and $\wt P_{w^i}$ is a path from $z^i$ to $w^i$ which does not enter $\bigcup_{i=1}^n \ol V_i$ and which has $D_{h_{\bd y}}$-length at most $\op{len}(P_{\bd x}^i ; D_{h_{\bd y}}) + 4\ep$. By~\eqref{eqn-geo-count-neg},
\eqb \label{eqn-geo-count-shorter}
\op{len}\left( \wt P' ; D_{h_{\bd y}} \right) < D_{h_{\bd x}}(z^i,w^i)  -4 \ep . 
\eqe

On the other hand, $\wt P$ is disjoint from $\bigcup_{i=1}^n \ol V_i$ so the $D_{h_{\bd x}}$-length of $\wt P$ is the same as its $D_{h_{\bd y}}$-length. 
In particular,
\eqb  \label{eqn-geo-count-longer}
\op{len}\left( \wt P  ; D_{h_{\bd y}} \right) \geq D_{h_{\bd x}}(\wt z , \wt w) , 
\eqe
which by the triangle inequality is at least $D_{h_{\bd x}}(z^i,w^i) - 4\ep$. Combined with~\eqref{eqn-geo-count-shorter}, this shows that $\wt P'$ is strictly $D_{h_{\bd y}}$-shorter than $\wt P$, so $\wt P$ is not a $D_{h_{\bd y}}$-geodesic. 
\medskip 

\noindent\textit{Case 2: $x_ i -y_i < -8 e \xi^{-1} C \ep$.}
Now consider a path $\wt P$ from a point $\wt z \in \mcl B_\ep(z ; D_{h_{\bd y}})$ to a point $\wt w \in \mcl B_\ep(q ; D_{h_{\bd y}})$ which enters $U_i$. 
We will show that $\wt P$ cannot be a $D_{h_{\bd y}}$-geodesic, and hence that $E_{\bd y}^\ep(z,w)$ cannot occur (since the geodesic $P_{\bd y}^i$ in the definition of $E_{\bd y}^\ep(z,w)$ cannot exist). 
Via a similar calculation to the one leading to~\eqref{eqn-geo-count-neg}, we obtain
\eqb \label{eqn-geo-count-pos}
\op{len}\left( \wt P ; D_{h_{\bd y}} \right)  
> D_{h_{\bd x}}(\wt z , \wt w) + 8 \ep .
\eqe

We will now construct a new path from $\wt z$ to $\wt w$ whose $D_{h_{\bd y}}$-length is strictly shorter than that of $\wt P$. 
Recall that $P_{\bd x}^0$ is a $D_{h_{\bd x}}$-geodesic from a point of $\mcl B_\ep(z;D_{h_{\bd x}})$ to a point of $\mcl B_\ep(w; D_{h_{\bd y}})$ which is disjoint from $\bigcup_{i=1}^n \ol V_i$. 
Let $z^0\in \mcl B_\ep(z ; D_{h_{\bd x}})$ and $w^0 \in \mcl B_\ep(w ; D_{h_{\bd x}})$ be the endpoints of $P_{\bd x}^0$. 
Similarly as in Case 1, we concatenate $P_{\bd x}^0$ with a path in $\mcl B_\ep(z ; D_{h_{\bd x}})$ from $\wt z$ to $z^0$ with $D_{h_{\bd x}}$-length at most $2\ep$ and a path in $\mcl B_\ep(w ; D_{h_{\bd x}})$ from $w^0$ to $\wt w$ with $D_{h_{\bd x}}$-length at most $2\ep$.
This gives a path $\wt P'$ from $\wt z$ to $\wt w$ which is disjoint from $\bigcup_{i=1}^n \ol V_i$ and has $D_{h_{\bd x}}$-length at most $D_{h_{\bd x}}(z^0 ,w^0 ) + 4\ep$. 

Since each $\phi_i$ is supported on $V_i$, it follows from Weyl scaling that 
\eqb \label{eqn-geo-count0}
\op{len}\left( \wt P ' ; D_{h_{\bd y}} \right) 
= \op{len}\left( \wt P ' ; D_{h_{\bd x}} \right)
\leq D_{h_{\bd x}}(z^0 ,w^0 ) + 4\ep ,
\eqe
which by the triangle inequality is at most $D_{h_{\bd x}}(\wt z , \wt w) + 8\ep$. 
Comparing this with~\eqref{eqn-geo-count-pos} shows that
\eqbn
\op{len}\left( \wt P ' ; D_{h_{\bd y}} \right)  <  \op{len}\left( \wt P ; D_{h_{\bd y}} \right).
\eqen
Hence $\wt P$ cannot be a $D_{h_{\bd y}}$-geodesic, as required.
\end{proof}

\begin{lem} \label{lem-geo-count-unif}
Let $\bd X = (X_1,\dots,X_n)$ be sampled uniformly from Lebesgue measure on $[0,1]^n$, independently from $h$. 
For each $C>1$, each $z,w\in\BB C$, and each $\ep > 0$, a.s.\
\eqb \label{eqn-geo-count-unif}
\BB P\left[ E_{\bd X}^\ep(z,w) \cap G_{\bd X}(C) \,|\, h \right] = O_\ep(\ep^n) 
\eqe
at a rate which is deterministic and uniform over all $z,w\in\BB C$. 
\end{lem}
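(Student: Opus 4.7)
The plan is to realize the conditional probability as the Lebesgue measure of a random subset of $[0,1]^n$ and bound that measure using a slight strengthening of Lemma~\ref{lem-geo-count-perturb}. Since $\bd X$ is uniform on $[0,1]^n$ and independent of $h$, a.s.\
\eqbn
\BB P\left[ E_{\bd X}^\ep(z,w) \cap G_{\bd X}(C) \,|\, h \right] = \mcl L(A),
\eqen
where $\mcl L$ denotes $n$-dimensional Lebesgue measure and $A = A(h,z,w,\ep,C) := \{\bd x \in [0,1]^n : E_{\bd x}^\ep(z,w) \cap G_{\bd x}(C) \text{ occurs}\}$. Borel measurability of $A$ will follow from the fact that $\bd x \mapsto D_{h_{\bd x}}$ is jointly continuous in the local uniform topology, an immediate consequence of Weyl scaling.

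The main step is to observe that Lemma~\ref{lem-geo-count-perturb} remains true with its hypothesis $G(C)$ replaced by $G_{\bd x}(C)$. Inspecting the proof, $G(C)$ is used only to obtain a lower bound of the form $\op{len}(\cdot\,;D_{h_{\bd x}}) \geq 1/C$ for a segment of $P_{\bd x}^i$ (or, in Case~2, of an arbitrary path entering $U_i$) that traverses the annulus $U_i' \setminus U_i$, via the chain $\op{len}(\cdot\,;D_{h_{\bd x}}) \geq \op{len}(\cdot\,;D_h) \geq D_h(\bdy U_i,\bdy U_i') \geq 1/C$. On $G_{\bd x}(C)$ the same bound is immediate without the middle step: such a segment has $D_{h_{\bd x}}$-length at least $D_{h_{\bd x}}(\bdy U_i, \bdy U_i') \geq 1/C$. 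All other steps of the proof rely only on Weyl scaling and carry over verbatim.

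With this strengthening in hand, the conclusion is essentially immediate. Assuming $A \neq \emptyset$ (otherwise the bound is trivial), I would fix any $\bd x \in A$ and apply the strengthened lemma: for any $\bd y \in [0,1]^n$ with $\max_i |x_i - y_i| > 8 e \xi^{-1} C \ep$, $E_{\bd y}^\ep(z,w)$ does not occur, and hence $\bd y \notin A$. Therefore $A$ is contained in the $L^\infty$-ball of radius $8 e \xi^{-1} C \ep$ around $\bd x$, yielding $\mcl L(A) \leq (16 e \xi^{-1} C \ep)^n$. Since the prefactor $(16 e \xi^{-1} C)^n$ depends only on $n, \xi, C$, this is a deterministic bound uniform over $h, z, w$. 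The one mild technical point to verify will be the strengthened lemma in Case~2, where I must check that the ``similar calculation'' invoked there uses only the annulus-crossing bound now furnished by $G_{\bd x}(C)$; this should reduce to the same Weyl-scaling identity applied to a crossing of $U_i' \setminus U_i$.
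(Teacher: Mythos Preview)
Your proposal is correct and follows essentially the same approach as the paper: interpret the conditional probability as the Lebesgue measure of the set $A\subset[0,1]^n$ and bound that measure using Lemma~\ref{lem-geo-count-perturb}. The only difference is in how you reconcile the hypothesis $G(C)$ of Lemma~\ref{lem-geo-count-perturb} with the event $G_{\bd x}(C)$ appearing in the statement. You strengthen the lemma by observing that its proof only needs the annulus-crossing bound $D_{h_{\bd x}}(\bdy U_i,\bdy U_i')\geq 1/C$, which $G_{\bd x}(C)$ supplies directly; the paper instead keeps Lemma~\ref{lem-geo-count-perturb} as stated and notes that since $\sum_i X_i\phi_i\in[0,1]$, Weyl scaling gives $D_h\geq e^{-\xi}D_{h_{\bd x}}$, hence $G_{\bd x}(C)\subset G(e^\xi C)$, and then applies the lemma with $e^\xi C$ in place of $C$. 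Both routes yield the same bound $(16e\xi^{-1}C'\ep)^n$ with $C'$ equal to $C$ or $e^\xi C$. One small omission: you should note (as the paper does) that Lemma~\ref{lem-geo-count-perturb} requires $\ep<1/(100C)$, and for larger $\ep$ the $O_\ep(\ep^n)$ bound is trivial.
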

\begin{proof}
By Lemma~\ref{lem-geo-count-perturb}, if $\ep \in (0,1/(100 C))$ and $G(C)$ occurs then the Lebesgue measure of the set of $\bd x \in [0,1]^n$ for which $E_{\bd x}^\ep(z,w)$ occurs is at most $(16 e \xi^{-1} C  \ep)^n$. Consequently,
\eqb \label{eqn-geo-count-unif0} 
\BB P\left[ E_{\bd X}^\ep(z,w) \cap G (C) \,|\, h \right] \leq (16 e \xi^{-1} C  \ep)^n .
\eqe
Recall that the random function $\sum_{i=1}^n X_i \phi_i$ takes values in $[0,1]$. 
By the Weyl scaling property of $D_h$, if $G_{\bd X}(C)$ occurs, then $G(e^\xi C)$ occurs. Therefore,~\eqref{eqn-geo-count-unif} for $\ep \in (0,1/(100 C))$ follows from~\eqref{eqn-geo-count-unif0}. This implies~\eqref{eqn-geo-count-unif} in general since the implicit constant in the $O_\ep(\cdot)$ is allowed to depend on $C$. 
\end{proof}

\begin{proof}[Proof of Lemma~\ref{lem-geo-count-prob}]
Let $\bd X = (X_1,\dots,X_n)$ be a vector of i.i.d.\ uniform random variables as in Lemma~\ref{lem-geo-count-unif}.
By~\eqref{eqn-multi-geo-bdy}, the support of the function $\sum_{i=1}^n X_i\phi_i$ is disjoint from $\bdy\BB D$, so the circle average of $h_{\bd x}$ over $\bdy\BB D$ is zero.
By a standard Radon-Nikodym derivative calculation for the GFF (see, e.g.,~\cite[Lemma A.2]{gwynne-ball-bdy}), if we condition on $\bd X$ then the laws of $h$ and $h_{\bd X}$ are mutually absolutely continuous and the Radon-Nikodym derivative of the law of $h$ w.r.t.\ the conditional law of $h_{\bd X}$ is given by
\eqb
M = M(h_{\bd X} , \bd X) = \exp\left( - \sum_{i=1}^n \left( X_i (h_{\bd X} , \phi_i)_\nabla - \frac{X_i^2}{2} (\phi_i , \phi_i)_\nabla \right) \right) .
\eqe
We have
\eqbn
(h_{\bd X} , \phi_i)_\nabla = (h  , \phi_i)_\nabla + X_i (\phi_i , \phi_i)_\nabla  .
\eqen
Since $(h,\phi_i)_\nabla$ is centered Gaussian with variance $(\phi_i,\phi_i)_\nabla$, we can compute that for each $p>1$,  
\eqb
\BB E\left[ M^p \,|\, \bd X \right] \leq \exp\left(   \frac{p^2 - p}{2}  \sum_{i=1}^n X_i^2  (\phi_i , \phi_i)_\nabla \right)
\eqe
Each $X_i$ takes values in $[0,1]$ and there is some deterministic constant $A$ (which does not depend on $z,w,\ep$) such that $(\phi_i,\phi_i)_\nabla  \leq A $ for each $i\in [1,n ]_{\BB Z}$. Therefore, 
\eqb \label{eqn-geo-count-rn}
\BB E\left[ M^p \,|\, \bd X \right]  = O_\ep(1) .
\eqe  

We now use H\"older's inequality to get that if $p,q > 1$ with $1/p + 1/q = 1$, then
\allb
\BB P\left[ E^\ep(z,w) \cap G(C) \right] 
&= \BB P\left[ E^\ep(z,w) \cap G(C)  \,|\, \bd X \right] \quad \text{($\bd X$ and $h$ are independent)} \notag\\
&= \BB E\left[ M \BB 1_{E^\ep_{\bd X}(z,w) \cap G_{\bd X}(C)} \,|\, \bd X \right] \notag\\
&\leq \BB E\left[ M^p   \,|\, \bd X \right]^{1/p}    \BB P\left[   E^\ep_{\bd X}(z,w) \cap G_{\bd X}(C)   \,|\, \bd X \right]^{1/q} \quad \text{(H\"older)}  .
\alle
We now take unconditional expectations of both sides of this last display, and use~\eqref{eqn-geo-count-rn} and Lemma~\ref{lem-geo-count-unif} to bound the right side. This leads to
\eqb
\BB P\left[ E^\ep(z,w) \cap G(C) \right] 
\preceq  \BB E\left[ \BB P\left[   E^\ep_{\bd X}(z,w) \cap G_{\bd X}(C)   \,|\, \bd X \right]^{1/q}  \right]
\leq   \BB P\left[   E^\ep_{\bd X}(z,w) \cap G_{\bd X}(C)   \right]^{1/q}
= O_\ep(\ep^{n/q} ). 
\eqe
Note that in the second inequality, we used Jensen's inequality to move the $1/q$ outside of the expectation. 
Sending $q \rta 1^+$ (equivalently, $p \rta \infty$) yields~\eqref{eqn-geo-count-prob}. 
\end{proof}

\subsection{Proof of Proposition~\ref{prop-subset-dim}} 
\label{sec-subset-dim}

Since we are working with LQG balls rather than Euclidean balls, we will need to apply Lemma~\ref{lem-geo-count-prob} for points $z,w$ sampled from the $\gamma$-LQG measure $\mu_h$, rather than for deterministic points. We can do this due to the description of the conditional law of $h$ given points sampled from its LQG area measure (Lemma~\ref{lem-two-point-law}) and the following extension of Lemma~\ref{lem-geo-count-prob}. 

\begin{lem} \label{lem-geo-count-prob-f}  
Fix a constant $A > 1$. Let $f : \BB C\rta \BB R \cup \{\infty\}$ be a deterministic function which is continuous except for finitely many logarithmic singularities. Assume that $D_{h+f}$ a.s.\ induces the same topology on $\BB C$ as the Euclidean metric and 
\eqb
\sup\left\{ |f(u)| : u \in \bigcup_{i=1}^{n} \ol V_i \right\} \leq A . 
\eqe
For $z,\in\BB C$ and $\ep > 0$, define $E^\ep_f(z,w)$ in the same manner as the event $E^\ep(z,w)$ from Section~\ref{sec-geo-count-prob} but with $h+f$ in place of $f$. 
For each $C>1$,  
\eqb \label{eqn-geo-count-prob-f}
\BB P\left[ E_f^\ep(z,w) \cap G(C) \right] \leq \ep^{n + o_\ep(1)}
\eqe
where here the rate of the $o_\ep(1)$ does not depend on $z$ or $w$ and depends on $f$ only via the constant $A$. 
\end{lem}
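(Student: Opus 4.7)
The plan is to repeat the proof of Lemma~\ref{lem-geo-count-prob} with $h+f$ in place of $h$. The shift by the deterministic function $f$ does not affect the argument in any essential way, because (i) $f$ is uniformly bounded by $A$ on $\bigcup_i \ol V_i$, where the perturbations $\phi_i$ are supported, and (ii) $f$ is deterministic, so it does not enter the Radon--Nikodym computation.

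First I would prove the analog of Lemma~\ref{lem-geo-count-perturb}. For $\bd x \in [0,1]^n$, set $h_{\bd x, f} := h + f + \sum_{i=1}^n x_i \phi_i$ and define $E^\ep_{\bd x, f}(z,w)$ in the obvious way. The proof of Lemma~\ref{lem-geo-count-perturb} is purely deterministic and uses only Weyl scaling on path segments contained in $\ol{U_i' \setminus U_i} \subset \ol V_i$. Its only quantitative input is the lower bound $D_h(\bdy U_i, \bdy U_i') \geq 1/C$ provided by $G(C)$. Since $|f| \leq A$ on $\bigcup_i \ol V_i$ and $\sum_i x_i \phi_i \geq 0$, a path from $\bdy U_i$ to $\bdy U_i'$ must cross $\ol{U_i' \setminus U_i}$, and Weyl scaling on that crossing gives $D_{h_{\bd x, f}}(\bdy U_i, \bdy U_i') \geq e^{-\xi A}/C$ on $G(C)$. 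Setting $C' := e^{\xi A} C$ and re-running the proof verbatim (with $D_{h_{\bd x, f}}$ in place of $D_{h_{\bd x}}$) yields: for $\ep \in (0, 1/(100 C'))$ and $\bd x, \bd y \in [0,1]^n$ with $\max_i |x_i - y_i| > 8 e \xi^{-1} C' \ep$, if $E^\ep_{\bd x, f}(z,w) \cap G(C)$ occurs then $E^\ep_{\bd y, f}(z,w)$ does not.

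Following Lemma~\ref{lem-geo-count-unif}, this yields that for $\bd X = (X_1,\dots,X_n)$ uniform on $[0,1]^n$ and independent of $h$, the Lebesgue measure of $\{\bd x \in [0,1]^n : E^\ep_{\bd x, f}(z,w) \cap G(C)\}$ is at most $(16 e \xi^{-1} C' \ep)^n$, so $\BB P[E^\ep_{\bd X, f}(z,w) \cap G(C) \,|\, h] = O_\ep(\ep^n)$ uniformly in $z,w$. The Radon--Nikodym / H\"older step in the proof of Lemma~\ref{lem-geo-count-prob} then carries over unchanged: since $f$ is deterministic, the Cameron--Martin shift from $h+f$ to $h+f+\sum_i X_i \phi_i$ has exactly the same Radon--Nikodym derivative $M$ as the shift from $h$ to $h+\sum_i X_i \phi_i$, and hence the moment estimate $\BB E[M^p \,|\, \bd X] = O_\ep(1)$ is unchanged. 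Applying H\"older's inequality exactly as in that proof yields $\BB P[E^\ep_f(z,w) \cap G(C)] \leq \ep^{n+o_\ep(1)}$, with the rate depending on $f$ only via $A$ (through the constant $C'$).

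The main obstacle is essentially bookkeeping: one must justify the Weyl-scaling lower bound on $D_{h_{\bd x, f}}(\bdy U_i, \bdy U_i')$ carefully, and verify that the log singularities of $f$ do not interfere. The latter is automatic because the hypothesis $\sup\{|f(u)| : u \in \bigcup_i \ol V_i\} \leq A$ forces those singularities to lie outside $\bigcup_i \ol V_i$, while both the Cameron--Martin shifts $\sum_i X_i \phi_i$ and all Weyl-scaling manipulations take place inside $\bigcup_i \ol V_i$ where $f$ is bounded. The assumption that $D_{h+f}$ induces the Euclidean topology ensures that the metric balls appearing in the definition of $E^\ep_f(z,w)$ are well-behaved.
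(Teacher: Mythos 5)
Your proposal is correct and takes essentially the same approach as the paper's (terse) proof, which likewise observes that Lemmas~\ref{lem-geo-count-disjoint}--\ref{lem-geo-count-unif} carry over verbatim with the constant $8e\xi^{-1}C$ in Lemma~\ref{lem-geo-count-perturb} replaced by one depending on $A$, and that the Radon--Nikodym step is unchanged because $f$ is deterministic. You have simply supplied the explicit constant $C' = e^{\xi A}C$ and the Weyl-scaling justification that the paper leaves to the reader.
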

\begin{proof}
This follows from exactly the same proof as Lemma~\ref{lem-geo-count-prob}, with $h+f$ used in place of $h$ throughout. 
To be more precise, the proofs of Lemmas~\ref{lem-geo-count-disjoint}, \ref{lem-geo-count-perturb}, and~\ref{lem-geo-count-unif} do not use any particular properties of the field $h$ so carry over verbatim to our setting, except that the constant $8e\xi^{-1} C$ appearing in Lemma~\ref{lem-geo-count-perturb} is replaced by a possibly larger constant which is allowed to depend on $A$.
The only property of $h$ used in Lemma~\ref{lem-geo-count-prob} is the Radon-Nikodym derivative bound, but this is identical in our setting since $f$ is deterministic so for any deterministic smooth function $g$, the Radon-Nikodym derivative between the laws of $h+f+g$ and $h+g$ is the same as the Radon-Nikodym derivative between the laws of $h$ and $h+g$. 
\end{proof}

We now prove a variant of Lemma~\ref{lem-geo-count-prob} for a random choice of $z$ and $w$.

\begin{lem} \label{lem-random-pt}
Let $Z,W\subset\BB C$ be bounded open sets which lie at positive distance from each other and from $ \bigcup_{i=1}^n \ol V_i$. 
Conditional on $h$, let $(\BB z,\BB w)$ be sampled from $\mu_h|_Z \times \mu_h|_W$, normalized to be a probability measure.
For each $C>1$ and each $\ep\in (0,1)$, 
\eqb \label{eqn-random-pt}
\BB P\left[ E^\ep(\BB z,\BB w) \cap G(C)  \right] \leq \ep^{n + o_\ep(1)} 
\eqe
where the rate of the $o_\ep(1)$ is deterministic.
\end{lem}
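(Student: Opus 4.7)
\emph{Proof proposal.} The plan is to deduce Lemma~\ref{lem-random-pt} from Lemma~\ref{lem-geo-count-prob-f} via the standard rerooting (Girsanov) description of the $\gamma$-LQG measure at points sampled from it, combined with a truncation to handle the normalizing factor $(\mu_h(Z)\mu_h(W))^{-1}$ in the conditional law of $(\BB z, \BB w)$. The first step is to split, for a parameter $\delta > 0$ to be chosen,
\begin{align*}
\BB P\bigl[E^\ep(\BB z, \BB w) \cap G(C)\bigr]
&\leq \BB P[\mu_h(Z) < \delta] + \BB P[\mu_h(W) < \delta] \\
&\quad + \BB P\bigl[E^\ep(\BB z, \BB w) \cap G(C) \cap \{\mu_h(Z) \wedge \mu_h(W) \geq \delta\}\bigr].
\end{align*}
Since the $\gamma$-LQG measure has finite negative moments of all orders (a standard property of Gaussian multiplicative chaos), the first two terms are at most $C_p\,\delta^p$ for every $p > 0$. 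Replacing the random density $(\mu_h(Z)\mu_h(W))^{-1}$ by $\delta^{-2}$ on the good event and using Markov's inequality, the third term is at most
\begin{equation*}
\delta^{-2}\, \BB E\left[\int_Z \int_W \BB 1_{E^\ep(z,w) \cap G(C)}\,\mu_h(dz)\mu_h(dw)\right].
\end{equation*}

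The next step is to apply Lemma~\ref{lem-two-point-law}, which should express the expectation of $\int\int F(h,z,w)\,\mu_h(dz)\mu_h(dw)$ as $\int_Z\int_W \BB E[F(h + f_{z,w}, z, w)]\,\rho(z,w)\,dz\,dw$, where $\rho$ is a bounded continuous density on $Z\times W$ and $f_{z,w}(u) = -\gamma\log|u-z| - \gamma\log|u-w| + g_{z,w}(u)$ for a continuous function $g_{z,w}$ satisfying $\sup_{(z,w) \in Z\times W}\|g_{z,w}\|_\infty < \infty$. Because $Z$ and $W$ are at positive distance from $\bigcup_{i=1}^n \ol V_i$, the function $f_{z,w}$ is continuous and uniformly bounded by some constant $A$ on $\bigcup_{i=1}^n \ol V_i$, with its log singularities located outside $\bigcup_i \ol V_i$; moreover $D_{h + f_{z,w}}$ a.s.\ induces the Euclidean topology because $\gamma < Q = 2/\gamma + \gamma/2$. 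Since $E^\ep(z,w)$ depends on the field only through $D_h$, its pushforward under the deterministic shift $h\mapsto h + f_{z,w}$ is exactly the event $E_{f_{z,w}}^\ep(z,w)$ of Lemma~\ref{lem-geo-count-prob-f}. That lemma then gives $\BB P[E_{f_{z,w}}^\ep(z,w) \cap G(C)] \leq \ep^{n + o_\ep(1)}$ with a rate uniform in $(z,w) \in Z\times W$, so after integration
\begin{equation*}
\BB E\left[\int_Z\int_W \BB 1_{E^\ep(z,w) \cap G(C)}\,\mu_h(dz)\mu_h(dw)\right] \leq C\,\ep^{n + o_\ep(1)}.
\end{equation*}

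Putting the two bounds together yields $\BB P[E^\ep(\BB z, \BB w) \cap G(C)] \leq C_p\,\delta^p + C\,\delta^{-2}\,\ep^{n + o_\ep(1)}$. Given any $\eta > 0$, choosing $\delta = \ep^{\eta/4}$ and $p$ large enough that $p\eta/4 \geq n$ makes both terms at most $\ep^{n-\eta}$ for all sufficiently small $\ep$; since $\eta > 0$ is arbitrary, this gives the desired $\ep^{n + o_\ep(1)}$ bound. The main obstacle I anticipate is confirming that Lemma~\ref{lem-two-point-law} delivers exactly the rerooting identity described above---in particular, that the density $\rho$ and the bounded part $g_{z,w}$ of the log-singular shift are uniformly controlled for $(z,w) \in Z \times W$, so that the constant $A$ (and hence the rate of $o_\ep(1)$) coming from Lemma~\ref{lem-geo-count-prob-f} is uniform in $(z,w)$. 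Once this bookkeeping is arranged, the rest of the argument is the elementary truncation/optimization outlined above.
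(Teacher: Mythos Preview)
Your proposal is correct and follows essentially the same route as the paper: both arguments invoke the rerooting description of Lemma~\ref{lem-two-point-law} to reduce to Lemma~\ref{lem-geo-count-prob-f} applied with $f_{z,w} = -\gamma\log|\cdot-z| - \gamma\log|\cdot-w|$ plus a bounded correction. Your concern about the bookkeeping is easily dispatched: the density $\rho(z,w) = |z-w|^{-\gamma^2}(|z|_+|w|_+)^{2\gamma^2}$ is bounded on $Z\times W$ since $Z,W$ are bounded and at positive distance from each other, and the ``bounded part'' $g_{z,w}$ is just the constant $-\gamma c_z - \gamma c_w$, which is uniformly bounded for $(z,w)$ in a bounded set.

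The only methodological difference is in how the two proofs handle the normalizing factor $(\mu_h(Z)\mu_h(W))^{-1}$. The paper passes to the weighted law $\wt{\BB P}$ (i.e., $d\wt{\BB P} \propto \mu_h(Z)\mu_h(W)\,d\BB P$), proves the bound $\wt{\BB P}[E^\ep(\BB z,\BB w)\cap G(C)] \leq \ep^{n+o_\ep(1)}$ directly under $\wt{\BB P}$, and then transfers back to $\BB P$ via H\"older's inequality together with the finiteness of negative moments of $\mu_h(Z),\mu_h(W)$, sending the H\"older exponent $p\to 1$. You instead truncate on $\{\mu_h(Z)\wedge\mu_h(W)\geq\delta\}$, bound the normalization crudely by $\delta^{-2}$, and optimize over $\delta$. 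Both devices exploit the same input (negative moments of the LQG measure) and lose only an arbitrarily small power of $\ep$; they are interchangeable here.
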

\begin{proof}
Write $\wt{\BB P}$ for the law of $(h,\BB z,\BB w)$ weighted by $\mu_h(Z)\mu_h(W)$, normalized to be a probability measure. 
Write $\wt{\BB E}$ for the corresponding expectation.  
By the description~\eqref{item-law-quantum} from Lemma~\ref{lem-two-point-law}, under $\wt{\BB P}$ the conditional law of $h$ given $(\BB z,\BB w)$ is the same as the the law of the field $\wt h - \gamma \log|\cdot-\BB z| - \gamma \log |\cdot-\BB w|$, normalized to have average zero over $\bdy\BB D$, where $\wt h$ is a whole-plane GFF which is independent from $\BB z , \BB w$. 
Hence we can apply Lemma~\ref{lem-geo-count-prob} under the conditional law given $\BB z , \BB w$, with $f$ given by $ -\gamma \log|\cdot-\BB z| - \gamma \log |\cdot-\BB w|$ minus its average over $\bdy\BB D$ to get that
\eqb \label{eqn-random-pt-weighted}
\wt{\BB P}\left[ E^\ep(\BB z,\BB w) \cap G(C)  \right]
= \wt{\BB E}\left[ \wt{\BB P}\left[ E^\ep(\BB z,\BB w) \cap G(C) \,|\, \BB z,\BB w \right] \right] 
= \ep^{n+o_\ep(1)} .
\eqe 

We now use H\"older's inequality to get that for $p , q > 1$ with $1/p+1/q=1$, 
\allb \label{eqn-random-pt-holder} 
\BB P\left[ E^\ep(\BB z,\BB w) \cap G(C)   \right] 
&= c^{-1} \wt{\BB E}\left[ \BB 1_{ E^\ep(\BB z,\BB w) \cap G(C)}  \mu_h(Z)^{-1} \mu_h(W)^{-1}  \right] \notag\\
&= c^{-1} \wt{\BB P}\left[  E^\ep(\BB z,\BB w) \cap G(C) \right]^{1/p} \times \wt{\BB E}\left[ \mu_h(Z)^{-q} \mu_h(W)^{-q} \right]^{1/q} ,
\alle
where $c = \wt{\BB E}[ \mu_h(Z)^{-1} \mu_h(W)^{-1}  ]$ is a normalizing constant.
By~\eqref{eqn-random-pt-weighted}, the first factor on the right side of~\eqref{eqn-random-pt-holder} is at most $\ep^{n/p + o_\ep(1)}$. 
The second factor on the right side of~\eqref{eqn-random-pt-holder} is equal to $\BB E\left[ \mu_h(Z)^{1-q} \mu_h(W)^{1-q} \right]^{1/q}$, which is finite for any choice of $q$ since $\mu_h$ has negative moments of all orders~\cite[Theorem 2.12]{rhodes-vargas-review}. 
Sending $p\rta 1$ and $q\rta \infty$ now gives~\eqref{eqn-random-pt}.
\end{proof}

Recall the set $\wt{\mcl S}_n$ from Proposition~\ref{prop-subset-dim}. 

\begin{lem} \label{lem-subset-cover}
Fix bounded open sets $Z,W \subset \BB C$ which lie at positive distance from each other and from $ \bigcup_{i=1}^n \ol V_i$. 
Also fix compact sets $K_1\subset Z$ and $K_2\subset W$.  
With probability tending to 1 as $\ep\rta 0$, the set $\wt{\mcl S}_n \cap (K_1\times K_2)$ can be covered by $\ep^{n-2d_\gamma+o_\ep(1)}$ sets of the form $\mcl B_\ep(z;D_h) \times \mcl B_\ep(w;D_h)$ for $(z,w) \in Z\times W$. 
\end{lem}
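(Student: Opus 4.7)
The plan is to combine the one-point bound from Lemma~\ref{lem-random-pt} with a covering argument based on Lemma~\ref{lem-ball-cover}, and then convert expected $\mu_h$-mass bounds into counts of LQG balls.

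First, I would apply Lemma~\ref{lem-ball-cover} to obtain, with probability tending to 1 as $\ep\rta 0$, a cover of $K_1$ by LQG balls $\mcl B_\ep(z_i;D_h)$ with $z_1,\dots,z_{N_1}\in Z$ and $N_1\leq \ep^{-d_\gamma+o_\ep(1)}$, together with an analogous cover of $K_2$ of size $N_2\leq \ep^{-d_\gamma+o_\ep(1)}$. The products give a cover of $K_1\times K_2$ by at most $\ep^{-2d_\gamma+o_\ep(1)}$ product balls. Since $Z, W, \bigcup_i \ol V_i$ are at positive Euclidean distance and $D_h$ induces the Euclidean topology, for small enough $\ep$ all these product balls satisfy the disjointness condition from the definition of $E^\ep$; this holds with probability tending to 1 by the continuity of $D_h$ and compactness.

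Second, I would observe that it suffices to retain only those product balls whose center pair $(z_i,w_j)$ satisfies $E^\ep(z_i,w_j)\cap G(C)$ (for some $C$ large enough that $G(C)$ occurs with high probability). Indeed, if $(z',w')\in \wt{\mcl S}_n \cap (\mcl B_\ep(z_i;D_h)\times \mcl B_\ep(w_j;D_h))$, then the $n+1$ distinct geodesics from $z'$ to $w'$ witnessing $\wt{\mcl S}_n$ start in $\mcl B_\ep(z_i;D_h)$ and end in $\mcl B_\ep(w_j;D_h)$ and satisfy~\eqref{eqn-subset-def'}, so they witness $E^\ep(z_i,w_j)$.

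Third, to count such bad pairs I would exploit Lemma~\ref{lem-random-pt}. Observe that if $(z',w')\in \mcl B_\ep(z_i;D_h)\times \mcl B_\ep(w_j;D_h)$, then by the triangle inequality $\mcl B_\ep(z_i;D_h)\subset \mcl B_{2\ep}(z';D_h)$ and similarly on the $w$-side, so $E^\ep(z_i,w_j)$ implies $E^{2\ep}(z',w')$ for every such $(z',w')$. Using the bounded multiplicity of a Vitali-type subcover (the LQG metric is a.s.\ doubling on compact sets), one obtains
\[
\sum_{(i,j)\,:\,E^\ep(z_i,w_j)} \mu_h(\mcl B_\ep(z_i;D_h))\,\mu_h(\mcl B_\ep(w_j;D_h))
\;\leq\; M \int_{Z\times W} \BB 1_{E^{2\ep}(z',w')\cap G(C)} \, d\mu_h(z')\,d\mu_h(w').
\]
Lemma~\ref{lem-random-pt} bounds the expectation of the normalized integral by $\ep^{n+o_\ep(1)}$; a H\"older argument using that $\mu_h(Z),\mu_h(W)$ have moments of all orders (so that the normalization can be removed at the cost of raising the bound to a power $1-\delta$ for arbitrary $\delta$) gives $\BB E[\int_{Z\times W}\BB 1_{E^{2\ep}\cap G(C)}\,d\mu_h\otimes d\mu_h] \leq \ep^{n+o_\ep(1)}$, and then Markov's inequality gives the same bound for the integral itself with probability tending to~1.

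Finally, I would combine this with a uniform lower bound $\mu_h(\mcl B_\ep(z_i;D_h))\geq \ep^{d_\gamma+o_\ep(1)}$ valid for all balls in the cover with probability tending to~1 (such bounds follow from standard regularity of $\mu_h$ and $D_h$, e.g.\ from the estimates underlying Lemma~\ref{lem-ball-cover}). Dividing, the number of bad pairs is at most $\ep^{(n-2d_\gamma)+o_\ep(1)}$, yielding the claim. The main obstacle in this plan will be the removal of the normalization in Lemma~\ref{lem-random-pt} together with securing uniform lower bounds on $\mu_h$-masses of LQG balls; both are essentially known, but must be stated with the correct probabilistic control so that all the high-probability events in Steps 1, 2, and 4 can be intersected without losing the target power of $\ep$.
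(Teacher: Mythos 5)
Your first two steps match the paper, but your Step 3 opens a genuine gap and is also unnecessary. You invoke a Vitali-type subcover argument premised on the claim that ``the LQG metric is a.s.\ doubling on compact sets.'' This is not established anywhere in the paper or, as far as I can tell, in the literature, and given the extreme irregularity of $D_h$ (e.g., metric ball boundaries having dimension strictly greater than $1$ with infinitely many components), it should not be taken for granted. Without a bounded-multiplicity property for the covering family, the inequality
\[
\sum_{(i,j)\,:\,E^\ep(z_i,w_j)} \mu_h(\mcl B_\ep(z_i;D_h))\,\mu_h(\mcl B_\ep(w_j;D_h))
\;\leq\; M \int_{Z\times W} \BB 1_{E^{2\ep}(z',w')\cap G(C)} \, d\mu_h\otimes d\mu_h
\]
simply does not follow: an LQG ball could be heavily overlapped by many other balls in the cover, so the sum could far exceed the integral. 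Your Step 4 (uniform lower bound on $\mu_h$-mass of LQG balls) is fine but is downstream of the flawed step.

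The cleaner route, which is what the paper does, sidesteps all of this by noticing that the centers $z_1,\dots,z_{N_1}$ produced by Lemma~\ref{lem-ball-cover} are already conditionally i.i.d.\ samples from $\mu_h|_Z$ (and similarly for $W$), with $M_\ep = \lfloor \ep^{-d_\gamma-\zeta}\rfloor$ samples on each side. Lemma~\ref{lem-random-pt} therefore applies directly to each pair $(\BB z_k,\BB w_l)$ and gives $\BB P[E^\ep(\BB z_k,\BB w_l)\cap G(C)]\leq \ep^{n+o_\ep(1)}$. Summing over the $M_\ep^2 \leq \ep^{-2d_\gamma-2\zeta}$ pairs bounds the expected count of bad pairs by $\ep^{n-2d_\gamma-2\zeta+o_\ep(1)}$, and Chebyshev/Markov turns this into a high-probability statement. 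Then one sends $C\rta\infty$ and $\zeta\rta 0$ slowly. No dilation to $E^{2\ep}$, no Vitali lemma, and no doubling hypothesis are needed. Your approach of integrating over the product balls treats the $z_i$'s as if they were deterministic centers, which loses the crucial feature of Lemma~\ref{lem-ball-cover} that makes the one-point estimate for $\mu_h$-sampled points immediately applicable.
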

\begin{proof}
Let $\zeta\in (0,1)$ be a small constant (which we will eventually send to zero) and let $M_\ep := \lfloor \ep^{-d_\gamma - \zeta}\rfloor$. Also let $C>1$ (which we will eventually send to $\infty$). 
Conditional on $h$, let $\{\BB z_k\}_{k\in [1,M_\ep]_{\BB Z}}$ (resp.\ $\{\BB w_k\}_{k\in [1,M_\ep]_{\BB Z}} $) be conditionally i.i.d.\ samples from $\mu_h|_{Z}$ (resp.\ $\mu_h|_{W }$), normalized to be a probability measure.
 
By Lemma~\ref{lem-ball-cover}, it holds with probability tending to 1 as $\ep\rta 0$ that the following is true.
\begin{itemize}
\item The sets $\mcl B_\ep(\BB z_k ; D_h) \times \mcl B_\ep(\BB w_l ; D_h)$ for $(k,l) \in [1,M^\ep]_{\BB Z}^2$ cover $K_1\times K_2$.
\item For each $(k,l) \in [1,M^\ep]_{\BB Z}^2$, the sets $\mcl B_\ep(\BB z_k ; D_h)$, $\mcl B_\ep(\BB w_l ; D_h)$, and $\bigcup_{i=1}^n \ol V_i$ are disjoint. 
\end{itemize}
We claim that if the above two conditions hold, then $\wt{\mcl S}_n \cap (K_1\times K_2)$ is contained in the union of the sets $\mcl B_\ep(\BB z_k ; D_h) \times \mcl B_\ep(\BB w_l ; D_h)$ for $(k,l) \in [1,M^\ep]_{\BB Z}^2$ such that $E^\ep(\BB z_k , \BB w_l)$ occurs.
Indeed, if the above two conditions hold and $(\wt z,\wt w) \in \wt{\mcl S}_n \cap (K_1\times K_2)$ then there exists $(k,l) \in [1,M^\ep]_{\BB Z}^2$ such that $(\wt z,\wt w) \in \mcl B_\ep(\BB z_k ; D_h) \times \mcl B_\ep(\BB w_l ; D_h)$. 
It is easy to see from the definitions of $\wt{\mcl S}_n$ and $E^\ep(\BB z_k , \BB w_l)$ that $E^\ep(\BB z_k , \BB w_l)$ occurs for this choice of $k,l$ (see~\eqref{eqn-subset-def} and~\eqref{eqn-subset-def'}). 
 
By Lemma~\ref{lem-random-pt} and the Chebyshev inequality, it holds with probability tending to 1 as $\ep\rta 0$ that the number of pairs $(\BB z_k , \BB w_l)$ for $(k,l) \in [1,M_\ep]_{\BB Z}^2$ for which $E^\ep(\BB z_k , \BB w_l) \cap G(C)$ occurs is at most $\ep^{ - 2d_\gamma - 2\zeta + n+o_\ep(1)}$.  
Since $\BB P[G(C)] \rta 1$ as $C\rta \infty$, the lemma statement follows by sending $C\rta \infty$ and $\zeta \rta 0$ at a sufficiently slow rate as $\ep\rta 0$.
\end{proof}

\begin{proof}[Proof of Proposition~\ref{prop-subset-dim}]
Let $Z,W \subset \BB C$ be bounded open sets which lie at positive distance from each other, from $ \bigcup_{i=1}^n V_i$. 
By Lemma~\ref{lem-subset-cover}, if $K_1\subset W_1$ and $K_2\subset W_2$ are as in that lemma, then for $n\leq \lfloor 2d_\gamma  \rfloor$ a.s.\ $\dim_{\mcl H}^\gamma(\wt{\mcl S}_n \cap (K_1\times K_2)) \leq 2d_\gamma-n$ and for $n > \lfloor 2d_\gamma   \rfloor$ a.s.\ $ \wt{\mcl S}_n \cap (K_1\times K_2)  = \emptyset$. 
Letting $K_1$ and $K_2$ increase to $Z$ and $W$, respectively, gives the same statement with $Z\times W$ in place of $K_1\times K_2$. 

By the definition of $\wt{\mcl S}_n$, if $(z,w) \in \wt{\mcl S}_n$ then $z\not=w$ and $z$ and $w$ each lie at positive distance from $\bigcup_{i=1}^n \ol V_i$. 
Hence there are Euclidean balls $B_1,B_2$ with rational centers and radii such that $z \in B_1$, $w \in B_2$, and $B_1$ and $B_2$ each lie at positive distance from $\bigcup_{i=1}^n \ol V_i$. 
By the preceding paragraph applied with $Z = B_1$ and $W = B_2 $, together with the countable stability of Hausdorff dimension, we now obtain the proposition statement. 
\end{proof}

\subsection{Upper bound on the number of distinct geodesics} 
\label{sec-geo-finite}

In this section we will deduce Theorem~\ref{thm-finite-geo} from Proposition~\ref{prop-multi-geo-dim}. The idea is that if $\mcl P$ is a finite collection of geodesics with the same endpoints, then we can always find a subset $\wt{\mcl P}$ of $\mcl P$ with the following properties. The cardinality $\#\wt{\mcl P}$ is bounded below in terms of $\#\mcl P$ and the geodesics in $\wt{\mcl P}$ satisfy the non-overlapping property in the definition of $\mcl S_n$ for $n = \#\wt{\mcl P} -1$. 
We will then use the fact that $\mcl S_n = \emptyset$ for $n > \lfloor 2d_\gamma \rfloor$ to get an upper bound for $\#\mcl P$. 

The construction of the desired subset $ \wt{\mcl P}$ of $\mcl P$ uses a combination of known topological properties of geodesics started from rational points and purely combinatorial arguments. The key observation for the topological part of the argument is the following dichotomy.

\begin{figure}[t!]
 \begin{center}
\includegraphics[scale=.9]{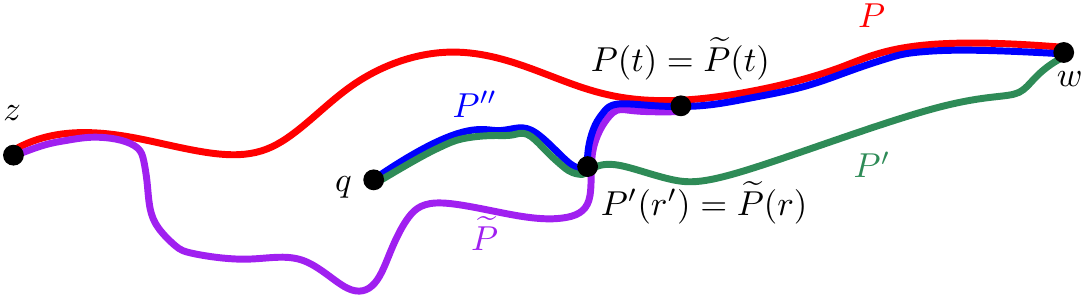}
\vspace{-0.01\textheight}
\caption{Illustration of the proof of Lemma~\ref{lem-dichotomy}. If there are two distinct geodesics from $z$ to $P(t)$, say $P|_{[0,t]}$ and $\wt P$, then we can use Lemma~\ref{lem-regular-geo'} to find a geodesic from $q\in\BB Q^2$ to $w$ which merges into $P$ before reaching $P(t)$. Condition~\ref{item-dichotomy-three} at time $t$ then follows from the analogous properties of geodesics started from rational points (Lemma~\ref{lem-regular-geo'} and Proposition~\ref{prop-four-geo}). 
}\label{fig-dichotomy}
\end{center}
\vspace{-1em}
\end{figure}

\begin{lem} \label{lem-dichotomy}
Almost surely, the following is true.
For every distinct $z,w\in\BB C$, every $D_h$-geodesic $P$ from $z$ to $w$, and every $t\in (0,D_h(z,w))$, at least one of the following two conditions holds.
\begin{enumerate}[A.]
\item $P|_{[0,t]}$ is the unique geodesic from $z$ to $P(t)$. \label{item-dichotomy-unique}
\item For every $s\in (t,D_h(z,w))$, $P|_{[t,s]}$ is the unique geodesic from $P(t)$ to $P(s)$. Moreover, there are at most three geodesics from $P(t)$ to $w$.  \label{item-dichotomy-three}
\end{enumerate}
Furthermore, the set of  $t\in (0,D_h(z,w))$ for which condition~\ref{item-dichotomy-unique} (resp.\ \ref{item-dichotomy-three}) holds is a sub-interval of $(0,D_h(z,w))$ with $0$ (resp.\ $D_h(z,w)$) as one of its endpoints (we do not specify whether the endpoint of the interval is included). 
\end{lem}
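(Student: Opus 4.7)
The plan is to introduce the sets $T_A$ (respectively $T_B$) of $t \in (0, D_h(z,w))$ at which condition~\ref{item-dichotomy-unique} (respectively~\ref{item-dichotomy-three}) holds, and to prove separately that (a) $T_A$ is a sub-interval of $(0, D_h(z,w))$ with $0$ as an endpoint, (b) $T_B$ is a sub-interval with $D_h(z,w)$ as an endpoint, and (c) $T_A \cup T_B = (0, D_h(z,w))$. Parts (a) and (b) will follow from straightforward concatenation arguments: if $t \in T_A$ and $t' < t$, then any hypothetical second $D_h$-geodesic from $z$ to $P(t')$ extends along $P|_{[t', t]}$ to a second $D_h$-geodesic from $z$ to $P(t)$, contradicting uniqueness at $t$; symmetrically, if $t \in T_B$ and $t'' > t$, any second $D_h$-geodesic from $P(t'')$ to $P(s)$ for some $s > t''$, or any fourth $D_h$-geodesic from $P(t'')$ to $w$, can be prepended by $P|_{[t, t'']}$ to violate condition~\ref{item-dichotomy-three} at $t$.

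For part (c), let $N(t)$ denote the number of distinct $D_h$-geodesics from $z$ to $P(t)$; extending any such geodesic along $P$ shows that $N$ is non-decreasing, so $t_* := \inf\{t \in (0, D_h(z,w)) : N(t) \geq 2\}$ is well defined. For $t < t_*$ we have $N(t) = 1$ and hence $t \in T_A$ immediately. The real work is to establish $t \in T_B$ for every $t \geq t_*$ at which condition~\ref{item-dichotomy-unique} fails; together with (a), (b), and a case-by-case verification that $t_* \in T_A \cup T_B$, this will give (c).

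The core step is as follows. Fix such a $t$ and let $\wt P \not= P|_{[0,t]}$ be a second $D_h$-geodesic from $z$ to $P(t)$. Working on the full-probability event on which the conclusions of Theorem~\ref{thm-general-confluence}, Lemma~\ref{lem-regular-geo'}(\ref{item-regular-unique}), and Proposition~\ref{prop-four-geo} hold simultaneously at every rational $q \in \BB Q^2$ (a countable union using translation invariance of the law of $h$ modulo additive constant), pick $q$ close enough to $z$ and invoke Theorem~\ref{thm-general-confluence} at $q$ with a small enough neighborhood $U$ of $q$ so that the resulting confluence point $Z_q$ satisfies $\rho := D_h(z, Z_q) < t_*$ and so that $w$ and the points $P(s)$ for every $s \in (t, D_h(z,w))$ of interest lie outside $U$. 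Theorem~\ref{thm-general-confluence} then forces every $D_h$-geodesic from $z$ to such a target to pass through $Z_q$, so in particular both $P|_{[0,t]}$ and $\wt P$ hit $Z_q$ at the common time $\rho$. The choice $\rho < t_*$ is critical: it gives $N(\rho) = 1$, which forces $P|_{[0,\rho]} = \wt P|_{[0,\rho]}$, and hence $P|_{[\rho,t]} \not= \wt P|_{[\rho,t]}$ as distinct $D_h$-geodesics from $Z_q$ to $P(t)$.

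Now suppose for contradiction that condition~\ref{item-dichotomy-three} fails at $t$: either (i) there exist $s \in (t, D_h(z,w))$ and $\wh P \not= P|_{[t,s]}$ from $P(t)$ to $P(s)$, or (ii) there are four distinct $D_h$-geodesics from $P(t)$ to $w$. Concatenating each of $P|_{[0,t]}$ and $\wt P$ with each available forward geodesic produces $4$ (in case (i)) or $8$ (in case (ii)) distinct $D_h$-geodesics from $z$ to the common target. All of these pass through $Z_q$ at time $\rho$; combining $P|_{[0,\rho]} = \wt P|_{[0,\rho]}$ with $P|_{[\rho,t]} \not= \wt P|_{[\rho,t]}$ and the distinctness of the forward pieces shows, via a short case check, that the segments from $Z_q$ onward are pairwise distinct $D_h$-geodesics from $Z_q$ to the target. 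Prepending the unique $D_h$-geodesic from $q$ to $Z_q$ (unique by Lemma~\ref{lem-regular-geo'}(\ref{item-regular-unique}) applied at the rational point $q$) then yields $4$ or $8$ distinct $D_h$-geodesics from $q$ to the target, contradicting Proposition~\ref{prop-four-geo}. The main technical obstacle is this final distinctness check for the $Z_q$-tails: the hypothesis $\rho < t_*$ is what makes it clean, since it collapses the initial segments $P|_{[0,\rho]}$ and $\wt P|_{[0,\rho]}$ into a single path, thereby ensuring that any duplication among the concatenated geodesics is controlled purely by their behavior past $Z_q$. The degenerate edge case $t_* = 0$, should it arise, will require a separate treatment of the very short initial segments, but otherwise the monotonicity of $T_B$ established in the first paragraph automatically propagates condition~\ref{item-dichotomy-three} from $t_*$ to all larger $t$.
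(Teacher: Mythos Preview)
Your interval structure arguments (parts (a) and (b)) are fine and match the paper's treatment. The problem is in your core step for part (c), specifically in how you invoke Theorem~\ref{thm-general-confluence}.

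You apply Theorem~\ref{thm-general-confluence} at a rational point $q$ near $z$ and assert that every $D_h$-geodesic from $z$ to the relevant targets passes through the confluence point $Z_q$. But the theorem, applied at the fixed point $q$, only guarantees that geodesics starting from points in some neighborhood $U'$ of $q$ pass through $Z_q$, and $U'$ is produced by the theorem, not chosen by you. There is no mechanism to ensure that the \emph{arbitrary} point $z$ lies in $U'$: picking $q$ closer to $z$ does not help, since $U'$ may shrink faster than $|q-z|$. Without $z\in U'$ you cannot conclude that $P|_{[0,t]}$ and $\wt P$ both hit $Z_q$, and the entire downstream argument (collapsing initial segments, counting tails from $Z_q$, prepending the unique geodesic from $q$ to $Z_q$) falls apart. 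Relatedly, your requirement $\rho < t_*$ becomes vacuous if $t_* = 0$, and you explicitly leave that case open; the paper's proof does not need to separate out such a case.

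The paper avoids this issue by a topological rather than a confluence argument. Given a second geodesic $\wt P$ from $z$ to $P(t)$, the union $P([0,t])\cup\wt P([0,t])$ separates $\BB C$, and one picks a rational $q$ in a complementary component not containing $P((t,D_h(z,w)])$. Any geodesic $P'$ from $q$ to $w$ must then cross $P([0,t])$ or $\wt P([0,t])$ at some point, and concatenating from that crossing point through $P(t)$ and along $P|_{[t,D_h(z,w)]}$ gives a geodesic $P''$ from $q$ to $w$ that passes through $P(t)$. Now Lemma~\ref{lem-regular-geo'}(\ref{item-regular-unique}) and Proposition~\ref{prop-four-geo} applied at the rational point $q$ to the geodesic $P''$ yield condition~\ref{item-dichotomy-three} directly. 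The point is that one never needs any confluence property at the arbitrary point $z$; one only needs the already-established results at rational points, and the topological separation provides the bridge.
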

\begin{proof}%[Proof of Lemma~\ref{lem-dichotomy}]
See Figure~\ref{fig-dichotomy} for an illustration of the proof. 
The proof is an easy consequence of the following two facts. 
\begin{enumerate}
\item By Lemma~\ref{lem-regular-geo'}, a.s.\ for every $q\in\BB Q^2$, every $w\in\BB C$, every geodesic $P$ from $q$ to $w$, and every $s \in [0,D_h(q,w))$, $P|_{[0,s]}$ is the unique geodesic from $q$ to $P(s)$. \label{item-dichotomy-reg}
\item By Proposition~\ref{prop-four-geo}, a.s.\ for every $q\in\BB Q^2$ and every $w\in\BB C$, there are at most three geodesics from $q$ to $w$. \label{item-dichotomy-four}
\end{enumerate}
Henceforth assume that both of the above numbered properties hold, which happens with probability one. 

Let $z,w,P$, and $t$ be as in the lemma statement. Assume that there are at least two distinct geodesics from $z$ to $P(t)$. 
We will show that condition~\ref{item-dichotomy-three} holds. 
Let $\wt P$ be a geodesic from $z$ to $P(t)$ which is not equal to $P|_{[0,t]}$. 
The set $\BB C\setminus (P([0,t]) \cup \wt P([0,t]))$ lies at positive distance from $w$ and has at least two connected components. Since a geodesic is a simple curve, $P|_{(t,D_h(z,w)]}$ is entirely contained in one of these connected components. Let $U$ be a connected component other than the one which contains $P|_{(t,D_h(z,w)]}$. 

Let $q \in \BB Q^2 \cap U$ and let $P'$ be a geodesic from $q$ to $w$. 
Since $w \notin U$, the geodesic $P'$ must cross either $P([0,t])$ or $\wt P([0,t])$. That is, there must be times $r \in [0,t]$ and $r' \in [0,D_h(q,w)]$ for which either $P(r) = P'(r')$ or $\wt P(r) = P'(r')$. Assume that $\wt P(r) = P'(r')$ (the other case is almost the same, but slightly simpler). 

The concatenation of $\wt P|_{[r,t]}$ and $P|_{[t,D_h(z,w)]}$ is a geodesic from $\wt P(r)$ to $w$. Since $P'(r') = \wt P(r)$, it follows that the concatenation $P''$ of $P'|_{[0,r']}$, $\wt P|_{[r,t]}$, and $P|_{[t,D_h(z,w)]}$ is a geodesic from $q$ to $w$. By property~\ref{item-dichotomy-reg} above, applied to the geodesic $P''$, for each $s\in (t,D_h(z,w))$, there is a unique geodesic from $q$ to $P(s)$, namely the segment of $P''$ before it hits $P(s)$. Therefore, the segment of $P''$ between $P(t)$ and $P(s)$, namely $P|_{[t,s]}$, is the unique geodesic from $P(t)$ to $P(s)$. 

By property~\ref{item-dichotomy-four}, there are at most three geodesics from $q$ to $w$. If there were more than three geodesics between $P(t)$ and $w$, then we could get more than three geodesics from $q$ to $w$ by replacing the segment of $P''$ between $P(t)$ and $w$ by one of the other geodesics from $P(t)$ to $w$. 
Hence there must be at most three geodesics from $P(t)$ to $w$. 
This gives the desired dichotomy.

The last statement, about the form of the sets for which each of the two conditions hold, is an easy consequence of the following observation: if $0 \leq t < t' < s \leq D_h(z,w)$, then the number of $D_h$-geodesics from $P(t)$ to $P(s)$ is at least the number of $D_h$-geodesics from $P(t')$ to $P(s)$.  
\end{proof}
\begin{comment}
It remains to prove the last statement, about the form of the sets for which each of the two conditions hold.
If $P|_{[0,t]}$ is not the unique geodesic from 0 to $P(t)$, then also $P|_{[0,t']}$ is not the unique geodesic from 0 to $P(t')$ for each $t' \in [t,D_h(z,w)]$. 
Hence, the set of $t\in (0,D_h(z,w))$ for which condition~\ref{item-dichotomy-unique} of Lemma~\ref{lem-dichotomy} holds is a sub-interval of $(0,D_h(z,w))$ with $0$ as one of its endpoints. 

If $t' \leq t \leq s$ and $P|_{[t ,s]}$ is not the unique geodesic from $P(t)$ to $P(s)$, then also $P|_{[t',s]}$ is not the unique geodesic from $P(t)$ to $P(s)$. This gives the needed monotonicity of the first part of condition~\ref{item-dichotomy-three}. As for the second part, if $t' \leq t$ and there are more than three geodesics from $t $ to $w$, then there are also more than three geodesics from $t$ to $w$. 
\end{comment}

Lemma~\ref{lem-dichotomy} has the following useful consequence, which says that any two $D_h$-geodesics with the same endpoints can make at most two ``excursions" away from each other.

\begin{lem} \label{lem-two-excursion}
Almost surely, the following is true.
Let $z,w\in\BB C$ and let $P,P'$ be $D_h$-geodesics from $z$ to $w$.
The set $\{t\in [0,D_h(z,w)] : P(t) \not= P'(t)\}$ has at most two connected components.
\end{lem}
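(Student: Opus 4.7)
The plan is to argue by contradiction, assuming the set $T := \{t \in [0, D_h(z,w)] : P(t) \neq P'(t)\}$ has at least three connected components. Since $T$ is the complement of the closed set $\{P = P'\}$ in $[0, D_h(z,w)]$, its components are open intervals, and I would list three of them from left to right as $(a_1, b_1), (a_2, b_2), (a_3, b_3)$ with $0 \leq a_1 < b_1 \leq a_2 < b_2 \leq a_3 < b_3 \leq D_h(z,w)$. Note $P(b_1) = P'(b_1)$ (since $b_1 \notin T$) and $b_1 \in (0, D_h(z,w))$ (since $b_1 > a_1 \geq 0$ and $b_1 \leq a_2 < b_3 \leq D_h(z,w)$).

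First I would apply Lemma~\ref{lem-dichotomy} at the interior time $t = b_1$. The paths $P|_{[0,b_1]}$ and $P'|_{[0,b_1]}$ are distinct $D_h$-geodesics from $z$ to $P(b_1)$ because they disagree on $(a_1, b_1)$, so condition~\ref{item-dichotomy-unique} of the lemma fails at $b_1$, forcing condition~\ref{item-dichotomy-three} to hold there. In particular, there are at most three $D_h$-geodesics from $P(b_1)$ to $w$.

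Next I would construct four such geodesics by ``braiding'' $P$ and $P'$ across the two remaining excursions $(a_2, b_2)$ and $(a_3, b_3)$. For each $i \in \{2, 3\}$, the paths $P|_{[a_i, b_i]}$ and $P'|_{[a_i, b_i]}$ are $D_h$-geodesics of equal length $b_i - a_i$ between $P(a_i) = P'(a_i)$ and $P(b_i) = P'(b_i)$, being sub-geodesics of the geodesics $P, P'$. For each pair $(\epsilon_2, \epsilon_3) \in \{P, P'\}^2$ I would define a path $Q_{\epsilon_2, \epsilon_3}$ from $P(b_1)$ to $w$ that follows $P$ on $[b_1, D_h(z,w)] \setminus ((a_2, b_2) \cup (a_3, b_3))$ and follows $\epsilon_i$ on each $(a_i, b_i)$. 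Each $Q_{\epsilon_2, \epsilon_3}$ has total $D_h$-length $D_h(z,w) - b_1 = D_h(P(b_1), w)$, hence is a $D_h$-geodesic; and the four resulting paths are pairwise distinct as curves, since $P$ and $P'$ disagree at every point of $(a_2, b_2)$ and of $(a_3, b_3)$. This produces four distinct geodesics from $P(b_1)$ to $w$, contradicting the bound of three obtained in the previous step.

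I do not foresee a genuine obstacle here; the argument is essentially combinatorial once Lemma~\ref{lem-dichotomy} is applied at the merging point $b_1$. The only thing to watch is the bookkeeping at the endpoints $0$ and $D_h(z,w)$ (e.g.\ handling the cases $a_1 = 0$ or $b_3 = D_h(z,w)$), but these cause no difficulty because the argument only uses that $b_1$ is strictly interior and that at least two further excursions follow it, both of which are automatic from the assumption that $T$ has three or more components.
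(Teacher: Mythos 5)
Your proof is correct and follows essentially the same route as the paper: both apply Lemma~\ref{lem-dichotomy} at the right endpoint of the first excursion to conclude there are at most three geodesics from $P(b_1)$ to $w$, and both then produce at least $2^2=4$ distinct geodesics from $P(b_1)$ to $w$ by braiding $P$ and $P'$ across the two later excursions. The only cosmetic difference is that you spell out the four braided paths explicitly, while the paper states the $2^n$ count more compactly.
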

\begin{proof}
Write $T =  \{t\in [0,D_h(z,w)] : P(t) \not= P'(t)\}$. 
Let $(a,b)$ be a non-trivial connected component of $T$. Then $P(b)= P'(b)$ and there are at least two distinct geodesics from $z$ to $P(b)$, namely $P|_{[0,b]}$ and the concatenation of $P|_{[0,a]}$ and $P'|_{[a,b]}$. By Lemma~\ref{lem-dichotomy}, there are at most 3 distinct geodesics from $P(b)$ to $w$. On the other hand, the number of distinct geodesics from $P(b)$ to $w$ is at least $2^n$, where $n$ is the number of connected components of $[b,D_h(z,w)] \cap T$: indeed, this is because each such connected component gives rise to two possible paths with the same endpoints which a geodesic from $P(b)$ to $w$ could take. Since $2^n \leq 3$ we must have $n\leq 1$. Hence there is at most one connected component of $T$ lying strictly to the right of any given connected component of $T$. Therefore, $T$ has at most two connected components.
\end{proof}

The following lemma shows that we can find, for each geodesic in $\mcl P$, a point which is hit by at most two other geodesics in $\mcl P$. This should be compared to the definition of $\mcl S_n$, where each geodesic is required to have a point which is hit by \emph{no} other geodesic in the collection.

\begin{figure}[t!]
 \begin{center}
\includegraphics[scale=.9]{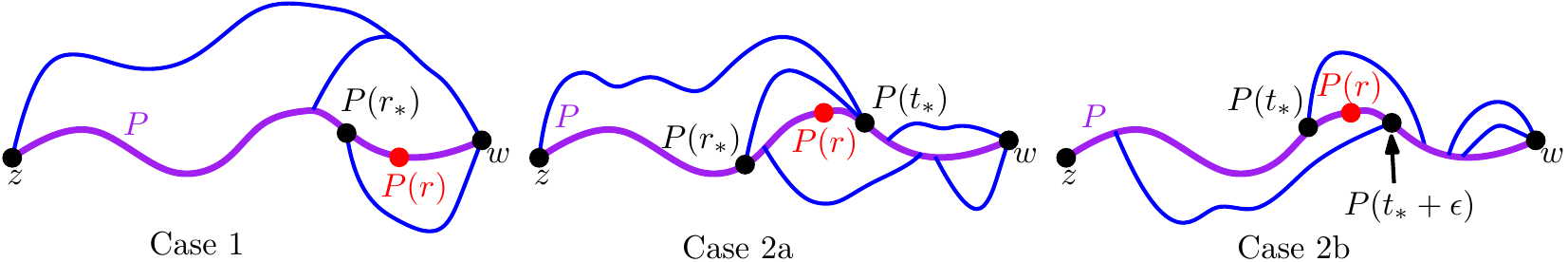}
\vspace{-0.01\textheight}
\caption{Illustration of each of the different cases treated in the proof of Lemma~\ref{lem-three-hit-pt}. In each case, the geodesic $P$ is shown in purple and segments of other geodesics in $\mcl P$ are shown in blue. Geodesics from $z$ to $w$ which are not in $\mcl P$ are not shown (although such geodesics must exist in Case 2b). 
}\label{fig-three-hit}
\end{center}
\vspace{-1em}
\end{figure}

\begin{lem}  \label{lem-three-hit-pt}
Almost surely, the following is true. 
Let $z,w\in\BB C$ be distinct and let $\mcl P$ be a finite collection of distinct geodesics from $z$ to $w$. 
For each $P\in\mcl P$, there is a time $r = r(P) \in (0,D_h(z,w))$ such that $P(r)$ is hit by at most three geodesics in $\mcl P$ (including $P$ itself). 
\end{lem}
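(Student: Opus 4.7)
The plan is to apply Lemma~\ref{lem-dichotomy} to the geodesic $P$ and consider the sets $T_A, T_B \subset (0,D_h(z,w))$ on which conditions~\ref{item-dichotomy-unique} and~\ref{item-dichotomy-three} of that dichotomy respectively hold. By that lemma these are sub-intervals with endpoints $0$ and $D_h(z,w)$ respectively, and $T_A \cup T_B = (0,D_h(z,w))$, so either $T_A \cap T_B$ is non-empty (the generic case), or $T_A$ and $T_B$ meet only at a single split point $a$ at which only one of the two conditions holds (the degenerate case).

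In the generic case, I would pick any $r \in T_A \cap T_B$. Condition~\ref{item-dichotomy-unique} at $r$ says $P|_{[0,r]}$ is the unique $D_h$-geodesic from $z$ to $P(r)$, so every $P' \in \mcl P$ with $P'(r)=P(r)$ satisfies $P'|_{[0,r]}=P|_{[0,r]}$ and is therefore determined by its continuation $P'|_{[r,D_h(z,w)]}$, a $D_h$-geodesic from $P(r)$ to $w$. Condition~\ref{item-dichotomy-three} at $r$ gives at most three such $D_h$-geodesics, so at most three geodesics in $\mcl P$ hit $P(r)$.

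The degenerate case splits into two sub-cases according to which of~\ref{item-dichotomy-unique} or~\ref{item-dichotomy-three} holds at $a$; by a time-reversal argument these are symmetric, so suppose condition~\ref{item-dichotomy-unique} holds at $a$ but condition~\ref{item-dichotomy-three} holds only strictly to the right of $a$. To bound the number of distinct continuations from $P(a)$ to $w$ that arise within $\mcl P$, I would apply Lemma~\ref{lem-dichotomy} a second time to the sub-geodesic $\wt P := P|_{[a,D_h(z,w)]}$; because condition~\ref{item-dichotomy-three} of the original dichotomy holds throughout $(a,D_h(z,w))$, the analogous condition for the iterated dichotomy of $\wt P$ holds on the entire parameter interval $(0,D_h(z,w)-a)$. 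Provided the iterated analog of~\ref{item-dichotomy-unique} holds at some $s^* > 0$, both iterated conditions apply at $s^*$, and a short argument (using condition~\ref{item-dichotomy-unique} at $a$ together with uniqueness of geodesics from $P(a)$ to $P(a+s^*)$) reduces matters to the generic case at $r = a+s^*$. The main obstacle is the residual sub-case --- the analog of Case~2b in the accompanying figure --- in which $P|_{[a,a+s]}$ is never the unique $D_h$-geodesic from $P(a)$ to $P(a+s)$ for any $s > 0$; here the finiteness of $\mcl P$ together with Lemma~\ref{lem-two-excursion} must be used to rule out the pathological configuration, and this is where geodesics from $z$ to $w$ lying outside of $\mcl P$ are forced to exist.
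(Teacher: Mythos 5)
Your high-level approach matches the paper's: invoke the dichotomy of Lemma~\ref{lem-dichotomy} and divide into cases according to where each condition holds along $P$. The generic case you describe (some $r \in T_A \cap T_B$) is handled correctly and is, in essence, the paper's Cases~1 and~2a combined. However, the degenerate cases contain real gaps, one of which is the crux of the lemma.

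First, a small omission: if $T_B = \emptyset$ (possible \emph{a priori}), there is no split point $a$ and your framework says nothing; this is the paper's Case~1, which is easy (each set $\{P = P'\}$ is an initial segment, so for $r$ near $D_h(z,w)$ the point $P(r)$ is hit only by $P$), but it must be mentioned. Second, the claim that the two degenerate sub-cases are ``symmetric by time-reversal'' is not justified and likely false as stated: Lemma~\ref{lem-dichotomy} is not symmetric under time-reversal, since condition~\ref{item-dichotomy-unique} concerns uniqueness of the geodesic from $z$ while condition~\ref{item-dichotomy-three} bundles together forward uniqueness \emph{and} a count of geodesics to $w$; applying the dichotomy to the reversed path produces new conditions $A', B'$ at each time, and your sub-case~2 (B holds at $a$, A fails at $a$) does not become sub-case~1 for the reversed path. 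In fact, sub-case~2 is precisely the paper's Case~2a and can be disposed of directly (apply the Case~1 argument to the truncated geodesics ending at $P(a)$, then count continuations from $P(a)$ to $w$ by condition~\ref{item-dichotomy-three}), so no time-reversal is needed.

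Third and most seriously, you do not carry out the residual sub-case (the paper's Case~2b), which is exactly where the work lies. You correctly identify Lemma~\ref{lem-two-excursion} and the finiteness of $\mcl P$ as the right tools, but merely gesture at them. Two specific issues: (i) your plan to ``iterate'' the dichotomy on the sub-geodesic $P|_{[a,D_h(z,w)]}$ risks the residual sub-case recurring, and you give no reason for the iteration to terminate; (ii) the phrase ``rule out the pathological configuration'' is misleading --- the paper does not show this configuration is impossible (indeed it cannot, since that would presuppose Theorem~\ref{thm-finite-geo}), it simply works with it. The actual argument is: for each $P' \in \mcl P\setminus\{P\}$ the set $S_{P'}$ of endpoints of excursion intervals has at most $4$ elements by Lemma~\ref{lem-two-excursion}; finiteness of $\mcl P$ then yields an $\ep>0$ with $S_{P'} \cap (a, a+\ep) = \emptyset$ for all $P'$, so each $P'$ either coincides with $P$ throughout $[a,a+\ep]$ or avoids $P((a,a+\ep))$; combining this with uniqueness at $a$ (condition~\ref{item-dichotomy-unique} holds there in this sub-case) and the bound of~$3$ on continuations from $P(a+\ep)$ to $w$ gives the result. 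You should spell this out rather than asserting that the pieces ``must be used.''
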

\begin{proof}
See Figure~\ref{fig-three-hit} for an illustration of the proof. 
Throughout the proof, we work on the full probability event of Lemma~\ref{lem-dichotomy}. 
Let $P\in \mcl P$. We consider two cases. 
\medskip

\noindent\textit{Case 1: For each $t\in (0,D_h(z,w))$, $P|_{[0,t]}$ is the unique geodesic from $z$ to $P(t)$.} 
Suppose $P' \in \mcl P$, $P'\not=P$. We claim that the set of $t\in [0,D_h(z,w)]$ for which $P(t) \not= P'(t)$ is of the form $(t_{P'} , D_h(z,w))$ for some $t_{P'} \in (0,D_h(z,w))$. 
Indeed, if not, then there are times $0 < t_1 < t_2  < D_h(z,w)$ such that $P(t_1) \not= P'(t_1)$ and $P(t_2) = P'(t_2)$. 
This gives two distinct geodesics from  $z$ to $P(t_2)$, namely $P|_{[0,t_2]}$ and $P'|_{[0,t_2]}$, which contradicts our assumption.  
We must have $t_{P'} < D_h(z,w)$ since $P  \not= P' $. 

If we let
\eqbn
r_* := \max_{P' \not= P} t_{P'}
\eqen
then $r_* < D_h(z,w)$ since $\mcl P$ is finite. For each $r \in (r_* , D_h(z,w))$, the point $P(r)$ is not hit by any geodesic in $\mcl P$ other than $P$. 
\medskip

\noindent\textit{Case 2: there is a $t\in (0,D_h(z,w))$ for which $P|_{[0,t]}$ is not the unique geodesic from $z$ to $P(t)$.}
Let $t_*$ be the supremum of the set of $t\in (0,D_h(z,w))$ for which $P|_{[0,t]}$ is the unique geodesic from $z$ to $P(t)$. 
Then the set of $t\in (0,D_h(z,w))$ for which $P|_{[0,t]}$ is the unique geodesic from $z$ to $P(t)$ is either $(0,t_*)$ or $(0,t_*]$. 
Hence, our assumption implies that $t_* < D_h(z,w)$. 
We consider two sub-cases.
\medskip

\noindent\textit{Case 2a: the $D_h$-geodesic from $z$ to $P(t_*)$ is not unique.} By definition, $P|_{[0,t]}$ is the unique geodesic from $z$ to $P(t)$ for each $t\in (0,t_*)$. 
Using Case 1 with $P(t_*)$ in place of $w$ and $\{P'|_{[0,t_*]} : P' \in\mcl P, P'(t_*) = P(t_*)\}$ in place of $\mcl P$, we find that there is an $r_* \in (0,t_*)$ such that each geodesic in $\mcl P$ which hits both $P((r_*,t_*))$ and $P(t_*)$ must coincide with $P$ on $[0,t_*]$. 

By Lemma~\ref{lem-dichotomy} and the the fact that the geodesic from $z$ to $P(t_*)$ is not unique, there are at most three geodesics from $P(t_*)$ to $w$. So, if $P' \in \mcl P$ hits both $P((r_*,t_*))$ and $P(t_*)$, then there is only one possibility for $P'|_{[0,t_*]}$ and at most three possibilities for $P'|_{[t_* , D_h(z,w)]}$. 
Hence there are at most three geodesics in $\mcl P$ which hit both $P((r_*,t_*))$ and $P(t_*)$.  

Since the range of a geodesic is a closed set, if $P' \in \mcl P$ and $P'$ does not hit $P(t_*)$, then there is a number $r_{P'} \in (0,t_*)$ such that $P'$ does not intersect $P((r_{P'} , t_*))$. By combining this with the preceding paragraph, we get that if  
\eqb
r  \in \left( r_* \vee \max\{r_{P'}  : P' \in \mcl P\setminus \{P\}\} , t_* \right) 
\eqe
then there are at most three geodesics in $\mcl P$ which hit $P(r)$. 
\medskip

\noindent\textit{Case 2b: the $D_h$-geodesic from $z$ to $P(t_*)$ is unique.} 
This case can arise if there is a sequence of times $t_n > t_*$ with $t_n \rta t_*$ such that the geodesic from $z$ to $P(t_n)$ is not unique for each $n$. This would imply that there are infinitely many geodesics from $z$ to $w$. Theorem~\ref{thm-finite-geo} implies \emph{a posteriori} that this cannot happen, but we have not yet proven this theorem, so we still have to deal with this possibility. Note that $\mcl P$ is a finite set by definition, so in this case some of the geodesics from $z$ to $w$ would have to not be in $\mcl P$. 

For $P'\in \mcl P \setminus \{P\}$, let $S_{P'}$ be the set of endpoints of connected components of $\{t \in [0,D_h(z,w)] : P(t) \not= P'(t)\}$. 
By Lemma~\ref{lem-two-excursion}, $\# S_{P'} \leq 4$. Since $\mcl P$ is a finite set, there exists $\ep > 0$ such that $S_{P'} \cap (t_* , t_*+\ep) =\emptyset$ for every $P' \in \mcl P \setminus \{P\}$ (note that we do not rule out the possibility that $t_* \in S_{P'}$). If $P'\in\mcl P$ such that $P'$ intersects $P((t_* ,t_*+\ep))$, but $P'|_{[t_*,t_*+\ep]} \not= P|_{[t_*,t_*+\ep]}$, then there must be an element of $S_{P'}$ in $(t_* , t_*+\ep)$. 
It follows that for each $P' \in \mcl P$, either $P'$ is disjoint from $P((t_* ,t_*+\ep))$ or $P'|_{[t_*,t_*+\ep]} = P|_{[t_*,t_*+\ep]}$. 

Let $r\in (t_* , t_*+\ep)$. Then if $P' \in\mcl P \setminus \{P\}$ and $P(r) = P'(r)$, the preceding paragraph implies that $P'|_{[t_*,t_*+\ep]} = P|_{[t_*,t_*+\ep]}$. Since the $D_h$-geodesic from $z$ to $P(t_*)$ is unique, we must have $P'|_{[0,t_*]} = P|_{[0,t_*]}$ and hence $P'|_{[0,t_*+\ep]} = P|_{[0,t_*+\ep]}$. 
By the definition of $t_*$, the geodesic from $z$ to $P(t_*+\ep)$ is not unique, so by Lemma~\ref{lem-dichotomy} there are at most three geodesics from $P(t_*+\ep)$ to $w$. Hence there are at most three possibilities for $P'|_{[t_*+\ep , D_h(z,w)]}$. Therefore, there are at most three geodesics in $\mcl P$ which hit $P(r)$.  
\end{proof}

We now discuss the main combinatorial input needed for the proof of Theorem~\ref{thm-finite-geo}. 
For a graph $G$, an \emph{independent set} of $G$ is a set of vertices of $G$ no two of which are joined by an edge. 
The following result is~\cite[Theorem 1]{hs-independence-number}. 

\begin{lem}[\cite{hs-independence-number}] \label{lem-independent-set}
Let $G$ be a connected graph with $V$ vertices and $E$ edges. 
There is an independent set of $G$ of cardinality at least
\eqbn
\left\lfloor \frac12 \left( \left(2E + V + 1 \right) - \sqrt{\left( 2E + V + 1 \right)^2 - 4V^2} \right) \right\rfloor 
\eqen
\end{lem}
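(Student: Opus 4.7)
My plan is to reformulate the target inequality as the quadratic
\[
\alpha(G)^2 - (2E+V+1)\alpha(G) + V^2 \leq 0,
\]
which, since $\alpha(G) \leq V$ lies well below the larger root, is equivalent to $V^2 \leq \alpha(G)(2E+V+1-\alpha(G))$. The natural starting point is the Caro--Wei bound $\alpha(G) \geq \sum_v 1/(d(v)+1)$: for a uniformly random permutation $\pi$ of the vertices, the set of vertices that precede all of their neighbors in $\pi$ is automatically independent and has expected size $\sum_v 1/(d(v)+1)$. Applying Cauchy--Schwarz to the identity $V = \sum_v (d(v)+1)^{-1/2}(d(v)+1)^{1/2}$ then gives $\alpha(G) \geq V^2/(2E+V)$, which already has the correct overall shape but misses the $+1$ in the denominator.

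To sharpen $2E+V$ to $2E+V+1$ and recover the precise quadratic, I would induct on $V$. Let $v$ be a vertex of minimum degree $\delta$; connectedness forces $\delta \geq 1$. One has two recursive options: either include $v$ in the independent set and work in $G - N[v]$ (yielding $\alpha(G) \geq 1 + \alpha(G - N[v])$), or exclude $v$ and work in $G - v$ (yielding $\alpha(G) \geq \alpha(G - v)$). Each reduces $(V,E)$ in a controlled way: the first by $(1+\delta,\,\delta + e(G[N(v)]))$, the second by $(1,\delta)$. Applying the inductive hypothesis to whichever branch is larger, and optimizing over $\delta$, should match the stated quadratic.

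The main obstacle is the algebraic bookkeeping: one must verify that the inequality $V^2 \leq \alpha(2E+V+1-\alpha)$ is preserved in the worst branch of the recursion, including the case when $G - N[v]$ or $G - v$ becomes disconnected, in which the inductive hypothesis must be applied component-wise and then recombined using concavity of $\alpha \mapsto \alpha(2E+V+1-\alpha)$. An alternative that sidesteps the induction is to treat the problem as a continuous constrained optimization: minimize $\sum_v 1/(d_v+1)$ over real degree sequences satisfying $\sum d_v = 2E$ and $d_v \geq 1$, solve it by Lagrange multipliers (the minimizer is the constant sequence $d_v = 2E/V$), and verify by direct substitution that the resulting minimum of the Caro--Wei sum is exactly the Harant--Schiermeyer expression. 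Either route reduces the claim to a finite algebraic check.
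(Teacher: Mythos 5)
The paper does not prove this lemma; it cites it as~\cite[Theorem 1]{hs-independence-number} (Harant--Schiermeyer), so there is no in-paper argument to compare your proposal against. Evaluated on its own terms, your proposal has a concrete gap.

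Your Cauchy--Schwarz derivation from Caro--Wei gives $\alpha(G)\geq V^2/(2E+V)$, and your ``alternative'' via Lagrange multipliers lands in exactly the same place (by Jensen, the constant degree sequence minimizes $\sum_v 1/(d_v+1)$, giving value $V/(\tfrac{2E}{V}+1)=V^2/(2E+V)$). But your assertion that this value ``is exactly the Harant--Schiermeyer expression'' is false. Writing $a:=2E+V+1$, the stated lower bound is $r_1=\tfrac12\bigl(a-\sqrt{a^2-4V^2}\bigr)=\tfrac{2V^2}{a+\sqrt{a^2-4V^2}}$. A short computation shows $r_1>\tfrac{V^2}{a-1}=\tfrac{V^2}{2E+V}$ if and only if $a-2>\sqrt{a^2-4V^2}$, i.e.\ $V^2>a-1$, i.e.\ $E<\binom V2$. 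So for every graph other than the complete graph, the quantity you can reach by Caro--Wei plus Cauchy--Schwarz (or equivalently by your Lagrange-multiplier minimization) is \emph{strictly smaller} than $r_1$. As a numerical check: for the star $K_{1,4}$, $V^2/(2E+V)=25/13\approx 1.92$ while $r_1=\tfrac12(14-\sqrt{96})\approx 2.10$. Neither of these two routes can prove the lemma; the miss is not merely ``the $+1$ in the denominator'' but a multiplicative factor that can approach $2$.

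That leaves the induction on $V$ via a minimum-degree vertex, which is the only route you sketch that has a chance. But you have not closed it, and it is not routine: one must choose between the include/exclude branches, correctly account for edges deleted in $G-N[v]$ (you undercount; deleting $N[v]$ also removes all edges from $N(v)$ to the rest of the graph), and, in the branch where the residual graph is disconnected, recombine per-component inductive bounds via the concavity of $x\mapsto x(2E+V+1-x)$. The result is true and this general shape of argument (a greedy/recursive removal scheme with a carefully chosen quadratic potential) is the spirit of the Harant--Schiermeyer proof, but as written your proposal stops at ``should match the stated quadratic,'' and the one piece you claim is a finished shortcut does not in fact give the bound. You need to actually carry out the induction, or simply cite~\cite{hs-independence-number} as the paper does.
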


\begin{proof}[Proof of Theorem~\ref{thm-finite-geo}]
We will find  a deterministic $m\in\BB N$ such that the following is true a.s. 
Let $M\in\BB N$, let $z,w\in\BB C$, and let $\mcl P$ be a finite collection of $\#\mcl P = M$ geodesics from $z$ to $w$. 
Then $M  \leq m$. 

To this end, we first use Lemma~\ref{lem-three-hit-pt} to get that for each $P\in\mcl P$, there is a point $u_P \in P$ such that $u_P$ is hit by at most three geodesics in $\mcl P$ (including $P$ itself). Just below, we will show using Lemma~\ref{lem-independent-set} that there is a subset $\wt{\mcl P}$ of $\mcl P$ such that 
\eqb \label{eqn-independent-set}
\#\wt{\mcl P} \geq   \left\lfloor \frac12 \left(\left( \frac{17}{3} M  + 1 \right) - \sqrt{\left( \frac{17}{3} M  + 1 \right)^2 - 4M^2} \right) \right\rfloor 
\eqe
and for each $P\in\wt{\mcl P}$, $P$ is the only geodesic in $\wt{\mcl P}$ which hits $u_P$. 
By Proposition~\ref{prop-multi-geo-dim}, $\#\wt{\mcl P} \leq \lfloor 2d_\gamma \rfloor + 1$. Since the right side of~\eqref{eqn-independent-set} goes to $\infty$ as $M\rta\infty$, we get that $M$ is bounded above by some finite, deterministic constant $m$.

To construct the set $\wt{\mcl P}$ of~\eqref{eqn-independent-set}, we first let $G_0$ be the graph whose vertex set is $\mcl P$ with distinct geodesics $P,P'\in\mcl P$ joined by an edge if and only if either $P$ hits $u_{P'}$ or $P'$ hits $u_P$. 
Then $G_0$ has $M$ vertices.
Since each $u_P$ is hit by at most 2 geodesics in $\mcl P$ other than $P$ itself, it follows that $G_0$ has at most $2M$ edges. 

Lemma~\ref{lem-independent-set} requires that the graph $G$ is connected. To arrange this in our setting, we first let $G_1$ be obtained from $G_0$ as follows.
We can assume without loss of generality that $M\geq 3$. 
For each $P\in\mcl P$ such that $P$ is joined by edges to fewer than 2 other vertices of $\mcl P$, we add one or two edges going from $P$ to (arbitrary) vertices in $\mcl P\setminus \{P\}$ which are not already joined by edges to $P$, so that each vertex of $G_1$ has degree at least 2. Note that $G_1$ still has at most $2M$ edges. 

Since each vertex of $G_1$ is joined to at least 2 other vertices, each connected component of $G_1$ has at least three vertices. Hence there are at most $M/3$ connected components of $G_1$. Consequently, we can produce a connected graph $G$ by adding at most $M/3 $ edges to $G_1$ to link up the connected components. 
The graph $G $ has $M$ vertices and at most $7M/3 $ edges. By Lemma~\ref{lem-independent-set}, there is a subset $\wt{\mcl P}\subset\mcl P$ such that the bound~\eqref{eqn-independent-set} holds and no two geodesics in $\wt{\mcl P}$ are joined by an edge in $G$.
Hence also no two edges in $\wt{\mcl P}$ are joined by an edge in $G_0$, i.e., for each $P\in\wt{\mcl P}$, $P$ is the only geodesic in $\wt{\mcl P}$ which hits $u_P$. 
\end{proof}

\subsection{Dimension bound for points joined to zero by two geodesics}
\label{sec-two-geo-dim}

The following proposition is the remaining statement needed to conclude the proof of Theorem~\ref{thm-zero-geo}.

\begin{prop} \label{prop-two-geo-dim}
Let $\mcl R_{\geq 2}$ be the set of points $z\in\BB C$ for which there are at least two $D_h$-geodesics from 0 to $z$, as in Proposition~\ref{prop-three-geo-dense}. Almost surely, $\dim_{\mcl H}^\gamma \mcl R_{\geq 2} \leq d_\gamma-1$. 
\end{prop}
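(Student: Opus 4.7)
The plan is to run the same covering scheme used in Section~\ref{sec-subset-dim} to prove Proposition~\ref{prop-subset-dim}, but with one of the sampled endpoints replaced by the deterministic basepoint $0$. Since Proposition~\ref{prop-subset-dim} (with $n=1$) gives a Hausdorff dimension bound of $2d_\gamma-1$ for the set of pairs of points joined by at least two geodesics, ``freezing'' one coordinate at $0$ should subtract $d_\gamma$ from that dimension, yielding $d_\gamma-1$ as desired.

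First I would reduce to a bound for a fixed pair of sets, just as in Section~\ref{sec-reduction}. For each $z\in \mcl R_{\geq 2}$ there are two distinct $D_h$-geodesics from $0$ to $z$, and by rational approximation of closed geodesic traces there exist bounded open sets $U_1,V_1\subset \BB C$, each a finite union of balls with rational centers and radii, such that $\ol U_1\subset V_1$, $V_1 \cap \bdy\BB D=\emptyset$, $0\notin \ol V_1$, one of the geodesics meets $U_1$, and the other avoids $\ol V_1$. By countable stability of Hausdorff dimension it suffices to fix $U_1,V_1$ with these properties and bound the dimension of the set $\wt{\mcl R}_{\geq 2}$ of $z\in \BB C$ for which such a pair of geodesics from $0$ to $z$ exists.

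Next I would establish the required one-point estimate. For $w\in \BB C$ and $\ep>0$ let $F^\ep(w)$ be the event that $\mcl B_\ep(w;D_h)$ is disjoint from $\ol V_1\cup \{0\}$ and there are two distinct $D_h$-geodesics from $0$ to points of $\mcl B_\ep(w;D_h)$, one meeting $U_1$ and the other disjoint from $\ol V_1$. Since $0\in \mcl B_\ep(0;D_h)$, we have $F^\ep(w)\subset E^\ep(0,w)$ with $E^\ep(\cdot,\cdot)$ as in Section~\ref{sec-geo-count-prob} for $n=1$, so Lemma~\ref{lem-geo-count-prob} gives $\BB P[F^\ep(w)\cap G(C)]\leq \ep^{1+o_\ep(1)}$ uniformly in $w$. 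Now fix a bounded open $W\subset \BB C$ with $\ol W$ disjoint from $\ol V_1\cup \{0\}$, and sample $\BB w$ from $\mu_h|_W$ normalized to be a probability measure. Using Lemma~\ref{lem-two-point-law}, the law of $h$ weighted by $\mu_h(W)$ has conditional law given $\BB w$ equal to $\wt h - \gamma\log|\cdot -\BB w|$ with the average over $\bdy\BB D$ subtracted, where $\wt h$ is an independent whole-plane GFF; since $\BB w$ lies at positive distance from $\ol V_1$, the log singularity is bounded on $\ol V_1$, so Lemma~\ref{lem-geo-count-prob-f} applies. Combining with H\"older's inequality and negative moments of $\mu_h(W)$ exactly as in the proof of Lemma~\ref{lem-random-pt} gives $\BB P[F^\ep(\BB w)\cap G(C)]\leq \ep^{1+o_\ep(1)}$.

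The covering argument then mirrors Lemma~\ref{lem-subset-cover}. For compact $K\subset W$, let $M_\ep=\lfloor \ep^{-d_\gamma-\zeta}\rfloor$ and take i.i.d.\ samples $\BB w_1,\dots,\BB w_{M_\ep}$ from $\mu_h|_W$. By Lemma~\ref{lem-ball-cover}, with probability tending to $1$ the balls $\mcl B_\ep(\BB w_k;D_h)$ cover $K$ and each is disjoint from $\ol V_1\cup\{0\}$. Any $z\in \wt{\mcl R}_{\geq 2}\cap K$ lies in some $\mcl B_\ep(\BB w_k;D_h)$, and inspection of the definitions shows $F^\ep(\BB w_k)$ occurs for that $k$. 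By the one-point estimate and Chebyshev, the number of such $k$ for which $F^\ep(\BB w_k)\cap G(C)$ occurs is at most $\ep^{-d_\gamma-2\zeta+1+o_\ep(1)}$ with high probability. Sending $C\to\infty$ and $\zeta\to 0$ produces a cover of $\wt{\mcl R}_{\geq 2}\cap K$ by $\ep^{-d_\gamma+1+o_\ep(1)}$ sets of $D_h$-diameter $\leq 2\ep$, giving $\dim_{\mcl H}^\gamma(\wt{\mcl R}_{\geq 2}\cap K)\leq d_\gamma-1$. Exhausting $W$ by compacts and then $\BB C\setminus \{0\}$ by such $W$'s (and taking a countable union over the choices of $U_1,V_1$) yields the proposition.

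The main obstacle is the one-point estimate for the event $F^\ep(\BB w)$ under the $\mu_h$-weighted measure; this is where the asymmetry with Proposition~\ref{prop-subset-dim} appears, since only one endpoint is sampled from $\mu_h$ and the log singularity at $0$ is not reweighted. Fortunately the containment $F^\ep(w)\subset E^\ep(0,w)$ reduces this to the already-proved Lemma~\ref{lem-geo-count-prob}, so the technical work is essentially a mild bookkeeping adaptation of Section~\ref{sec-geo-count-prob} rather than a new estimate.
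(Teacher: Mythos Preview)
Your proposal is correct and follows essentially the same route as the paper: reduce to fixed sets $U_1,V_1$ with $0\notin\ol V_1$, establish the one-point estimate $\BB P[E^\ep(0,\BB w)\cap G(C)]\leq \ep^{1+o_\ep(1)}$ for $\BB w$ sampled from $\mu_h|_W$ via the one-point analog of Lemma~\ref{lem-two-point-law} together with Lemma~\ref{lem-geo-count-prob-f}, and then run the covering argument of Lemma~\ref{lem-subset-cover} with a single family of sampled points. The paper packages these steps as Lemmas~\ref{lem-random-pt'} and~\ref{lem-subset-cover'}, but the content is the same as what you wrote.
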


The proof of Proposition~\ref{prop-two-geo-dim} is essentially the same as the proof of Proposition~\ref{prop-multi-geo-dim} with $n=1$. 
In fact, we will re-use most of the estimates which go into the proof of Proposition~\ref{prop-multi-geo-dim}. 
Let us, therefore, assume that we are in the setting discussed just after Proposition~\ref{prop-multi-geo-dim} with $n=1$, so that $U_1,U_1',V_1'\subset\BB C$ are bounded open sets which lie at positive distance from $\bdy\BB D$ and satisfy $\ol U_1\subset U_1'$ and $\ol U_1' \subset V_1$. 
We also assume that $V_1$ lies at positive distance from 0.

Let $\wt{\mcl R}_{\geq 2}$ be the set of $z\in\BB C$ for which there is a $D_h$-geodesic $P^1$ from 0 to $z$ which enters $U_1$ and a geodesic $P^0$ from 0 to $z$ which is disjoint from $V_1$. Exactly as in the discussion following Proposition~\ref{prop-multi-geo-dim}, it suffices to show that a.s.\
\eqb \label{eqn-two-geo-subset}
\dim_{\mcl H}^\gamma \wt{\mcl R}_{\geq 2} \leq d_\gamma-1  . 
\eqe

Define $G(C)$ for $C>0$ as in~\eqref{eqn-geo-count-reg} and $E^\ep(0,w)$ for $w\in\BB C$ and $\ep > 0$ as in Section~\ref{sec-geo-count-prob} with $z=0$.

\begin{lem} \label{lem-random-pt'}
Let $W\subset\BB C$ be a bounded open set which lies at positive distance from $\ol V_1$. 
Conditional on $h$, let $\BB w$ be sampled from $ \mu_h|_W$, normalized to be a probability measure.
For each $C>1$ and each $\ep\in (0,1)$, 
\eqb \label{eqn-random-pt'}
\BB P\left[ E^\ep(0,\BB w) \cap G(C)  \right] \leq \ep^{1 + o_\ep(1)} 
\eqe
where the rate of the $o_\ep(1)$ is deterministic.
\end{lem}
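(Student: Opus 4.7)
The plan is to mirror the proof of Lemma~\ref{lem-random-pt}, with only the single point $\BB w$ being size-biased. I would first introduce the tilted measure $\wt{\BB P}$ obtained by weighting the joint law of $(h,\BB w)$ by $\mu_h(W)$ and normalizing to be a probability measure, with $\wt{\BB E}$ the corresponding expectation. By the one-point version of the rooted-measure decomposition (the single-endpoint analog of description~\eqref{item-law-quantum} from Lemma~\ref{lem-two-point-law}), under $\wt{\BB P}$ the conditional law of $h$ given $\BB w$ is that of $\wt h - \gamma\log|\cdot-\BB w|$ minus its circle average over $\bdy\BB D$, where $\wt h$ is a whole-plane GFF independent of $\BB w$.

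Next, I would apply Lemma~\ref{lem-geo-count-prob-f} under the conditional law given $\BB w$, taking $f = f_{\BB w} := -\gamma\log|\cdot - \BB w| - a(\BB w)$, where $a(\BB w)$ is the circle-average normalization. The hypothesis of that lemma requires a deterministic $A>1$ with $\sup_{u\in\ol V_1}|f_{\BB w}(u)|\leq A$ for every $\BB w\in W$. Since $W$ is separated from $\ol V_1$ by a positive Euclidean distance, $|u-\BB w|$ is bounded uniformly above and below over $(u,\BB w)\in\ol V_1\times W$; and $a(\BB w)$ depends continuously (hence boundedly) on $\BB w\in W$. Combining these gives the required uniform bound. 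Moreover $D_{\wt h + f_{\BB w}}$ a.s.\ induces the Euclidean topology since we have a single logarithmic singularity of coefficient $\gamma<Q$ (see Remark~\ref{remark-log-singularity}). Lemma~\ref{lem-geo-count-prob-f} therefore yields
\eqbn
\wt{\BB P}\left[E^\ep(0,\BB w)\cap G(C)\,\big|\,\BB w\right] \leq \ep^{1+o_\ep(1)},
\eqen
uniformly in $\BB w\in W$, hence the same bound after taking $\wt{\BB E}$.

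Finally, I would invert the tilt via H\"older's inequality exactly as in~\eqref{eqn-random-pt-holder}: for conjugate exponents $p,q>1$,
\eqbn
\BB P\left[E^\ep(0,\BB w)\cap G(C)\right] \leq c^{-1}\,\wt{\BB P}\left[E^\ep(0,\BB w)\cap G(C)\right]^{1/p}\BB E\left[\mu_h(W)^{1-q}\right]^{1/q},
\eqen
where $c = \wt{\BB E}[\mu_h(W)^{-1}]$. The first factor is $\ep^{1/p+o_\ep(1)}$ by the previous step, and the second factor is finite for every $q>1$ by the negative-moment bounds for Gaussian multiplicative chaos~\cite[Theorem 2.12]{rhodes-vargas-review}. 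Sending $p\to 1^+$ and $q\to\infty$ yields~\eqref{eqn-random-pt'}. The only real content beyond the proof of Lemma~\ref{lem-random-pt} is the uniform-in-$\BB w$ control of $f_{\BB w}$ on $\ol V_1$; I expect this to be the main (and quite mild) obstacle, and it is precisely the place where the hypothesis that $W$ and $\ol V_1$ are at positive distance enters the argument.
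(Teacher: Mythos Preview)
Your proposal is correct and follows exactly the approach of the paper's own proof: tilt by $\mu_h(W)$, identify the conditional law of $h$ given $\BB w$ as a whole-plane GFF plus $-\gamma\log|\cdot-\BB w|$ (normalized), apply Lemma~\ref{lem-geo-count-prob-f}, and undo the tilt via H\"older and negative moments of $\mu_h$. You have in fact written out more detail than the paper does, in particular the verification that $f_{\BB w}$ is uniformly bounded on $\ol V_1$ (using the positive-distance hypothesis) and that the singularity coefficient $\gamma<Q$ ensures the Euclidean topology; the paper simply says to repeat the proof of Lemma~\ref{lem-random-pt}.
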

\begin{proof}
Write $\wt{\BB P}$ for the law of $(h,\BB w)$ weighted by $ \mu_h(W)$, normalized to be a probability measure. 
By exactly the same proof as in Lemma~\ref{lem-two-point-law}, under $\wt{\BB P}$ the conditional law of $h$ given $\BB z$ is the same as the $\BB P$-law of the field $h - \gamma \log|\cdot-\BB z| $, normalized to have average zero over $\bdy\BB D$. 
We now apply Lemma~\ref{lem-geo-count-prob-f} and follow exactly the same argument as in the proof of Lemma~\ref{lem-random-pt} to obtain~\eqref{eqn-random-pt'}. 
\end{proof}

\begin{lem} \label{lem-subset-cover'}
Let $W\subset\BB C$ be a bounded open set which lies at positive distance from $\ol V_1$ and let $K\subset W$ be compact. 
With probability tending to 1 as $\ep\rta 0$, the set $\wt{\mcl R}_{\geq 2}  \cap K$ can be covered by $\ep^{ - (d_\gamma - 1) +o_\ep(1)}$ sets of the form $\mcl B_\ep(w;D_h)$ for $w\in W$. 
\end{lem}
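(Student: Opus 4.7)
The plan is to mimic the proof of Lemma~\ref{lem-subset-cover} almost verbatim, with the simplification that one endpoint of the geodesic network (namely $0$) is already fixed, so we only need to sample one random point instead of a pair. Fix a small $\zeta \in (0,1)$ (eventually sent to $0$), fix $C > 1$ (eventually sent to $\infty$), and set $M_\ep := \lfloor \ep^{-d_\gamma - \zeta} \rfloor$. Conditional on $h$, let $\BB w_1, \dots, \BB w_{M_\ep}$ be i.i.d.\ samples from $\mu_h|_W$, normalized to be a probability measure.

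First, I would invoke Lemma~\ref{lem-ball-cover} (together with a comparison between $\mu_h$ and $D_h$-balls) to show that with probability tending to $1$ as $\ep\rta 0$, the balls $\mcl B_\ep(\BB w_k; D_h)$ for $k \in [1, M_\ep]_{\BB Z}$ cover $K$, and each of them is disjoint from $\mcl B_\ep(0; D_h) \cup \ol V_1$ (the latter uses that $W$ lies at positive $D_h$-distance from $\ol V_1$ and from $0$, which follows a.s.\ for small enough $\ep$). Call this good event $F_\ep$.

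Next, I would check the key containment: on $F_\ep$, if $\wt z \in \wt{\mcl R}_{\geq 2} \cap K$, then $\wt z \in \mcl B_\ep(\BB w_k; D_h)$ for some $k$, and by the definition of $\wt{\mcl R}_{\geq 2}$ there exist geodesics $P^0, P^1$ from $0$ to $\wt z$ with $P^1 \cap U_1 \not= \emptyset$, $P^1 \cap \text{nothing else}$ (since $n = 1$ there are no extra forbidden sets), and $P^0 \cap \ol V_1 = \emptyset$. Since $\wt z \in \mcl B_\ep(\BB w_k; D_h)$, these same geodesics (viewed as starting in $\mcl B_\ep(0; D_h)$ and ending in $\mcl B_\ep(\BB w_k; D_h)$) witness $E^\ep(0, \BB w_k)$. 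Hence on $F_\ep$, the set $\wt{\mcl R}_{\geq 2} \cap K$ is covered by those balls $\mcl B_\ep(\BB w_k; D_h)$ for which $E^\ep(0, \BB w_k)$ occurs.

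Now I would use Lemma~\ref{lem-random-pt'} and Markov's inequality to bound the number of ``bad'' indices. By Lemma~\ref{lem-random-pt'},
\[
\BB E\left[ \#\{k \in [1, M_\ep]_{\BB Z} : E^\ep(0, \BB w_k) \cap G(C) \text{ occurs}\} \right] \leq M_\ep \cdot \ep^{1 + o_\ep(1)} \leq \ep^{-(d_\gamma - 1) - \zeta + o_\ep(1)},
\]
so by Markov, with probability tending to $1$ as $\ep \rta 0$, the number of such indices is at most $\ep^{-(d_\gamma - 1) - 2\zeta + o_\ep(1)}$. Intersecting with $F_\ep$ and the event $G(C)$ (whose probability tends to $1$ as $C \rta \infty$), we conclude that with probability tending to $1$ as $\ep \rta 0$, the set $\wt{\mcl R}_{\geq 2} \cap K$ is covered by at most $\ep^{-(d_\gamma - 1) - 2\zeta + o_\ep(1)}$ balls of the form $\mcl B_\ep(w; D_h)$ with $w \in W$. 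Sending $C \rta \infty$ and $\zeta \rta 0$ sufficiently slowly as $\ep \rta 0$ yields the desired bound.

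The main obstacle (which is really the same one dealt with in the parallel Lemma~\ref{lem-subset-cover}) is verifying the containment step: one must be careful that the geodesics witnessing membership in $\wt{\mcl R}_{\geq 2}$, when their endpoints are moved slightly into $\mcl B_\ep(0; D_h)$ and $\mcl B_\ep(\BB w_k; D_h)$, still satisfy the intersection/disjointness conditions in the definition of $E^\ep$. Here this is immediate because the \emph{same} geodesics (with $z = 0$ and $w = \wt z$) already witness both $\wt{\mcl R}_{\geq 2}$-membership and the $E^\ep(0, \BB w_k)$ event, once we know that the initial and terminal $\ep$-balls are disjoint from $\ol V_1$ on $F_\ep$. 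Beyond this, the argument is a direct translation of the proof of Lemma~\ref{lem-subset-cover}.
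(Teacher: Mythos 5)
Your proof is correct and is essentially the proof the paper has in mind: the paper simply says to repeat the proof of Lemma~\ref{lem-subset-cover} with Lemma~\ref{lem-random-pt'} in place of Lemma~\ref{lem-random-pt}, and you have carried out exactly that substitution (one random sample instead of a pair, the containment check via $E^\ep(0,\BB w_k)$, then Markov/Chebyshev and the $C\to\infty$, $\zeta\to 0$ limit). The only small imprecision is your parenthetical that positive $D_h$-distance of $W$ from $0$ ``follows a.s.\ for small enough $\ep$'' --- this is really a (harmless, and implicitly assumed) hypothesis on $W$ rather than something that follows, and in the application to Proposition~\ref{prop-two-geo-dim} one simply chooses $W$ to avoid a neighborhood of $0$, which is possible since every $z\in\wt{\mcl R}_{\geq 2}$ is at positive distance from $0$.
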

\begin{proof}
This follows from exactly the same proof as in Lemma~\ref{lem-subset-cover}, but with Lemma~\ref{lem-random-pt'} used in place of Lemma~\ref{lem-random-pt}. 
\end{proof}

\begin{proof}[Proof of Proposition~\ref{prop-two-geo-dim}]
The relation~\eqref{eqn-two-geo-subset} follows from exactly the same argument used to prove Proposition~\ref{prop-subset-dim}, with Lemma~\ref{lem-subset-cover'} used in place of Lemma~\ref{lem-subset-cover}. 
\end{proof}

\appendix

\section{Appendix}

Here we collect some basic technical lemmas which are used at various places in the paper. The proofs of these lemmas do not use any other results in the paper. Our first lemma is a purely topological statement which is used in the proof of Lemma~\ref{lem-three-geo-dense-pt}. 

\begin{lem} \label{lem-connected-bdy}
Let $X$ be a path connected, locally path connected topological space whose first homology group $H_1(X)$ (with integer coefficients) is trivial. 
Let $K\subset X$ be a closed connected set. 
If $X\setminus K$ is connected, then $\bdy K$ is connected.
\end{lem}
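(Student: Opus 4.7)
The plan is to argue by contradiction: suppose $\bdy K = A \sqcup B$ with $A,B$ non-empty and closed in $\bdy K$; since $\bdy K$ is closed in $X$, $A$ and $B$ are also closed in $X$. I will produce a non-trivial clopen decomposition of $K$, contradicting its connectedness.

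First I consider the open cover $W_1 := X\setminus A$ and $W_2 := X\setminus B$, so that $W_1\cup W_2 = X$ and $W_1\cap W_2 = X\setminus \bdy K = \op{int}(K)\sqcup (X\setminus K)$. The reduced Mayer--Vietoris sequence, combined with $\tilde H_1(X)=0$ and $\tilde H_0(X)=0$, gives $\tilde H_0(W_1\cap W_2)\cong \tilde H_0(W_1)\oplus \tilde H_0(W_2)$, which is a linear identity among numbers of connected components.

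Next I use local path connectedness to count the components of $W_1$ and $W_2$. Let $\{K_\alpha\}$ be the (open) components of $\op{int}(K)$, each with non-empty boundary $\bdy K_\alpha \subseteq \bdy K = A\cup B$. The key observation is that for any $b\in B$ one can pick a path-connected open neighborhood $N\ni b$ disjoint from $A$; this $N$ lies inside $W_1$ and meets $X\setminus K$ since $b\in\overline{X\setminus K}$. It follows that $(X\setminus K)\cup B$ lies in a single component of $W_1$, and so does any $K_\alpha$ with $\bdy K_\alpha\cap B\neq\emptyset$, whereas any $K_\alpha$ with $\bdy K_\alpha\subseteq A$ is clopen in $W_1$ and hence a component by itself. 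A symmetric analysis applies to $W_2$. Plugging these counts into the Mayer--Vietoris identity cancels out to force that no $K_\alpha$ has $\bdy K_\alpha$ meeting both $A$ and $B$, i.e., each $\bdy K_\alpha$ lies entirely in $A$ or entirely in $B$.

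This suggests the decomposition $K = C_A\sqcup C_B$ where $C_A := A\cup \bigcup_{\bdy K_\alpha\subseteq A} K_\alpha$ and $C_B$ is defined symmetrically. Once I verify that $C_A$ and $C_B$ are both closed in $X$, their disjointness and non-emptiness will contradict the connectedness of $K$. The main obstacle is verifying closedness, specifically that no $b\in B$ is a limit point of $\bigcup_{\bdy K_\alpha\subseteq A} K_\alpha$. I would argue this via local path connectedness: a path inside a path-connected neighborhood of $b$ disjoint from $A$, running from a nearby point of some such $K_\alpha$ to $b$, would have to exit $K_\alpha$ through $\bdy K_\alpha\subseteq A$ (by a standard supremum-of-times argument applied to the exit time from $K_\alpha$), contradicting the fact that the path avoids $A$. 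The edge case $\bdy K=\emptyset$ is handled separately: it forces $K\in\{\emptyset,X\}$ by connectedness of $X$, in which case $\bdy K$ is vacuously connected.
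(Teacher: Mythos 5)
Your proof is correct, and it takes a genuinely different route from the paper's. The paper first \emph{enlarges} the two pieces of $\bdy K$ to disjoint open sets $A,B\subset X$, applies Mayer--Vietoris to the open cover $U:=K\cup A\cup B$, $V:=(X\setminus K)\cup A\cup B$, and reads off a contradiction immediately: $U$ and $V$ are connected, but $U\cap V = A\cup B$ is not, while exactness of the sequence forces it to be. That argument is shorter, but the passage from ``$\bdy K$ disconnected'' to ``disjoint open $A,B$ in $X$ containing the two pieces'' tacitly uses a separation property of $X$ (normality, say, or metrizability); this holds in the paper's applications but is not among the stated hypotheses. Your proof avoids that step entirely by taking $A,B$ to be the two \emph{closed} pieces of $\bdy K$ and covering $X$ by $W_1=X\setminus A$, $W_2=X\setminus B$ instead. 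The price is that the contradiction is less immediate: you must analyze the components of $W_1$, $W_2$, and $W_1\cap W_2$ via local path connectedness, deduce that each component $K_\alpha$ of $\op{int}(K)$ has $\bdy K_\alpha$ lying entirely in $A$ or entirely in $B$, and then assemble a nontrivial clopen decomposition of $K$ (contradicting connectedness of $K$ rather than disconnectedness of $U\cap V$).

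One small precision worth adding: when you say the Mayer--Vietoris identity ``cancels out'' to rule out a $K_\alpha$ whose boundary meets both $A$ and $B$, a bare count of components does not suffice if $\op{int}(K)$ has infinitely many components, which can happen. What you should invoke is the \emph{injectivity} of the Mayer--Vietoris map $\wt H_0(W_1\cap W_2)\to\wt H_0(W_1)\oplus\wt H_0(W_2)$: if some $K_\alpha$ had $\bdy K_\alpha$ meeting both $A$ and $B$, then $[K_\alpha]-[X\setminus K]$ would be a nonzero element of $\wt H_0(W_1\cap W_2)$ mapping to $0$, since both $K_\alpha$ and $X\setminus K$ lie in the single ``big'' component of $W_1$ and of $W_2$; this contradicts injectivity. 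With that adjustment the argument is complete; in particular your supremum-of-exit-times argument for the closedness of $C_A$ and $C_B$, and your separate handling of $\bdy K=\emptyset$, are sound.
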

\begin{proof}
Our argument follows closely an argument given by Moishe Kohan on Math stack exchange, see~\cite{kohan-mse-post}.

Assume by way of contradiction that $\bdy K$ is not connected.
Then there exist disjoint open sets $A,B\subset X$ such that $\bdy K\subset A\cup B$ and each of $A\cap \bdy K$ and $B\cap \bdy K$ is non-empty.
We can assume without loss of generality that each connected component of $A$ and each connected component of $B$ has non-empty intersection with $\bdy K$ (otherwise, we can remove some components).

We observe that the sets $U := K \cup A \cup B$ and $V := (X\setminus K) \cup A \cup B$ are each open and connected.
Indeed, $V$ is the union of three open sets by definition and $U$ is the union of the interior of $K$ and the open sets $A$ and $B$. 
The set $U$ is the union of the connected set $K$ and the connected components of $A$ and $B$, each of which intersect $K$, so $U$ is connected.
Similarly, $V$ is the union of the connected set $X\setminus K$ and the connected components of $A$ and $B$, each of which intersect $\bdy K$ and hence also $X\setminus K$. 
So, $V$ is also connected. On the other hand, the set $U\cap V = A\cup B$ is not connected. 

We briefly recall that the \emph{reduced homology} groups of a topological space $Y$. 
For $n\geq 1$, the $n$th reduced homology group is the same as the ordinary $n$th homology group $H_n(Y)$.
For $n=0$, the $0$th reduced homology group $\wt H_0(Y)$ is defined so that $H_0(Y) = \wt H_0(Y) \oplus \BB Z$, which makes it so that $\wt H_0(Y) \cong \BB Z^{m-1}$, where $m$ is the number of connected components of $Y$. 

We now apply the Mayer-Vietoris sequence for reduced homology, which implies in particular that if $X = U\cup V$ is the union of open sets with non-empty intersection, then we have an exact sequence (i.e., the kernel of each map is the image of the previous map)
\eqb
H_1(X) \rta \wt H_0(U\cap V) \rta \wt H_0(U) \oplus \wt H_0(V)  .
\eqe
In our setting, $H_1(X) = 0$ by assumption and $\wt H_0(U) = \wt H_0(V) = 0$ since $U$ and $V$ are each connected. 
Therefore, the exactness of the sequence implies that $\wt H_0(U\cap V) = 0$, i.e., $U\cap V = A\cup B$ is connected. This contradicts the earlier statement that $A\cup B$ is not connected, so we conclude that $\bdy K$ must be connected.
\end{proof}

The following lemma is a variant of a standard fact about Gaussian multiplicative chaos measures which dates back to Kahane~\cite{kahane}.

\begin{lem} \label{lem-two-point-law}
Let $h$ be a whole-plane GFF normalized so that its average over $\bdy\BB D$ is zero.
Also let $Z,W \subset \BB C$ be bounded open sets which lie at positive distance from each other.
The following two probability measures agree (here, all laws are assumed to be normalized to be probability measures). 
\begin{enumerate}[(i)]
\item The law of the triple $(\wt h ,z,w)$ where $\wt h$ is sampled from the law of $h$ weighted by $\mu_h(Z)\mu_h(W)$, and, conditional on $h$, $z$ and $w$ are sampled independently from $\mu_h|_Z$ and $\mu_h|_W$, respectively. \label{item-law-quantum} 
\item The law of the triple $(h - \gamma \log|\cdot-z| - \gamma \log |\cdot-w| - \gamma c_z - \gamma c_w ,z,w)$ where $h$ is a whole-plane GFF as above, \label{item-law-lebesgue}
\eqbn
c_z := \frac{1}{2\pi} \int_{\bdy\BB D} \log |u-z|^{-1} \,du ,
\eqen
and the pair $(z,w)$ is independent from $h$ and is sampled from Lebesgue measure on $Z\times W$ weighted by 
\eqb \label{eqn-two-pt-weight}
|z-w|^{-\gamma^2} (|z|_+  |w|_+)^{2\gamma^2} ,\quad\text{where} \quad |u|_+ := \max\{|u| , 1\} .
\eqe 
\end{enumerate}
\end{lem}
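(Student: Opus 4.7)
I would prove this via the classical Kahane--Girsanov rooted measure identity for subcritical Gaussian multiplicative chaos, adapted to the whole-plane GFF normalized to have mean zero on $\bdy\BB D$. The strategy is to show that the joint measure on triples $(h,z,w)$ appearing in~(i) has Radon--Nikodym derivative with respect to the product of the field-law in~(ii) and Lebesgue measure given precisely by the weight $|z-w|^{-\gamma^2}(|z|_+|w|_+)^{2\gamma^2}$. Both this density factor and the log-singularity insertions appearing in (ii) arise from a single Cameron--Martin computation.

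Concretely, starting from the regularization $\mu_h^\ep(dz) := \ep^{\gamma^2/2}e^{\gamma h_\ep^*(z)}\,d^2z$ of $\mu_h$, I would compute, for any bounded continuous functional $F$ on triples (field, point of $Z$, point of $W$),
\begin{equation*}
\BB E\!\left[F(h,z,w)\,d\mu_h^\ep(z)\,d\mu_h^\ep(w)\right]
= \ep^{\gamma^2}\,\BB E\!\left[F(h,z,w)\, e^{\gamma h_\ep^*(z)+\gamma h_\ep^*(w)}\right]\,d^2z\,d^2w.
\end{equation*}
For each fixed $(z,w)$ with $z\ne w$, Cameron--Martin (equivalently, Girsanov) applied to the linear Gaussian functional $\gamma h_\ep^*(z)+\gamma h_\ep^*(w)$ of $h$ replaces the exponential weight by the deterministic factor $\exp\!\bigl(\tfrac12\op{Var}(\gamma h_\ep^*(z)+\gamma h_\ep^*(w))\bigr)$ times a shift of the mean of $h$ by $g_\ep^{z,w}(u) := \gamma\,\BB E[h_\ep^*(z)\,h(u)] + \gamma\,\BB E[h_\ep^*(w)\,h(u)]$. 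Using the covariance structure of the $\bdy\BB D$-normalized whole-plane GFF, $g_\ep^{z,w}$ converges as $\ep\to 0$ to the function $u\mapsto -\gamma\log|u-z| - \gamma\log|u-w| - \gamma c_z - \gamma c_w$, where the constants $c_z, c_w$ encode exactly the boundary correction needed to restore mean-zero on $\bdy\BB D$. The variance is computed as
\begin{equation*}
\op{Var}(\gamma h_\ep^*(z)+\gamma h_\ep^*(w)) = -2\gamma^2\log\ep + 4\gamma^2\log(|z|_+|w|_+) - 2\gamma^2\log|z-w| + o(1),
\end{equation*}
so that $\ep^{\gamma^2}\exp\!\bigl(\tfrac12\op{Var}(\gamma h_\ep^*(z)+\gamma h_\ep^*(w))\bigr) \to |z-w|^{-\gamma^2}(|z|_+|w|_+)^{2\gamma^2}$, matching the density in~(ii).

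The remaining technical step is justifying the $\ep\to 0$ passage under the integrals. On the left, $\mu_h^\ep(Z)\mu_h^\ep(W) \to \mu_h(Z)\mu_h(W)$ in $L^1$ by standard subcritical-GMC estimates (see~\cite{shef-kpz} or~\cite{rhodes-vargas-review}). On the right, the density $|z-w|^{-\gamma^2}(|z|_+|w|_+)^{2\gamma^2}$ is integrable on $Z\times W$ because these sets are bounded and lie at positive distance from each other, so $|z-w|^{-\gamma^2}$ is bounded there. The main obstacle is essentially bookkeeping: carefully tracking the mean-shift and variance contributions from the Cameron--Martin computation and identifying the boundary corrections $-\gamma c_z - \gamma c_w$ that keep the shifted field mean-zero on $\bdy\BB D$. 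No deep LQG input is used in the proof.
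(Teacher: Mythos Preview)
Your proposal is correct and follows essentially the same approach as the paper: both introduce the $\ep$-regularized joint measure proportional to $e^{\gamma(h_\ep(z)+h_\ep(w))}\,dh\,dz\,dw$, identify its marginal on $(z,w)$ via the variance computation (yielding the weight $|z-w|^{-\gamma^2}(|z|_+|w|_+)^{2\gamma^2}$) and its conditional on $h$ via the Cameron--Martin shift (yielding the log-singularity insertions with the $c_z,c_w$ corrections), and then pass to the limit. The only cosmetic difference is that the paper uses circle averages $h_\ep$ rather than the heat-kernel mollification $h_\ep^*$, and organizes the argument by defining a single measure $\Theta^\ep$ and showing it converges to both (i) and (ii), whereas you phrase it directly as a Girsanov identity; these are the same computation.
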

\begin{proof}
By a slight abuse of notation, we write $dh$ for the law of $h$, $dz$ for Lebesgue measure on $Z$, and $dv$ for Lebesgue measure on $W$. 
For $\ep > 0$, define the probability measure 
\eqb
\Theta^\ep := \mcl Z_\ep^{-1} e^{\gamma ( h_\ep(z) + h_\ep(w)) }  \,dh \,dz \, dw ,
\eqe
where $\mcl Z_\ep $ is a normalizing constant. We will show that as $\ep\rta 0$, the law $\Theta^\ep$ converges to each of~\eqref{item-law-quantum} and~\eqref{item-law-lebesgue}. 

By integrating out $z$ and $w$, we see that the marginal law of $h$ under $\Theta^\ep$ converges as $\ep\rta 0$ to the measure $\mu_h(Z) \mu_h(W) \,dh$, normalized to be a probability measure. 
The $\Theta^\ep$-conditional law of $(z,w)$ given $h$ is equal to $\mcl Z_\ep^{-1} e^{\gamma (h_\ep(z) + h_\ep(w))} \,dz \,dw$, which converges as $\ep\rta 0$ to $\mu_h|_Z \times \mu_h|_W$, normalized to be a probability measure. 
Therefore, as $\ep\rta 0$ the probability measure $\Theta^\ep$ converges to the law~\eqref{item-law-quantum}.

On the other hand, the covariance function of $h$ is given by
\eqbn
\op{Cov}\left( h(z) , h(w) \right)  = \log\frac{|z|_+ |w|_+}{|z-w|} ;
\eqen
see, e.g.,~\cite[Section 2.1.1]{vargas-dozz-notes}. 
By integrating this covariance function against the uniform measure on $\bdy B_\ep(z)\times\bdy B_\ep(w)$, we get that if $\ep  < |z-w|$, then
\eqb
\op{Cov}\left( h_\ep(z) , h_\ep(w) \right)  = \log\frac{|z|_+ |w|_+}{|z-w|} + o_\ep(1) 
\eqe
where the $o_\ep(1)$ tends to zero as $\ep\rta 0$ uniformly on compact subsets of $\BB C$. 
Similarly,
\eqbn
\op{Var} h_\ep(z) = \log \ep^{-1} + 2 \log |z|_+ + o_\ep(1) . 
\eqen
Therefore,
\eqb
\BB E\left[ e^{\gamma ( h_\ep(z) + h_\ep(w)) } \right] 
%= \exp\left(  \gamma^2\left( \log\frac{|z|_+ |w|_+}{|z-w|} + \log\ep^{-1} + \log |z|_+ + \log |w|_+ \right)  + o_\ep(1) \right)
=  e^{o_\ep(1)} \ep^{-\gamma^2}   |z-w|^{-\gamma^2} (|z|_+  |w|_+)^{2\gamma^2}   .
\eqe
So, under $\Theta^\ep$, the marginal law of $(z,w)$ is given by Lebesgue measure on $Z\times W$ weighted by $e^{o_\ep(1)} |z-w|^{-\gamma^2} (|z|_+  |w|_+)^{2\gamma^2} $, normalized to be a probability measure. 

The $\Theta^\ep$-conditional law of $h$ given $(z,w)$ is the same as the marginal law of $h$ weighted by $e^{\gamma(h_\ep(z) + h_\ep(w))} $ (normalized to be a probability measure). 
To describe this conditional law, let 
\eqb
\phi_\ep^z(u) := -\log\max\{\ep , |u-z|\} - c_z^\ep ,
\eqe
where $c_z^\ep$ is the average of $-\log\max\{\ep , |u-z|\}$ over $\bdy\BB D$. 
Then by definition we have $h_\ep(z) = (h , \phi_\ep^z)_\nabla$. Consequently, the law of $h$ weighted by $e^{\gamma(h_\ep(z) + h_\ep(w))} $ is the same as the law of 
\eqbn
h + \gamma \phi_\ep^z + \gamma \phi_\ep^w  .
\eqen
Since $\phi_\ep^z \rta -\gamma\log|\cdot-z| - c_z$ in the distributional sense as $\ep\rta 0$ and similarly for $\phi_\ep^w$, we get that $\Theta^\ep$ converges to the law~\eqref{item-law-lebesgue} as $\ep\rta 0$. 
\end{proof}

\begin{comment}
Using that $z\mapsto \frac{1}{|z-w|}$ is harmonic, we have
\alb
\op{Cov}\left( h_\ep(z) , h_\ep(w) \right)
&= \int_{\bdy B_\ep(z)} \int_{\bdy B_\ep(w)} \log \frac{|u|_+ |v|_+}{|u-v|} \,du \,dv \\
&= \int_{\bdy B_\ep(z)} \int_{\bdy B_\ep(w)} \log \frac{|z|_+ |w|_+}{|u-v|} \,du \,dv  + o_\ep(1) \\
&= \int_{\bdy B_\ep(z)}  \log \frac{|z|_+ |w|_+}{|w-v|}  \,dv  + o_\ep(1) \\
&=   \log \frac{|z|_+ |w|_+}{|w-z|}  \,dv  + o_\ep(1) . 
\ale
Also,
\alb
\op{Var}\left( h_\ep(z) , h_\ep(w) \right)
&= \int_{\bdy B_\ep(z)} \int_{\bdy B_\ep(z)} \log \frac{|u|_+ |v|_+}{|u-v|} \,du \,dv \\
&= \int_{\bdy B_\ep(z)} \int_{\bdy B_\ep(z)} \log \frac{|z|_+^2}{|u-v|} \,du \,dv  + o_\ep(1) \\
&= \int_{\bdy B_\ep(z)} \int_{\bdy B_\ep(z)} \log \frac{1}{|u-v|} \,du \,dv + 2\log |z|_+  + o_\ep(1) \\
&= \log\ep^{-1} + 2\log |z|_+  + o_\ep(1). 
\ale
\end{comment}

Our next lemma is essentially a version of the statement that the Minkowski dimension of the $\gamma$-LQG metric is $d_\gamma$, which was proven in~\cite{afs-metric-ball}.

\begin{lem} \label{lem-ball-cover}
Let $W\subset\BB C$ be a bounded open set, let $K \subset W$ be a closed set, and fix a small constant $\zeta \in (0,1)$. 
For $\ep > 0$, let $M_\ep := \lfloor \ep^{-d_\gamma-\zeta} \rfloor$. Conditional on $h$ let $\{\BB z_k\}_{k\in [1,M_\ep]_{\BB Z}}$ be conditionally i.i.d.\ samples from $\mu_h|_W$, normalized to be a probability measure.
With probability tending to 1 as $\ep\rta 0$,  
\eqb \label{eqn-ball-cover}
K \subset \bigcup_{k=1}^{K_\ep} \mcl B_\ep(\BB z_k ; D_h) \subset B_{\ep^{1/\chi} }(W)  
\eqe
where $\chi  := \xi(Q+2) + \zeta$.
\end{lem}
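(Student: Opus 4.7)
There are two containments to establish, which I will handle by essentially independent arguments.
\medskip

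\noindent\textit{Step 1: The second containment $\bigcup_{k=1}^{M_\ep} \mcl B_\ep(\BB z_k;D_h) \subset B_{\ep^{1/\chi}}(W)$.}
Since each $\BB z_k \in W$, it suffices to show that with probability tending to $1$ as $\ep \to 0$, for every $z\in W$ we have $\mcl B_\ep(z;D_h) \subset B_{\ep^{1/\chi}}(z)$, equivalently $D_h(z,\bdy B_r(z)) \geq r^\chi$ for $r = \ep^{1/\chi}$. I would prove this uniform lower bound on LQG distances using the conformal coordinate change axiom (Axiom~\ref{item-metric-coord}) combined with Weyl scaling (Axiom~\ref{item-metric-f}). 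Concretely, writing $h_r(z)$ for the circle average over $\bdy B_r(z)$ and $h^{r,z} := h(r \cdot + z) - h_r(z)$, one obtains
\[
D_h(z, \bdy B_r(z)) = r^{\xi Q} e^{\xi h_r(z)} D_{h^{r,z}}(0, \bdy B_1(0)) .
\]
By standard bounds for the modulus of continuity of the circle averages of the whole-plane GFF, for any $\delta>0$ we have $\min_{z\in W} h_r(z) \geq -(2+\delta)\log(1/r)$ uniformly in $z\in W$ with probability tending to $1$ as $r\to 0$. The factor $D_{h^{r,z}}(0,\bdy B_1(0))$ is only mildly $z$-dependent since $h^{r,z}$ has the law of a whole-plane GFF (normalized to have circle average zero on $\bdy B_1$); in particular, using a union bound over a suitable discrete net of centers $z$ and continuity, one can arrange $\inf_{z\in W} D_{h^{r,z}}(0,\bdy B_1(0)) \geq r^\delta$ with high probability. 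Combining these ingredients yields $D_h(z,\bdy B_r(z)) \geq r^{\xi(Q+2)+\zeta}$ uniformly in $z\in W$ with high probability for all sufficiently small $r$, which is exactly the second containment with $\chi = \xi(Q+2)+\zeta$.
\medskip

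\noindent\textit{Step 2: The first containment $K\subset \bigcup_{k=1}^{M_\ep} \mcl B_\ep(\BB z_k;D_h)$.} I would reduce this to a covering--mass argument, using the following two quantitative inputs from~\cite{afs-metric-ball}: (i) with probability tending to $1$ as $\ep\to 0$, there is a cover of $K$ by at most $N_\ep := \ep^{-d_\gamma-\zeta/3}$ closed $D_h$-balls $\{\mcl B_{\ep/3}(y_j;D_h)\}_{j=1}^{N_\ep}$ with centers $y_j\in W$; (ii) with probability tending to $1$, every $D_h$-ball $\mcl B_{\ep/3}(y;D_h)$ with $y\in W$ satisfies $\mu_h(\mcl B_{\ep/3}(y;D_h)) \geq \ep^{d_\gamma + \zeta/3}$. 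On the intersection of these events, for each $j$ the conditional probability that none of the $M_\ep = \lfloor \ep^{-d_\gamma-\zeta}\rfloor$ i.i.d.\ samples $\BB z_k$ lies in $\mcl B_{\ep/3}(y_j;D_h)$ is at most
\[
\left(1 - \frac{\ep^{d_\gamma+\zeta/3}}{\mu_h(W)}\right)^{M_\ep} \leq \exp\!\left(-\frac{\ep^{-2\zeta/3}}{\mu_h(W)}\right),
\]
which, combined with a union bound over the $N_\ep = \ep^{-d_\gamma-\zeta/3}$ balls and the fact that $\mu_h(W)$ has finite negative moments, goes to $0$ as $\ep\to 0$. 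Hence with probability tending to $1$, each ball $\mcl B_{\ep/3}(y_j;D_h)$ contains at least one $\BB z_k$, and then by the triangle inequality $K \subset \bigcup_j \mcl B_{\ep/3}(y_j;D_h) \subset \bigcup_k \mcl B_{2\ep/3}(\BB z_k;D_h) \subset \bigcup_k \mcl B_\ep(\BB z_k;D_h)$.
\medskip

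\noindent\textit{Main obstacle.} The routine part of the proof is the probabilistic covering in Step 2, which is a standard coupon-collector computation once the deterministic inputs (i) and (ii) from~\cite{afs-metric-ball} are in hand. The delicate part is the uniformity over $z\in W$ in Step 1: the scaling calculation easily gives the correct exponent $\xi(Q+2)$ at a fixed center $z$, and the circle-average modulus of continuity handles the factor $e^{\xi h_r(z)}$ uniformly in $z$, but it requires care to deduce a uniform lower bound on $D_{h^{r,z}}(0,\bdy B_1(0))$ over $z$ at the scale $r = \ep^{1/\chi}$, since we do not want the price of the union bound over $z$ to eat into the $\zeta$ budget. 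I would handle this by fixing a net of centers at Euclidean spacing $r^{1+\delta}$ and using local absolute continuity of the restrictions $h|_{B_{2r}(z)}$ against the corresponding whole-plane GFF restrictions to control the infimum via a single one-point estimate combined with a union bound.
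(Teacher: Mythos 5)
Your proof is correct in outline, but it diverges from the paper's argument in a way that makes it do unnecessary work, and it also attributes one step to~\cite{afs-metric-ball} that is not literally found there.

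The key difference is in how you obtain the uniform H\"older-type bounds on $D_h$. The paper simply cites the local bi-H\"older continuity of $D_h$ relative to the Euclidean metric from~\cite[Theorem 1.7]{lqg-metric-estimates}: with probability tending to $1$, $|z-w|^{\xi(Q+2)+\zeta} \leq D_h(z,w) \leq |z-w|^{\xi(Q-2)-\zeta}$ for all $z,w$ in a neighborhood of $W$. The left inequality immediately gives your Step 1 (the second containment), and the right inequality is used to build the covering for Step 2. You instead re-derive the lower bound from scratch via the coordinate change formula, circle-average moduli of continuity, a union bound over a net, and local absolute continuity --- essentially reproving (a piece of) the cited theorem. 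This is not wrong, but it is the content of~\cite[Theorem 1.7]{lqg-metric-estimates} and there is no reason to re-derive it. You correctly flagged the net argument for $\inf_z D_{h^{r,z}}(0,\bdy B_1(0))$ as the delicate point; that delicacy evaporates once you notice the needed estimate is already in the literature as a black box.

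For the first containment, your coupon-collector computation is essentially identical to the paper's, but you build the cover differently. You claim~\cite{afs-metric-ball} provides a cover of $K$ by at most $\ep^{-d_\gamma - \zeta/3}$ LQG balls of radius $\ep/3$. What~\cite[Theorem 1.1]{afs-metric-ball} actually gives (and what the paper cites) is the uniform lower bound $\mu_h(\mcl B_{\ep/2}(w;D_h)) \geq \ep^{d_\gamma + \zeta/2}$ for $w\in W$; the covering-number statement you want is a \emph{consequence} of this via a maximal-separated-set argument, but it is not the cited theorem, and you would need to supply that deduction. The paper sidesteps this by using the upper H\"older bound $D_h(z,w)\leq |z-w|^{\chi'}$ (with $\chi' = \xi(Q-2)-\zeta$) to convert a fixed deterministic Euclidean cover of $K$ by $O_\ep(\ep^{-2/\chi'})$ balls of radius $(\ep/2)^{1/\chi'}/2$ into a cover by LQG balls of radius $\ep/2$; the cardinality $\ep^{-2/\chi'}$ is larger than $\ep^{-d_\gamma}$ but polynomial, which is harmless against the super-polynomially small coupon-collector failure probability. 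Both routes work, but the paper's is cleaner because it reuses the already-cited bi-H\"older bound rather than introducing a second covering lemma.
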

\begin{proof}
By the local bi-H\"older continuity of $D_h$ and the Euclidean metric~\cite[Theorem 1.7]{lqg-metric-estimates}, it holds with probability tending to 1 as $\ep\rta 0$ that
\eqb \label{eqn-ball-cover-holder} 
|z-w|^{\xi(Q+2)+\zeta} \leq D_h(z,w) \leq |z-w|^{\xi(Q-2)-\zeta} ,\quad\forall z,w\in B_1(W) . 
\eqe
From this, the second inclusion in~\eqref{eqn-ball-cover} is immediate. 

To prove the first inclusion in~\eqref{eqn-ball-cover}, we set $\chi' := \xi(Q-2)-\zeta$ and choose for each $\ep > 0$ a deterministic collection $\mcl W_\ep$ of $O_\ep(\ep^{-2/\chi' })$ points in $W$ such that 
\eqbn
K \subset\bigcup_{w \in \mcl W_\ep} B_{(\ep/2)^{1/\chi'}/2}(w) .
\eqen
By~\eqref{eqn-ball-cover-holder} we have $B_{(\ep/2)^{1/\chi'}}(w)\subset \mcl B_{\ep/2}(w;D_h)$ for each $w\in \mcl W_\ep$.
Hence with probability tending to 1 as $\ep\rta 0$, the balls $\mcl B_{\ep/2}(w;D_h)$ for $w\in\mcl W_\ep$ cover $K$. 

By~\cite[Theorem 1.1]{afs-metric-ball}, it holds with probability tending to one as $\ep\rta 0$ that
\eqb
\mu_h(\mcl B_{\ep/2} (w;D_h)) \geq \ep^{d_\gamma + \zeta/2} , \quad\forall w \in W .
\eqe
Hence, it holds with probability tending to one as $\ep\rta 0$ that for each $k\in [1,M_\ep]_{\BB Z}$ and each $w\in\mcl W_\ep$,
\eqb
\BB P\left[ \BB z_k \in \mcl B_{\ep/2}(w;D_h) \,|\, h \right] \geq \frac{\ep^{d_\gamma+\zeta/2}}{\mu_h(W)} .
\eqe
Since the $\BB z_k$'s are conditionally independent given $h$ and $M_\ep = \lfloor \ep^{-d_\gamma-\zeta}\rfloor$, this implies that for each $w\in\mcl W_\ep$,
\allb \label{eqn-ball-cover-conc}
\BB P\left[\text{$\BB z_k \in \mcl B_{\ep/2}(w;D_h) $ for at least one $k \in [1,M_\ep]_{\BB Z}$} \,|\, h \right] 
&\geq 1 - \left( 1 - \frac{\ep^{d_\gamma+\zeta/2}}{\mu_h(W)} \right)^{M_\ep} \notag\\
&\geq 1 - O_\ep\left( \exp\left( - \frac{1}{\ep^{\zeta/2} \mu_h(W)}   \right)  \right) .
\alle
We take a union bound over all $O_\ep(\ep^{-2/\chi'})$ points $w\in \mcl W_\ep$ to get that with probability tending to 1 as $\ep\rta 0$, each of the balls $\mcl B_{\ep/2}(w;D_h)$ for $w\in \mcl W_\ep$ contains at least one of the $\BB z_k$'s. For such a choice of $k$, we have $\mcl B_{\ep/2}(w;D_h) \subset \mcl B_\ep(\BB z_k ; D_h)$. Since the balls $\mcl B_{\ep/2}(w;D_h)$ for $w\in\mcl W_\ep$ cover $K$ with probability tending to one as $\ep\rta 0$, we deduce that the balls $\mcl B_\ep(\BB z_k ; D_h)$ also cover $K$ with probability tending to one as $\ep\rta 0$. 
\end{proof}

\bibliography{cibib}
\bibliographystyle{hmralphaabbrv}

\end{document}